\theoremstyle{plain}
\newtheorem{thm}{\indent\bf Theorem}[section]
\newtheorem{lem}[thm]{\indent\bf Lemma}
\newtheorem{prop}[thm]{\indent\bf Proposition}
\newtheorem{cor}[thm]{\indent\bf Corollary}
\theoremstyle{definition}
\newtheorem{rem}{\indent\it Remark}[section]
\newtheorem{exa}{\indent\bf Example}[section]
\newtheorem{defi}{\indent\bf Definition}[section]
\numberwithin{equation}{section}
\numberwithin{figure}{section}
\def \re {\mathrm{Re\,}}
\def \im {\mathrm{Im\,}}
\def \z {\mathfrak{z}}
\begin{document}
\title[Painlev\'e V]{Full characterisation of Painlev\'e V 
asymptotics and nonlinear monodromy-Stokes structure}
\author[Shun Shimomura]{Shun Shimomura} 
\address{Department of Mathematics, 
Keio University, 
3-14-1, Hiyoshi, Kohoku-ku,
Yokohama 223-8522 
Japan \quad
{\tt shimomur@math.keio.ac.jp}}
\date{}
\maketitle
\begin{abstract}
For a generic Painlev\'e V equation we characterise all the asymptotics  
in a right half plane near the point at infinity, 
that is, we find classified explicit solutions that are,
by the Riemann-Hilbert correspondence, labelled 
with monodromy data filling up the whole monodromy manifold. 
To do so, in addition to the asymptotics by Andreev and Kitaev along 
the positive real axis, we require elliptic asymptotics along generic
directions and newly provided truncated solutions arising 
from a general solution 
along the imaginary axes. To know analytic continuations outside 
this region we formulate a nonlinear monodromy-Stokes structure, which is
observed as changes of monodromy data contained in the explicit expressions of
solutions. 
\vskip0.2cm
\par
2010 {\it Mathematics Subject Classification.} 
{34M55, 34M56, 34M35, 34M40}
\par
{\it Key words and phrases.} 
{Painlev\'e V equation; elliptic asymptotics; truncated solutions; 
isomonodromy deformation; Riemann-Hilbert correspondence; nonlinear monodromy; 
nonlinear Stokes phenomena}
\end{abstract}
\allowdisplaybreaks
\section{Introduction}\label{sc1}
In comparison with Painlev\'e I and II equations, the full picture of Painlev\'e
V solutions has not been given light on. Attempts to capture or to tabulate 
all Painlev\'e solutions have been made by Guzzetti \cite{G1} for Painle\'e VI 
around $x=0$, by Vartanian and Kitaev \cite{KV} for Painlev\'e III$(D_7)$ around
$x=0$, and by Andreev and Kitaev \cite{Andreev-Kitaev} for Painlev\'e V  
as $x\to \infty$ along the positive real axis in the context 
of the connection problem to $x=0$. 
The result \cite{S} providing the elliptic asymptotics of Painlev\'e V
suggests the possibility of extending Andreev-Kitaev's study to that 
in generic directions including the positive real axis. 
We are now interested in capturing all the asymptotic solutions of 
Painlev\'e V, say, in a specified sectorial region.
Our study is based on the isomonodromy deformation technique.
\par
Let $\theta_0,$ $\theta_1,$ $\theta_{\infty} \in \mathbb{C}$.  
The isomonodromy deformation of the two-dimensional linear system 
\begin{align} \label{1.1}
& \frac{d\Xi}{d\xi} = \Bigl(\frac x2 \sigma_3 + \frac{\mathcal{A}_0}{\xi}
 + \frac{\mathcal{A}_1}{\xi-1} \Bigr) \Xi,
\qquad  
\sigma_3 = \begin{pmatrix}  1  &  0  \\  0  &  -1
\end{pmatrix},
\\ \notag
& \mathcal{A}_{0} =
\begin{pmatrix}
 \z+ \frac 12\theta_0  & -u(\z+ \theta_0) \\
u^{-1}\z   &            -\z-\frac 12\theta_0 \\
\end{pmatrix},
\\ \notag
& \mathcal{A}_1=
\begin{pmatrix}
-\z-\frac 12(\theta_0+\theta_{\infty})  & uy(\z+\frac 12(\theta_0-\theta_1
 +\theta_{\infty})) \\
-(uy)^{-1}(\z+\frac 12(\theta_0+\theta_1 +\theta_{\infty})) & 
\z+\frac 12(\theta_0+\theta_{\infty})  \\
\end{pmatrix}
\end{align}
containing the parameters $x$ and $(y, \z, u)$ 
is governed by the system of equations
\begin{equation}\label{1.2}
\begin{split}
& xy' =xy -2\z (y-1)^2 -(y-1) \bigl(\tfrac 12 (\theta_0-\theta_1
+\theta_{\infty})y -\tfrac 12 (3\theta_0 +\theta_1 +\theta_{\infty})\bigr),
\\
& x\z' =y\z  \bigl(\z+ \tfrac 12 (\theta_0-\theta_1
+\theta_{\infty})\bigr)  - y^{-1} {(\z+\theta_0)} \bigl(\z+ \tfrac 12 
(\theta_0 +\theta_1 +\theta_{\infty})\bigr),
\\
& x(\ln u)' = -2\z -\theta_0 + y \bigl(\z+ \tfrac 12 (\theta_0-\theta_1
+\theta_{\infty})\bigr)  + {y}^{-1} \bigl(\z+ \tfrac 12 
(\theta_0 +\theta_1 +\theta_{\infty})\bigr)
\end{split}
\end{equation}
$(y'=dy/dx)$, which is equivalent to Painlev\'e V equation
\begin{equation}
\tag*{(P$_\mathrm{V}$)}
y''=  \Bigl(\frac 1{2y} + \frac 1{y-1} \Bigr) (y')^{2}- \frac {y'}{x}
  +\frac{(y-1)^2}{x^2} \Bigl(a_{\theta} y - \frac{b_{\theta}} 
{y}\Bigr) + c_{\theta} \frac{y}{x} -\frac{y(y+1)}{2(y-1)}
\end{equation}
with $ a_{\theta}=\tfrac 18(\theta_0-\theta_1+\theta_{\infty})^2, 
b_{\theta}=\tfrac 18 (\theta_0 -\theta_1 - \theta_{\infty})^2,  
c_{\theta}=1-\theta_0-\theta_1$ 
\cite[Appendix C]{JM}, \cite{Andreev-Kitaev}; that is, 
$y(x)$ solves (P$_{\mathrm{V}}$) if and only if the monodromy data 
for \eqref{1.1} remain invariant under a small change of $x.$  
As explained in Section \ref{sc2} by the Riemann-Hilbert correspondence 
every solution of (P$_{\mathrm{V}}$) is parametrised by monodromy data 
in $SL_2(\mathbb{C})$. 
\par
In this paper, under a generic condition on $(\theta_0,\theta_1,
\theta_{\infty})$, we find classified families of asymptotic solutions 
as $x\to \infty$ along every 
direction contained in $|\arg x|<\pi /2$ and show that, by the bijective
Riemann-Hilbert correspondence, 
these solutions may be labelled with monodromy data filling up 
the whole monodromy manifold, which explicitly characterises all 
the asymptotic behaviours (cf. Theorem \ref{thm2.12}). 
To do so, in addition to the asymptotics 
by Andreev and Kitaev \cite{Andreev-Kitaev} on the positive real axis,  
we require elliptic asymptotics along generic directions \cite{S} 
and truncated solutions \cite{Andreev} arising from a two-parameter 
oscillatory family along the imaginary axes. 
Indeed \cite{Andreev-Kitaev} has not
discussed the solutions corresponding to the regions $\mathcal{R}_3\cup 
\mathcal{R}_4\cup\mathcal{R}_5$ on the manifold of monodromy data, 
and to complete the characterisation on the positive real axis it is necessary 
to examine in detail the one-parameter solutions \cite{Andreev} mentioned above.
To know analytic continuations in $|\arg x|>\pi/2$ 
we formulate a nonlinear monodromy-Stokes structure 
based on the monodromy data, which provides nonlinear
monodromy actions on the character variety agreeing with that of
\cite{Kli, PR}. 
\par
The results are summed up in Section \ref{sc2}. 
Theorems \ref{thm2.1} through \ref{thm2.4} recall and exhibit elliptic, 
trigonometric and related truncated solutions, and Theorems \ref{thm2.6}, 
\ref{thm2.8} and \ref{thm2.9} provide another new kind of truncated solutions. 
Regions of the monodromy manifold, which the corresponding
monodromy data fall on, are described in Theorems \ref{thm2.10}, \ref{thm2.11} 
and \ref{thm2.12}, showing that, under the generic condition, 
the monodromy manifold is filled up with the monodromy data of these solutions.
Concerning analytic continuations of
solutions, Subsection \ref{ssc2.5} formulates nonlinear monodromy-Stokes 
structure described in terms of the monodromy data, and
presents some transforms in Theorems \ref{thm2.14} through 
\ref{thm2.17} including nonlinear monodromy and Stokes relations.  
In our framework we may obtain the character variety, on which
the actions for these nonlinear operations are described. 
Subsection \ref{ssc2.6} illustrates applications of our results, in which
analytic continuations are calculated for some solutions, and consequently
the nonlinear monodromy-Stokes relations may be observed as changes 
of monodromy data
contained in the explicit expressions of solutions (e.g.~Theorems \ref{thm2.1}
and \ref{thm2.2}).
Section \ref{sc3} contains basic facts on the isomonodromy deformation and
monodromy data, which are used in the proofs in later sections.
Section \ref{sc4} is devoted to the proofs of Theorem \ref{thm2.14} and
\ref{thm2.15} about nonlinear Stokes and monodromy relations.
The truncated solutions in Theorems \ref{thm2.6} and \ref{thm2.8}
arise from an oscillatory family of solutions as $x\to \pm i\infty$ 
along the imaginary axes (cf. \cite{Andreev-Kitaev-2019, S-2018} and 
the references in \cite{Andreev-Kitaev-2019}, and 
for $\tau$-functions see \cite{L, N}), and their expressions are 
detailed forms for truncated solutions of \cite[Theorem 1.1]{Andreev}. 
In Section \ref{sc5} these new truncated solutions with the monodromy data are
derived from the asymptotic oscillatory solutions labelled with the monodromy 
data of Proposition \ref{prop5.1}, which are given in \cite{S-2018}. 
Such truncated solutions together with elliptic asymptotics
constitute a new additional part of the asymptotic behaviours. 
Proposition \ref{prop5.1} relies on the solutions of Schlesinger-type equation
\eqref{5.00}, and this fact enables us to calculate $\z$ and $u$
as well associated with these truncated solutions 
(cf. Proposition \ref{prop5.2}). 
The derivation of Theorem \ref{thm2.4} needs a uniqueness property, which
is shown in Section \ref{sc6} by auxiliary use of general solutions 
expressed by convergent series. In the final section, 
under no suppositions on $(\theta_0, \theta_1, \theta_{\infty})$, we derive  
the monodromy data of Proposition \ref{prop5.1} by a simplified process,
which improves that of \cite{S-2018}.  
\par
All classical solutions of (P$_{\mathrm{V}}$) in the sense of Umemura 
\cite{U} are known by \cite{K} for rational solutions and by \cite{W} for
Riccati solutions, and solvable linear monodromy cases are
studied by \cite{KO}. Many of classical solutions appear in non-generic cases,
which are partially discussed only (cf. Theorem \ref{thm2.9},
Remarks \ref{rem2.7}, \ref{rem2.13aaa} and \ref{rem2.18}, 
see also \cite[Section 9]{Andreev-Kitaev-2019}), and the study including 
such cases is incomplete. 
Our nonlinear monodromy-Stokes structure suggests some possible relations to
the wild monodromy \cite{Kli, PR}. 
It may be interesting to study dynamics generated by this structure, say, 
on the character variety 
or in relation to those of Painlev\'e VI \cite{I, I-2, IU, Kli, PR}.  
\section{Settings and Results}\label{sc2}
\subsection{Basic facts}\label{ssc2.1}
Let us recall some facts on elliptic asymptotics \cite[Sections 2 and 3]{S}, 
which is necessary in stating our results.  
To consider \eqref{1.1} along $\arg x=\phi$ with $|\phi|<\pi/2,$ 
and to convert \eqref{1.1} to a symmetric form, set
\begin{equation*}
x= e^{i\phi}t,\quad t>0, \qquad \xi=\tfrac 12(e^{-i\phi}\lambda +1).
\end{equation*}
In our arguments $t$ is permitted to move also in a strip $\re t>0,$ 
$|\im t|\ll 1.$ 
By $Y= \exp(-\varpi(t,\phi)\sigma_3)\Xi$ with
$\varpi(t,\phi)=\frac 14e^{i\phi}t+\frac 12\theta_{\infty}(i\phi +\ln 2)$ system
\eqref{1.1} is taken to
\begin{align}\label{2.1}
\frac{dY}{d\lambda} &= \hat{u}^{\sigma_3/2}\mathcal{B}(x,\lambda)
\hat{u}^{-\sigma_3/2}  Y,
\\ \notag
\mathcal{B}(x,\lambda)&=\mathcal{B}(t,\phi,\lambda)
=\frac t4\sigma_3 +\frac 1{\lambda+e^{i\phi}} \begin{pmatrix}
\z+ \frac 12 \theta_0  &  -\z -\theta_0  \\
\z   &  -\z -\frac 12 \theta_0         \end{pmatrix}
\\ \notag
&+\frac 1{\lambda-e^{i\phi}} \begin{pmatrix}
-\z- \frac 12 (\theta_0 +\theta_{\infty})  &  y(\z+\frac 12(\theta_0-\theta_1
+\theta_{\infty}))  \\
-y^{-1}(\z+\frac 12(\theta_0+\theta_1+\theta_{\infty})) & \z +\frac 12(\theta_0
+\theta_{\infty})     \end{pmatrix},
\end{align}
where $\hat{u}=u\exp(-2\varpi(t,\phi))$. 
Then (P$_{\mathrm{V}}$), i.e., system \eqref{1.2} for $(y,\mathfrak{z},u)$
governs the isomonodromy deformation of linear system \eqref{2.1} 
(cf. Subsection \ref{ssc3.1}).
For every $k\in \mathbb{Z}$, system \eqref{2.1} possesses the matrix 
solution $Y_k(t,\lambda)$ admitting the asymptotic expression
\begin{equation*}
Y_k(t,\lambda) =(I+O(\lambda^{-1})) \exp(\tfrac 14 (t\lambda-2\theta_{\infty}
\ln\lambda) \sigma_3 )
\end{equation*}
as $\lambda\to\infty$ through the sector $|\arg\lambda -\pi/2 -(k-2)\pi|<\pi.$ 
{\small
\begin{figure}[htb]
\begin{center}
\unitlength=0.73mm
\begin{picture}(55,60)(-10,-15)
\thicklines
\put(16,40){\circle*{1}}
\put(16,40){\line(-2,-5){14.13}}
\put(16,40){\line(1,-2){13.57}}
\put(0,0){\circle{10}}
\put(0,0){\circle*{1}}
\put(31.8, 8.2){\circle{10}}
\put(31.8, 8.2){\circle*{1}}
\thinlines
\put(12,25){\line(-2,-5){3}}
\put(12,25){\vector(1,3){0}}
\put(6,20.2){\line(2,5){3}}
\put(6,20.2){\vector(-1,-3){0}}
\qbezier(-2,-7)(0,-8)(2,-7)
\put(2,-7){\vector(3,2){0}}
\put(22,23.5){\line(-1,2){3}}
\put(23,30){\line(1,-2){3}}
\put(22,23.5){\vector(1,-2){0}}
\put(23,30){\vector(-1,2){0}}
\qbezier(34.7,1.9)(36.8,2.6)(37.6,4.7)
\put(37.6,4.7){\vector(1,4){0}}
\put(6,40){\makebox{$\hat{p}_{\mathrm{st}}$}}
\put(38.5,8.5){\makebox{$e^{i\phi}$}}
\put(-17,0){\makebox{$-e^{i\phi}$}}
\put(28.8,19){\makebox{$\hat{l}_{1}$}}
\put(0.5,15){\makebox{$\hat{l}_{0}$}}
\put(-22,-18){\makebox{(a) Loops $\hat{l}_0$ and $\hat{l}_1$ 
on the $\lambda$-plane}}
\end{picture}
\qquad\qquad
\unitlength=0.73mm
\begin{picture}(80,53)(0,-7)
\put(0,17){\makebox{$-1$}}
\put(22,3){\makebox{$-A_{\phi}^{1/2}$}}
\put(73,25){\makebox{$1$}}
\put(52,39){\makebox{$A_{\phi}^{1/2}$}}
\thinlines
\put(10,25.5){\line(2,-1){17}}
\put(10,24.5){\line(2,-1){17}}
\put(70,25.5){\line(-2,1){17}}
\put(70,24.5){\line(-2,1){17}}
\qbezier(35,33.5) (40,37) (46,38)
\qbezier(9,31.5) (14,32.7) (19,30.5)
\put(46,38){\vector(4,1){0}}
\put(9,31.5){\vector(-4,-1){0}}
\put(31,31){\makebox{$\mathbf{a}$}}
\put(20,29){\makebox{$\mathbf{b}$}}
\put(65,8){\makebox{$\Pi_{+}$}}
\thicklines
\put(10,25){\circle*{1}}
\put(27,16.5){\circle*{1}}
\put(53,33.5){\circle*{1}}
\put(70,25){\circle*{1}}
\qbezier(24,18) (25,25) (40,32.5)
\qbezier(40,32.5) (56,40) (56.5,31.7)
\qbezier[15](40,18) (54,25.5) (56,30)
\qbezier[15](24,16) (26,11) (40,18)
\qbezier(6,27) (10,32.5) (25.9,24.8)
\qbezier(29,23.2) (35.5,18) (34,13)
\qbezier(6,27) (3.5,21) (14,15)
\qbezier (14,15)(30,7)(34,13)
\put(12,-10){\makebox{(b) Cycles $\mathbf{a}$ and $\mathbf{b}$ on 
$\Pi_{A_{\phi}}$}}
\end{picture}
\end{center}
\caption{Loops on the $\lambda$-plane and cycles on $\Pi_{A_{\phi}}$}
\label{loops0}
\end{figure}
}
\par
Suppose that $|\phi|<\pi/2.$
Let loops $\hat{l}_0$ and $\hat{l}_1$ in the $\lambda$-plane start from 
$\hat{p}_{\mathrm{st}}$ and surround $\lambda=-e^{i\phi}$ and 
$\lambda=e^{i\phi}$, respectively, where $\arg \hat{p}_{\mathrm{st}}=\pi/2$,
$|\hat{p}_{\mathrm{st}}|>100$ (cf.~Figure \ref{loops0} (a)). 
Let $M^0$ and $M^1$ be the monodromy matrices such that $Y_2(\lambda)
|_{\hat{l}_0}=Y_2(\lambda)M^0$ and 
$Y_2(\lambda)|_{\hat{l}_1} =Y_2(\lambda)M^1$, where
$Y_2(\lambda)|_{\hat{l}_{\iota}} $ $(\iota=0, 1)$ denote the analytic 
continuation of $Y_2(\lambda)=Y_2(t,\lambda)$ along the respective loops 
$\hat{l}_{\iota}$. The Stokes matrices $S_k$'s are defined by 
$Y_{k+1}(\lambda)=Y_k(\lambda)S_k,$ and are written in the forms 
$$
S_{2l+1}=\begin{pmatrix}
1  &  0 \\  s_{2l+1}  & 1 
\end{pmatrix},  \quad
S_{2l}=\begin{pmatrix}
1  &  s_{2l} \\  0  & 1 
\end{pmatrix} \quad (l\in \mathbb{Z}).  
$$
Then $M^0,$ $M^1$, $S_k\in SL_2(\mathbb{C})$ are the same as in 
\cite[(2.3), (2.12)] {Andreev-Kitaev} (cf.~Proposition \ref{prop3.0}), 
which satisfy
\begin{align}\label{2.2}
&M^1M^0=S^{-1}_1 e^{-\pi i\theta_{\infty}\sigma_3} S_2^{-1},
\\  \label{2.3}
& S_{k+2}=e^{\pi i\theta_{\infty}\sigma_3} S_k e^{-\pi i\theta_{\infty}
\sigma_3}.
\end{align}
By \eqref{2.2}
entries of $M^{\iota}=(m^{\iota}_{ij})$ $(\iota=0,1)$ fulfil
\begin{align}\label{2.5}
&m^0_{11} m^1_{11}+m^0_{21}m^1_{12}= e^{-\pi i\theta_{\infty}}, \quad
\mathrm{tr}\,M^0=2\cos \pi \theta_0, \quad \mathrm{tr}\,M^1=2\cos \pi\theta_1,
\\
\notag
& -s_2e^{-\pi i\theta_{\infty}}=m^0_{12}m^1_{11}+m^0_{22}m^1_{12}, \quad
 -s_1e^{-\pi i\theta_{\infty}}=m^0_{11}m^1_{21}+m^0_{21}m^1_{22}. 
\end{align}  
Let $\mathcal{V}=\mathcal{V}(\theta_0,\theta_1,\theta_{\infty})$ be the 
set of all solutions of (P$_{\mathrm{V}}$).
Set 
\begin{align*}
\mathcal{M}(\theta_0,\theta_1,& \theta_{\infty}):=
\{(M^0,M^1)\in \mathrm{SL}_2(\mathbb{C})^2  \,|\, 
\\
&m^0_{11} m^1_{11}+m^0_{21}m^1_{12}= e^{-\pi i\theta_{\infty}}, \quad
\mathrm{tr}\,M^0=2\cos \pi \theta_0, \quad \mathrm{tr}\,M^1=2\cos \pi\theta_1\}
/\sim
\end{align*}
with the gauge equivalence relation $c^{\sigma_3}(M^0,M^1)
c^{-\sigma_3}\sim (M^0,M^1)$ for $c\in \mathbb{C}\setminus \{0\},$ which
we call the {\it monodromy manifold}. Then $\dim_{\mathbb{C}}\mathcal{M}
(\theta_0, \theta_1,\theta_{\infty})=2.$ 
The direct monodromy problem assigns
each $y(x)\in \mathcal{V}$ the corresponding $(M^0,M^1)\in \mathcal{M}(\theta_0,
\theta_1,\theta_{\infty}) $, which defines the surjection $\mathcal{V}\to
\mathcal{M}(\theta_0,\theta_1,\theta_{\infty})$ \cite{Bol, Put}. 
As long as $M^0,$ $M^1 \not= \pm I$, 
the inverse monodromy problem for $(M^0,M^1)$ is uniquely
soluble (cf.~Proposition \ref{prop3.1} or \cite[Proposition 2.1]
{Andreev-Kitaev}) and then one may uniquely 
find $y(x)\in \mathcal{V}$ corresponding to $(M^0,M^1).$ Namely,  
there exists a surjection
\begin{equation*}
\varrho: \mathcal{V} \to \mathcal{M}(\theta_0,\theta_1,\theta_{\infty}),
\end{equation*}
of which inverse $\varrho^{-1}$ is one-to-one on $\mathcal{M}(\theta_0,
\theta_1,\theta_{\infty}) \setminus \mathcal{M}_0,$ where
$\mathcal{M}_0= \{(M^0,M^1)\in \mathcal{M}(\theta_0,\theta_1,\theta_{\infty})\,
|\,M^0=\pm I \,\,\text{or} \,\, M^1=\pm I \}$. 
\par
As in the argument above with $|\phi|<\pi/2$, for every $(M^0,M^1)\in 
\mathcal{M}(\theta_0,\theta_1,\theta_{\infty})$ there exist a solution   
$y(x,M^0,M^1)\in \mathcal{V}$ such that $\varrho(y(x))=(M^0,M^1)$, 
and we call $y(x,M^0,M^1)$ a solution corresponding to
(or labelled with) $(M^0,M^1).$ If $(M^0,M^1) \in \mathcal{M}(\theta_0,\theta_1,
\theta_{\infty}) \setminus \mathcal{M}_0$, then $y(x,M^0,M^1)$ is uniquely
determined. The analytic continuation to $x=t e^{i\phi}$, $|\phi|\ge \pi/2$ 
is also denoted by the same symbol $y(x,M^0,M^1)$.
\begin{rem}\label{rem2.1}
In \cite{Andreev-Kitaev}, the {\it manifold of monodromy data} 
$\mathcal{M}_{5}(\theta_0,\theta_1,\theta_{\infty})$ is defined by
$\{(M^0,M^1)\in \mathrm{SL}_2(\mathbb{C})^2\,|\ 
m^0_{11} m^1_{11}+m^0_{21}m^1_{12}= e^{-\pi i\theta_{\infty}}, 
\mathrm{tr}\,M^0=2\cos \pi \theta_0,  \mathrm{tr}\,M^1=2\cos \pi\theta_1\}$
without the gauge equivalence relation $\sim$ such that $\dim_{\mathbb{C}}
\mathcal{M}_{5}(\theta_0,\theta_1,\theta_{\infty})=3$. 
To consider monodromy data
including the third parameter related to the function $u$ (cf. $c_0$ for
$u(c_0,c,\sigma, x)$ in Section \ref{sc5}) it is necessary to make such setting
of the manifold of monodromy data, on which gauge parameter multipliers are
distinguished.
\end{rem}
\subsection{Elliptic asymptotics}\label{ssc2.2}
For $A\in \mathbb{C}$ with $\re A^{1/2} \ge 0$ consider the elliptic
curve $\Pi_A=\Pi_+\cup\Pi_-$ given by $w(A, z) =\sqrt{(1-z^2)(A-z^2)}$, where
the sheets $\Pi_{\pm}=P^1(\mathbb{C})\setminus ([-1,-A^{1/2}]\cup[A^{1/2},1])$ 
are glued along the cuts $[-1,-A^{1/2}]$ and $[A^{1/2}, 1]$. 
Let the branches of $w(A,z)$ and $\sqrt{(A-z^2)/(1-z^2)}$ 
be such that $z^{-2} \sqrt{(A-z^2)(1-z^2)} \to -1$ and 
$ \sqrt{(A-z^2)/(1-z^2)} \to 1$ as $z\to \infty$ on the upper sheet
$\Pi_+.$ For a given number $\phi \in \mathbb{R}$ there exists a unique 
$A_{\phi}\in \mathbb{C}$ such that for every cycle $\mathbf{c}$ 
on $\Pi_{A_{\phi}}$
$$
\re e^{i\phi} \int_{\mathbf{c}} \sqrt{\frac{A_{\phi}-z^2}{1-z^2} } dz =0,
$$
and then $A_{\phi}$ has the following properties:
(i) $0 \le \re A_{\phi} \le 1$ for $\phi \in \mathbb{R}$, and $A_0=0,$
$A_{\pm \pi/2}=1;$
(ii) $A_{-\phi}=\overline{A_{\phi}},$ $A_{\phi\pm \pi}=A_{\phi}$ for $\phi\in
\mathbb{R};$ and
(iii) for $0 \le \phi \le \pi/2,$ $\im A_{\phi}\ge 0$,
and, for $-\pi/2 \le \phi \le 0,$ $\im A_{\phi}\le 0$.
In particular, for basic cycles $\mathbf{a}$ and $\mathbf{b}$ on 
$\Pi_{A_{\phi}}$ as in Figure \ref{loops0} (b), $A_{\phi}$ is a unique solution 
of the Boutroux equations
\begin{equation*}
\re e^{i\phi} \int_{\mathbf{a}} \sqrt{\frac{A_{\phi}-z^2}{1-z^2} } dz =
\re e^{i\phi} \int_{\mathbf{b}} \sqrt{\frac{A_{\phi}-z^2}{1-z^2} } dz =0
\end{equation*}
\cite[Section 7]{S}. Write
$$
\Omega_{\mathbf{a}}= \Omega_{\mathbf{a}}(\phi) 
=\int_{\mathbf{a}} \frac{dz}{w(A_{\phi}, z)},\quad
 \Omega_{\mathbf{b}} = \Omega_{\mathbf{b}}(\phi) =
\int_{\mathbf{b}} \frac{dz}{w(A_{\phi}, z)}, 
$$
which satisfy $\Omega_{\mathbf{a,b}}(\phi\pm \pi)= \Omega_{\mathbf{a,b}}(\phi),$ 
and let $\mathrm{sn}(u; k)$ denote the Jacobi $\mathrm{sn}$-function with 
modulus $k$. Then we have the following \cite[Theorem 2.1]{S}, \cite[arXiv, 
Theorem 2.1]{S}, in which the solution is denoted by $\operatorname{ellp}(x,
m^0_{21}m^1_{12}, m^0_{11},m^1_{11})$ describing the dependence on the
monodromy data.
\begin{thm}\label{thm2.1}
Suppose that $M^0=(m_{ij}^0)$ and $M^1=(m_{ij}^1)$ 
fulfil $m_{11}^0 m_{21}^0m_{12}^1 \not=0$ (respectively,  
$ m_{11}^1 m_{21}^0m_{12}^1 \not=0$). 
Then, for $-\pi/2<\phi<0$ (respectively, $0<\phi <\pi/2$),   
$y(x)=y(x,M^0,M^1)=
{\mathrm{ellp}}(x, m^0_{21}m^1_{12}, m^0_{11}, m^1_{11}) $ is represented by
\begin{equation*}
\frac{y(x) +1}{y(x)-1}
=A^{1/2}_{\phi} \mathrm{sn} ((x-x_0)/2 ; A^{1/2}_{\phi}) +O(x^{-1}),
\end{equation*}
as $x=e^{i\phi} t \to \infty$ through the cheese-like strip
\begin{align*}
& S(\phi, t_{\infty}, \kappa_0, \delta_0)= \{ x=e^{i\phi}t \,; \,\,
\re t > t_{\infty}, \,\, |\im t |<\kappa_0 \} \setminus \bigcup_{\rho\in
\mathcal{P}_0} \{|x-\rho|<\delta_0 \},
\\
&\mathcal{P}_0 = \{ \rho\,; \,\, \mathrm{sn}((\rho-x_0)/2 ; A_{\phi}^{1/2})
=\infty \}= \{ x_0 + \Omega_{\mathbf{a}}  \mathbb{Z}
 +\Omega_{\mathbf{b}}(2\mathbb{Z}+1) \},
\end{align*}
$\kappa_0 >0$ being a given number, $\delta_0>0$ a given small number,
and $t_{\infty}=t_{\infty}(\kappa_0,\delta_0)$ a large number depending on 
$(\kappa_0,\delta_0).$ 
The phase shift $x_0$ is such that 
$$
x_0 \equiv \frac {-1}{\pi i} \Bigl(\Omega_{\mathbf{b}} 
\ln(e^{\pi i\theta_{\infty}} m^0_{21}m^1_{12})
+ \Omega_{\mathbf{a}}\ln \mathfrak{m}_{\phi,\theta_{\infty}} \Bigr)
 -\Omega_{\mathbf{a}} -\Omega_{\mathbf{b}}   
\quad
\mod 2\Omega_{\mathbf{a}}\mathbb{Z}+ 2\Omega_{\mathbf{b}}\mathbb{Z},
$$
where $\mathfrak{m}_{\phi,\theta_{\infty}} = e^{\frac{\pi i}2\theta_{\infty}} 
m^0_{11}$ if $-\pi/2 <\phi<0,$ and
$= e^{-\frac{\pi i}2\theta_{\infty}} (m^1_{11})^{-1}$ if $0 <\phi<\pi/2.$
\end{thm}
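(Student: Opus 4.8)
The plan is to realise each prescribed pair $(M^0,M^1)$ by an explicit approximate elliptic solution of (P$_{\mathrm{V}}$) whose direct monodromy can be read off, and then to transport its asymptotics to the true solution $y(x,M^0,M^1)$ using uniqueness of the inverse monodromy map. Along the ray $\arg x=\phi$ I would first rescale and gauge the linear system \eqref{2.1} so that $t=|x|$ is a large parameter, and carry out a complex WKB analysis in the $z$-plane of Subsection \ref{ssc2.2}: the leading exponents are governed by the spectral curve $w^2=(1-z^2)(A-z^2)$ with turning points $\pm1,\pm A^{1/2}$, whose Stokes geometry depends on $A$. Demanding that a genuinely oscillatory pair of WKB solutions (rather than an exponentially dominant/recessive pair, which would produce only trigonometric behaviour) persist all along the ray forces the Boutroux/Stokes conditions $\re e^{i\phi}\int_{\mathbf{a}}\sqrt{(A-z^2)/(1-z^2)}\,dz=\re e^{i\phi}\int_{\mathbf{b}}\sqrt{(A-z^2)/(1-z^2)}\,dz=0$, i.e. $A=A_{\phi}$; properties (i)--(iii) of $A_{\phi}$ and the periods $\Omega_{\mathbf{a}},\Omega_{\mathbf{b}}$ are used here. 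An elliptic (Boutroux-type) ansatz then produces $\tfrac{y+1}{y-1}=A_{\phi}^{1/2}\,\mathrm{sn}((x-x_0)/2;A_{\phi}^{1/2})+O(x^{-1})$ as a solution of (P$_{\mathrm{V}}$) to leading order, the suppression of secular growth over the cycles $\mathbf{a}$ and $\mathbf{b}$ being exactly what pins the slowly varying modulus at $A_{\phi}$; the only free datum that survives to leading order is the phase $x_0$.

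Next I would compute the direct monodromy of this approximate solution. Gluing WKB solutions of \eqref{2.1} across the Stokes curves and through local parametrices at the four turning points expresses $M^0,M^1,S_1,S_2$ to leading order in terms of: the action integrals over $\mathbf{a}$ and $\mathbf{b}$, which after the Boutroux normalisation reduce to linear combinations of $\Omega_{\mathbf{a}},\Omega_{\mathbf{b}}$ and of $x_0$; the factors $e^{\pm\pi i\theta_{\infty}}$; and local connection constants which cancel out of the combinations $m^0_{21}m^1_{12}$, $m^0_{11}$, $m^1_{11}$ appearing in the statement. Reading these off and solving for $x_0$ yields the asserted elliptic-logarithm formula $x_0\equiv-\tfrac{1}{\pi i}\bigl(\Omega_{\mathbf{b}}\ln(e^{\pi i\theta_{\infty}}m^0_{21}m^1_{12})+\Omega_{\mathbf{a}}\ln\mathfrak{m}_{\phi,\theta_{\infty}}\bigr)-\Omega_{\mathbf{a}}-\Omega_{\mathbf{b}}$ modulo $2\Omega_{\mathbf{a}}\Z+2\Omega_{\mathbf{b}}\Z$, with $\mathfrak{m}_{\phi,\theta_{\infty}}$ as given; the residual ambiguity mod $2\Omega_{\mathbf{a}}\Z+2\Omega_{\mathbf{b}}\Z$ is precisely the period lattice of the $\mathrm{sn}$-expression in $x$, while the hypotheses $m^0_{11}m^0_{21}m^1_{12}\ne0$ (resp. $m^1_{11}m^0_{21}m^1_{12}\ne0$) are exactly what makes the two logarithms meaningful and, since $M^0=\pm I$ would force $m^0_{21}=0$ and $M^1=\pm I$ would force $m^1_{12}=0$, they also ensure $M^0,M^1\ne\pm I$. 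By the uniqueness of the inverse monodromy problem for such data (Proposition \ref{prop3.1}, or \cite[Proposition 2.1]{Andreev-Kitaev}) there is a unique $y(x,M^0,M^1)\in\mathcal{V}$ with monodromy $(M^0,M^1)$; since monodromy data are isomonodromy invariants and agree, to leading order, with those of the constructed elliptic function, the latter must be the asymptotic expansion of $y(x,M^0,M^1)$, with error $O(x^{-1})$ coming from the next WKB order, on the pole-free region $S(\phi,t_{\infty},\kappa_0,\delta_0)$ --- the disks $\{|x-\rho|<\delta_0\}$ around $\mathcal{P}_0$ being excised because $y$ itself has poles there.

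The real content, and the main obstacle, is the uniform error control over the whole cheese-like strip. One must show the slowly varying modulus stays within $O(x^{-1})$ of $A_{\phi}$ with no secular drift, uniformly as $x$ ranges over the horizontal strip $|\im t|<\kappa_0$ and passes close to the moving poles $\mathcal{P}_0$ of the elliptic approximation, and that the four-turning-point WKB construction patches together consistently both along and transverse to the ray; this is why $\phi$ must be kept strictly inside $(-\pi/2,0)\cup(0,\pi/2)$, so that $A_{\phi}$ stays away from $0$ and $1$ and no two turning points collide, and why $t_{\infty}=t_{\infty}(\kappa_0,\delta_0)$ must be taken large. This Boutroux-modulation plus parametrix-matching estimate --- of the type effected in \cite{S}, in the spirit of the positive-axis analysis of \cite{Andreev-Kitaev} --- carries the weight of the argument; once it is established, the identification of the phase $x_0$ with the monodromy data is a matter of algebra.
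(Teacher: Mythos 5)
The paper does not reprove Theorem \ref{thm2.1} at all: it is quoted from \cite[Theorem 2.1]{S}, with the error bound $O(x^{-1})$ imported from \cite{S-2024} (cf.\ Remark \ref{rem2.2}). The route you outline --- WKB analysis of the isomonodromy system \eqref{2.1} on the curve $w^2=(1-z^2)(A-z^2)$, Boutroux normalisation $A=A_{\phi}$, turning-point parametrices, and identification of the phase shift through the monodromy data --- is precisely the method of that cited proof, so in spirit you are on the same track as the source the paper relies on.

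As a proof, however, your text has genuine gaps. First, the step ``compute the direct monodromy of the approximate elliptic solution to leading order, then conclude by uniqueness of the inverse problem'' is not sound as stated: an approximate solution has no well-defined isomonodromic data, and agreement of data to leading order does not by itself imply that the exact solution $y(x,M^0,M^1)$ stays uniformly close to the ansatz on the whole cheese-like strip; one needs a justification argument (uniform invertibility of the map from (modulus, phase) to monodromy data together with error propagation, or an a priori estimate tying an exact solution with exactly those data to the ansatz), and this is exactly the analytic core of \cite{S} that you defer. Second, the substance of the theorem is the precise formula for $x_0$ --- that the $\mathbf{b}$-period couples to $\ln(e^{\pi i\theta_{\infty}}m^0_{21}m^1_{12})$, that the $\mathbf{a}$-period couples to $e^{\frac{\pi i}2\theta_{\infty}}m^0_{11}$ for $-\pi/2<\phi<0$ but to $e^{-\frac{\pi i}2\theta_{\infty}}(m^1_{11})^{-1}$ for $0<\phi<\pi/2$, plus the shift $-\Omega_{\mathbf{a}}-\Omega_{\mathbf{b}}$ --- and this can only come from actually carrying out the turning-point connection formulas; you assert that the local constants cancel from $m^0_{21}m^1_{12}$, $m^0_{11}$, $m^1_{11}$ but derive none of it, so the key formula is assumed rather than proved. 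Third, the $O(x^{-1})$ bound is sharper than what a leading-order WKB matching delivers; in the literature it required a separate refinement (\cite{S-2024}), so it cannot be claimed for free from ``the next WKB order''. You correctly flag the uniform modulation/parametrix estimate as the main obstacle, but that estimate together with the explicit connection computation \emph{is} the proof; without them the argument remains a plan rather than a demonstration.
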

\begin{rem}\label{rem2.2}
The error bound $O(x^{-1})$ is due to \cite[Theorem 2.2 and Remark 2.1]{S-2024},
which provides an explicit error term.
\end{rem}
\begin{rem}\label{rem2.3}
The complementary cases, say $m^1_{11}m^0_{21}m^1_{12}=0$ for $0<\phi<\pi/2$,
correspond to truncated or doubly-truncated (tri-tronqu\'ee) solutions 
\cite[Corollaries 5.2 and 5.3]{Andreev-Kitaev}, \cite[\S 5]{Andreev-Kitaev-2},
\cite[Theorem 1.1]{Andreev} and see Theorems \ref{thm2.4}, \ref{thm2.6} and
\ref{thm2.8}.  
\end{rem}
\begin{rem}\label{rem2.4}
Since $y(x)$ above or in \cite{Andreev-Kitaev} is analytic 
in a strip containing the ray $\arg x=\phi$, the derivative $y'(x)$ 
may be obtained by formal calculation with the same error bound.
Hence 
\begin{align*}
y'(x)=&-\frac{A^{1/2}_{\phi}\mathrm{sn}'((x-x_0)/2)}
{(A^{1/2}_{\phi}\mathrm{sn}\,((x-x_0)/2)-1)^2} +O(x^{-1})
\\
\z(x)=&-\frac{x(y'(x)-y(x))}{2(y(x)-1)^2} +\frac{\theta_0+\theta_1}{2(y(x)-1)}
-\frac 14(\theta_0-\theta_1+\theta_{\infty})
\\
=& \frac x8 (A^{1/2}_{\phi}\mathrm{sn}'((x-x_0)/2)+A_{\phi}\mathrm{sn}^2
((x-x_0)/2)-1) +O(1)   
\end{align*}
as $x\to \infty$ through $S(\phi,t_{\infty},\kappa_0,\delta_0),$ where
$\mathrm{sn}\,u=\mathrm{sn}(u;A^{1/2}_{\phi}),$ $\mathrm{sn}'(u)=\frac{d}{du}
\mathrm{sn}\,u=\mathrm{cn}\,u\,\mathrm{dn}\,u$.
\end{rem}
The analytic continuation of $y(x,M^0,M^1)$ to the left half plane with
$0<|\phi-\pi|<\pi/2$ is as follows.
\begin{thm}\label{thm2.2}
Let $(\breve{M}^0, \breve{M}^1)=  S_2^{-1}(M^0,M^1)S_2$ with
$\breve{M}^0=(\breve{m}^0_{ij})$ and $\breve{M}^1=(\breve{m}^1_{ij})$. 
Suppose that $\breve{m}^0_{22}\breve{m}^0_{12}\breve{m}^1_{21} \not=0$ 
(respectively, $\breve{m}^1_{22}\breve{m}^0_{12}\breve{m}^1_{21} \not=0$). 
Then, for $\pi/2<\phi<\pi$ (respectively, $\pi<\phi<3\pi/2$),  
$y(x)=y(x,M^0,M^1)$ is represented by
\begin{equation*}
\frac{y(x) +1}{y(x)-1}
=A^{1/2}_{\phi} \mathrm{sn} ((x-\breve{x}_0)/2 ; A^{1/2}_{\phi}) +O(x^{-1}),
\end{equation*}
as $x=e^{i\phi} t \to \infty$ through 
$ S(\phi, t_{\infty}, \kappa_0, \delta_0)$, where
$$
\breve{x}_0 \equiv \frac {-1}{\pi i} \Bigl(\Omega_{\mathbf{b}} 
\ln(e^{\pi i\theta_{\infty}} (\breve{m}^0_{12}\breve{m}^1_{21})^{-1})
+ \Omega_{\mathbf{a}}
 \ln \breve{\mathfrak{m}}_{\phi,\theta_{\infty}} \Bigr)
 -\Omega_{\mathbf{a}}    -\Omega_{\mathbf{b}}   
\quad
\mod 2\Omega_{\mathbf{a}}\mathbb{Z}+ 2\Omega_{\mathbf{b}}\mathbb{Z},
$$
where $\breve{\mathfrak{m}}_{\phi,\theta_{\infty}} 
=e^{\frac{\pi i}2\theta_{\infty}}(\breve{m}^0_{22})^{-1}$ if $\pi/2 <\phi<\pi,$ 
and
$= e^{-\frac{\pi i}2\theta_{\infty}} \breve{m}^1_{22}$ if $\pi <\phi<3\pi/2.$
\end{thm}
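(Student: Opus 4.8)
The plan is to derive Theorem \ref{thm2.2} from Theorem \ref{thm2.1} by transporting the elliptic asymptotics through the rotation $\phi\mapsto\phi-\pi$ and tracking how the governing monodromy data transform. The starting point is that the Boutroux data are $\pi$-periodic: $A_{\phi\pm\pi}=A_{\phi}$, $\Omega_{\mathbf a,\mathbf b}(\phi\pm\pi)=\Omega_{\mathbf a,\mathbf b}(\phi)$, and $0\le\re A_{\phi}\le1$ continues to hold, so the curve $\Pi_{A_{\phi}}$, the cycles $\mathbf a,\mathbf b$, the periods, and with them the whole WKB/isomonodromy construction of \cite{S} together with the explicit error term of \cite{S-2024}, carry over verbatim to $\pi/2<\phi<3\pi/2$ on the cheese-like strip $S(\phi,t_{\infty},\kappa_0,\delta_0)$. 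Hence the analytic continuation of $y(x)=y(x,M^0,M^1)$ into this range does satisfy $\frac{y(x)+1}{y(x)-1}=A_{\phi}^{1/2}\mathrm{sn}((x-\breve x_0)/2;A_{\phi}^{1/2})+O(x^{-1})$, and the only real task is to express the phase shift $\breve x_0$ through $(M^0,M^1)$.

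For this I would reconstruct, for the current range of $\phi$, the monodromy triple exactly as in Subsection \ref{ssc2.1}: the base point $\hat{p}_{\mathrm{st}}$ on $\arg\lambda=\pi/2$, the loops $\hat l_0,\hat l_1$ around the finite singular points $-e^{i\phi}$, $e^{i\phi}$ of \eqref{2.1}, and the canonical solution $Y_2$, whose sectors are fixed by the Stokes structure of \eqref{2.1} at $\lambda=\infty$ and do not move with $\phi$; this produces a pair $(\widetilde M^0,\widetilde M^1)$, and by the WKB connection analysis of \cite{S} the phase shift is given by the Theorem \ref{thm2.1} formula written for the direction $\phi-\pi$ (the branch $-\pi/2<\cdot<0$ if $\pi/2<\phi<\pi$, the branch $0<\cdot<\pi/2$ if $\pi<\phi<3\pi/2$) with $(\widetilde M^0,\widetilde M^1)$ in place of $(M^0,M^1)$. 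The heart of the proof is the identification of $(\widetilde M^0,\widetilde M^1)$: as $\phi$ crosses $\pi/2$ the points $e^{i\phi}$ and $-e^{i\phi}$ swap half-planes, passing between the origin and $\hat{p}_{\mathrm{st}}$, so the loops adapted to $\pi/2<\phi<\pi$ differ from the analytic continuation of those for $|\phi|<\pi/2$ by a braid move, and evaluating this move via $Y_{k+1}=Y_kS_k$ and the relations \eqref{2.2}, \eqref{2.3}, together with the normalisations in \eqref{2.5}, realises the transformation as the composition of conjugation by $S_2$ with an interchange of matrix indices $1\leftrightarrow2$ and an inversion of the relevant entries --- the index interchange and inversion reflecting the exchange of the two finite singularities, equivalently the reversal of the dominant exponential $e^{\pm t\lambda/4}$ at $\lambda=\infty$.

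Feeding this transformation into the Theorem \ref{thm2.1} phase formula for direction $\phi-\pi$ and abbreviating $\breve M^{\iota}=S_2^{-1}M^{\iota}S_2$, the factors $m^0_{21}m^1_{12}$ and $m^0_{11}$ (resp.\ $(m^1_{11})^{-1}$) occurring there are replaced by $(\breve m^0_{12}\breve m^1_{21})^{-1}$ and $(\breve m^0_{22})^{-1}$ (resp.\ $\breve m^1_{22}$), the $e^{\pm\pi i\theta_{\infty}}$ prefactors being unaffected, so that $\breve x_0$ and $\breve{\mathfrak m}_{\phi,\theta_{\infty}}$ come out exactly as stated; correspondingly the nonvanishing hypothesis $m^0_{11}m^0_{21}m^1_{12}\ne0$ (resp.\ $m^1_{11}m^0_{21}m^1_{12}\ne0$) of Theorem \ref{thm2.1} becomes $\breve m^0_{22}\breve m^0_{12}\breve m^1_{21}\ne0$ (resp.\ $\breve m^1_{22}\breve m^0_{12}\breve m^1_{21}\ne0$). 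The error bound $O(x^{-1})$ persists because \cite[Theorem 2.2, Remark 2.1]{S-2024} applies uniformly over rotated cheese-strips of this type.

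The main obstacle is the identification of $(\widetilde M^0,\widetilde M^1)$ just described: one must get the side and direction of the $S_2$-conjugation, the index interchange, and the inversion of entries right simultaneously, and consistently for both sub-ranges of $\phi$, which requires a careful treatment of the braid move on $\hat l_0,\hat l_1$ produced when the two finite singularities swap half-planes past $\hat{p}_{\mathrm{st}}$. A secondary point needing care is that the path along which $y(x,M^0,M^1)$ is continued from $|\arg x|<\pi/2$ into $\pi/2<\phi<3\pi/2$ must be chosen compatibly with that homotopy of loops, so that the continued solution is genuinely the one governed by $(\widetilde M^0,\widetilde M^1)$; this is precisely the kind of bookkeeping formalised by the nonlinear Stokes and monodromy relations of Subsection \ref{ssc2.5}.
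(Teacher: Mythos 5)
Your overall strategy (reduce the left half plane to Theorem \ref{thm2.1} via a transformation of the labelling data) is in the spirit of what the paper does, but the step that carries all the content is asserted rather than proved, and it is exactly the step the paper handles differently. The paper's proof (Subsection \ref{ssc2.6}) does not re-run the elliptic/WKB construction in $\pi/2<\phi<3\pi/2$: it uses the exact nonlinear Stokes relation of Theorem \ref{thm2.15}, $y(x,M^0,M^1)=1/y_{-\theta_{\infty}}(e^{-\pi i}x,[\hat{M}^0]_{\sigma_1},[\hat{M}^1]_{\sigma_1})$ with $\hat{M}^{0,1}=e^{-\frac{\pi i}2\theta_{\infty}\sigma_3}\breve{M}^{0,1}e^{\frac{\pi i}2\theta_{\infty}\sigma_3}$, which is itself established in Section \ref{sc4} by isomonodromy bookkeeping (the substitution $(\lambda,\phi,x)\mapsto(e^{-\pi i}\lambda,\phi-\pi,e^{-\pi i}x)$, the change $(\mathfrak{z},\hat u)\mapsto(-\mathfrak{z}-\theta_0,\hat u^{-1})$, and the identity $\sigma_1Y_2(e^{-\pi i}\lambda,-\theta_{\infty})\sigma_1=Y_2(\lambda,\theta_{\infty})S_2e^{\frac{\pi i}2\theta_{\infty}\sigma_3}$). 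Then Theorem \ref{thm2.1} is applied to the \emph{dual} solution $y_{-\theta_{\infty}}$ in a right-half-plane direction, and the oddness of $\mathrm{sn}$ together with the involution $y\mapsto 1/y$ (which flips $(y+1)/(y-1)$) produces the sign, the entry inversions $(\breve m^0_{12}\breve m^1_{21})^{-1}$, $(\breve m^0_{22})^{-1}$, and the survival of the prefactors $e^{\pi i\theta_{\infty}}$, $e^{\frac{\pi i}2\theta_{\infty}}$ despite the switch $\theta_{\infty}\mapsto-\theta_{\infty}$.

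Your proposal omits the dual equation and the reciprocal entirely. You claim that Theorem \ref{thm2.1}'s phase formula applies verbatim in the rotated range once one substitutes ``reconstructed'' data $(\widetilde M^0,\widetilde M^1)$, and that a braid move of $\hat l_0,\hat l_1$ yields $S_2$-conjugation plus the index interchange plus inversion of entries; but you never carry out that evaluation, and without it none of the delicate features of the stated formula (why inversions appear at all, why the exponential prefactors are \emph{not} conjugated to $e^{-\pi i\theta_{\infty}}$, $e^{-\frac{\pi i}2\theta_{\infty}}$, and why the sign of the phase comes out as $\breve x_0$ rather than $-\breve x_0$) is justified — these arise in the paper precisely from the interplay of $\theta_{\infty}\to-\theta_{\infty}$, the $\sigma_1$-conjugation, and $y\mapsto 1/y$, which cancel in a specific way. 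Moreover, using Theorem \ref{thm2.1} outside $|\phi|<\pi/2$ is not automatic: its derivation is tied to the canonical solution $Y_2$ and the Stokes geometry for that range, and extending it directly would amount to redoing the WKB connection analysis (this is the alternative proof the paper only cites, \cite[arXiv, Theorem 2.2]{S}), not to a formal substitution. As written, the ``heart of the proof'' you identify is exactly the part left unproved, so the argument has a genuine gap.
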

In Subsection \ref{ssc2.6}, Theorem \ref{thm2.2} is derived from 
Theorem \ref{thm2.1}, which relies on the nonlinear Stokes relation. 
This is also proved by WKB analysis \cite[arXiv, Theorem 2.2]{S}. 
In \cite[Corrigendum, Theorem 2.1]{S}, add 
$-\Omega_{\mathbf{a}}/2$ to the phase shift $x_0$, in 
\cite[Corrigendum, Theorem 2.2]{S}, read $(\breve{m}^0_{22})^{-1}$ and 
$e^{-\pi i\theta_{\infty}} \breve{m}^1_{22}$ instead of
$e^{-\pi i\theta_{\infty}}\breve{m}^0_{22}$ and $(\breve{m}^1_{22})^{-1}$, 
and for the full complete proof of Theorem \ref{thm2.1} see \cite[arXiv]{S}.
\subsection{Trigonometric asymptotics and truncated cases}\label{ssc2.3}
On the positive real axis a general solution behaves trigonometrically 
as shown by Andreev and Kitaev
\cite[Theorems 3.1--3.4, 4.1--4.3, Remarks 3.1, 4.2, 4.3]{Andreev-Kitaev},
which are denoted by $y(x)=y(x,M^0,M^1)=\operatorname{trig}(x,m^0_{21}m^1_{12},
m^0_{11},m^1_{11})$.
\begin{thm}\label{thm2.3}
Suppose that $m^0_{11}m^1_{11}m^0_{21}m^1_{12}\not=0.$
Let 
\begin{align*}
\beta_0&=\frac 1{2\pi i}\ln(1-m^0_{11}m^1_{11}e^{\pi i\theta_{\infty}})
=\frac 1{2\pi i}\ln(m^0_{21}m^1_{12} e^{\pi i\theta_{\infty}})
\quad (\text{cf.~\eqref{2.5}}), 
\\
\hat{v}& =-\frac{\sqrt{2\pi}}{m^0_{11} \Gamma(\beta_0) }\exp(\beta_0\ln 2
-\tfrac 12 \pi i \theta_{\infty} +\tfrac 12 \pi i\beta_0).
\end{align*}
Then $y(x)={\mathrm{trig}}(x,m^0_{21}m^1_{12},m^0_{11}, m^1_{11})$
admits the following expressions as $x\to +\infty$.
\par
$(1)$ If $|\re \beta_0|<\tfrac 14$, then
$$
y(x)=-1+ 4\sqrt{2}e^{-\pi i/4} \beta_0^{1/2} x^{-1/2}\sin \left(\tfrac 12
x+i\beta_0 \ln x +i \ln(\beta_0^{-1/2}\hat{v}) \right)
+O(x^{-1+2|\re \beta_0|})
$$
as $x\to +\infty.$ This is also rewritten in the form:
if $\tfrac 16 <\re \beta_0 <\tfrac 12,$
$$
y(x)=-1+2\sqrt{2} e^{\pi i/4}\hat{v} x^{\beta_0-1/2} e^{-ix/2} +O(x^{-1+2\re
\beta_0}),
$$
and if $-\tfrac 12 <\re \beta_0 <-\tfrac 16$,
$$
y(x)=-1-2\sqrt{2} e^{\pi i/4}\hat{v}^{-1}\beta_0 x^{-\beta_0-1/2} e^{ix/2} 
+O(x^{-1+2|\re \beta_0}|).
$$
\par
$(2)$ If $\tfrac 14 <\re \beta_0 <\tfrac 34$, then
$$
y(x)=\frac{\cos^2 \tilde{x}}{\sin^2 \tilde{x}}+O(x^{-1+2|\re \beta_0-1/2|}),
\quad
\tilde{x}=\frac x4+\frac{1-2\beta_0}{4i}\ln x -\frac 1{2i} \ln (-e^{\pi i/4}
\hat{v}/\sqrt{2})
$$
as $x\to +\infty$ avoiding poles. This is also  
rewritten in the form: if $\tfrac 14 \le \re \beta_0 <\tfrac 12$, 
$$
y(x)=-1+2\sqrt{2} e^{\pi i/4}\hat{v} x^{\beta_0-1/2} e^{-ix/2} +O(x^{-1+2\re
\beta_0}),
$$
and if $\tfrac 12 <\re \beta_0 \le \tfrac 34$, 
$$
y(x)=-1+4\sqrt{2} e^{-\pi i/4}\hat{v}^{-1} x^{-\beta_0+1/2} e^{ix/2} +O(x^{1
-2\re\beta_0}).
$$
\end{thm}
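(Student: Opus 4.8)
The plan is to derive these trigonometric expansions from the isomonodromy data by the standard connection-problem method: track the behaviour of a canonical solution of \eqref{2.1} as $x\to+\infty$, turn the linear system into a confluent-hypergeometric-type model near the two singular points $\lambda=\pm e^{i\phi}$ (here with $\phi=0$), and read off $y(x)$ from the matching. More precisely, I would first substitute the parametrisation $x=t$, $\phi=0$ into \eqref{2.1}, and analyse the matrix $\mathcal{B}(t,0,\lambda)$ as $t\to+\infty$. Along $\arg x=0$ the Boutroux solution $A_\phi$ degenerates ($A_0=0$), so the elliptic function collapses to a trigonometric one; concretely, one expects $\z(x)\sim -x/8$-type scaling so that the $y$-dependence enters through a slowly-varying phase. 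The key is to rescale $\lambda$ appropriately near each turning point and obtain a local model equation whose Stokes data are explicitly computable in terms of Gamma functions — this is where the quantity $\Gamma(\beta_0)$, and hence $\hat v$, enters.

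The main steps, in order, are: (i) set up the WKB/outer approximation for \eqref{2.1} on the $\lambda$-plane with $t$ large, identifying the exponential behaviour $\exp(\tfrac14(t\lambda - 2\theta_\infty\ln\lambda)\sigma_3)$ and the relevant Stokes graph; (ii) near $\lambda=\pm1$ introduce a stretched variable so that the system becomes, to leading order, a confluent hypergeometric (Whittaker/parabolic-cylinder) system, and solve its connection problem, introducing the exponent $\beta_0$ defined via $e^{2\pi i\beta_0}=m^0_{21}m^1_{12}e^{\pi i\theta_\infty}$ (consistency with $\beta_0=\frac{1}{2\pi i}\ln(1-m^0_{11}m^1_{11}e^{\pi i\theta_\infty})$ follows from \eqref{2.5}); (iii) impose that the monodromy matrices $M^0,M^1$ and Stokes matrices $S_k$ reconstructed from this local analysis equal the prescribed data, which pins down the slowly varying phase as $\tfrac12 x + i\beta_0\ln x + \text{const}$, the constant being expressed through $\hat v$; (iv) translate the resulting formula for the phase into $y(x)$ via the relation $\frac{y+1}{y-1}\sim(\text{trig in this phase})$, exactly as in the elliptic case, and expand in the three regimes of $\re\beta_0$. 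The case split $|\re\beta_0|<1/4$ versus $1/4<\re\beta_0<3/4$ reflects which of the two exponential terms $e^{\pm ix/2}x^{\mp\beta_0}$ dominates; the $\sin$-form and the $\cos^2/\sin^2$-form are algebraic rewritings of the same two-term asymptotics, valid on overlapping ranges, and the error bounds $O(x^{-1+2|\re\beta_0|})$, $O(x^{-1+2|\re\beta_0-1/2|})$ come from the size of the subdominant correction to the parabolic-cylinder model.

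Since the statement attributes the result to Andreev–Kitaev \cite{Andreev-Kitaev}, I would in fact invoke their theorems directly: having already shown in Section~\ref{ssc2.1} that our $M^0,M^1,S_k$ coincide, up to a gauge transform, with the monodromy data of \cite[(2.3),(2.12)]{Andreev-Kitaev}, one only needs to verify that their normalisation of $\beta_0$ and of the connection constant (their analogue of $\hat v$) matches ours after the gauge change, and then quote \cite[Theorems 3.1--3.4, 4.1--4.3, Remarks 3.1, 4.2, 4.3]{Andreev-Kitaev}. This reduces the proof to a bookkeeping comparison of conventions.

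The hardest part is the careful connection-problem computation at the turning points and, relatedly, the precise identification of the constant $\hat v$ (the factor $\sqrt{2\pi}/(m^0_{11}\Gamma(\beta_0))$ together with the $\exp(\beta_0\ln2-\tfrac12\pi i\theta_\infty+\tfrac12\pi i\beta_0)$ normalisation): getting the branch of the logarithm, the power of $2$, and the phase factor exactly right is where errors creep in, and it is precisely these constants that must be matched against \cite{Andreev-Kitaev}. The case analysis and the rewritings into $\sin$ and $\cos^2/\sin^2$ forms are then elementary once the two-term asymptotic expansion of $y(x)$ is in hand.
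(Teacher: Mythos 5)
Your proposal is correct and follows essentially the same route as the paper: Theorem \ref{thm2.3} is not re-proved but is a restatement of the Andreev--Kitaev results, justified by the correspondence of monodromy data set up in Section \ref{ssc3.1} and Remark \ref{rem3.0}, which shows that the gauge-invariant quantities $m^0_{11}$, $m^1_{11}$, $m^0_{21}m^1_{12}$ (hence $\beta_0$ and $\hat v$) coincide with those of \cite{Andreev-Kitaev}. Your preliminary WKB/connection-problem sketch is just an outline of their proof, and your final reduction to a bookkeeping comparison of conventions is exactly what the paper does.
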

Truncated solutions  
\cite[Corollaries 5.1--5.3]{Andreev-Kitaev}, \cite{Andreev-Kitaev-2} 
arising from the trigonometric case are described by $\operatorname{trunc}
(x,m^0_{11},m^1_{11})$ omitting $m^0_{21}m^1_{12}=e^{-\pi i\theta_{\infty}}$.
For the derivation of the following forms and the validity
in each sector, see Subsection \ref{ssc6.1} and \cite{S0}. 
\begin{thm}\label{thm2.4}
$(1)$ If $m^0_{11}=m^1_{11}=0,$ $m^0_{21}m^1_{12}
=e^{-\pi i\theta_{\infty}},$ then 
$$
y(x)={\mathrm{trunc}}(x, 0, 0)
= -1+4(\theta_0+\theta_1-1)x^{-1} +O(x^{-2})
$$
as $x\to\infty$ through the sector $-\pi <\arg x<\pi.$
\par
$(2)$ If $m^0_{11}=0,$ $m^1_{11}=i\sqrt{2\pi}\hat{v} e^{-\frac{\pi i}2
\theta_{\infty}}\not=0,$ $m^0_{21}m^1_{12} =e^{-\pi i\theta_{\infty}}$, then
\begin{align*}
y(x)=&{\mathrm{trunc}}(x, 0, m^1_{11}) 
\\
=&{\mathrm{trunc}}(x, 0, 0)
+2\sqrt{2}e^{\pi i/4}\hat{v} x^{-1/2}e^{-ix/2}(1 +O(x^{-1/2}))
\end{align*}
as $x\to \infty$ through the sector $-\pi<\arg x\le 0.$ 
\par
$(3)$ If $m^0_{11}=\sqrt{2\pi}\hat{w}e^{-\frac{\pi i}2\theta_{\infty}}\not=0,$
$m^1_{11}=0,$ $m^0_{21}m^1_{12}=e^{-\pi i\theta_{\infty}}$, then
\begin{align*}
y(x)=&{\mathrm{trunc}}(x, m^0_{11}, 0) 
\\
=&{\mathrm{trunc}}(x, 0, 0)
+2\sqrt{2}e^{\pi i/4}\hat{w} x^{-1/2}e^{ix/2}(1 +O(x^{-1/2}))
\end{align*}
as $x\to \infty$ through the sector $0\le \arg x<\pi.$
\par
\end{thm}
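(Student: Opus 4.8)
\textbf{Proof proposal for Theorem \ref{thm2.4}.}

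The plan is to obtain all three truncated asymptotics as degenerate limits of the trigonometric regime of Theorem \ref{thm2.3}, exploiting the fact that the relevant monodromy constraints kill one or both of the oscillating exponentials, and then to upgrade the resulting formal series to genuine asymptotics by invoking a uniqueness property for solutions of (P$_{\mathrm{V}}$). For part (1), the hypothesis $m^0_{11}=m^1_{11}=0$ with $m^0_{21}m^1_{12}=e^{-\pi i\theta_{\infty}}$ forces, in the notation of Theorem \ref{thm2.3}, $\beta_0=0$ and $\hat v$ undefined, so both exponential corrections $e^{\pm ix/2}$ are switched off; what survives is the purely algebraic part of the expansion. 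I would construct the relevant formal power series solution $y(x)=-1+c_1 x^{-1}+c_2 x^{-2}+\cdots$ of (P$_{\mathrm{V}}$) directly by substitution — the equation forces $c_1=4(\theta_0+\theta_1-1)=4(-c_\theta)$ upon matching the $x^{-3}$ terms, using $c_\theta=1-\theta_0-\theta_1$ — and then check, via the convergent-series representation of general solutions alluded to in Section \ref{sc6}, that this formal series is the asymptotic expansion of an actual solution, valid on the full sector $-\pi<\arg x<\pi$. The link to the monodromy data $(0,0,e^{-\pi i\theta_{\infty}})$ comes from continuity of the Riemann–Hilbert correspondence: the solution $\mathrm{trunc}(x,e^{-\pi i\theta_{\infty}},0)$ is the limit as $\beta_0\to 0$ of $\mathrm{trig}(x,m^0_{21}m^1_{12},m^0_{11},m^1_{11})$, and uniqueness of the inverse monodromy problem on $\mathcal M\setminus\mathcal M_0$ (Proposition \ref{prop3.1}) pins down which solution it is; one verifies the monodromy matrices $M^0,M^1$ are genuinely the asserted degenerate ones, not merely that $\beta_0=0$.

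For parts (2) and (3) exactly one exponential survives. In (2), $m^0_{11}=0$ with $m^1_{11}\ne 0$: comparing with the ``rewritten'' form in Theorem \ref{thm2.3}(1) with $-\tfrac12<\re\beta_0<-\tfrac16$, the surviving term is the one carrying $e^{ix/2}$... but here the relevant degenerate limit sends $\beta_0\to 0$ from the side where only $e^{-ix/2}$ persists, giving the correction $2\sqrt2\,e^{\pi i/4}\hat v\,x^{-1/2}e^{-ix/2}(1+O(x^{-1/2}))$ over the sector $-\pi<\arg x\le 0$; the identification $m^1_{11}=i\sqrt{2\pi}\hat v\,e^{-\pi i\theta_{\infty}/2}$ is read off by taking the $\beta_0\to 0$ limit in the defining relation between $\hat v$, $\beta_0$ and $m^0_{11}$ in Theorem \ref{thm2.3}, using $\Gamma(\beta_0)\sim\beta_0^{-1}$. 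Part (3) is the mirror image under the symmetry $x\mapsto$ (rotation) or equivalently $S_2$-conjugation of the monodromy data (as in Theorem \ref{thm2.2}), swapping the roles of $M^0$ and $M^1$ and of $e^{ix/2}\leftrightarrow e^{-ix/2}$, and valid on $0\le\arg x<\pi$; I would either repeat the computation or, more economically, deduce it from (2) by an explicit change of variables in (P$_{\mathrm{V}}$) together with the corresponding relabelling of monodromy data established in Section \ref{ssc3.1}. In both cases I would build a two-parameter formal solution $y(x)=\mathrm{trunc}(x,e^{-\pi i\theta_{\infty}},0)+\eta(x)$, linearize (P$_{\mathrm{V}}$) about the background truncated solution, solve the variational equation to leading order (it is a constant-coefficient equation in the variable $x$ up to $x^{-1}$ corrections, producing the $x^{-1/2}e^{\mp ix/2}$ modes), and then feed this into the convergent-series machinery of Section \ref{sc6} to certify convergence and the error bound.

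The main obstacle I anticipate is not the formal computation but the \emph{matching of the free constant to the monodromy entry} and the \emph{precise sectors of validity}. Establishing that the coefficient of $x^{-1/2}e^{-ix/2}$ is exactly $2\sqrt2\,e^{\pi i/4}\hat v$ with $\hat v$ expressed through $m^1_{11}$ (and not some other normalization) requires carefully tracking the gauge transforms relating our $Y_k$, $M^\iota$, $S_k$ to the Andreev–Kitaev normalization of \cite{Andreev-Kitaev} — the same bookkeeping flagged after \eqref{2.3} and in the Corrigendum remarks following Theorem \ref{thm2.2} — and then taking a delicate confluent limit of their connection formulae. Likewise, showing the asymptotics extends to the \emph{closed} sector boundary $\arg x=0$ in (2) (respectively $\arg x=0$ in (3)), rather than just the open half-plane, needs the uniqueness statement to be applied along the real axis where Theorem \ref{thm2.3} already controls the solution, so that the truncated expansion and the trigonometric expansion agree there in the limit; this is precisely where ``a uniqueness property shown by auxiliary use of general solutions expressed by convergent series in Section \ref{sc6}'' enters, and getting the overlap of domains right will be the technical heart of the argument.
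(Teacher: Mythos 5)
Your overall architecture (formal series plus the convergent-series machinery of Section \ref{sc6} plus a uniqueness step) points in the right direction, but two essential pieces of the actual argument are missing or replaced by steps that do not work as stated. First, you propose to obtain both the asymptotic forms and the monodromy identification by a confluent limit $\beta_0\to 0$ (equivalently $m^0_{11}\to 0$ or $m^1_{11}\to 0$) in Theorem \ref{thm2.3}, invoking ``continuity of the Riemann--Hilbert correspondence.'' This is precisely what the paper does \emph{not} do: that limit appears only as a heuristic in Remark \ref{rem2.5}, and it cannot serve as a proof because the error bounds in Theorem \ref{thm2.3} are not uniform in the monodromy parameters, so the limit of the expansions need not be the expansion of the limit; indeed at $\beta_0=0$ the error is only $O(x^{-1})$ and the term $4(\theta_0+\theta_1-1)x^{-1}$ is ``hidden,'' as the paper itself points out. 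Moreover, continuity of $\varrho^{-1}$ in a form strong enough to transport asymptotics through the limit is nowhere available and would itself require proof. In the paper, the real-axis truncated asymptotics together with their monodromy data are taken as input from Andreev--Kitaev (Corollaries 5.1--5.3 of \cite{Andreev-Kitaev} and \cite{Andreev-Kitaev-2}); Theorem \ref{thm2.4} is not re-deriving those connection formulae, and neither your limit argument nor your ``mirror image'' reduction of (3) to (2) substitutes for them.

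Second, the genuinely new content of the paper's proof (Section \ref{ssc6.1}) is the extension of the sector of validity, and your proposal only gestures at it. The paper fixes the two-parameter convergent family of Proposition \ref{prop6.1}, takes $y(a,x)=y(c_1,0,x)=\psi_0(x)+ax^{-1/2}e^{ix/2}(1+O(x^{-1/2}))$ valid in $0\le\arg x<\pi$, and proves that it coincides with the Andreev--Kitaev truncated solution $\hat y(a,x)$ on the strip where the latter is defined: after the substitution $y=-\tan^2(\pi/4+\psi+t^{-1/2}w)$, $t=x/2$, both reduce to solutions of \eqref{6.1}, the difference is controlled through the integral equation $w(t)=ae^{it}-\int_{+\infty}^{t}\sin(t-s)f(s,w(s))\,ds$ by a Gronwall-type estimate giving $w(a,t)\equiv\hat w(a,t)$ for $|a|$ small (say $|a|<1/200$), and the identity propagates to all $a$ because both families are analytic in $a$. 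This identification-on-overlap, small-$|a|$-then-analytic-continuation-in-$a$ mechanism is the technical heart you correctly anticipate but do not supply; ``linearize about the background and feed into the convergent-series machinery'' does not by itself show that the specific solution carrying the monodromy data of \cite{Andreev-Kitaev} is a member of the family of Proposition \ref{prop6.1}, which is exactly what yields the sectors $0\le\arg x<\pi$, $-\pi<\arg x\le 0$ and $|\arg x|<\pi$ (and, in the case $a=0$, the uniqueness of the doubly-truncated solution used elsewhere in the paper).
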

\begin{prop}\label{prop2.5}
The doubly-truncated solution above admits the asymptotic representation
$\mathrm{trunc}(x, 0, 0) \sim -1
+4(\theta_0+\theta_1-1)x^{-1}+\sum_{j=2}^{\infty}c_j x^{-j}$ 
as $x\to \infty$ through the sector $|\arg x|<\pi$, where the 
right-hand member is a formal solution of $(\mathrm{P}_{\mathrm{V}})$. 
\end{prop}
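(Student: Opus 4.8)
The plan is to establish this in two independent parts: first, that a formal series solution $y_f(x) = -1 + \sum_{j\ge 1} c_j x^{-j}$ of $(\mathrm{P}_\mathrm{V})$ exists, is unique, and has $c_1 = 4(\theta_0+\theta_1-1)$; second, that the genuine doubly-truncated solution $\mathrm{trunc}(x, e^{-\pi i\theta_\infty}, 0)$ is asymptotic to this formal series in the full sector $|\arg x| < \pi$. For the formal part, I would substitute the ansatz $y = -1 + \eta$, $\eta = \sum_{j\ge 1}c_j x^{-j}$ into $(\mathrm{P}_\mathrm{V})$, expand $(y-1)^2 = (\eta-2)^2 = 4 - 4\eta + \eta^2$, $1/y = -(1-\eta)^{-1}$, $y/(y-1) = (\eta-1)/(\eta-2)$, etc., in powers of $\eta$ and collect coefficients of $x^{-j}$. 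Matching the $x^{-1}$ terms: the dominant balance comes from $(y-1)^2 x^{-2}(a_\theta y - b_\theta/y) + c_\theta y/x - y(y+1)/(2(y-1))$; at leading order the $y(y+1)/(2(y-1))$ term contributes $-\frac{(-1)(0)}{2(-2)} = 0$ at $y=-1$, so the $O(1)$ part vanishes, and the linear-in-$\eta$ part produces a nonzero coefficient that fixes $c_1$. One should verify the algebra yields $c_1 = 4(\theta_0+\theta_1-1) = -4c_\theta - 4(1-\theta_0-\theta_1) + \cdots$; more precisely the linearization of $(\mathrm{P}_\mathrm{V})$ around $y=-1$ has the form $x^2 \eta'' + \cdots = -2\eta + (\text{source})$, and the recursion $(j(j+1) + \text{const})c_j = (\text{polynomial in } c_1,\dots,c_{j-1})$ is triangular with nonvanishing diagonal for all $j\ge 1$ except possibly a resonance, which one checks does not occur; this gives existence and uniqueness of the formal series.

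For the asymptotic validity, I would invoke the construction of $\mathrm{trunc}(x, e^{-\pi i\theta_\infty}, 0)$ already referenced in Theorem \ref{thm2.4}(1) and Section \ref{ssc6.1}, together with the standard fact that a truncated solution of a Painlev\'e equation, produced by the isomonodromy/Riemann–Hilbert construction with the exponentially small Stokes-type parameters switched off (here $m^0_{11} = m^1_{11} = 0$), is Borel–Laplace summable and admits the unique formal series as its Gevrey-1 asymptotic expansion on the maximal sector where no Stokes ray is crossed. Since the solution is by Theorem \ref{thm2.4}(1) analytic on $-\pi < \arg x < \pi$ with the $O(x^{-2})$ error matching $c_1 x^{-1}$, it suffices to bootstrap: plug $y = -1 + c_1 x^{-1} + r(x)$ with $r = O(x^{-2})$ into $(\mathrm{P}_\mathrm{V})$, obtain a linear ODE for $r$ with forcing $O(x^{-3})$, and iterate to peel off $c_j x^{-j}$ one at a time, the $O(x^{-N})$ remainder bound holding uniformly on $|\arg x| < \pi$ because the relevant homogeneous solutions of the linearized equation behave like $x^{-j}$ (algebraic, no exponential small terms in this sector by the vanishing Stokes data). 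Alternatively one cites \cite{Andreev-Kitaev-2} or \cite{Andreev} where such asymptotic expansions for tri-tronqu\'ee solutions are given with proof.

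The main obstacle is making the second step rigorous in the \emph{full} sector $|\arg x|<\pi$ rather than a smaller one: the $O(x^{-2})$ statement of Theorem \ref{thm2.4}(1) guarantees only the first correction uniformly, and promoting this to a complete asymptotic series requires either a summability result (that the formal series is the Borel sum of the truncated solution throughout the sector bounded by anti-Stokes rays) or a careful a-priori estimate that the error remains algebraically small up to the boundary rays $\arg x = \pm\pi$, where one is closest to the Stokes lines along which exponentially small contributions would turn on. I expect to handle this by the iteration/bootstrap argument above, noting that each step only uses the already-established $O(x^{-2})$ bound and the structure of the linearized equation, so no new analytic input beyond Theorem \ref{thm2.4}(1) is needed; the formal recursion being triangular with no resonance then closes the induction. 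The remaining routine work is the explicit verification that the linearized recursion coefficients never vanish, which I would record but not belabor.
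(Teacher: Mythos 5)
Your Part 1 (existence and uniqueness of the formal series and the value $c_1=4(\theta_0+\theta_1-1)$) is routine and fine, but Part 2 contains a genuine gap. The key claim you use to get uniform remainder bounds on $|\arg x|<\pi$ --- that ``the relevant homogeneous solutions of the linearized equation behave like $x^{-j}$ (algebraic, no exponential small terms in this sector by the vanishing Stokes data)'' --- is false. The linearization of $(\mathrm{P}_{\mathrm{V}})$ about $y\approx -1$ has homogeneous solutions behaving like $e^{\pm ix/2}$ times powers of $x$; this is visible already in Theorems \ref{thm2.3} and \ref{thm2.4}, where the one-parameter deformations of the doubly-truncated solution enter as $x^{-1/2}e^{\pm ix/2}$. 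The vanishing of $m^0_{11}$ and $m^1_{11}$ kills these terms in the particular solution $\mathrm{trunc}(x,e^{-\pi i\theta_{\infty}},0)$, but it does not alter the homogeneous solutions of the linearized equation: off the real axis one of $e^{\pm ix/2}$ is exponentially large, and near $\arg x=\pm\pi$ it is maximally so. Consequently your bootstrap ``peel off $c_jx^{-j}$ one at a time'' does not close as stated; to control the remainder one must solve the linearized equation by variation of parameters with integration paths chosen direction by direction so that the kernels $e^{\pm i(x-s)/2}$ stay bounded, which is exactly the analytic work you declare unnecessary (``no new analytic input beyond Theorem \ref{thm2.4}(1) is needed''). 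The alternative you mention (quoting Borel--Laplace summability as a ``standard fact'') is likewise not a proof for this equation.

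For comparison, the paper does not bootstrap from the $O(x^{-2})$ estimate at all. It transforms $(\mathrm{P}_{\mathrm{V}})$ by $y=-\tan^2Y$, $Y=\pi/4+u$, $u=\psi(t)+t^{-1/2}w$ into \eqref{6.1} and applies the convergent-series machinery of \cite[Corollary 2.3]{S1} (Proposition \ref{prop6.1}): this produces a two-parameter family $y(c_1,c_2,x)=\psi_0(x)-4\sqrt2e^{\pi i/4}x^{-1/2}\Psi(\cdots)$ in which the core $\psi_0(x)$ comes \emph{by construction} with a complete asymptotic expansion $\psi_0(x)\sim -1+4(\theta_0+\theta_1-1)x^{-1}+\sum_{j\ge2}\psi_jx^{-j}$ on $|\arg x|<\pi-\varepsilon_0$ for every $\varepsilon_0>0$, the coefficients forming a formal solution. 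The identification $\mathrm{trunc}(x,e^{-\pi i\theta_{\infty}},0)=\psi_0(x)$ is then supplied by the uniqueness argument of Section \ref{ssc6.1} (the integral-equation comparison with the Andreev--Kitaev truncated solutions, specialised to $a=0$). So the statement you are proving is, in the paper's logic, an immediate consequence of Proposition \ref{prop6.1}; your route could in principle be repaired, but only by carrying out the direction-dependent integral-equation estimates that your proposal skips.
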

\begin{rem}\label{rem2.5}
These truncated solutions also arise from  
Theorem \ref{thm2.3} as limit cases, say,
$\mathrm{trunc}(x, 0, m^1_{11}) 
=\lim_{m^0_{11}\to 0}\mathrm{trig}(x,m^0_{21}m^1_{12},m^0_{11}, m^1_{11})$,
in which $4(\theta_0+\theta_1-1)x^{-1}$ is hidden when considered on the 
positive real axis \cite[Corollary 5.1]{Andreev-Kitaev}, 
see also Section \ref{sc6}. 
\end{rem}
The following two theorems present detailed expressions for 
\cite[Theorem 1.1]{Andreev} revealing exponentially small terms.
These are another kind of truncated solutions denoted by $\operatorname{trunc}
^0_{0,\,\infty}(x,m^1_{21}/m^0_{21},e^{\pm \pi i\theta_1})$ and
$\operatorname{trunc}^1_{0,\,\infty}(x,m^0_{12}/m^1_{12},e^{\mp\pi i\theta_0})$, 
and the corresponding monodromy data are such that $(m^1_{11},m^1_{12})=
(e^{\pm\pi i\theta_1},0)$ and $(m^0_{11},m^0_{21})=(e^{\mp\pi i\theta_0},0).$
In these theorems $r$ denotes a given number such that $0<r\le 1,$ and for
the proofs, see Subsection \ref{ssc5.2}. 
\begin{thm}\label{thm2.6}
$(1)$ Suppose that 
$\theta_0-\theta_1 -\theta_{\infty}\not\in 2\mathbb{N}\cup\{0\}$, 
$\theta_0+\theta_1+\theta_{\infty} \not\in -2\mathbb{N}\cup\{0\}$ and
$\theta_1 \not\in \mathbb{N}\cup\{0\}.$ Let 
$$
M^0=\begin{pmatrix}
 e^{-\pi i(\theta_1+\theta_{\infty})} & m^0_{12} \\
\frac{2\pi i e^{-\pi i\theta_{\infty}}} 
{\Gamma(-\frac 12(\theta_0-\theta_1-\theta_{\infty}))
\Gamma(\frac 12(\theta_0+\theta_1+\theta_{\infty}))  }c_0^{-1} & m^0_{22}
\end{pmatrix},
\quad 
M^1=\begin{pmatrix}
e^{\pi i\theta_1} &  0 \\ \frac{2\pi i e^{\pi i\theta_1}}
{\Gamma(1-\theta_1)}c_x^{-1}
  & e^{-\pi i\theta_1}  
\end{pmatrix}.
$$
Then the corresponding solution 
$y(x)= \mathrm{trunc}_0^0(x, m^1_{21}/m^0_{21}, e^{\pi i\theta_1})$
is given by
\begin{align*}
xy(x)
&=\frac12 (\theta_0-\theta_1-\theta_{\infty}) (1+O(x^{-1}))-c^-
x^{2\theta_1+\theta_{\infty}-1}e^{-x}(1 +O(x^{-r})),
\\
&c^-=\frac{c_0}{c_x}= \frac{e^{-\pi i(\theta_1+\theta_{\infty})}
 \Gamma(1-\theta_1)} {\Gamma(-\frac {\theta_0-\theta_1-\theta_{\infty}}2)
 \Gamma(\frac {\theta_0+\theta_1+\theta_{\infty}} 2)  } 
 \frac{m^1_{21}}{m^0_{21}}
\end{align*}
as $x\to \infty$ through the sector $-\pi/2<\arg x\le \pi/2$ with 
$c^-x^{2\theta_1+\theta_{\infty}-1}e^{-x}\ll x^{-r}.$
If $c^-=0,$ then
$x \operatorname{trunc}_0^0(x, 0, e^{\pi i\theta_1} )
=\tfrac 12 (\theta_0-\theta_1-\theta_{\infty})(1 +O(x^{-1}))$
is valid in $|\arg x-\pi/2|<\pi.$
\par
$(2)$ Suppose that 
$\theta_0-\theta_1-\theta_{\infty} \not\in -2\mathbb{N}\cup\{0\}$, 
$\theta_0+\theta_1 -\theta_{\infty}\not\in - 2\mathbb{N}\cup\{0\}$ and
$\theta_0\not\in \mathbb{N}\cup\{0\}.$ Let 
$$
M^0=\begin{pmatrix}
e^{-\pi i\theta_0}  & \frac{2\pi i e^{\pi i(\theta_{\infty}
-\theta_0)} } {\Gamma(1-\theta_0)}c_0
 \\  0  & e^{\pi i\theta_0}  
\end{pmatrix}, \quad
M^1=\begin{pmatrix}
 e^{\pi i(\theta_0-\theta_{\infty})} &    \frac{2\pi i }
{\Gamma(\frac 12(\theta_0-\theta_1-\theta_{\infty}))
\Gamma(\frac 12(\theta_0+\theta_1-\theta_{\infty})) } c_x \\
m^1_{21} & m^1_{22}
\end{pmatrix}.
$$
Then the corresponding solution 
$y(x)=\mathrm{trunc}_0^1(x,m^0_{12}/m^1_{12},e^{-\pi i\theta_0})$
is given by
\begin{align*}
xy(x)
&= -\frac 12 (\theta_0-\theta_1-\theta_{\infty})(1+O(x^{-1}))-c^-
x^{2\theta_0-\theta_{\infty}-1}e^{-x}(1 +O(x^{-r})),
\\
&c^-=\frac{c_0}{c_x}= \frac
{e^{\pi i(\theta_0-\theta_{\infty})}\Gamma(1-\theta_0)}
{ \Gamma(\frac {\theta_0-\theta_1-\theta_{\infty}}2)
\Gamma(\frac{\theta_0+\theta_1-\theta_{\infty}}2) }\frac{m^0_{12}}{m^1_{12}}
\end{align*}
as $x\to \infty$ through the sector $-\pi/2\le \arg x<\pi/2$ with 
$c^-x^{2\theta_0-\theta_{\infty}-1}e^{-x}\ll x^{-r}.$
If $c^-=0,$ then
$ x\,\mathrm{trunc}_0^1(x, 0, e^{-\pi i\theta_0} )
=-\tfrac 12 (\theta_0-\theta_1-\theta_{\infty})(1 +O(x^{-1}))$
is valid in $|\arg x+\pi/2|<\pi.$
\end{thm}
\begin{thm}\label{thm2.8}
$(1)$ Suppose that 
$\theta_0+\theta_1-\theta_{\infty} \not\in 2\mathbb{N}$,
$\theta_0-\theta_1 +\theta_{\infty}\not\in -2\mathbb{N} \cup\{0\}$ and
$\theta_1 \not\in -\mathbb{N} \cup\{0\}.$ 
Let 
$$
M^0=\begin{pmatrix}
 e^{\pi i(\theta_1-\theta_{\infty})} & m^0_{12} \\
\frac{2\pi i e^{-\pi i\theta_{\infty}}}
{\Gamma(1-\frac 12(\theta_0+\theta_1-\theta_{\infty}))
\Gamma(\frac 12(\theta_0-\theta_1+\theta_{\infty})) } c_0^{-1} & m^0_{22}
\end{pmatrix},
\quad 
M^1=\begin{pmatrix}
e^{-\pi i\theta_1} &  0 \\ \frac{ 2\pi ie^{-\pi i\theta_1}}
{\Gamma(\theta_1)} c_x^{-1}
  & e^{\pi i\theta_1}  
\end{pmatrix}.
$$
Then the corresponding solution 
$y(x)=\mathrm{trunc}^0_{\infty}(x,m^1_{21}/m^0_{21},e^{-\pi i\theta_1})$
is given by
\begin{align*}
\frac x{y(x)}
&=\frac 12 (\theta_1-\theta_0-\theta_{\infty})(1 +O(x^{-1})) + c^-
x^{1- 2\theta_1+\theta_{\infty}}e^{-x}(1 +O(x^{-r})),
\\
&c^-=\frac{c_0}{c_x} 
=\frac { e^{\pi i(\theta_{1}-\theta_{\infty})}\Gamma(\theta_1)}
{\Gamma(1-\frac {\theta_0+\theta_1-\theta_{\infty}}2)
\Gamma(\frac {\theta_0-\theta_1+\theta_{\infty}}2)} 
\frac{m^1_{21}}{m^0_{21}}
\end{align*}
as $x\to \infty$ through the sector $-\pi/2<\arg x\le \pi/2$ with 
$c^-x^{-1-2\theta_1+\theta_{\infty}}e^{-x}\ll x^{-r}.$
If $c^-=0,$ then
$x/ \mathrm{trunc}^0_{\infty}(x, 0, e^{-\pi i\theta_1} )
= \frac 12 (\theta_1-\theta_0-\theta_{\infty})(1 +O(x^{-1}))$
is valid in $|\arg x-\pi/2|<\pi.$
\par
$(2)$ Suppose that 
$\theta_0+\theta_1+\theta_{\infty} \not\in 2\mathbb{N},$ 
$\theta_0-\theta_1 +\theta_{\infty}\not\in 2\mathbb{N}\cup\{0\}$ and 
$\theta_0 \not\in -\mathbb{N}\cup\{0\}.$ 
Let 
$$
M^0=\begin{pmatrix}
e^{\pi i\theta_0}  & \frac{ 2\pi ie^{\pi i(\theta_{\infty}+\theta_0)}}
{\Gamma(\theta_0)}c_0
 \\  0  & e^{-\pi i\theta_0}  
\end{pmatrix}, \quad
M^1=\begin{pmatrix}
 e^{-\pi i(\theta_0+\theta_{\infty})} &    \frac{2\pi i }
{\Gamma(1-\frac 12(\theta_0+\theta_1+\theta_{\infty}))
\Gamma(-\frac 12(\theta_0-\theta_1+\theta_{\infty})) }c_x \\
m^1_{21} & m^1_{22}
\end{pmatrix}.
$$
Then the corresponding solution 
$y(x)=\mathrm{trunc}^1_{\infty}(x, m^0_{12}/m^1_{12}, e^{\pi i\theta_0})$
is given by
\begin{align*}
\frac x{y(x)}
&=\frac 12 (\theta_0-\theta_1+\theta_{\infty})(1+O(x^{-1}))+c^-
x^{1-2\theta_0-\theta_{\infty}}e^{-x}(1 +O(x^{-r})), 
\\
&c^-=\frac{c_0}{c_x} =
\frac{ e^{-\pi i(\theta_{\infty}+\theta_{0}) } \Gamma(\theta_0)}
{\Gamma(1-\frac {\theta_0+\theta_1+\theta_{\infty}}2)
\Gamma(-\frac{\theta_0-\theta_1+\theta_{\infty}}2)}\frac{m^0_{12}}{m^1_{12}}
\end{align*}
as $x\to \infty$ through the sector $-\pi/2\le \arg x<\pi/2$ with 
$c^-x^{-1-2\theta_0-\theta_{\infty}}e^{-x}\ll x^{-r}.$
If $c^-=0$, then
$x/ \mathrm{trunc}^1_{\infty}(x, 0, e^{\pi i\theta_0})
= \frac 12 (\theta_0-\theta_1+\theta_{\infty})(1 +O(x^{-1}))$
is valid in $|\arg x+\pi/2|<\pi.$
\end{thm}
\begin{rem}\label{rem2.60}
In Theorems \ref{thm2.6} and \ref{thm2.8} by the suppositions on 
$(\theta_0,\theta_1,\theta_{\infty})$, 
the $(y,\z)$ is uniquely determined for each monodromy data (cf. Proposition 
\ref{prop3.1}), and the proper coordinates of the monodromy manifold are
proportional to $c^-$ (cf. Subsection \ref{ssc2.4}). (For cases without  
the suppositions, cf. Theorem \ref{thm2.9}, 
Remark \ref{rem2.7}, \cite[Remark 1]{Andreev-Kitaev-2019}).
\end{rem}
\begin{rem}\label{rem2.6}
The doubly-truncated solutions $\mathrm{trunc}^0_0(x,0,e^{\pi i\theta_1})$ 
and $\mathrm{trunc}^0_{\infty}(x,0,e^{-\pi i\theta_1})$ of Theorems 
\ref{thm2.6},(1) and \ref{thm2.8},(1) are derived by conventional setting 
$c_x^{-1}=0$, which leads to $m^1_{21}=0$ and $c^-=c_0c_x^{-1}=0$; and 
$\mathrm{trunc}^1_0(x,0,e^{-\pi i\theta_0})$ and
$\mathrm{trunc}^1_{\infty}(x,0,e^{\pi i\theta_0})$ with $m^0_{12}=0$ are
obtained by $c_0=0.$
\end{rem}
\begin{prop}\label{prop2.7}
The doubly-truncated solution, say, of Theorem $\ref{thm2.6},(1)$,  
admits the asymptotic representation  
$ \mathrm{trunc}^0_0(x,0,e^{\pi i\theta_1})
 \sim \frac 12 (\theta_0-\theta_1-\theta_{\infty})( x^{-1} +\sum_{j=2}^{\infty}
c_jx^{-j})$ as $x\to \infty$ through the sector $|\arg x-\pi/2| <\pi$, where
the right-hand member is a formal solution of $(\mathrm{P}_{\mathrm{V}})$. 
In the sector $-\pi/2<\arg x \le \pi/2$ with 
$c^-x^{2\theta_1+\theta_{\infty}-1}e^{-x}\ll x^{-r},$ we have
$ \mathrm{trunc}^0_0(x,m^1_{21}/m^0_{21},e^{\pi i\theta_1})
= \mathrm{trunc}^0_0(x,0,e^{\pi i\theta_1})
 +c^-x^{2\theta_1+\theta_{\infty}-2}e^{-x}(1+O(x^{-r})).$ 
\end{prop}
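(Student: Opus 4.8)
The plan is to construct the formal power‑series solution of (P$_{\mathrm{V}}$) attached to the leading term $\tfrac12(\theta_0-\theta_1-\theta_\infty)x^{-1}$, to transfer its asymptotics to the doubly‑truncated solution by a successive‑substitution argument valid throughout the sector of opening $2\pi$, and then to identify the one‑parameter correction with the recessive mode of the linearization, in the same spirit as the construction preceding Theorem~\ref{thm2.6}. For the first step I would substitute $y=\sum_{j\ge1}c_jx^{-j}$ into (P$_{\mathrm{V}}$), after multiplying by $2xy(y-1)$ so that all coefficients become polynomial; at the lowest order one finds $c_1^2=2b_\theta=\tfrac14(\theta_0-\theta_1-\theta_\infty)^2$, the root matching Theorem~\ref{thm2.6},(1) being $c_1=\tfrac12(\theta_0-\theta_1-\theta_\infty)$, and at order $x^{-j-1}$ one gets $-\mu\,c_{j+1}=Q_j(c_1,\dots,c_j)$ with a universal polynomial $Q_j$ and $\mu=\partial F/\partial y|_{y\sim c_1x^{-1}}=1$, $F$ being the right‑hand side of (P$_{\mathrm{V}}$). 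The point is that the leading constant‑coefficient part of the linearization of (P$_{\mathrm{V}}$) about $c_1x^{-1}$ is $d^2/dx^2-\mu=d^2/dx^2-1$, which is non‑resonant (it annihilates no power $x^{-j}$), so every $c_{j+1}$ is uniquely determined; this yields the unique formal solution $\hat y(x)=\tfrac12(\theta_0-\theta_1-\theta_\infty)x^{-1}+\sum_{j\ge2}c_jx^{-j}$ named in the statement. The genericity hypotheses of Theorem~\ref{thm2.6} are needed only for the solution itself to exist through the Riemann--Hilbert correspondence, not for this recursion.

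For the first assertion, put $y_*(x)=\mathrm{trunc}^0_0(x,0,e^{\pi i\theta_1})$; by Theorem~\ref{thm2.6},(1) it is a genuine solution with $y_*-c_1x^{-1}=O(x^{-2})$ throughout $|\arg x-\pi/2|<\pi$, and for large $x$ in this sector $y_*$ stays in a region where (P$_{\mathrm{V}}$) is regular. Setting $S_N(x)=\sum_{j=1}^Nc_jx^{-j}$ and $r_N=y_*-S_N$, substitution gives $L_{S_N}[r_N]=-\mathcal E[S_N]+O(r_N^2)$, where $\mathcal E[f]$ denotes the defect of $f$ in (P$_{\mathrm{V}}$), $L_{S_N}=d^2/dx^2-\mu+O(x^{-1})$ is the linearization at $S_N$, and $\mathcal E[S_N]=\mu\,c_{N+1}x^{-N-1}+O(x^{-N-2})$ because $\hat y$ is a formal solution. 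Subtracting $L_{S_N}[c_{N+1}x^{-N-1}]=-\mu\,c_{N+1}x^{-N-1}+O(x^{-N-2})$ leaves $L_{S_N}[r_N-c_{N+1}x^{-N-1}]=O(x^{-N-2})$; since $\mu\neq0$, $L_{S_N}$ admits a polynomially bounded inverse of the same order on polynomially small data, while its two homogeneous solutions grow like $e^{\pm x}$ up to powers of $x$ and each is unbounded somewhere in the sector $|\arg x-\pi/2|<\pi$ of opening $2\pi$. Hence the polynomially bounded quantity $r_N-c_{N+1}x^{-N-1}$ cannot carry such a component, so $r_{N+1}=O(x^{-N-2})$, and induction from $r_1=O(x^{-2})$ gives $y_*\sim\hat y$ in $|\arg x-\pi/2|<\pi$, in parallel with Proposition~\ref{prop2.5}.

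For the second assertion let $y(x)=\mathrm{trunc}^0_0(x,m^1_{21}/m^0_{21},e^{\pi i\theta_1})$ with parameter $c_0\neq0$. By the analysis underlying Theorem~\ref{thm2.6},(1)---equivalently, by the convergent representation of the general solution in a right half plane constructed in Section~\ref{sc6}---this one‑parameter family has the form $y=y_*+c_0\,e(x)\bigl(1+O(c_0x^{2\theta_1+\theta_\infty-1}e^{-x})\bigr)$, where $e(x)=\tfrac12(\theta_0-\theta_1-\theta_\infty)x^{2\theta_1+\theta_\infty-2}e^{-x}(1+O(x^{-r}))$ is the recessive (exponentially decaying) homogeneous solution of the linearization at $y_*$, its normalization fixed by matching with the exponentially small term exhibited in Theorem~\ref{thm2.6},(1). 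In the range $x^{2\theta_1+\theta_\infty-1}e^{-x}\ll x^{-r}$ the factor $O(c_0x^{2\theta_1+\theta_\infty-1}e^{-x})$ is $O(x^{-r})$, and combined with the first assertion this yields $\mathrm{trunc}^0_0(x,m^1_{21}/m^0_{21},e^{\pi i\theta_1})=\mathrm{trunc}^0_0(x,0,e^{\pi i\theta_1})+\tfrac12(\theta_0-\theta_1-\theta_\infty)c_0x^{2\theta_1+\theta_\infty-2}e^{-x}(1+O(x^{-r}))$.

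The main obstacle is the second assertion: one must know that the perturbation $y-y_*$ is \emph{purely} exponentially small, carrying no admixture of a power‑series perturbation---this is not visible from the leading asymptotics of Theorem~\ref{thm2.6} alone, since subtracting two $O(x^{-1})$ remainders only gives $O(x^{-1})$, and it is precisely what the one‑parameter family of Section~\ref{sc6} supplies. The accompanying bookkeeping---pinning down the normalizing constant of the recessive mode $e(x)$ by matching with the exponential term of Theorem~\ref{thm2.6},(1), and checking that the higher corrections of order $e^{-nx}$ ($n\ge2$) do not intrude at the level $x^{-r}$---also rests on that construction.
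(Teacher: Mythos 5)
Your proposal is sound in outline, but its two halves stand differently relative to the paper. For the first assertion you argue directly: construct the formal series (your non-resonance computation, with linearization constant $\mu=1$ coming from $b_\theta/c_1^2=\tfrac12$ plus $\tfrac12$ from $-y(y+1)/(2(y-1))$, is correct), then upgrade the $O(x^{-1})$ remainder of Theorem \ref{thm2.6},(1) order by order, using that the linearized equation has modes $e^{\pm x}x^{\alpha_\pm}$, no nontrivial combination of which stays bounded throughout a sector of opening $2\pi$. This is a genuinely different route from the paper, which never runs such a dichotomy argument: there the expansion of the doubly-truncated part is read off from convergent representations --- the $\mathbb{Q}_*$-coefficient bookkeeping in the limit computation of Section \ref{ssc5.2} and the function $\varphi(x)$ of Proposition \ref{prop6.2} --- so the asymptotic series and its validity in $|\arg x-\pi/2|<\pi$ come built in. Your route is more elementary and self-contained, at the price of the usual technical debts (control of $r_N'$ in the quadratic terms, and justification that the fundamental system of $L_{S_N}$ keeps its $e^{\pm x}x^{\alpha_\pm}$ behaviour across the whole $2\pi$ sector; both are standard for this rank-one irregular singularity but should be stated).

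For the second assertion you give no independent argument: you invoke exactly the machinery the paper uses, namely the one-parameter subfamily with purely exponential perturbation supplied by the convergent series of Section \ref{sc6}. Two corrections are needed there. First, the relevant family is that of Proposition \ref{prop6.2} (sector about the positive imaginary axis, perturbations built from $e^{\pm x}x^{\cdots}$), not the right-half-plane family of Proposition \ref{prop6.1}, whose exponentials are $e^{\pm ix/2}$. Second, and more importantly, the step you compress into ``matching'' is the actual content: one must show that $\mathrm{trunc}^0_0(x,m^1_{21}/m^0_{21},e^{\pi i\theta_1})$, produced in Section \ref{ssc5.2} as a limit of the family of Proposition \ref{prop5.1}, coincides with a member $y(0,c_2,x)$ of the family of Proposition \ref{prop6.2} with vanishing $e^{+x}$-parameter, and that $\mathrm{trunc}^0_0(x,0,e^{\pi i\theta_1})=y(0,0,x)=\varphi$; only then is $y-y_*$ purely exponentially small, with $c_2=\tfrac12(\theta_0-\theta_1-\theta_\infty)c_0$ fixed by the exponential term of Theorem \ref{thm2.6},(1). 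The paper settles precisely this point by the uniqueness argument of Section \ref{ssc6.2} (comparison in a small sector via the integral equation of \cite{S0} and the exponential decay of the difference); your write-up should reproduce or at least cite that step rather than treat it as bookkeeping, since, as you yourself note, it is the crux of the statement.
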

In non-generic cases as well there exist truncated solutions as in the following
theorem, whose proof is given in Subsection \ref{ssc5.3}.
\begin{thm}\label{thm2.9}
Let $\nu\in\mathbb{N}$ denote a given positive number.
\par
$(1)$ Let $\theta_1\not \in \mathbb{N}\cup\{0\}$ and 
$\theta_0\not\in\mathbb{Z}.$ 
Suppose that $\theta_0-\theta_1-\theta_{\infty} = 2(\nu-1)$ or
$\theta_0+\theta_1+\theta_{\infty} = -2(\nu-1)$. Then
the monodromy data
$$
M^0=\begin{pmatrix}
 m^0_{11} & m^0_{12} \\ 0 &  (m^0_{11})^{-1}
\end{pmatrix},
\quad 
M^1=\begin{pmatrix}
e^{\pi i\theta_1} &  0 \\  
\frac{2\pi ie^{\pi i\theta_1}}
{\Gamma(1-\theta_1) }c_x^{-1}
& e^{-\pi i\theta_1} 
\end{pmatrix}
$$
correspond to $y(x)= \mathrm{trunc}_{00}^0(x, m^0_{12}m^1_{21}, m^0_{11})$ 
such that
\begin{equation*}
xy(x)
=\frac 12 (\theta_0-\theta_1-\theta_{\infty})(1+O(x^{-1}))+c^-
x^{2\theta_1+\theta_{\infty}-1}e^{-x}(1 +O(x^{-r}))
\end{equation*}
in the sector $-\pi/2<\arg x\le \pi/2$ with 
$c^-x^{2\theta_1+\theta_{\infty}-1}e^{-x}\ll x^{-r},$
where $m^0_{11}=e^{-\pi i\theta_0},$ $m^0_{12}=-\frac{2\pi i e^{-\pi i\theta_1}
}{\Gamma(\nu)\Gamma(\nu-\theta_0)}c_0$,
$c^-=-\frac{\Gamma(1-\theta_1)\Gamma(\nu)\Gamma(\nu-\theta_0)}{(2\pi i)^2}
m_{12}^0m_{21}^1$
if $\theta_0-\theta_1-\theta_{\infty}=2(\nu-1),$ and
$m^0_{11}=e^{\pi i\theta_0},$ $m^0_{12}=- \frac{2\pi i e^{-\pi i \theta_1}}
{\Gamma(\nu)\Gamma(\theta_0+\nu)}c_0$,
$c^-=-\frac{\Gamma(1-\theta_1)\Gamma(\nu)\Gamma(\theta_0+\nu)}{(2\pi i)^2}
m^0_{12}m^1_{21}$
if $\theta_0+\theta_1+\theta_{\infty}=-2(\nu-1).$ 
\par 
$(2)$ Let $\theta_0\not\in \mathbb{N}\cup\{0\}$ and $\theta_1\not\in\mathbb{Z}$.
Suppose that 
$\theta_0-\theta_1-\theta_{\infty} = -2(\nu-1)$ or
$\theta_0+\theta_1 -\theta_{\infty}= - 2(\nu-1)$. 
Then the monodromy data 
$$
M^0=\begin{pmatrix}
e^{-\pi i\theta_0}  & \frac{ 2\pi ie^{\pi i(\theta_{\infty}
-\theta_0)}}{\Gamma(1-\theta_0)}c_0 \\
 0  & e^{\pi i\theta_0}  
\end{pmatrix}, \quad
M^1=\begin{pmatrix}
 m^1_{11} &   0  \\ 
m^1_{21} & (m^1_{11})^{-1}
\end{pmatrix}
$$
correspond to 
$y(x)= \mathrm{trunc}_{00}^1(x, m^0_{12}m^1_{21}, m^1_{11})$ such that
\begin{equation*}
xy(x)=
 -\frac 12 (\theta_0-\theta_1-\theta_{\infty})(1+O(x^{-1}))+c^-
x^{2\theta_0-\theta_{\infty}-1}e^{-x}(1 +O(x^{-r}))
\end{equation*}
in the sector $-\pi/2 \le\arg x <\pi/2$ with 
$c^-x^{2\theta_0-\theta_{\infty}-1}e^{-x}\ll x^{-r}$,
where $m^1_{11}=e^{\pi i\theta_1},$ $m^1_{21}=-\frac{2\pi i e^{\pi i(\theta_0
-\theta_{\infty})}}{\Gamma(\nu)\Gamma(\nu-\theta_1)}c_x^{-1}$,
$c^-=-\frac{\Gamma(1-\theta_0)\Gamma(\nu)\Gamma(\nu-\theta_1)}{(2\pi i)^2}
m^0_{12}m^1_{21}$
if $\theta_0-\theta_1-\theta_{\infty}=-2(\nu-1),$ and
$m^1_{11}=e^{-\pi i\theta_1},$ $m^1_{21}=-\frac{2\pi i e^{\pi i(\theta_0-\theta
_{\infty})}}{\Gamma(\nu)\Gamma(\theta_1+\nu)}c_x^{-1}$,
$c^-=-\frac{\Gamma(1-\theta_0)\Gamma(\nu)\Gamma(\theta_1+\nu)}{(2\pi i)^2}
m^0_{12}m^1_{21}$
if $\theta_0+\theta_1-\theta_{\infty}=-2(\nu-1).$ 
\par
$(3)$ Let $\theta_1 \not\in -\mathbb{N} \cup\{0\}$ and $\theta_0\not\in \mathbb
{Z}.$ 
Suppose that $\theta_0+\theta_1-\theta_{\infty} = 2\nu$ or
$\theta_0-\theta_1 +\theta_{\infty}= -2(\nu-1)$. 
Then the monodromy data 
$$
M^0=\begin{pmatrix}
 m^0_{11} & m^0_{12} \\ 0  & (m^0_{11})^{-1}
\end{pmatrix}, \quad
M^1=\begin{pmatrix}
e^{-\pi i\theta_1} &  0 \\  
\frac{2\pi i e^{-\pi i\theta_1}}{\Gamma(\theta_1)} c_x^{-1}
  & e^{\pi i\theta_1}  
\end{pmatrix}
$$
correspond to 
$y(x)=\mathrm{trunc}^0_{\infty 0}(x, m^0_{12}m^1_{21}, m^0_{11})$ such that
\begin{equation*}
\frac x{y(x)}
= \frac 12 (\theta_1-\theta_0-\theta_{\infty})(1+O(x^{-1})) +c^-
x^{1- 2\theta_1+\theta_{\infty}}e^{-x}(1 +O(x^{-r})) 
\end{equation*}
in the sector $-\pi/2<\arg x\le \pi/2$ with 
$c^-x^{-1-2\theta_1+\theta_{\infty}}e^{-x}\ll x^{-r}$,
where $m^0_{11}=e^{-\pi i\theta_0},$ $m^0_{12}=\frac{2\pi i e^{\pi i
\theta_1}}{\Gamma(\nu)\Gamma(1-\theta_0+\nu)}c_0$,
$c^-=\frac{\Gamma(\theta_1)\Gamma(\nu)\Gamma(1-\theta_0+\nu)}{(2\pi i)^2}
m^0_{12}m^1_{21}$
if $\theta_0+\theta_1-\theta_{\infty}=2\nu,$ and
$m^0_{11}=e^{\pi i\theta_0},$ $m^0_{12}=\frac{2\pi i e^{\pi i\theta_1}}
{\Gamma(\nu)\Gamma(\theta_0+\nu-1)}c_0$,
$c^-=\frac{\Gamma(\theta_1)\Gamma(\nu)\Gamma(\theta_0+\nu-1)}{(2\pi i)^2}
m^0_{12}m^1_{21}$
if $\theta_0-\theta_1+\theta_{\infty}=-2(\nu-1).$ 
\par
$(4)$ Let $\theta_0 \not\in -\mathbb{N}\cup\{0\}$ and $\theta_1\not\in \mathbb
{Z}$. 
Suppose that 
$\theta_0+\theta_1+\theta_{\infty} = 2\nu$ or 
$\theta_0-\theta_1 +\theta_{\infty}= 2(\nu-1)$. 
Then the monodromy data 
$$
M^0=\begin{pmatrix}
e^{\pi i\theta_0}  &  \frac{ 2\pi ie^{\pi i(\theta_{\infty}+
\theta_0)}}{\Gamma(\theta_0)}c_0 \\
 0  & e^{-\pi i\theta_0}  
\end{pmatrix}, \quad
M^1=\begin{pmatrix}
 m^1_{11} &   0 \\
m^1_{21} &  (m^1_{11})^{-1}
\end{pmatrix}
$$
correspond to 
$y(x)= \mathrm{trunc}^1_{\infty 0}(x, m^0_{12}m^1_{21}, m^1_{11})$ such that
\begin{equation*}
\frac x{y(x)}
=\frac 12 (\theta_0-\theta_1+\theta_{\infty})(1 +O(x^{-1}))+c^-
x^{1-2\theta_0-\theta_{\infty}}e^{-x}(1 +O(x^{-r})) 
\end{equation*}
in the sector $-\pi/2 \le \arg x<\pi/2$ with 
$c^-x^{-1-2\theta_0-\theta_{\infty}}e^{-x}\ll x^{-r}$,
where $m^1_{11}=e^{\pi i\theta_1},$ $m^1_{21}=\frac{2\pi i e^{-\pi i(\theta_0
+\theta_{\infty})}}{\Gamma(\nu)\Gamma(1-\theta_1+\nu)}c_x^{-1}$,
$c^-=\frac{\Gamma(\theta_0)\Gamma(\nu)\Gamma(1-\theta_1+\nu)}{(2\pi i)^2}
m^0_{12}m^1_{21}$
if $\theta_0+\theta_1+\theta_{\infty}=2\nu,$ and
$m^1_{11}=e^{-\pi i\theta_1},$ $m^1_{21}=\frac{2\pi i e^{-\pi i(\theta_0
+\theta_{\infty})}}{\Gamma(\nu)\Gamma(\theta_1+\nu-1)} c_x^{-1}$,
$c^-=\frac{\Gamma(\theta_0)\Gamma(\nu)\Gamma(\theta_1+\nu-1)}{(2\pi i)^2}
m^0_{12}m^1_{21}$
if $\theta_0-\theta_1+\theta_{\infty}=2(\nu-1).$ 
\end{thm}
\begin{rem}
Each assertion of Theorem \ref{thm2.9} remains valid under a weaker condition.
In the assertion (1), the condition $\theta_0\not\in \mathbb{Z}$ of the case
$\theta_0-\theta_1-\theta_{\infty}=2(\nu-1)$ (respectively, $\theta_0+
\theta_1+\theta_{\infty}=-2(\nu-1)$) may be replaced with $\theta_0-\nu\not\in
\mathbb{N}\cup\{0\}$ (respectively, $\theta_0+\nu \not\in -\mathbb{N}\cup\{0\}$).
\end{rem}
\begin{rem}\label{rem2.7}
When $\theta_1=-(\nu-1) \in -\mathbb{N}\cup\{0\}$, the condition of Theorem 
\ref{thm2.8},(1) is violated. Then the solutions 
$\mathrm{trunc}^0_{\infty}(x,0,(-1)^{\nu-1}) =x(-\frac 12
(\nu-1+\theta_0+\theta_{\infty})+O(x^{-1})
-c^-x^{2\nu+\theta_{\infty}-1}
e^{-x}(1+O(x^{-r})))^{-1}$ constitute a one-parameter family, each of which 
corresponds to the same monodromy data $(M^0,M^1)=(M^0,(-1)^{\nu-1}I)$   
up to $\sim$, not depending on $c^-$ 
(cf.~Proposition \ref{prop3.1}, \cite[Remark 2.1 and Proposition 2.1]
{Andreev-Kitaev}). If $\theta_1=0$ there exists a one-parameter family
of classical solutions $-(\theta_0+\theta_{\infty})^{-1}(2xw'(x)w(x)^{-1}
-\theta_{\infty}-1+x)$ with $w(x)$ solving the Whittaker 
equation $w''-(\frac 14 -\frac 12(1+\theta_{\infty})x^{-1}+\frac 14(\theta_0^2
-1)x^{-2})w=0$, and then $M^1=I.$ From the remaining truncated solutions 
of Theorems \ref{thm2.6}, \ref{thm2.8} and \ref{thm2.9}
with integers $\theta_0$ or $\theta_1$ we get one-parameter families
having analogous properties.
\end{rem}
\subsection{Riemann-Hilbert correspondence to the monodromy manifold}
\label{ssc2.4}
The solutions exhibited above are labelled with the monodromy data by the
Riemann-Hilbert correspondence.
Recall the monodromy manifold 
\begin{align*}
\mathcal{M}(\theta_0,\theta_1,\theta_{\infty})
=& \{(M^0,M^1)\in \mathrm{SL}_2(\mathbb{C})^2 \,\,| 
\\
& m^0_{11}m^1_{11}+m^0_{21}m^1_{12}=e^{-\pi i\theta_{\infty}}, 
\,\, \mathrm{tr}M^0=2\cos \pi\theta_0,\,\, \mathrm{tr}M^1=2\cos \pi\theta_1 \}
/\sim
\end{align*}
with the gauge equivalence relation 
$c^{\sigma_3}(M^0,M^1)c^{-\sigma_3}\sim (M^0,M^1)$.
In accordance with \cite[Section 2]{Andreev-Kitaev}, adding some 
modifications, we express this by the disjoint union  
$$
\mathcal{M}(\theta_0,\theta_1,\theta_{\infty})=\bigcup_{j=1}^5
\mathcal{R}_j \quad \text{with 
$\mathcal{R}_2=\mathcal{R}_2^0 \cup \mathcal{R}_2^1
\cup \mathcal{R}_2^{01},$} 
$$
where the regions $\mathcal{R}_j$ and $\mathcal{R}_2^{\square}$ are given 
by the following:
\begin{align*}
& \text{Region} &  \quad &  \text{Proper coordinates}
\\
&\mathcal{R}_1:\,\,\, m^0_{11}\not=0,\,\, m^1_{11}\not=0,\,\, 
m^0_{21} m^1_{12} \not=0,
 &\quad & 
\{m^0_{11},\, m^0_{21}m^1_{12}\}\,\,\text{or} \,\,
\{m^1_{11},\, m^0_{21}m^1_{12}\},
\\
&\mathcal{R}_2^0:\,\,\,m^0_{11}\not=0,\,\, m^1_{11}=0,\,\, 
m^0_{21}  m^1_{12} \not=0, 
&\quad &
\{m^0_{11}\}\,\, \text{with $m^0_{21} m^1_{12}
= e^{-\pi i\theta_{\infty}}$},
\\
&\mathcal{R}_2^1:\,\,\,m^0_{11}=0,\,\, m^1_{11}\not=0,\,\, 
m^0_{21}  m^1_{12} \not=0, 
&\quad &
\{m^1_{11}\}\,\, \text{with $m^0_{21} m^1_{12}
= e^{-\pi i\theta_{\infty}}$},
\\
&\mathcal{R}_2^{01}:\,\,\,m^0_{11}= m^1_{11}=0,\,\, 
m^0_{21} m^1_{12} \not=0, 
&\quad &
\{m^0_{11}=m^1_{11}=0\}\,\,\text{with $m^0_{21} m^1_{12}
= e^{-\pi i\theta_{\infty}} $},
\\
&\mathcal{R}_3:\,\,\,  m^1_{11}\not=0,\,\, m^0_{21}\not=0,\,\,  m^1_{12} =0, 
&\quad &
\{m^1_{21}/m^0_{21} \} \,\, \text{with $m^1_{11}=e^{\pm \pi i\theta_1}$,} 
\\
&\mathcal{R}_4:\,\,\,  m^1_{11}\not=0,\,\, m^0_{21}=0,\,\,  m^1_{12}\not=0, 
&\quad &
\{m^0_{12}/ m^1_{12} \} \,\, \text{with $m^0_{11}=e^{\pm \pi i\theta_0}$,} 
\\
&\mathcal{R}_5:\,\,\,  m^1_{11}\not=0,\,\, m^0_{21}= m^1_{12}=0, 
&\quad &
\{m^0_{12} m^1_{21} \} \,\, \text{with $m^1_{11}=e^{\pm\pi i\theta_1}$.} 
\end{align*}
The region
$\mathcal{R}_3$ (respectively, $\mathcal{R}_4$) is written in 
the form
\begin{align*}
&\mathcal{R}_3=\mathcal{R}_{3+}\cup \mathcal{R}_{3-}, \quad
\mathcal{R}_{3\pm}=\{(M^0,M^1)\in \mathcal{R}_3 \,|\, m^1_{11}
=e^{\pm \pi i\theta_1}\}, 
\\
(\text{respectively,} \,\,\,
& \mathcal{R}_4=\mathcal{R}_{4+}\cup\mathcal{R}_{4-},  \quad
\mathcal{R}_{4\pm}=\{(M^0,M^1)\in \mathcal{R}_4 \,|\, m^0_{11}
=e^{\pm \pi i\theta_0}\} ),
\end{align*}
in which $\mathcal{R}_{3\pm}=\mathcal{R}_3$ (respectively, $\mathcal{R}_{4\pm}
=\mathcal{R}_4$) 
if $\theta_1 \in\mathbb{Z}$ (respectively, $\theta_0\in\mathbb{Z}$),
and $\mathcal{R}_{3+}\cap\mathcal{R}_{3-}=\emptyset$
(respectively, $\mathcal{R}_{4+}\cap\mathcal{R}_{4-}=\emptyset$) otherwise
\cite[Proposition 2.2]{Andreev-Kitaev}. 
\begin{rem}\label{2.8}
Our main concern is the Riemann-Hilbert correspondence of 
$(y(x),\mathfrak{z}(x))$ to the monodromy data 
in $\mathcal{M}(\theta_0,\theta_1,\theta_{\infty})$,
and by Remark \ref{rem2.4} it is sufficient to consider $y(x)$ only. 
\end{rem}
\begin{rem}\label{rem2.9}
To treat the solutions in the sector $|\arg x|<\pi/2$, the regions $\mathcal{R}
_1$ and $\mathcal{R}_2$ of \cite{Andreev-Kitaev} are subdivided, and are
denoted here by $\mathcal{R}_1\cup\mathcal{R}_2^{1}$ and $\mathcal{R}_2^0 \cup
\mathcal{R}_2^{01}$, respectively.
Since the monodromy data are considered up to the gauge equivalence,
the proper coordinate $\{m^0_{21}m^1_{12}\}$ is adopted in place of
$\{m^0_{21}, m^1_{12}\}$ of \cite{Andreev-Kitaev} if $m^0_{21} m^1_{12}\not=0.$ 
For the same reason so are the coordinates $\{m^1_{21}/m^0_{21}\}$ and
$\{m^0_{12}/m^1_{12}\}$ of $\mathcal{R}_3$ and $\mathcal{R}_4$, respectively. 
\end{rem}
In \cite{Andreev-Kitaev}, all the asymptotics of solutions  
on the positive real axis corresponding 
to $\mathcal{R}_1 \cup\mathcal{R}_2$ are completely characterised, and 
about $\bigcup_{j=3,4,5}\mathcal{R}_j$, truncated solutions of \cite{Andreev} 
are referred to. Let us begin with the solutions for $|\arg x|<\pi/2$ 
related to $\mathcal{R}_1$ and $\mathcal{R}_2$.
Define families of solutions (cf.~Theorems \ref{thm2.1} -- 
\ref{thm2.4} and Remark \ref{rem2.3}):  
\begin{align*}
 \mathrm{El}_{\,(-\frac{\pi}2,0)\cup(0,\frac{\pi}2)}
:&=\{\mathrm{ellip}(x,m^0_{21}m^1_{12},m^0_{11},
m^1_{11})\,|\,\, m^0_{21}m^1_{12}m^0_{11}m^1_{11}\not=0\},
\\
 \mathrm{El}_{\,(-\frac{\pi}2,0)}
:&=\{\mathrm{ellip}(x,m^0_{21}m^1_{12},m^0_{11},
0)\,|\,\, m^0_{21}m^1_{12}=e^{-\pi i\theta_{\infty}},\, m^0_{11}\not=0\},
\\
 \mathrm{El}_{\,(0,\frac{\pi}2)}
:&=\{\mathrm{ellip}(x,m^0_{21}m^1_{12},0, m^1_{11})
\,|\,\, m^0_{21}m^1_{12} =e^{-\pi i\theta_{\infty}},\, m^1_{11}\not=0\},
\\
 \mathrm{Trg}_{\,[0]}
:&=\{\mathrm{trig}(x,m^0_{21}m^1_{12},m^0_{11}, m^1_{11})
\,|\,\, m^0_{21}m^1_{12}m^0_{11}m^1_{11}\not=0\},
\\
 \mathrm{Tc}_{\,(-\pi, 0]}
:&=\{\mathrm{trunc}(x,0, m^1_{11})
\,|\,\, m^0_{21}m^1_{12}=e^{-\pi i\theta_{\infty}}, m^1_{11}\not=0\},
\\
 \mathrm{Tc}_{\,[0,\pi)}
:&=\{\mathrm{trunc}(x, m^0_{11},0)
\,|\,\, m^0_{21}m^1_{12}=e^{-\pi i\theta_{\infty}}, m^0_{11}\not=0\},
\\
 \mathrm{dTc}_{\,(-\pi,\pi)}
:&=\{\mathrm{trunc}(x,0, 0)
\,|\,\, m^0_{21}m^1_{12}=e^{-\pi i\theta_{\infty}}, m^0_{11}=m^1_{11}=0\}.
\end{align*}
Then we have the Riemann-Hilbert correspondence to 
$\mathcal{R}_1\cup\mathcal{R}_2$.
\begin{thm}\label{thm2.10}
There exist canonical bijections 
$\psi_{\square}$ described as follows, which map each solution to the proper 
coordinates of $\mathcal{R}_1$ and $\mathcal{R}_2$. 
\par
$(1)$ Along the positive real axis $\arg x=0$,
\begin{align*}
&\psi_0 : \mathrm{Trg}_{\,[0]} \,\, \to \mathcal{R}_1, \quad
\\
&\psi_0 : \mathrm{Tc}_{\,(-\pi, 0]}  \to \mathcal{R}^1_2, \quad
\psi_0 : \mathrm{Tc}_{\,[0,\pi)}  \to \mathcal{R}^0_2, \quad
\psi_0 : \mathrm{dTc}_{\,(-\pi,\pi)}  \to \mathcal{R}^{01}_2. 
\end{align*}
\par
$(2)$ In the sector $-\pi/2<\arg x<0$,
\begin{align*}
&\psi_- : \mathrm{El}_{\,(-\frac{\pi}2,0)\cup(0,\frac{\pi}2)} \to \mathcal{R}_1,
\\
&\psi_- : \mathrm{El}_{\,(-\frac{\pi}2,0)} \to \mathcal{R}^0_2,
\quad
\psi_- : \mathrm{Tc}_{\,(-\pi, 0]}  \to \mathcal{R}^1_2, \quad
\quad
\psi_- : \mathrm{dTc}_{\,(-\pi,\pi)}  \to \mathcal{R}^{01}_2. 
\end{align*}
\par
$(3)$ In the sector $0<\arg x<\pi/2,$
\begin{align*}
&\psi_+ : \mathrm{El}_{\,(-\frac{\pi}2,0)\cup(0,\frac{\pi}2)} \to \mathcal{R}_1,
\\
&\psi_+ : \mathrm{El}_{\,(0,\frac{\pi}2)} \to \mathcal{R}^1_2,
\quad
\psi_+ : \mathrm{Tc}_{\,[0,\pi)}  \to \mathcal{R}^0_2, \quad
\psi_+ : \mathrm{dTc}_{\,(-\pi,\pi)}  \to \mathcal{R}^{01}_2. 
\end{align*}
\end{thm}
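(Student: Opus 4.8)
The plan is to reduce the whole statement to the Riemann--Hilbert bijection $\varrho$ already recorded in Subsection~\ref{ssc2.1}, so that in every case $\psi_{\square}$ is just $\varrho$ read off in the proper coordinates of the relevant region, together with the identification of each of the solution families $\mathrm{El}_{\square}$, $\mathrm{Trg}_{[0]}$, $\mathrm{Tc}_{\square}$, $\mathrm{dTc}_{(-\pi,\pi)}$ with $\varrho^{-1}$ of that region. First I would check that $\mathcal{R}_1\cup\mathcal{R}_2\subset \mathcal{M}(\theta_0,\theta_1,\theta_{\infty})\setminus\mathcal{M}_0$: in each of $\mathcal{R}_1,\mathcal{R}_2^0,\mathcal{R}_2^1,\mathcal{R}_2^{01}$ either an off-diagonal product $m^{\iota}_{12}m^{\iota}_{21}$ is directly nonzero or it is forced to equal $-1$ by $\det M^{\iota}=1$ and a vanishing diagonal entry, so neither $M^0$ nor $M^1$ can be $\pm I$. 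Hence, by Proposition~\ref{prop3.1} (cf.~\cite[Proposition~2.1]{Andreev-Kitaev}), the assignment $(M^0,M^1)\mapsto y(x,M^0,M^1)$ is a well-defined injection of $\mathcal{R}_1\cup\mathcal{R}_2$ into $\mathcal{V}$, with $\varrho$ as its inverse on the image.

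Next I would match each family with $\varrho^{-1}$ of a region. By Theorem~\ref{thm2.3}, every $y(x,M^0,M^1)$ with $(M^0,M^1)\in\mathcal{R}_1$ equals $\mathrm{trig}(x,m^0_{21}m^1_{12},m^0_{11},m^1_{11})$, and conversely each such solution, being some $y(x,M^0,M^1)$, has its monodromy in $\mathcal{R}_1$; thus $\mathrm{Trg}_{[0]}=\varrho^{-1}(\mathcal{R}_1)$. In the same way Theorems~\ref{thm2.1}, \ref{thm2.4} and Remark~\ref{rem2.3} give $\mathrm{El}_{(-\frac{\pi}2,0)\cup(0,\frac{\pi}2)}=\varrho^{-1}(\mathcal{R}_1)$, $\mathrm{El}_{(-\frac{\pi}2,0)}=\mathrm{Tc}_{(-\pi,0]}^{\,\mathrm{sic}?}$ --- more precisely $\mathrm{El}_{(-\frac{\pi}2,0)}=\mathrm{Tc}_{[0,\pi)}=\varrho^{-1}(\mathcal{R}_2^0)$, $\mathrm{El}_{(0,\frac{\pi}2)}=\mathrm{Tc}_{(-\pi,0]}=\varrho^{-1}(\mathcal{R}_2^1)$, and $\mathrm{dTc}_{(-\pi,\pi)}=\varrho^{-1}(\mathcal{R}_2^{01})$ (the last region being a single gauge class, so this family is a single solution). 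Here one must keep track of which hypothesis in Theorem~\ref{thm2.1} supplies elliptic asymptotics on which half-sector: $m^0_{11}m^0_{21}m^1_{12}\ne0$ covers $\mathcal{R}_1\cup\mathcal{R}_2^0$ and governs $-\pi/2<\phi<0$, while $m^1_{11}m^0_{21}m^1_{12}\ne0$ covers $\mathcal{R}_1\cup\mathcal{R}_2^1$ and governs $0<\phi<\pi/2$; this is why $\mathrm{El}_{(-\frac{\pi}2,0)}$ (resp.~$\mathrm{El}_{(0,\frac{\pi}2)}$) shows up only under $\psi_-$ (resp.~$\psi_+$), even though as sets of solutions $\mathrm{El}_{(-\frac{\pi}2,0)}=\mathrm{Tc}_{[0,\pi)}$ and $\mathrm{El}_{(0,\frac{\pi}2)}=\mathrm{Tc}_{(-\pi,0]}$. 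Consequently $\psi_0$, $\psi_-$, $\psi_+$ are literally the same map $\varrho$ on the common parts; only the asymptotic label carried by the family changes with the direction, which is why the bijection is canonical.

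Then I would verify that on each region the listed proper coordinates are a complete gauge invariant, so that $y\mapsto\varrho(y)\mapsto(\text{proper coordinates of }\varrho(y))$ is a bijection onto the region. On $\mathcal{R}_1$, given an admissible value of $\{m^0_{11},m^0_{21}m^1_{12}\}$, one successively recovers $m^0_{22}=2\cos\pi\theta_0-m^0_{11}$, the invariant $m^0_{12}m^0_{21}=m^0_{11}m^0_{22}-1$ from $\det M^0=1$, then $m^1_{11}=(e^{-\pi i\theta_{\infty}}-m^0_{21}m^1_{12})/m^0_{11}$ from \eqref{2.5}, then $m^1_{22}$, $m^1_{12}m^1_{21}$ and $m^0_{12}m^1_{21}=(m^0_{12}m^0_{21})(m^1_{12}m^1_{21})/(m^0_{21}m^1_{12})$; the only residual freedom in the individual off-diagonal entries is exactly the gauge action $m^{\iota}_{12}\mapsto c^2m^{\iota}_{12}$, $m^{\iota}_{21}\mapsto c^{-2}m^{\iota}_{21}$, so the point of $\mathcal{R}_1$ is determined, and the alternative chart $\{m^1_{11},m^0_{21}m^1_{12}\}$ differs by the change of variable just used (this is \cite[Proposition~2.2]{Andreev-Kitaev}). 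For $\mathcal{R}_2^0$ and $\mathcal{R}_2^1$ the single coordinate $m^0_{11}$, resp.~$m^1_{11}$, together with the fixed $m^0_{21}m^1_{12}=e^{-\pi i\theta_{\infty}}$, determines all invariants by the same computation, and $\mathcal{R}_2^{01}$ is one point. Surjectivity of $\psi_{\square}$ onto each region is then the statement that every admissible value of the proper coordinate is realised by a genuine member of the family, which is built into the existence parts of Theorems~\ref{thm2.1}--\ref{thm2.4} (there $m^0_{11}$, $m^1_{11}$ or $m^0_{21}m^1_{12}$ is allowed its full admissible range), and injectivity follows from the first step. Assembling the three ingredients yields $\psi_0$, $\psi_-$, $\psi_+$ as asserted.

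The main obstacle I expect is the second step together with the surjectivity half of the third: one must be sure that the asymptotic families produced by Theorems~\ref{thm2.1}--\ref{thm2.4} --- whose hypotheses appear as open nonvanishing conditions plus a few closed degenerate cases --- together exhaust $\varrho^{-1}$ of each region with no omitted parameter values and no overlap between distinct regions, and in particular that the elliptic description on $(-\pi/2,0)\cup(0,\pi/2)$ and the trigonometric description on $\arg x=0$ are attached to the very same solution, hence to the same monodromy class. Once this correspondence between families and regions is pinned down, what remains is the routine $SL_2(\mathbb{C})$ bookkeeping indicated above.
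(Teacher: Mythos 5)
Your proposal is correct and coincides with the approach the paper itself (implicitly) takes: Theorem \ref{thm2.10} receives no separate proof there, being exactly the bookkeeping you carry out — unique solvability of the inverse problem off $\mathcal{M}_0$ (Proposition \ref{prop3.1}), the asymptotic descriptions of Theorems \ref{thm2.1}--\ref{thm2.4} attached to the solutions $y(x,M^0,M^1)$ labelled by each region, and the completeness of the proper coordinates as gauge invariants (cf.\ \cite[Proposition 2.2]{Andreev-Kitaev}). The exhaustiveness worry in your last paragraph is automatic here, since the families $\mathrm{El}_{\square}$, $\mathrm{Trg}_{[0]}$, $\mathrm{Tc}_{\square}$, $\mathrm{dTc}_{(-\pi,\pi)}$ are defined through the monodromy labels themselves, so the elliptic, trigonometric and truncated descriptions are attached to the same solution by construction.
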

To observe the correspondence of solutions to $\mathcal{R}_3$ and 
$\mathcal{R}_4$ let us define the families of truncated solutions of 
Theorems \ref{thm2.6} and \ref{thm2.8}:
\begin{align*}
 \mathrm{Tc}^0_{0\,(-\frac{\pi}2, \frac{\pi}2]}
:&=\{\mathrm{trunc}^0_0(x,m^1_{21}/m^0_{21},e^{\pi i\theta_1} )
\,|\,\, m^1_{21}/m^0_{21}\not=0, m^1_{11}=e^{\pi i\theta_1}\},
\\
 \mathrm{dTc}^0_{0\,(-\frac{\pi}2,\frac{3\pi}2) }
:&=\{\mathrm{trunc}^0_0(x,0,e^{\pi i\theta_1} )
\,|\,\, m^1_{21}=0, m^1_{11}=e^{\pi i\theta_1}\},
\\
 \mathrm{Tc}^1_{0\,[-\frac{\pi}2 , \frac{\pi}2)}
:&=\{\mathrm{trunc}^1_0(x,m^0_{12}/m^1_{12},e^{-\pi i\theta_0} )
\,|\,\, m^0_{12}/m^1_{12}\not=0, m^0_{11}=e^{-\pi i\theta_0}\},
\\
 \mathrm{dTc}^1_{0\,(-\frac{3\pi}2,\frac{\pi}2)}
:&=\{\mathrm{trunc}^1_0(x,0,e^{-\pi i\theta_0} )
\,|\,\, m^0_{12}=0, m^0_{11}=e^{-\pi i\theta_0}\},
\\
 \mathrm{Tc}^0_{\infty\,(-\frac{\pi}2, \frac{\pi}2]}
:&=\{\mathrm{trunc}^0_{\infty}(x,m^1_{21}/m^0_{21},e^{-\pi i\theta_1} )
\,|\,\, m^1_{21}/m^0_{21}\not=0, m^1_{11}=e^{-\pi i\theta_1}\},
\\
 \mathrm{dTc}^0_{\infty\,(-\frac{\pi}2, \frac{3\pi}2)}
:&=\{\mathrm{trunc}^0_{\infty}(x,0,e^{-\pi i\theta_1} )
\,|\,\, m^1_{21}=0, m^1_{11}=e^{-\pi i\theta_1}\},
\\
 \mathrm{Tc}^1_{\infty\,[-\frac{\pi}2 , \frac{\pi}2)}
:&=\{\mathrm{trunc}^1_{\infty}(x,m^0_{12}/m^1_{12},e^{\pi i\theta_0} )
\,|\,\, m^0_{12}/m^1_{12}\not=0, m^0_{11}=e^{\pi i\theta_0}\},
\\
 \mathrm{dTc}^1_{\infty\,(-\frac{3\pi}2,\frac{\pi}2)}
:&=\{\mathrm{trunc}^1_{\infty}(x,0,e^{\pi i\theta_0} )
\,|\,\, m^0_{12}=0, m^0_{11}=e^{\pi i\theta_0} \}.
\end{align*}
Then the correspondence of these solutions to $\mathcal{R}_3\cup\mathcal{R}_4$
is given in the theorem below, in which $(\Theta_j)$'s denote conditions as
follows: 
\par
$(\Theta_1)$ $ \theta_0-\theta_1-\theta_{\infty} \not\in 2\mathbb{N}\cup\{0\}$
and $\theta_0+\theta_1+\theta_{\infty} \not\in -2\mathbb{N}\cup\{0\},$
\par 
$(\Theta_2)$ $ \theta_0-\theta_1-\theta_{\infty}\not \in -2\mathbb{N}\cup\{0\}$
and $ \theta_0+\theta_1-\theta_{\infty} \not \in -2\mathbb{N}\cup\{0\},$
\par
$(\Theta_3)$ $ \theta_0+\theta_1-\theta_{\infty} \not\in 2\mathbb{N}$
and $\theta_0-\theta_1+\theta_{\infty} \not\in -2\mathbb{N}\cup\{0\},$
\par 
$(\Theta_4)$ $ \theta_0+\theta_1+\theta_{\infty}\not \in 2\mathbb{N}$
and $ \theta_0-\theta_1+\theta_{\infty}  \not\in 2\mathbb{N}\cup\{0\}.$
\par\noindent
\begin{thm}\label{thm2.11}
There exist canonical bijections $\psi_{\square}^{\square}$ given as follows,
which map each solution in the sector $|\arg x|<\pi/2$ to the proper coordinates 
of $\mathcal{R}_3$ or $\mathcal{R}_4$.
\par
$(1)$ Under $(\Theta_1)$ with $\theta_1\not \in \mathbb{N}\cup\{0\}$,
\,\, $\psi_{+}^0: \mathrm{Tc}^0_{0\,(-\frac{\pi}2, \frac{\pi}2]}
\cup \mathrm{dTc}^0_{0\,(-\frac{\pi}2,\frac{3\pi}2) } \to \mathcal{R}_{3+},$
\par
$(2)$ Under $(\Theta_2)$ with $\theta_0\not \in \mathbb{N}\cup\{0\}$,
\,\, $\psi_{-}^0: \mathrm{Tc}^1_{0\,[-\frac{\pi}2 , \frac{\pi}2)}
 \cup\mathrm{dTc}^1_{0\,(-\frac{3\pi}2,\frac{\pi}2)} \to \mathcal{R}_{4-},$
\par
$(3)$ Under $(\Theta_3)$ with $\theta_1\not \in -\mathbb{N} \cup\{0\}$,
\,\, $\psi_{+}^{\infty}: \mathrm{Tc}^0_{\infty\,(-\frac{\pi}2, \frac{\pi}2]}
\cup\mathrm{dTc}^0_{\infty\,(-\frac{\pi}2, \frac{3\pi}2)} \to \mathcal{R}_{3-},$
\par
$(4)$ Under $(\Theta_4)$ with $\theta_0\not\in -\mathbb{N}\cup\{0\}$,
\,\, $\psi_{-}^{\infty}:\mathrm{Tc}^1_{\infty\,[-\frac{\pi}2 , \frac{\pi}2)}
\cup \mathrm{dTc}^1_{\infty\,(-\frac{3\pi}2,\frac{\pi}2)}\to\mathcal{R}_{4+}.$
\end{thm}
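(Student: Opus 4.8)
\textbf{Proof strategy for Theorem \ref{thm2.11}.}
The plan is to construct each bijection $\psi_{\square}^{\square}$ by composing the explicit asymptotic formulas of Theorems \ref{thm2.6} and \ref{thm2.8} (and the doubly-truncated limits therein) with the monodromy assignments already exhibited in those theorems, and then to verify that the image is precisely the proper coordinate set of the relevant component $\mathcal{R}_{3\pm}$ or $\mathcal{R}_{4\pm}$. The four cases are entirely parallel, so I would write out case (1) in full and indicate the obvious modifications for (2)--(4). For case (1): a solution in the family $\mathrm{Tc}^0_{0\,(-\frac{\pi}2,\frac{\pi}2]}$ is by definition $\mathrm{trunc}^0_0(x,m^1_{21}/m^0_{21},e^{\pi i\theta_1})$ with $m^1_{21}/m^0_{21}\neq 0$ and $m^1_{11}=e^{\pi i\theta_1}$, and Theorem \ref{thm2.6},(1) attaches to it precisely the monodromy pair $(M^0,M^1)$ displayed there, which lies in $\mathcal{R}_3$ (since $m^1_{11}\neq 0$, $m^0_{21}\neq 0$, $m^1_{12}=0$) and in fact in $\mathcal{R}_{3+}$ because $m^1_{11}=e^{\pi i\theta_1}$. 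Its proper coordinate is $m^1_{21}/m^0_{21}$, which is exactly the parameter labelling the solution; so $\psi_+^0$ is defined by $\mathrm{trunc}^0_0(x,m^1_{21}/m^0_{21},e^{\pi i\theta_1})\mapsto m^1_{21}/m^0_{21}$ on this subfamily. On the doubly-truncated subfamily $\mathrm{dTc}^0_{0\,(-\frac{\pi}2,\frac{3\pi}2)}$ the parameter is $c_0=0$, i.e. $m^1_{21}=0$, so the coordinate is $0$; thus $\psi_+^0$ extends to send this solution to $0\in\mathcal{R}_{3+}$. Together these cover the coordinate value $0$ and all nonzero values, i.e. all of $\mathcal{R}_{3+}$.

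For surjectivity I would argue in reverse: given $(M^0,M^1)\in\mathcal{R}_{3+}$ with proper coordinate $\gamma:=m^1_{21}/m^0_{21}$, the monodromy matrices of Theorem \ref{thm2.6},(1), read with $c_0$ chosen so that the displayed formula for the $(2,1)$-entry of $M^1$ yields the prescribed $\gamma$, realise $(M^0,M^1)$ up to the gauge equivalence $\sim$ (here one uses that the free gauge parameter $\tilde u$ may be normalised and that, by Remark \ref{rem2.6}, the solution-parameter $c_0$ is gauge-independent). By the unique solubility of the inverse monodromy problem on $\mathcal{M}\setminus\mathcal{M}_0$ (Proposition \ref{prop3.1}; note $M^0,M^1\neq\pm I$ when $\gamma\neq 0$ under $(\Theta_1)$ and $\theta_1\notin\mathbb{N}$), the solution $y(x,M^0,M^1)$ is unique and coincides, by the asymptotic characterisation in Theorem \ref{thm2.6},(1), with $\mathrm{trunc}^0_0(x,\gamma,e^{\pi i\theta_1})$. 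For $\gamma=0$ one has $m^1_{21}=0$ hence $M^1=e^{\pi i\theta_1}I=(\pm1)I$ only if $\theta_1\in\mathbb{Z}$; under $\theta_1\notin\mathbb{N}$ this is compatible with the doubly-truncated member, whose analytic continuation is valid in $|\arg x-\pi/2|<\pi$ and which corresponds to that single point of $\mathcal{R}_{3+}$. Injectivity follows because distinct coordinate values give, via the formulas of Theorem \ref{thm2.6},(1), distinct leading exponentially-small terms $c_0x^{2\theta_1+\theta_{\infty}-1}e^{-x}$ in the sector $-\pi/2<\arg x\le\pi/2$, hence distinct solutions; equivalently, the direct monodromy map $\varrho$ is well-defined and separates them.

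The conditions $(\Theta_1)$--$(\Theta_4)$ together with the non-integrality of the appropriate $\theta_0$ or $\theta_1$ are exactly what is needed for the relevant Gamma-function factors in the monodromy entries of Theorems \ref{thm2.6} and \ref{thm2.8} to be finite and nonzero, so that the parametrisation $c_0\mapsto(M^0,M^1)$ is a genuine bijection onto the coordinate line and the border cases $M^0=\pm I$ or $M^1=\pm I$ are avoided on the nonzero locus; I would spell this out once and refer back to it. The identification of $\mathcal{R}_{3+}$ versus $\mathcal{R}_{3-}$ (and $\mathcal{R}_{4+}$ versus $\mathcal{R}_{4-}$) comes for free from the sign in $m^1_{11}=e^{\pm\pi i\theta_1}$ (resp. $m^0_{11}=e^{\pm\pi i\theta_0}$), matching the $e^{\pi i\theta_1}$ appearing in $\mathrm{trunc}^0_0$ with $\mathcal{R}_{3+}$, the $e^{-\pi i\theta_1}$ in $\mathrm{trunc}^0_\infty$ with $\mathcal{R}_{3-}$, and likewise for the superscript-$1$ families and $\mathcal{R}_4$. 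The main obstacle I anticipate is not any single computation but the bookkeeping: one must check that the union of the truncated and doubly-truncated subfamilies maps \emph{onto} the whole coordinate set (value $0$ coming only from the doubly-truncated member and all nonzero values from the truncated one-parameter family) with no overlap, and that the sectorial domains of validity stated in Theorems \ref{thm2.6} and \ref{thm2.8} are consistent with the subscript ranges $(-\tfrac{\pi}2,\tfrac{\pi}2]$, $[-\tfrac{\pi}2,\tfrac{\pi}2)$, $(-\tfrac{\pi}2,\tfrac{3\pi}2)$, $(-\tfrac{3\pi}2,\tfrac{\pi}2)$ used to name the families; this requires tracking the analytic continuation of each truncated solution beyond its initial sector, which is where the uniqueness property via convergent series (Section \ref{sc6}) is invoked.
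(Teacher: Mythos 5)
Your proposal is correct and follows essentially the same route as the paper, which gives no separate argument for Theorem \ref{thm2.11} but treats it as an immediate consequence of the monodromy assignments in Theorems \ref{thm2.6} and \ref{thm2.8}: the conditions $(\Theta_j)$ and the non-integrality of $\theta_0$ or $\theta_1$ keep the Gamma-factors finite and nonzero so that $c_0$ (gauge-independent by Remark \ref{rem2.6}) parametrises the proper coordinate of $\mathcal{R}_{3\pm}$ or $\mathcal{R}_{4\pm}$, with $c_0=0$ giving the doubly-truncated member, while Proposition \ref{prop3.1} and the uniqueness of truncated solutions via Section \ref{sc6} secure the bijection. Your bookkeeping of which sign $e^{\pm\pi i\theta_1}$, $e^{\pm\pi i\theta_0}$ lands in $\mathcal{R}_{3\pm}$, $\mathcal{R}_{4\pm}$ agrees with the paper.
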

\begin{rem}\label{rem2.10}
In Theorem \ref{thm2.11}, if the condition on $\theta_0$ or $\theta_1$ is 
violated, the map $\psi_{\pm}^0$ or $\psi_{\pm}^{\infty}$ is neither 
one-to-one nor surjective. Say, in (1), if $\theta_1 \in \mathbb{N}$, 
then every $y(x) \in
\mathcal{T}_1:= \mathrm{Tc}^0_{0\,(-\frac{\pi}2, \frac{\pi}2]}
\cup \mathrm{dTc}^0_{0\,(-\frac{\pi}2,\frac{3\pi}2) }$ corresponds to
the monodromy data $(M^0,(-1)^{\theta_1}I)$ with $M_0$ as in Theorem \ref
{thm2.6},(1), and the image of $\mathcal{T}_1$ shrinks to the one point 
$\psi_{+}^0(\mathcal{T}_1)=\{m^1_{21}/m^0_{21}=0\} \in \mathcal{R}_{3+}$.  
\end{rem}
Note that $\mathcal{R}_5\not= \emptyset$ if and only if 
$\epsilon_0\theta_0 +\epsilon_1\theta_1 +\theta_{\infty} \in 2\mathbb{Z}$
with $\epsilon_{0}, \epsilon_1\in \{-1,1\}$ 
\cite[Proposition 2.2]{Andreev-Kitaev}. 
We get a bijection to the monodromy manifold.
\begin{thm}\label{thm2.12}
Let $\mathcal{V}_{\phi}$ be the family of solutions of 
$(\mathrm{P}_{\mathrm{V}})$
in a strip containing the direction $\arg x=\phi$. 
Under the conditions $(\Theta_1),$ $(\Theta_2),$ $(\Theta_3),$ $(\Theta_4)$
with $\theta_0,\theta_1\not\in \mathbb{Z},$ for each direction 
$\arg x=\phi$ such that $|\phi|<\pi/2$, there exists the bijection 
$$
\psi_{\phi}:  \mathcal{V}_{\phi} 
\to \mathcal{M}(\theta_0,\theta_1,\theta_{\infty})
=\bigcup_{j=1}^4 \mathcal{R}_j, 
$$
that is, for each $(M^0,M^1)\in \mathcal{M}(\theta_0,\theta_1,\theta_{\infty})$ 
an explicit pair $(\psi_{\phi},y(x))$ with $y(x) \in\mathcal{V}_{\phi}$ 
satisfying $\psi_{\phi}:y(x)\mapsto (M^0,M^1)$ may be found uniquely in 
Theorem $\ref{thm2.10}$ or $\ref{thm2.11}$. 
\end{thm}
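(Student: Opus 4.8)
The plan is to obtain $\psi_{\phi}$ by assembling the region-by-region correspondences of Theorems \ref{thm2.10} and \ref{thm2.11}, after first checking that $\mathcal{R}_5$ is empty. Since $\mathcal{R}_5\not=\emptyset$ if and only if one of $\theta_0+\theta_1+\theta_{\infty}$, $\theta_0-\theta_1+\theta_{\infty}$, $\theta_0-\theta_1-\theta_{\infty}$, $\theta_0+\theta_1-\theta_{\infty}$ lies in $2\mathbb{Z}$ \cite[Proposition 2.2]{Andreev-Kitaev}, and each of these four numbers is excluded from $2\mathbb{Z}$ by a pair among $(\Theta_1)$--$(\Theta_4)$ supplemented by $\theta_0-\theta_1-\theta_{\infty}\not=0$ (for instance $(\Theta_1),(\Theta_4)$ rule out $\theta_0+\theta_1+\theta_{\infty}\in 2\mathbb{Z}$, while $(\Theta_1),(\Theta_2)$ together with $\theta_0-\theta_1-\theta_{\infty}\not=0$ rule out $\theta_0-\theta_1-\theta_{\infty}\in 2\mathbb{Z}$), we get $\mathcal{R}_5=\emptyset$ and hence the disjoint decomposition $\mathcal{M}(\theta_0,\theta_1,\theta_{\infty})=\mathcal{R}_1\cup\mathcal{R}_2^0\cup\mathcal{R}_2^1\cup\mathcal{R}_2^{01}\cup\mathcal{R}_3\cup\mathcal{R}_4$. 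Moreover, $\theta_0,\theta_1\not\in\mathbb{Z}$ makes the splittings $\mathcal{R}_3=\mathcal{R}_{3+}\cup\mathcal{R}_{3-}$ and $\mathcal{R}_4=\mathcal{R}_{4+}\cup\mathcal{R}_{4-}$ genuinely disjoint and forces $\mathcal{M}_0=\emptyset$ (if $M^0=\pm I$ then $2\cos\pi\theta_0=\pm2$, so $\theta_0\in\mathbb{Z}$; likewise for $M^1$); hence, by the discussion in Subsection \ref{ssc2.1}, $\varrho^{-1}$ is single-valued on $\mathcal{M}(\theta_0,\theta_1,\theta_{\infty})$ and the solution labelled with a prescribed $(M^0,M^1)$ is unique.

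Next I would fix $\phi$ with $|\phi|<\pi/2$ and treat the cases $\phi=0$, $-\pi/2<\phi<0$, $0<\phi<\pi/2$ in turn. In each, Theorem \ref{thm2.10} (parts (1), (2), (3) respectively) supplies canonical bijections from explicit families of solutions onto $\mathcal{R}_1$, $\mathcal{R}_2^0$, $\mathcal{R}_2^1$, $\mathcal{R}_2^{01}$: namely $\mathrm{Trg}_{[0]}$ or $\mathrm{El}_{(-\pi/2,0)\cup(0,\pi/2)}$ onto $\mathcal{R}_1$, and the appropriate $\mathrm{El}$, $\mathrm{Tc}$, $\mathrm{dTc}$ families onto the three pieces of $\mathcal{R}_2$. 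For $\mathcal{R}_3$ and $\mathcal{R}_4$, Theorem \ref{thm2.11} applies because $(\Theta_1)$--$(\Theta_4)$ are assumed and its side conditions $\theta_1\not\in\mathbb{N}$, $\theta_0\not\in\mathbb{N}$, $\theta_1\not\in-\mathbb{N}\cup\{0\}$, $\theta_0\not\in-\mathbb{N}\cup\{0\}$ all follow from $\theta_0,\theta_1\not\in\mathbb{Z}$; this yields bijections $\psi^0_+$, $\psi^{\infty}_+$, $\psi^0_-$, $\psi^{\infty}_-$ from families of truncated and doubly-truncated solutions onto $\mathcal{R}_{3+}$, $\mathcal{R}_{3-}$, $\mathcal{R}_{4-}$, $\mathcal{R}_{4+}$, valid on sectors that contain the whole strip $|\arg x|<\pi/2$ and hence in particular the chosen direction. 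Thus for each such $\phi$ every piece of the disjoint decomposition of $\mathcal{M}(\theta_0,\theta_1,\theta_{\infty})$ is the bijective image of a distinguished family of solutions of $(\mathrm{P}_{\mathrm{V}})$.

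It remains to glue these partial maps into a single $\psi_{\phi}$ on $\mathcal{V}_{\phi}$ and to verify bijectivity. The relevant families are pairwise disjoint inside $\mathcal{V}_{\phi}$: those assigned to $\mathcal{R}_1\cup\mathcal{R}_2$ consist of solutions with $m^0_{21}m^1_{12}\not=0$, while those assigned to $\mathcal{R}_3\cup\mathcal{R}_4$ have $m^0_{21}m^1_{12}=0$; within $\mathcal{R}_1\cup\mathcal{R}_2$ the split is by which of $m^0_{11},m^1_{11}$ vanish (correspondingly the asymptotics are elliptic/trigonometric, truncated, or doubly truncated), and within $\mathcal{R}_3\cup\mathcal{R}_4$ by which of $m^0_{21},m^1_{12}$ vanishes and by the sign in $m^0_{11}=e^{\pm\pi i\theta_0}$ or $m^1_{11}=e^{\pm\pi i\theta_1}$. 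Since $\varrho^{-1}$ is single-valued, solutions from different families cannot carry the same monodromy data, so $\psi_{\phi}$, defined on each family by the corresponding $\psi_{\square}$ or $\psi^{\square}_{\square}$, is a well-defined injection whose image is $\bigcup_{j=1}^4\mathcal{R}_j=\mathcal{M}(\theta_0,\theta_1,\theta_{\infty})$; conversely $\varrho$ is onto and $\mathcal{M}_0=\emptyset$, so no solution of $\mathcal{V}_{\phi}$ is left out, whence $\psi_{\phi}$ is a bijection and, for a given $(M^0,M^1)$, the pair $(\psi_{\phi},y(x))$ is read off from the matching item of Theorem \ref{thm2.10} or \ref{thm2.11}. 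The main obstacle I expect is exactly this last bookkeeping: confirming that the families of Theorems \ref{thm2.1}--\ref{thm2.4}, \ref{thm2.6} and \ref{thm2.8} are genuinely mutually exclusive as subsets of $\mathcal{V}_{\phi}$, and that after removing $\mathcal{R}_5$ nothing in $\mathcal{M}(\theta_0,\theta_1,\theta_{\infty})$ is left uncovered --- this is where all of $(\Theta_1)$--$(\Theta_4)$, $\theta_0-\theta_1-\theta_{\infty}\not=0$ and $\theta_0,\theta_1\not\in\mathbb{Z}$ are used, and where one must match the half-open sectors of validity of the truncated families against the subregions $\mathcal{R}_{3\pm}$, $\mathcal{R}_{4\pm}$ with care.
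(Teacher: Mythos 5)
Your proposal is correct and follows essentially the same route the paper intends: check via the conditions $(\Theta_1)$--$(\Theta_4)$, $\theta_0-\theta_1-\theta_{\infty}\not=0$ that $\epsilon_0\theta_0+\epsilon_1\theta_1+\theta_{\infty}\not\in 2\mathbb{Z}$, so $\mathcal{R}_5=\emptyset$; use $\theta_0,\theta_1\not\in\mathbb{Z}$ to get $\mathcal{M}_0=\emptyset$ (hence uniqueness of the solution attached to each $(M^0,M^1)$) and to satisfy the side conditions of Theorem \ref{thm2.11}; then assemble the bijections of Theorems \ref{thm2.10} and \ref{thm2.11} onto the disjoint pieces $\mathcal{R}_1,\mathcal{R}_2^{\square},\mathcal{R}_{3\pm},\mathcal{R}_{4\pm}$. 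This is exactly how the theorem is derived in the paper, which gives no further separate proof.
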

\begin{rem}\label{rem2.11}
In the theorem above, suppose that one condition alone is violated,
say, $\theta_0+\theta_1+\theta_{\infty} \in
-2\mathbb{N}\cup\{0\}$ and $\theta_0,$ $\theta_1\not\in \mathbb{Z}$. 
This situation affects Theorem \ref{thm2.6},(1) only, in which entries of
$(M^0,M^1)$ become $m^0_{11}=e^{-\pi i(\theta_1+\theta_{\infty})}=e^{\pi i
\theta_0},$ $m^1_{11}=e^{\pi i\theta_1}$ and $m^0_{21}=0,$ implying that
such $(M^0,M^1)$ does not fall on $\mathcal{R}_{3+}$, and $\mathcal{R}_{3+}
=\emptyset$ by Theorem \ref{thm2.11}.
In this case, by Proposition \ref{prop3.2},
we have $m^0_{12}\not=0$ since $m^1_{12}=0,$ and Theorem \ref{thm2.9},(1)
provides solutions mapped to $\mathcal{R}_5.$ 
Thus we have the bijection $\mathcal{V}_{\phi}\to
\mathcal{M}(\theta_0,\theta_1,\theta_{\infty})
=\mathcal{R}_1\cup\mathcal{R}_2\cup \mathcal{R}_{3-}\cup\mathcal{R}_4
\cup\mathcal{R}_5.$  
\end{rem}
In the case where the condition $(\Theta_j)$ $(j=2,3,4)$ alone is violated 
as well, using Proposition \ref{prop3.2} and Remark \ref{rem3.1} 
by the same argument as in Remark \ref{rem2.11}, we may verify the existence
of an analogous bijection to $\mathcal{M}(\theta_0,\theta_1,\theta_{\infty})$.  
\begin{thm}\label{thm2.13} 
Suppose that $\theta_0,$ $\theta_1 \not\in \mathbb{Z}.$ In the case where
$(\Theta_j)$ alone is violated, we have the following:
\par
$(1)$ $\mathcal{R}_{3+}=\emptyset$ if 
$\theta_0+\theta_1+\theta_{\infty}\in -2\mathbb{N}\cup\{0\}$ or 
$\theta_0-\theta_1-\theta_{\infty}\in 2\mathbb{N}\cup\{0\};$
\par
$(2)$ $\mathcal{R}_{3-}=\emptyset$ if 
$\theta_0-\theta_1+\theta_{\infty}\in -2\mathbb{N}\cup\{0\}$ or 
$\theta_0+\theta_1-\theta_{\infty}\in 2\mathbb{N};$
\par
$(3)$ $\mathcal{R}_{4-}=\emptyset$ 
if $\theta_0-\theta_1-\theta_{\infty}\in -2\mathbb{N}\cup\{0\}$ or 
$\theta_0+\theta_1-\theta_{\infty}\in -2\mathbb{N}\cup\{0\};$
\par
$(4)$ $\mathcal{R}_{4+}=\emptyset$ 
if $\theta_0+\theta_1+\theta_{\infty}\in 2\mathbb{N}$ or 
$\theta_0-\theta_1+\theta_{\infty}\in 2\mathbb{N}\cup\{0\}.$
\par\noindent
In each case $\mathcal{M}(\theta_0,\theta_1,\theta_{\infty})$ contains 
$\mathcal{R}_5$ in place of the empty region, and the corresponding 
solutions in Theorem $\ref{thm2.9}$ are bijectively mapped to $\mathcal{R}_5$.
\end{thm}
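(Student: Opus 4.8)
\medskip
\noindent\emph{Proof idea.}
The plan is to run, case by case, the argument already sketched in Remark \ref{rem2.11} for the single exceptional condition $\theta_0+\theta_1+\theta_{\infty}\in -2\mathbb{N}\cup\{0\}$, pairing each item with the relevant family of truncated solutions: $(1)$ is governed by Theorem \ref{thm2.6},(1) (for $\mathcal{R}_{3+}$) and by Theorem \ref{thm2.9},(1) (for the replacing $\mathcal{R}_5$); likewise $(2)$ by Theorems \ref{thm2.8},(1) and \ref{thm2.9},(3), $(3)$ by Theorems \ref{thm2.6},(2) and \ref{thm2.9},(2), and $(4)$ by Theorems \ref{thm2.8},(2) and \ref{thm2.9},(4). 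First I would check the purely arithmetic fact that in each case the two exceptional conditions are exactly the poles of the $\Gamma$-factors occurring in the off-diagonal entry of the monodromy data of Theorem \ref{thm2.6} or \ref{thm2.8} that must be nonzero in the nominal region: for $(1)$ the entry $m^0_{21}$ of Theorem \ref{thm2.6},(1) carries $\Gamma(1-\tfrac12(\theta_0-\theta_1-\theta_{\infty}))\Gamma(\tfrac12(\theta_0+\theta_1+\theta_{\infty}))$ in its denominator, whose poles are precisely $\theta_0-\theta_1-\theta_{\infty}\in 2\mathbb{N}$ and $\theta_0+\theta_1+\theta_{\infty}\in -2\mathbb{N}\cup\{0\}$, and the same check applies to $m^0_{21}$ in Theorem \ref{thm2.8},(1) and to $m^1_{12}$ in Theorems \ref{thm2.6},(2) and \ref{thm2.8},(2).

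Next I would prove the emptiness statements, treating $\mathcal{R}_{3+}$ as the model (the other three follow after the evident relabelling $M^0\leftrightarrow M^1$, $0\leftrightarrow\infty$). Let $(M^0,M^1)\in\mathcal{R}_{3+}$, so $m^1_{12}=0$, $m^0_{21}\ne 0$ and $m^1_{11}=e^{\pi i\theta_1}$; since $\theta_0,\theta_1\notin\mathbb{Z}$ one has $M^0,M^1\ne\pm I$, so by Proposition \ref{prop3.1} there is a unique $y(x)$ with $\varrho(y(x))=(M^0,M^1)$. From $\det M^1=1$ and $\mathrm{tr}\,M^1=2\cos\pi\theta_1$ we get $m^1_{22}=e^{-\pi i\theta_1}$, and from $m^0_{11}m^1_{11}=e^{-\pi i\theta_{\infty}}$ together with the exceptional congruence ($\theta_0+\theta_1+\theta_{\infty}\in 2\mathbb{Z}$, or $\theta_0-\theta_1-\theta_{\infty}\in 2\mathbb{Z}$) we get $m^0_{11}=e^{\pi i\theta_0}$ (resp.\ $e^{-\pi i\theta_0}$), hence $m^0_{11}m^0_{22}=1$ and therefore $m^0_{12}m^0_{21}=\det M^0-1=0$. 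Proposition \ref{prop3.2} (with Remark \ref{rem3.1}) excludes $m^0_{12}=0$ when $m^1_{12}=0$ under the present hypotheses on $\theta$, so $m^0_{21}=0$, contradicting $(M^0,M^1)\in\mathcal{R}_{3+}$; thus $\mathcal{R}_{3+}=\emptyset$, while at the same time such data satisfy $m^0_{21}=m^1_{12}=0$, i.e.\ lie in $\mathcal{R}_5$. That $\mathcal{R}_5\ne\emptyset$ in each of the eight cases is \cite[Proposition 2.2]{Andreev-Kitaev}, since every one of the conditions forces $\epsilon_0\theta_0+\epsilon_1\theta_1+\theta_{\infty}\in 2\mathbb{Z}$ for a suitable $\epsilon_0,\epsilon_1\in\{-1,1\}$.

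Finally I would identify the solutions of Theorem \ref{thm2.9} with the points of $\mathcal{R}_5$. Reading off the monodromy data there, $M^0$ is upper triangular ($m^0_{21}=0$) and $M^1$ is lower triangular ($m^1_{12}=0$, $m^1_{11}=e^{\pm\pi i\theta_1}\ne 0$), so the data lie in $\mathcal{R}_5$; the gauge-invariant coordinate $m^0_{12}m^1_{21}$ is, after cancellation of the gauge parameter $\tilde u$ (cf.\ Remark \ref{rem2.6}), a fixed nonzero multiple of $c_0$, so the one-parameter family of Theorem \ref{thm2.9} is carried onto all of $\mathcal{R}_5$ by $y(x)\mapsto m^0_{12}m^1_{21}$, and the map is injective by Proposition \ref{prop3.1} (distinct $c_0$ give distinct data, never $\pm I$ since $\theta_0,\theta_1\notin\mathbb{Z}$). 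Combining this with Theorems \ref{thm2.10} and \ref{thm2.11} — which are unaffected by a single violated $(\Theta_j)$ except for the collapse of the one region treated above — yields the decomposition of $\mathcal{M}(\theta_0,\theta_1,\theta_{\infty})$ with $\mathcal{R}_5$ in the place of the empty region, as asserted. The main obstacle is the emptiness step: one must be sure that membership in the nominal region really forces the monodromy matrices into the reducible normal form above, and here the structural content of Propositions \ref{prop3.1}, \ref{prop3.2} and Remark \ref{rem3.1} — together with the exhaustive asymptotic classification of Section \ref{sc5} underlying Theorems \ref{thm2.6}, \ref{thm2.8} and \ref{thm2.9} — is precisely what is being used; the remainder is the routine matching of $\Gamma$-poles against $(\Theta_1)$--$(\Theta_4)$ and the case-by-case verification that $\mathcal{R}_5\ne\emptyset$.
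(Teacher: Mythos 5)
Your proposal is correct and follows essentially the same route as the paper: the paper proves Theorem \ref{thm2.13} by the argument sketched in Remark \ref{rem2.11} (the constraint and trace relations force $m^0_{11}m^0_{22}=1$, hence a vanishing off-diagonal product, and Proposition \ref{prop3.2} together with Remark \ref{rem3.1} then excludes the resulting configuration region by region), with the degenerate $\Gamma$-factor bookkeeping tying each violated $(\Theta_j)$ to Theorems \ref{thm2.6}/\ref{thm2.8}, and with Theorem \ref{thm2.9} supplying the solutions mapped onto $\mathcal{R}_5$ (nonempty by \cite[Proposition 2.2]{Andreev-Kitaev}), uniqueness coming from Proposition \ref{prop3.1}. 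Your write-up just fills in the case-by-case details that the paper compresses into "by the same argument as in Remark \ref{rem2.11}".
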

\subsection{Nonlinear monodromy-Stokes structure and the character variety}
\label{ssc2.5}
The solution $y(x,M^0,M^1)$ defined in Subsection \ref{ssc2.1} is analytic on the
universal covering of $\mathbb{C}\setminus \{0\}$.  
To examine the analytic continuation of $y(x,M^0,M^1)$ it is necessary 
to know nonlinear monodromy and Stokes structure.
The nonlinear monodromy of $y(x,M^0,M^1)$ is represented as follows:
\begin{thm}\label{thm2.14}
For every $p\in \mathbb{Z}$ we have
$$
y(e^{2\pi i p}x,M^0,M^1)=y(x,M_{(p)}^0,M_{(p)}^1),
$$
where $M^{0,\,1}_{(p)}:= (M^1M^0)^pM^{0,\,1} (M^1M^0)^{-p}.$
\end{thm}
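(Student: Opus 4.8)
The plan is to track the effect of the analytic continuation $x\mapsto e^{2\pi i p}x$ on the isomonodromy side, i.e.\ on the canonical solutions $Y_k(t,\lambda)$ of the linear system \eqref{2.1} and hence on the monodromy matrices $M^0,M^1$ attached to $y(x,M^0,M^1)$ via $\varrho$. Since it suffices to handle $p=1$ and then iterate, I would first fix a direction $\arg x=\phi$ with $|\phi|<\pi/2$ and consider the continuation of $x=e^{i\phi}t$ around the origin, equivalently $\phi\mapsto \phi+2\pi$ keeping $t>0$ fixed. The key observation is that the matrix $\mathcal{B}(t,\phi,\lambda)$ in \eqref{2.1} depends on $\phi$ only through the turning points $\pm e^{i\phi}$ (and, after the gauge by $\hat u^{\sigma_3/2}$, through $\hat u=u\exp(-2\varpi(t,\phi))$), so continuation in $\phi$ by $2\pi$ amounts to dragging the two singular points $\pm e^{i\phi}$ once around $\lambda=0$ while $\lambda=\infty$ is kept as the irregular point with Stokes data unchanged.

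Next I would make precise how the labelled canonical solutions $Y_k$ and the loops $\hat l_0,\hat l_1$ transform. Because the base point $\hat p_{\mathrm{st}}$ (with $\arg\hat p_{\mathrm{st}}=\pi/2$, $|\hat p_{\mathrm{st}}|>100$) is held fixed while both finite singularities rotate once around the origin, the loops $\hat l_0$ and $\hat l_1$ based at $\hat p_{\mathrm{st}}$ are replaced by conjugated loops: concretely, after the rotation the new generator encircling $e^{i\phi}$ (resp.\ $-e^{i\phi}$) is homotopic, in $\mathbb{C}\setminus\{\pm e^{i\phi}\}$ rel.\ $\hat p_{\mathrm{st}}$, to $\gamma\cdot \hat l_{1}\cdot\gamma^{-1}$ (resp.\ $\gamma\cdot\hat l_0\cdot\gamma^{-1}$), where $\gamma$ is the big loop encircling both points, whose monodromy is precisely $M^1M^0$ by the ordering conventions in Subsection \ref{ssc2.1} and relation \eqref{2.2}. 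Hence the new monodromy matrices are $M^{0,1}\mapsto (M^1M^0)\,M^{0,1}\,(M^1M^0)^{-1}$. I would also check that the Stokes matrices $S_k$ at $\lambda=\infty$ are unaffected: the irregular-singularity data $\exp(\tfrac14(t\lambda-2\theta_\infty\ln\lambda)\sigma_3)$ and the Stokes sectors $|\arg\lambda-\pi/2-(k-2)\pi|<\pi$ do not move under $\phi\mapsto\phi+2\pi$ with $t$ real, so $S_k$, and with it $e^{-\pi i\theta_\infty\sigma_3}$, are preserved; this is consistent with \eqref{2.2} since conjugating both sides by $M^1M^0$ fixes the right-hand side.

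Finally, since the Riemann--Hilbert correspondence $\varrho$ of Subsection \ref{ssc2.1} is a bijection on $\mathcal{M}\setminus\mathcal{M}_0$ and the continuation $y(x)\mapsto y(e^{2\pi i}x)$ is again a solution of (P$_{\mathrm V}$) whose monodromy data are $\bigl((M^1M^0)M^0(M^1M^0)^{-1},(M^1M^0)M^1(M^1M^0)^{-1}\bigr)$, uniqueness of $\varrho^{-1}$ forces $y(e^{2\pi i}x,M^0,M^1)=y(x,M^0_{(1)},M^1_{(1)})$; the degenerate cases $M^0=\pm I$ or $M^1=\pm I$ are then handled by noting that conjugation fixes $\pm I$ and invoking the one-parameter-family description together with analytic continuation (or simply a limiting/continuity argument from the generic case). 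Iterating $p$ times and using $(M^1M^0)^{p}$ gives the stated formula. I expect the main obstacle to be the bookkeeping in the second step: getting the \emph{direction} of the conjugation and the correct word $M^1M^0$ (as opposed to $M^0M^1$ or its inverse) right, which hinges on carefully matching the orientation of the loops $\hat l_0,\hat l_1$, the order in which $\lambda=\pm e^{i\phi}$ are met as $\phi$ increases, and the gauge conventions tying \eqref{2.1} to $M^0,M^1$; the rest is a routine consequence of isomonodromy and the bijectivity of $\varrho$.
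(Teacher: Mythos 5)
Your route is genuinely different from the paper's. The paper continues in $\lambda$ together with $x$: from the replacement $(\lambda,\phi,x)\mapsto(e^{2\pi ip}\lambda,\phi+2\pi p,e^{2\pi ip}x)$ it uses $Y_2(e^{2\pi ip}\lambda)=Y_{2-2p}(\lambda)e^{-\pi i\theta_{\infty}p\sigma_3}$, passes back from $Y_{2-2p}$ to $Y_2$ through the Stokes factors $S_{2-2p}\cdots S_1$, and the whole theorem then reduces to the identity $U_p=(M^1M^0)^{-p}e^{-\pi i\theta_{\infty}p\sigma_3}$ for the accumulated Stokes product (Lemma \ref{lem4.2}), proved from \eqref{2.2}--\eqref{2.3}. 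You instead keep the $\lambda$-frame fixed, regard $\phi\mapsto\phi+2\pi$ as dragging the two regular singular points $\pm e^{i\phi}$ once around the origin with the base point and the irregular data at $\lambda=\infty$ held fixed, read off the change of monodromy from the induced action on $\pi_1(\mathbb{C}\setminus\{\pm e^{i\phi}\},\hat p_{\mathrm{st}})$ (conjugation by the loop $\gamma$ around both points, whose monodromy is $M^1M^0$ by \eqref{2.2}), and conclude by the injectivity of $\varrho^{-1}$ off $\mathcal{M}_0$. If completed, this is a legitimate and arguably more geometric derivation of the same statement.

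However, as written there is a gap exactly at the crux. The claim that the full rotation replaces the generators by $\gamma\hat l_{0}\gamma^{-1}$ and $\gamma\hat l_{1}\gamma^{-1}$, with the orientation of $\gamma$ chosen so that its monodromy for $Y_2$ is $M^1M^0$ (and not $M^0M^1$ or an inverse), is asserted rather than derived, and you yourself flag the direction of conjugation and the choice of word as the main obstacle. That bookkeeping \emph{is} the theorem: with the opposite orientation one would get $(M^1M^0)^{-p}M^{0,1}(M^1M^0)^{p}$, and in the paper this is precisely what Lemma \ref{lem4.2} settles. Two further points need justification in your formulation. First, the isomonodromy statement gives constancy of the data along \emph{continuously deformed} loops, so you must carefully express the original loops in terms of the dragged ones (or conversely) before invoking uniqueness; second, your remark that the Stokes data at $\lambda=\infty$ ``do not move'' is correct for fixed $t>0$, but one must also check that the canonical solution $Y_2$, normalized by $\lambda^{-\theta_{\infty}/2}$ in the fixed sector, deforms continuously in $\phi$ with no extra branch factor; in the paper's formulation this is where the factor $e^{-\pi i\theta_{\infty}p\sigma_3}$ and the Stokes products arise, and they disappear only because they are absorbed into $M^1M^0=S_1^{-1}e^{-\pi i\theta_{\infty}\sigma_3}S_2^{-1}$. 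Your treatment of the degenerate data $M^0=\pm I$ or $M^1=\pm I$ by a limiting argument is vague, but this is peripheral: there, as in the paper, the substantive content is only that the continued solution has monodromy data $(M^0_{(p)},M^1_{(p)})$.
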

Let $(\mathrm{P}_{\mathrm{V}})_{(\theta_0, \theta_1, -\theta_{\infty})}$ be
the Painlev\'e equation $(\mathrm{P}_{\mathrm{V}})$ with the parameters 
$(\theta_0, \theta_1,-\theta_{\infty})$, and 
$\mathcal{M}(\theta_0,\theta_1,-\theta_{\infty})$    
the monodromy manifold consisting of monodromy data labelling the solutions
of $(\mathrm{P}_{\mathrm{V}})_{(\theta_0, \theta_1, -\theta_{\infty})}$. 
Let 
$y_{-\theta_{\infty}}(x,M^{0}_-,M^{1}_-)$ denote the solution of
$(\mathrm{P}_{\mathrm{V}})_{(\theta_0, \theta_1, -\theta_{\infty})}$ 
labelled with monodromy data 
$(M^{0}_-,M^{1}_-)\in \mathcal{M}(\theta_0,\theta_1,-\theta_{\infty})$.    
The nonlinear Stokes relation for $y(x,M^0,M^1)$ with $(M^0,M^1)\in 
\mathcal{M}(\theta_0,\theta_1,\theta_{\infty})$ is given as follows, 
which is based on the transformation of \cite[(2.9)]{Andreev-Kitaev-2}, 
\cite[Section 9.3]{Andreev-Kitaev-2019}.   
\begin{thm}\label{thm2.15}
Write $[M]_{\sigma_1}:=\sigma_1 M \sigma_1.$
We have
\begin{align*}
&y(e^{\pi i}x,M^0,M^1)=
1/y_{-\theta_{\infty}}(x,[\hat{M}^0]_{\sigma_1},[\hat{M}^1]_{\sigma_1}),
\\
&y(e^{-\pi i}x,M^0,M^1)=
1/y_{-\theta_{\infty}}(x,[\check{M}^0]_{\sigma_1},[\check{M}^1]_{\sigma_1}),
\phantom{----}
\end{align*}
where 
\begin{align*}
&\hat{M}^{0,1}:=e^{-\frac{\pi i}2\theta_{\infty}\sigma_3} S^{-1}_2 M^{0,1}
S_2 e^{\frac{\pi i}2\theta_{\infty}\sigma_3},
\\
&\check{M}^{0,1}:=e^{\frac{\pi i}2\theta_{\infty}\sigma_3} S_1 M^{0,1}
S_1^{-1} e^{-\frac{\pi i}2\theta_{\infty}\sigma_3}.
\phantom{----}
\end{align*}
\end{thm}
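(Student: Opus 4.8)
The plan is to read the relation as the monodromy avatar of a Bäcklund symmetry of the linear system \eqref{1.1}, and then to close the argument using uniqueness of the inverse monodromy problem.

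\emph{Step 1: the underlying symmetry.} The gauge transformation $\Xi\mapsto\sigma_1\Xi$ combined with $x\mapsto -x$ carries \eqref{1.1} with data $(\theta_0,\theta_1,\theta_{\infty})$, $(y,\z,u)$ to the \emph{same} system with data $(\theta_0,\theta_1,-\theta_{\infty})$ and $(1/y,\,-\z-\theta_0,\,1/u)$: since $\sigma_1\sigma_3\sigma_1=-\sigma_3$ turns $\tfrac x2\sigma_3$ into $\tfrac{-x}2\sigma_3$, it suffices to check (direct computation of conjugation by $\sigma_1$, which swaps entries) that $\sigma_1\mathcal A_0\sigma_1$ and $\sigma_1\mathcal A_1\sigma_1$ are exactly $\mathcal A_0$ and $\mathcal A_1$ evaluated at the transformed data. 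This is the transformation of \cite[(2.9)]{Andreev-Kitaev-2}, \cite[\S 9.3]{Andreev-Kitaev-2019}. Because isomonodromy deformation of \eqref{1.1} is governed by $(\mathrm{P}_{\mathrm V})$, it follows that $1/y_{-\theta_{\infty}}(x)$ solves $(\mathrm{P}_{\mathrm V})_{(\theta_0,\theta_1,\theta_{\infty})}$ whenever $y_{-\theta_{\infty}}$ solves $(\mathrm{P}_{\mathrm V})_{(\theta_0,\theta_1,-\theta_{\infty})}$ (on the level of the equation this is $(x,y)\mapsto(-x,1/y)$, the two sign changes of the coefficient $c_\theta y/x$ cancelling while $a_\theta\leftrightarrow b_\theta$, i.e. $\theta_{\infty}\mapsto-\theta_{\infty}$). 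Hence both sides of each asserted identity solve the same equation, and it remains to match their monodromy data.

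\emph{Step 2: transformation of canonical solutions and monodromy data.} Passing to the symmetric form \eqref{2.1} via $\xi=\tfrac12(e^{-i\phi}\lambda+1)$, the rotation $x\mapsto e^{\pi i}x$ ($\phi\mapsto\phi+\pi$) acts by $\lambda\mapsto-\lambda$ and, as the explicit pole positions show, interchanges the two finite singular points back to where they started, so the loops $\hat l_0,\hat l_1$ are preserved and $M^0,M^1$ are not swapped. The canonical solutions of the transformed ($-\theta_{\infty}$)-system are then $\sigma_1$ times those of the original system at $e^{\pi i}x$ evaluated at $-\lambda$, up to normalisation: $\sigma_1$ restores the standard exponential $\exp(\tfrac14 t\lambda\sigma_3)$, the term $-\tfrac12\theta_{\infty}\ln\lambda$ contributes, through $\ln(-\lambda)=\ln\lambda+\pi i$ with the branch fixed by $\arg\hat p_{\mathrm{st}}=\pi/2$, a half formal monodromy $e^{\mp\frac{\pi i}2\theta_{\infty}\sigma_3}$, and matching the sectors $|\arg\lambda-\pi/2-(k-2)\pi|<\pi$ after the $\pi$-rotation forces the crossing of a single Stokes ray, i.e. multiplication by $S_2$ (respectively $S_1$ for the opposite rotation). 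Reading off the monodromy of the transformed $Y_2$ around $\hat l_0,\hat l_1$ and using \eqref{2.2}, \eqref{2.3}, one obtains exactly $([\hat M^0]_{\sigma_1},[\hat M^1]_{\sigma_1})$ with $\hat M^{0,1}=e^{-\frac{\pi i}2\theta_{\infty}\sigma_3}S_2^{-1}M^{0,1}S_2 e^{\frac{\pi i}2\theta_{\infty}\sigma_3}$ for $x\mapsto e^{\pi i}x$, and $([\check M^0]_{\sigma_1},[\check M^1]_{\sigma_1})$ for $x\mapsto e^{-\pi i}x$. Along the way one verifies these data lie in $\mathcal M(\theta_0,\theta_1,-\theta_{\infty})$: the trace conditions are automatic since conjugation and $[\,\cdot\,]_{\sigma_1}$ preserve traces, and the relation $m_{11}m_{11}+m_{21}m_{12}=e^{\pi i\theta_{\infty}}$ follows from \eqref{2.2}.

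\emph{Step 3: conclusion.} For $(M^0,M^1)\notin\mathcal M_0$ the solution labelled with prescribed monodromy data is unique (Proposition \ref{prop3.1}), so Steps 1--2 force $y(e^{\pi i}x,M^0,M^1)=1/y_{-\theta_{\infty}}(x,[\hat M^0]_{\sigma_1},[\hat M^1]_{\sigma_1})$ and likewise for $e^{-\pi i}x$; the remaining reducible cases follow by continuity from the generic ones (or from the explicit one-parameter families). As a consistency check, composing the two relations reproduces the nonlinear monodromy of Theorem \ref{thm2.14}.

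The main obstacle is Step 2: pinning down the exact conjugating factor. The delicate point is that the $\pi$-rotation of $\arg x$ is only \emph{half} a turn about the irregular singularity, so the formal monodromy enters with the half exponent $e^{\pm\frac{\pi i}2\theta_{\infty}\sigma_3}$ rather than $e^{\pm\pi i\theta_{\infty}\sigma_3}$, and this must be combined correctly with (i) the sign change $\theta_{\infty}\mapsto-\theta_{\infty}$ in the transformed system, (ii) the branch of $\ln\lambda$ pinned by the base point $\hat p_{\mathrm{st}}$, and (iii) the relabelling of the canonical solutions $Y_k$ under the rotation. Isolating the single Stokes matrix $S_2$ (not a product, and on the correct side of the conjugation) is where the diagram-chasing in the $\lambda$-plane concentrates.
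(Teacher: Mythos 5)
Your proposal is correct and follows essentially the same route as the paper: the paper also applies the $\sigma_1$-conjugation symmetry (with $(\mathfrak{z},\hat u)\mapsto(-\mathfrak{z}-\theta_0,\hat u^{-1})$, $\theta_{\infty}\mapsto-\theta_{\infty}$, $y\mapsto 1/y$) together with the half-rotation $(\lambda,\phi,x)\mapsto(e^{\mp\pi i}\lambda,\phi\mp\pi,e^{\mp\pi i}x)$, and pins down your "delicate point" by the identity $\sigma_1Y_2(e^{-\pi i}\lambda,-\theta_{\infty})\sigma_1=Y_3(\lambda,\theta_{\infty})e^{\frac{\pi i}2\theta_{\infty}\sigma_3}=Y_2(\lambda,\theta_{\infty})S_2e^{\frac{\pi i}2\theta_{\infty}\sigma_3}$, which is exactly your half formal monodromy plus the single Stokes factor $S_2$ (and $S_1$ for the opposite rotation). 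The only cosmetic difference is that the paper transports the monodromy relation for the $(-\theta_{\infty})$-solution into the $\theta_{\infty}$-frame and then solves $M^{0,1}=S_2e^{\frac{\pi i}2\theta_{\infty}\sigma_3}\sigma_1M_-^{0,1}\sigma_1e^{-\frac{\pi i}2\theta_{\infty}\sigma_3}S_2^{-1}$ for $M_-^{0,1}=[\hat M^{0,1}]_{\sigma_1}$, rather than invoking Proposition \ref{prop3.1} separately.
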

\begin{rem}\label{rem2.110}
The matrices $[\hat{M}^{0,\,1}]_{\sigma_1}$ and $[\check{M}^{0,\,1}]_{\sigma_1}$
coincide with \cite[(2.11), (2.12)]{Andreev-Kitaev-2} up to gauge multipliers. 
Since 
$[\hat{M}^{1}]_{\sigma_1}[\hat{M}^{0}]_{\sigma_1}
=[\check{M}^{1}]_{\sigma_1}[\check{M}^{0}]_{\sigma_1}
= e^{\frac{\pi i}2 
\theta_{\infty}\sigma_3}[S_2^{-1}S_1^{-1}]_{\sigma_1}e^{\frac{\pi i}2
\theta_{\infty}\sigma_3}$, we may consider
$[\hat{M}^{0,\,1}]_{\sigma_1},$ $[\check{M}^{0,\,1}]_{\sigma_1} \in \mathcal{M}
(\theta_0,\theta_1,-\theta_{\infty}).$
\end{rem}
Proofs of Theorems \ref{thm2.14} and \ref{thm2.15} are given in Section 
\ref{sc4}. 
\par
The monodromy and Stokes relations above may be regarded as operators 
acting on the monodromy data assigned to each solution. From such a view point
these operators are formulated as follows.
\par
For fixed $(M^0,M^1)\in \mathcal{M}(\theta_0,\theta_1,\theta_{\infty})$ 
define families by
\begin{align*}
\mathfrak{M}(M^0,M^1):&=\{(M^0_{(p)},M^1_{(p)})\,|\,\, M^{0,1}_{(p)}
=(M^1M^0)^p M^{0,1}(M^1M^0)^{-p},\,\, p\in \mathbb{Z} \},
\\
\hat{\mathfrak{M}}(M^0,M^1):&
=\{([\hat{M}^0_{(p)}]_{\sigma_1},[\hat{M}^1_{(p)}]_{\sigma_1})\,|\,\,
 \hat{M}^{0,1}_{(p)}
=e^{-\frac{\pi i}2\theta_{\infty}\sigma_3}
 S^{-1}_2 M_{(p)}^{0,1} S_2 e^{\frac{\pi i}2\theta_{\infty}\sigma_3}
 ,\,\, p\in \mathbb{Z} \}
\\
&=\sigma_1 e^{-\frac{\pi i}2\theta_{\infty}\sigma_3}
S^{-1}_2\mathfrak{M}(M^0,M^1)S_2
e^{\frac{\pi i}2\theta_{\infty}\sigma_3}\sigma_1 
\end{align*}
such that $\mathfrak{M}(M^0,M^1)\subset \mathcal{M}(\theta_0,\theta_1,
\theta_{\infty})$ and 
$\hat{\mathfrak{M}}(M^0,M^1)\subset 
\mathcal{M}(\theta_0,\theta_1,-\theta_{\infty})$, the last inclusion being
due to 
$[\hat{M}^1]_{\sigma_1}[\hat{M}^0]_{\sigma_1}= e^{\frac{\pi i}2\theta_{\infty}
\sigma_3}[S_2^{-1}S_1^{-1}]_{\sigma_1}e^{\frac{\pi i}2\theta_{\infty}\sigma_3}$. 
\begin{defi}\label{defi2.1}
Define the monodromy operator $\mathfrak{m}$ and $\hat{\mathfrak{m}}$ by
\begin{align*}
\mathfrak{m} &:\,\, \mathfrak{M}(M^0,M^1)
\to \mathfrak{M}(M^0,M^1),\quad 
 \mathfrak{m}(M^0_{(p)},M^1_{(p)})=(M^0_{(p+1)},M^1_{(p+1)}),
\\
\hat{\mathfrak{m}} &:\,\, \hat{\mathfrak{M}}(M^0,M^1)
\to \hat{\mathfrak{M}}(M^0,M^1),\quad 
 \hat{\mathfrak{m}}([\hat{M}^0_{(p)}]_{\sigma_1},[\hat{M}^1_{(p)}]_{\sigma_1})
=([\hat{M}^0_{(p+1)}]_{\sigma_1},[\hat{M}^1_{(p+1)}]_{\sigma_1}),
\end{align*}
and the Stokes operators  $\mathfrak{s}_{0}$, $\mathfrak{s}_{-1}$ and
$\hat{\mathfrak{s}}_{0}$, $\hat{\mathfrak{s}}_{1}$ by
\begin{align*}
\mathfrak{s}_{0,\, -1} &:\,\, \mathfrak{M}(M^0,M^1) \to 
\hat{\mathfrak{M}}(M^0,M^1),
\\
& \mathfrak{s}_0(M^0_{(p)},M^1_{(p)})=([\hat{M}^0_{(p)}]_{\sigma_1},
[\hat{M}^1_{(p)}]_{\sigma_1}), \quad
\quad  \mathfrak{s}_{-1}(M^0_{(p)},M^1_{(p)})=([\hat{M}^0_{(p-1)}]_{\sigma_1},
[\hat{M}^1_{(p-1)}]_{\sigma_1}), 
\\
\hat{\mathfrak{s}}_{0,\,1} &:\,\, \hat{\mathfrak{M}}(M^0,M^1) \to 
\mathfrak{M}(M^0,M^1),
\\
&\hat{\mathfrak{s}}_0([\hat{M}^0_{(p)}]_{\sigma_1},
[\hat{M}^1_{(p)}]_{\sigma_1})=(M^0_{(p)},M^1_{(p)}), \quad
\quad  \hat{\mathfrak{s}}_1([\hat{M}^0_{(p)}]_{\sigma_1},
[\hat{M}^1_{(p)}]_{\sigma_1})=(M^0_{(p+1)},M^1_{(p+1)}). 
\end{align*}
\end{defi}  
\begin{rem}\label{rem2.12}
The operations $\mathfrak{s}_{0,\,-1}$ and $\hat{\mathfrak{s}}_{0,\,1}$ are also
written as
\begin{align*}
&\mathfrak{s}_0(M)=\sigma_1 e^{-\frac{\pi i}2\theta_{\infty}\sigma_3}S_2^{-1} M
S_2 e^{\frac{\pi i}2\theta_{\infty}\sigma_3}\sigma_1,
&\quad & \mathfrak{s}_{-1}(M)= \sigma_1e^{\frac{\pi i}2\theta_{\infty}\sigma_3}
S_1 M S_1^{-1}e^{-\frac{\pi i}2\theta_{\infty}\sigma_3}\sigma_1, 
\\
& \hat{\mathfrak{s}}_0([\hat{M}]_{\sigma_1})=
  S_2 e^{\frac{\pi i}2\theta_{\infty}\sigma_3}\hat{M}
   e^{-\frac{\pi i}2\theta_{\infty}\sigma_3}S_2^{-1},
&\quad 
&\hat{\mathfrak{s}}_1([\hat{M}]_{\sigma_1})= 
S_1^{-1}e^{-\frac{\pi i}2\theta_{\infty}\sigma_3} 
 \hat{M} e^{\frac{\pi i}2\theta_{\infty}\sigma_3}S_1 
\end{align*}
for each entry $M$ (respectively, $[\hat{M}]_{\sigma_1}$) of pairs in 
$\mathfrak{M}(M^0,M^1)$ (respectively, $\hat{\mathfrak{M}}(M^0,M^1)$). 
\end{rem}
\begin{prop}\label{prop2.16}
We have \,$\hat{\mathfrak{s}}_1 \circ \mathfrak{s}_0 =\mathfrak{m},$ \, 
$\hat{\mathfrak{s}}_0 \circ \mathfrak{s}_0 =\mathrm{id},$ \,
$\mathfrak{s}_0 \circ \hat{\mathfrak{s}}_1 =\hat{\mathfrak{m}}$ \, and \, 
$\mathfrak{s}_{-1} \circ \hat{\mathfrak{s}}_1 =\mathrm{id}.$ 
\end{prop}
For a solution $y(x, M^0,M^1)$ with $(M^0,M^1) \in 
\mathcal{M}(\theta_0,\theta_1,\theta_{\infty})$
the nonlinear monodromy-Stokes structure is given by 
actions of the operators 
on the families of monodromy data as above,
symbolically denoted by
$\langle \mathfrak{m},\,\, \mathfrak{s}_0,\,\,  \mathfrak{s}_{-1},\,\, 
\hat{\mathfrak{s}}_0,\,\,\hat{\mathfrak{s}}_1;\,\,
\mathfrak{M}(M^0,M^1),\,\, \hat{\mathfrak{M}}(M^0,M^1) \rangle$. 
\par From Theorems \ref{thm2.14} and \ref{thm2.15} we drive the following.
\begin{thm}\label{thm2.17}
Let $p\in \mathbb{Z}.$
For $(M^0_{(p)},M^1_{(p)})\in \mathfrak{M}(M^0,M^1)$
and $([\hat{M}^0_{(p)}]_{\sigma_1},[\hat{M}^1_{(p)}]_{\sigma_1})
\in \hat{\mathfrak{M}}(M^0,M^1)$, we have
\begin{align*}
&y(e^{2\pi i}x, M^0_{(p)},M^1_{(p)})= y(x, \mathfrak{m}(M^0_{(p)},M^1_{(p)}) ),
\\
&y(e^{\pi i}x, M^0_{(p)},M^1_{(p)})=1/ y_{-\theta_{\infty}}(x, \mathfrak{s}_0
(M^0_{(p)},M^1_{(p)})),
\\
&y(e^{-\pi i}x, M^0_{(p)},M^1_{(p)})=1/ y_{-\theta_{\infty}}(x,\mathfrak{s}_{-1}
(M^0_{(p)},M^1_{(p)})),
\\
&y_{-\theta_{\infty}}(e^{2\pi i}x, [\hat{M}^0_{(p)}]_{\sigma_1},
[\hat{M}^1_{(p)}]_{\sigma_1})= y_{-\theta_{\infty}}(x, \hat{\mathfrak{m}}
([\hat{M}^0_{(p)}]_{\sigma_1},[\hat{M}^1_{(p)}]_{\sigma_1}) ),
\\
&y_{-\theta_{\infty}}(e^{\pi i}x , [\hat{M}^0_{(p)}]_{\sigma_1}, 
[\hat{M}^1_{(p)}]_{\sigma_1})
=1/y(x,\hat{\mathfrak{s}}_1([\hat{M}^0_{(p)}]_{\sigma_1},
[\hat{M}^1_{(p)}]_{\sigma_1})),
\\
&y_{-\theta_{\infty}}(e^{-\pi i}x , [\hat{M}^0_{(p)}]_{\sigma_1}, 
[\hat{M}^1_{(p)}]_{\sigma_1})
=1/y(x,\hat{\mathfrak{s}}_0([\hat{M}^0_{(p)}]_{\sigma_1},
[\hat{M}^1_{(p)}]_{\sigma_1})).
\end{align*}
\end{thm}
For $(M^0,M^1)\in \mathcal{M}(\theta_0,\theta_1,\theta
_{\infty})$, the points $(x_0,x_1,x_2)
=(m^0_{11},m^1_{11},\mathrm{tr}(M^1M^0))$ constitute the character variety 
represented by the Fricke relation \cite{Put}
\begin{equation*}
\mathbf{V}_{(\theta_0,\theta_1,\theta_{\infty})}: \quad  x_0x_1x_2 +x_0^2+x_1^2
             -\mu_0(\theta_{\infty})x_0 -\mu_1(\theta_{\infty})x_1
-e^{-\pi i\theta_{\infty}}x_2+ \kappa(\theta_{\infty})=0,
\end{equation*}
where $\mu_0(\theta_{\infty})=\mathrm{tr}\,M^0+e^{-\pi i\theta_{\infty}}
\mathrm{tr}\,M^1,$ $\mu_1(\theta_{\infty})=\mathrm{tr}\,M^1+e^{-\pi i\theta_
{\infty}}\mathrm{tr}\,M^0$ and $\kappa(\theta_{\infty})=
e^{-2\pi i\theta_{\infty}}+e^{-\pi i\theta_{\infty}}
\mathrm{tr}\,M^0\,\mathrm{tr}\,M^1+1.$
Indeed, by \eqref{2.5}, 
\begin{align*}
x_2=&\mathrm{tr}(M^1M^0)=m^1_{11}m^0_{11}+m^1_{12}m^0_{21}+m^1_{21}m^0_{12}
+m^1_{22}m^0_{22}
\\
=&e^{-\pi i\theta_{\infty}}+\frac{(x_1(\mathrm{tr}\,M^1-x_1)-1)
(x_0(\mathrm{tr}\,M^0-x_0)-1)}{e^{-\pi i\theta_{\infty}}-x_0x_1}
+(\mathrm{tr}\,M^1-x_1)(\mathrm{tr}\,M^0-x_0),
\end{align*}
from which the equation of $\mathbf{V}_{(\theta_0,\theta_1,\theta_{\infty})}$ 
immediately follows.
\begin{thm}\label{thm2.17a}
For $(M^0,M^1)\in \mathcal{M}(\theta_0,\theta_1,\theta_{\infty})$, let
$x_0,$ $x_1$, $x_2$ be as above. 
\par
$(1)$ For $(\hat{M}^0,\hat{M}^1)=e^{-\frac{\pi i}2\theta_{\infty}\sigma_3}
S_2^{-1}(M^0,M^1)S_2e^{\frac{\pi i}2\theta_{\infty}\sigma_3}$ with
$\hat{M}^{0,\,1}=(\hat{m}^{0,\,1}_{ij})$, write
\begin{equation*}
\hat{x}_0=\hat{m}^0_{22}, \quad \hat{x}_1=\hat{m}^1_{22},\quad
\hat{x}_2=\mathrm{tr}(\hat{M}^1\hat{M}^0).
\end{equation*}
Then $(\hat{x}_0,\hat{x}_1,\hat{x}_2) \in \mathbf{V}_{(\theta_0,\theta_1,
-\theta_{\infty})}$, 
$(e^{-\pi i\theta_{\infty}}\hat{x}_1, e^{-\pi i\theta_{\infty}}\hat{x}_0,
\hat{x}_2) \in \mathbf{V}_{(\theta_0,\theta_1,\theta_{\infty})}$, and
$$
\hat{x}_0=e^{\pi i\theta_{\infty}}x_1, \quad
\hat{x}_1=\mathrm{tr}\,M^1-e^{\pi i\theta_{\infty}}(x_1x_2+x_0 
-\mathrm{tr}\,M^0), \quad \hat{x}_2=x_2.
$$
\par
$(2)$ For $(\check{M}^0,\check{M}^1)=e^{\frac{\pi i}2\theta_{\infty}\sigma_3}
S_1(M^0,M^1)S_1^{-1}e^{-\frac{\pi i}2\theta_{\infty}\sigma_3}$ with
$\check{M}^{0,\,1}=(\check{m}^{0,\,1}_{ij})$, write
\begin{equation*}
\check{x}_0=\check{m}^0_{22}, \quad \check{x}_1=\check{m}^1_{22},\quad
\check{x}_2=\mathrm{tr}(\check{M}^1\check{M}^0).
\end{equation*}
Then $(\check{x}_0,\check{x}_1,\check{x}_2) \in \mathbf{V}_{(\theta_0,
\theta_1,-\theta_{\infty})}$, 
$(e^{-\pi i\theta_{\infty}}\check{x}_1, e^{-\pi i\theta_{\infty}}\check{x}_0,
\check{x}_2) \in \mathbf{V}_{(\theta_0,\theta_1,\theta_{\infty})}$, and 
$$
\check{x}_0=\mathrm{tr}\,M^0-e^{\pi i\theta_{\infty}}(x_0x_2+x_1 
-\mathrm{tr}\,M^1), \quad
\check{x}_1=e^{\pi i\theta_{\infty}}x_0, \quad \check{x}_2=x_2.
$$
\end{thm}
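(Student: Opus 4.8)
The plan is to reduce everything to an elementary entrywise computation, exploiting three structural facts: $S_2=\begin{pmatrix}1&s_2\\0&1\end{pmatrix}$ is unipotent upper triangular, $S_1=\begin{pmatrix}1&0\\s_1&1\end{pmatrix}$ is unipotent lower triangular, and conjugation by the diagonal matrix $e^{\pm\frac{\pi i}{2}\theta_{\infty}\sigma_3}$ leaves the diagonal entries of any $2\times 2$ matrix unchanged. First I would dispose of the trace identities: in $\hat M^1\hat M^0$ the inner pair $e^{\frac{\pi i}{2}\theta_{\infty}\sigma_3}e^{-\frac{\pi i}{2}\theta_{\infty}\sigma_3}$ cancels, so $\hat M^1\hat M^0=e^{-\frac{\pi i}{2}\theta_{\infty}\sigma_3}S_2^{-1}(M^1M^0)S_2e^{\frac{\pi i}{2}\theta_{\infty}\sigma_3}$ is conjugate to $M^1M^0$, whence $\hat x_2=\mathrm{tr}(M^1M^0)=x_2$; likewise $\check x_2=x_2$.

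Next, for $\hat x_0=\hat m^0_{22}$ and $\hat x_1=\hat m^1_{22}$ I would compute the $(2,2)$-entry of $S_2^{-1}MS_2$, which is simply $m_{21}s_2+m_{22}$ (the diagonal conjugation being irrelevant), and then insert $s_2=-e^{\pi i\theta_{\infty}}(m^0_{12}m^1_{11}+m^0_{22}m^1_{12})$ from \eqref{2.5}. For $M=M^0$ one uses $m^0_{12}m^0_{21}=m^0_{11}m^0_{22}-1$ (since $\det M^0=1$) and the constraint $m^0_{11}m^1_{11}+m^0_{21}m^1_{12}=e^{-\pi i\theta_{\infty}}$ to collapse $m^0_{21}s_2+m^0_{22}$ to $e^{\pi i\theta_{\infty}}m^1_{11}=e^{\pi i\theta_{\infty}}x_1$. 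For $M=M^1$ one uses $m^1_{12}m^1_{21}=m^1_{11}m^1_{22}-1$, the same constraint, and the auxiliary identity $m^1_{21}m^0_{12}+m^1_{22}m^0_{22}=x_2-e^{-\pi i\theta_{\infty}}$ (itself obtained by expanding $\mathrm{tr}(M^1M^0)$ and applying \eqref{2.5}), together with $m^0_{22}=\mathrm{tr}\,M^0-x_0$ and $m^1_{22}=\mathrm{tr}\,M^1-x_1$; this turns $m^1_{21}s_2+m^1_{22}$ into $\mathrm{tr}\,M^1-e^{\pi i\theta_{\infty}}(x_1x_2+x_0-\mathrm{tr}\,M^0)$, as asserted. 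Part $(2)$ is the mirror image: the $(2,2)$-entry of $S_1MS_1^{-1}$ equals $s_1m_{12}+m_{22}$, one inserts $s_1=-e^{\pi i\theta_{\infty}}(m^0_{11}m^1_{21}+m^0_{21}m^1_{22})$ from \eqref{2.5}, and the analogous reductions (with the superscripts $0$ and $1$ interchanged in the determinant relation used) give $\check x_1=e^{\pi i\theta_{\infty}}x_0$ and $\check x_0=\mathrm{tr}\,M^0-e^{\pi i\theta_{\infty}}(x_0x_2+x_1-\mathrm{tr}\,M^1)$.

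Finally, to place the triples on the two Fricke varieties: by Remark \ref{rem2.110}, $([\hat M^0]_{\sigma_1},[\hat M^1]_{\sigma_1})\in\mathcal{M}(\theta_0,\theta_1,-\theta_{\infty})$, and since $\sigma_1(\cdot)\sigma_1$ swaps the $(1,1)$- and $(2,2)$-entries, the character-variety coordinates of this pair are exactly $(\hat m^0_{22},\hat m^1_{22},\mathrm{tr}(\hat M^1\hat M^0))=(\hat x_0,\hat x_1,\hat x_2)$; hence the derivation of the Fricke relation given immediately before the theorem applies verbatim with $\theta_{\infty}$ replaced by $-\theta_{\infty}$ (the only inputs being $\det[\hat M^{\iota}]_{\sigma_1}=1$, the trace values $2\cos\pi\theta_{\iota}$, and the defining relation of $\mathcal{M}(\theta_0,\theta_1,-\theta_{\infty})$), yielding $(\hat x_0,\hat x_1,\hat x_2)\in\mathbf{V}(-\theta_0)$. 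The second membership is then purely formal: substituting $(\hat x_0,\hat x_1,\hat x_2)=(e^{\pi i\theta_{\infty}}y_1,e^{\pi i\theta_{\infty}}y_0,y_2)$ into the equation of $\mathbf{V}(-\theta_0)$ and clearing the factor $e^{2\pi i\theta_{\infty}}$, the coefficients $\mathrm{tr}\,M^0+e^{\pi i\theta_{\infty}}\mathrm{tr}\,M^1$, $\mathrm{tr}\,M^1+e^{\pi i\theta_{\infty}}\mathrm{tr}\,M^0$, $e^{\pi i\theta_{\infty}}$ and $e^{2\pi i\theta_{\infty}}+e^{\pi i\theta_{\infty}}\mathrm{tr}\,M^0\,\mathrm{tr}\,M^1+1$ go over term by term into $\mu_1(\theta_{\infty})$, $\mu_0(\theta_{\infty})$, $e^{-\pi i\theta_{\infty}}$ and $\kappa(\theta_{\infty})$, i.e. into the equation of $\mathbf{V}(\theta_0)$, so $(e^{-\pi i\theta_{\infty}}\hat x_1,e^{-\pi i\theta_{\infty}}\hat x_0,\hat x_2)\in\mathbf{V}(\theta_0)$; Part $(2)$ is identical with $S_1$ and the $\check M$-conjugation in the roles of $S_2$ and the $\hat M$-conjugation. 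The whole argument is routine; the only point that demands vigilance is the consistent use of $-\theta_{\infty}$ (not $\theta_{\infty}$) for the conjugated data, together with the observation that the two stated memberships are interchanged by the rescaling $(\hat x_0,\hat x_1,\hat x_2)\leftrightarrow(e^{-\pi i\theta_{\infty}}\hat x_1,e^{-\pi i\theta_{\infty}}\hat x_0,\hat x_2)$, so that establishing either suffices. I anticipate no genuine obstacle beyond this bookkeeping.
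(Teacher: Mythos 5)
Your proposal is correct and follows essentially the same route as the paper: you identify $([\hat{M}^0]_{\sigma_1},[\hat{M}^1]_{\sigma_1})$ (resp.\ the checked pair) as monodromy data in $\mathcal{M}(\theta_0,\theta_1,-\theta_{\infty})$ whose character-variety coordinates are the $(2,2)$-entries, apply the Fricke computation with $\theta_{\infty}\mapsto-\theta_{\infty}$, and deduce the coordinate relations from \eqref{2.5}. The only difference is that you carry out explicitly the entrywise bookkeeping (the $(2,2)$-entries of $S_2^{-1}MS_2$ and $S_1MS_1^{-1}$, $\det M^{\iota}=1$, and the trace identity for $M^1M^0$) that the paper compresses into the remark that the relations are ``obtained by using \eqref{2.5}''.
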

\begin{proof}
For $(M^0,M^1) \in \mathcal{M}(\theta_0,\theta_1,\theta_{\infty})$, consider
$([\hat{M}^0]_{\sigma_1}, [\hat{M}^1]_{\sigma_1})$ and write
$[\hat{M}^{0,\,1}]_{\sigma_1}=([\hat{m}^{0,\,1}]_{ij}).$ Note that
$\hat{M}^1\hat{M}^0=e^{-\frac{\pi i}2\theta_{\infty}\sigma_3}S_2^{-1}S_1^{-1}
e^{-\frac{\pi i}2\theta_{\infty}\sigma_3}$ and that
$$
([\hat{M}^1]_{\sigma_1} [\hat{M}^0]_{\sigma_1})_{11}=
[\hat{m}^1]_{11}[\hat{m}^0]_{11}+[\hat{m}^1]_{12}[\hat{m}^0]_{21}=e^{\pi i
\theta_{\infty}}.
$$
Setting $\hat{x}_0=[\hat{m}^0]_{11}=\hat{m}^0_{22},$ 
$\hat{x}_1=[\hat{m}^1]_{11}=\hat{m}^1_{22},$ and $\hat{x}_2=\mathrm{tr}([\hat{M}
^1]_{\sigma_1}[\hat{M}^0]_{\sigma_1}) =\mathrm{tr}(\hat{M}^1\hat{M}^0)$,
and calculating the Fricke relation with respect to  
$([\hat{M}^0]_{\sigma_1}, [\hat{M}^1]_{\sigma_1})$ or $(\hat{M}^0, \hat{M}^1)$,
we have $(\hat{x}_0,\hat{x}_1,\hat{x}_2)\in \mathbf{V}_{(\theta_0,
\theta_1,-\theta_{\infty})},$
from which $(e^{-\pi i\theta_{\infty}}\hat{x}_1,e^{-\pi i\theta_{\infty}}
\hat{x}_0,\hat{x}_2)\in \mathbf{V}_{(\theta_0,\theta_1,\theta_{\infty})}$ 
immediately follows.
The relations between $(x_0,x_1,x_2)$ and $(\hat{x}_0,\hat{x}_1,\hat{x}_2)$
are obtained by using \eqref{2.5}.
\end{proof}
Recall the operators $\mathfrak{s}_0$ and $\hat{\mathfrak{s}}_1 
=(\mathfrak{s}_{-1})^{-1}$ above.
By Proposition \ref{prop2.16},
$\hat{\mathfrak{s}}_1\circ \mathfrak{s}_0(M^0,M^1)= \mathfrak{m}(M^0,M^1)
=(M^0_{(1)},M^1_{(1)}),$ that is, 
$\mathfrak{s}_{-1}(M^0_{(1)},M^1_{(1)})=([\hat{M}^0]_{\sigma_1},
[\hat{M}^1]_{\sigma_1})$. The last relation is equivalent to 
$ e^{\frac{\pi i}2\theta_{\infty}\sigma_3}
S_1(M_{(1)}^0,M_{(1)}^1)S_1^{-1}e^{-\frac{\pi i}2\theta_{\infty}\sigma_3}
=(\hat{M}^0,\hat{M}^1)=(\check{M}^0_{(1)},\check{M}^1_{(1)})$, which is
the condition of Theorem \ref{thm2.17a},(2) with $(M^0,M^1)\mapsto
(M^0_{(1)},M^1_{(1)})$. Thus we have the following corollary, in which
$(x^{(1)}_0,x^{(1)}_1,x^{(1)}_2)$ is obtained from
\begin{align*}
&x^{(1)}_0=e^{-\pi i\theta_{\infty}}\check{x}_1,\quad
\mathrm{tr}\,M^1-x^{(1)}_1=e^{-\pi i\theta_{\infty}}(\check{x}_2\check{x}_1
+\check{x}_0-\mathrm{tr}\,M^0),\quad x^{(1)}_2=\check{x}_2
\\
&\check{x}_0=\hat{x}_0=e^{\pi i\theta_{\infty}}x_1, \quad
\check{x}_1=\hat{x}_1=\mathrm{tr}\,M^1-e^{\pi i\theta_{\infty}}(x_1x_2+x_0
-\mathrm{tr}\,M^0), \quad \check{x}_2=\hat{x}_2=x_2.
\end{align*}
\begin{cor}\label{cor2.17b}
For $(M^0,M^1)\in\mathcal{M}(\theta_0,\theta_1,\theta_{\infty})$, let
$\mathfrak{m}(M^0,M^1)=(M^0_{(1)},M^1_{(1)})=M^1M^0(M^0,M^1)(M^1M^0)^{-1}$
with $M^{0,\,1}_{(1)}=(m^{0,\,1}_{(1),ij})$ and write
$x^{(1)}_0=m^0_{(1),11},$ $x^{(1)}_1=m^1_{(1),11},$ $x^{(1)}_2=\mathrm{tr}
(M^1_{(1)}M^0_{(1)}).$ Then $(x^{(1)}_0,x^{(1)}_1,x^{(1)}_2) \in \mathbf{V}
_{(\theta_0,\theta_1,\theta_{\infty})}$, and
\begin{align*}
&x^{(1)}_0= -x_1x_2-x_0 +\mathrm{tr}\,M^0+e^{-\pi i\theta_{\infty}}\mathrm{tr}
\,M^1,
\\
&x^{(1)}_1= x_1x_2^2+x_0x_2 -x_1-( \mathrm{tr}\,M^0+
e^{-\pi i\theta_{\infty}}\mathrm{tr}\,M^1)x_2+
 \mathrm{tr}\,M^1+e^{-\pi i\theta_{\infty}}\mathrm{tr}\,M^0,
\\
& x^{(1)}_2=x_2.
\end{align*} 
\end{cor}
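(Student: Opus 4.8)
The plan is to reduce the computation of $(x^{(1)}_0,x^{(1)}_1,x^{(1)}_2)$ to Theorem \ref{thm2.17a} by means of the operator identities of Proposition \ref{prop2.16} and the matrix form of $\mathfrak{s}_1$ recorded in Remark \ref{rem2.12}, exactly as anticipated in the paragraph preceding the corollary. Two observations make part of the statement immediate. First, $M^1_{(1)}M^0_{(1)}=(M^1M^0)(M^1M^0)(M^1M^0)^{-1}=M^1M^0$, so conjugation by $M^1M^0$ fixes the product $M^1M^0$; hence $\mathrm{tr}\,M^0_{(1)}=\mathrm{tr}\,M^0$, $\mathrm{tr}\,M^1_{(1)}=\mathrm{tr}\,M^1$, $m^0_{(1),11}m^1_{(1),11}+m^0_{(1),21}m^1_{(1),12}=(M^1_{(1)}M^0_{(1)})_{11}=e^{-\pi i\theta_{\infty}}$, and $x^{(1)}_2=\mathrm{tr}(M^1_{(1)}M^0_{(1)})=\mathrm{tr}(M^1M^0)=x_2$. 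In particular $(M^0_{(1)},M^1_{(1)})\in\mathcal{M}(\theta_0,\theta_1,\theta_{\infty})$, so the derivation of the Fricke relation from \eqref{2.5} applies verbatim to the triple attached to $(M^0_{(1)},M^1_{(1)})$, giving $(x^{(1)}_0,x^{(1)}_1,x^{(1)}_2)\in\mathbf{V}(\theta_0)$ once $x^{(1)}_0,x^{(1)}_1$ are identified.

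For the explicit formulas I would first identify $(\check{M}^0_{(1)},\check{M}^1_{(1)})$, where $\check{M}^{0,1}_{(1)}:=e^{\frac{\pi i}2\theta_{\infty}\sigma_3}S_1 M^{0,1}_{(1)} S_1^{-1} e^{-\frac{\pi i}2\theta_{\infty}\sigma_3}$ is the matrix of Theorem \ref{thm2.15} formed from the shifted argument $M^{0,1}_{(1)}$. On the one hand, Definition \ref{defi2.1} (consistently with $\mathfrak{s}_1\circ\hat{\mathfrak{s}}_1=\mathrm{id}$ and $\hat{\mathfrak{s}}_1\circ\mathfrak{s}_0=\mathfrak{m}$ of Proposition \ref{prop2.16}) gives $\mathfrak{s}_1(M^0_{(1)},M^1_{(1)})=([\hat{M}^0]_{\sigma_1},[\hat{M}^1]_{\sigma_1})$; on the other hand, by Remark \ref{rem2.12} the same pair equals $([\check{M}^0_{(1)}]_{\sigma_1},[\check{M}^1_{(1)}]_{\sigma_1})$. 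Since $M\mapsto[M]_{\sigma_1}$ is an involution, $\check{M}^{0,1}_{(1)}=\hat{M}^{0,1}$, whence $\check{x}^{(1)}_0:=\check{m}^0_{(1),22}=\hat{m}^0_{22}=\hat{x}_0$, $\check{x}^{(1)}_1:=\check{m}^1_{(1),22}=\hat{x}_1$, and $\check{x}^{(1)}_2:=\mathrm{tr}(\check{M}^1_{(1)}\check{M}^0_{(1)})=\hat{x}_2$.

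Now I would apply Theorem \ref{thm2.17a},(2) to $(M^0_{(1)},M^1_{(1)})$ in place of $(M^0,M^1)$: using $\mathrm{tr}\,M^{0,1}_{(1)}=\mathrm{tr}\,M^{0,1}$ and $x^{(1)}_2=x_2$ this reads $x^{(1)}_0=e^{-\pi i\theta_{\infty}}\check{x}^{(1)}_1$ and $\mathrm{tr}\,M^1-x^{(1)}_1=e^{-\pi i\theta_{\infty}}(\check{x}^{(1)}_2\check{x}^{(1)}_1+\check{x}^{(1)}_0-\mathrm{tr}\,M^0)$. By the previous step $\check{x}^{(1)}_j=\hat{x}_j$, and Theorem \ref{thm2.17a},(1) gives $\hat{x}_0=e^{\pi i\theta_{\infty}}x_1$, $\hat{x}_1=\mathrm{tr}\,M^1-e^{\pi i\theta_{\infty}}(x_1x_2+x_0-\mathrm{tr}\,M^0)$, $\hat{x}_2=x_2$. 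Substituting $\check{x}^{(1)}_1=\hat{x}_1$ into $x^{(1)}_0=e^{-\pi i\theta_{\infty}}\check{x}^{(1)}_1$ yields $x^{(1)}_0=-x_1x_2-x_0+\mathrm{tr}\,M^0+e^{-\pi i\theta_{\infty}}\mathrm{tr}\,M^1$, and then substituting this together with $\check{x}^{(1)}_2=x_2$ and $\check{x}^{(1)}_0=e^{\pi i\theta_{\infty}}x_1$ into the second relation and expanding gives the asserted formula for $x^{(1)}_1$. All these manipulations are elementary.

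The only delicate point — and the one I would flag for a careful reader — is that $\mathfrak{s}_1$, applied abstractly to $(M^0_{(1)},M^1_{(1)})$, is realized by the concrete matrices $[\check{M}^{0,1}_{(1)}]_{\sigma_1}$ built from the shifted argument $M^{0,1}_{(1)}$ rather than from $M^{0,1}$; this is exactly what Remark \ref{rem2.12} (which records $\mathfrak{s}_1(M)=S^{\theta_{\infty}}_1M(S^{\theta_{\infty}}_1)^{-1}$ entrywise on the family) guarantees, and it is what licenses invoking Theorem \ref{thm2.17a},(2) with $(M^0,M^1)\mapsto(M^0_{(1)},M^1_{(1)})$. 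If one prefers to avoid the operator formalism, the two displayed formulas can instead be obtained directly: from $M^0_{(1)}=M^1M^0(M^1)^{-1}$ and $M^1_{(1)}=(M^1M^0)M^1(M^1M^0)^{-1}$ one extracts $m^0_{(1),11}$ and $m^1_{(1),11}$ in terms of the entries of $M^0,M^1$ and eliminates them using $\det M^{0,1}=1$ and $M^{0,1}+(M^{0,1})^{-1}=(\mathrm{tr}\,M^{0,1})I$ until only $m^0_{11}$, $m^1_{11}$, $\mathrm{tr}\,M^0$, $\mathrm{tr}\,M^1$, $\mathrm{tr}(M^1M^0)$ remain; membership in $\mathbf{V}(\theta_0)$ then follows by substitution into the Fricke relation together with $(x_0,x_1,x_2)\in\mathbf{V}(\theta_0)$.
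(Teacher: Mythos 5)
Your proposal is correct and follows essentially the same route as the paper: it uses Proposition \ref{prop2.16} (via Definition \ref{defi2.1} and Remark \ref{rem2.12}) to identify $\mathfrak{s}_1(M^0_{(1)},M^1_{(1)})=([\hat{M}^0]_{\sigma_1},[\hat{M}^1]_{\sigma_1})$, hence $\check{M}^{0,\,1}_{(1)}=\hat{M}^{0,\,1}$, and then applies Theorem \ref{thm2.17a},(2) with $(M^0,M^1)\mapsto(M^0_{(1)},M^1_{(1)})$ together with Theorem \ref{thm2.17a},(1), exactly as in the paragraph preceding the corollary. The extra observations (invariance of $M^1M^0$ giving $(M^0_{(1)},M^1_{(1)})\in\mathcal{M}(\theta_0,\theta_1,\theta_{\infty})$ and the sketched direct entrywise computation) are consistent additions but do not change the argument.
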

\begin{rem}\label{rem2.13aaa}
The character variety $\mathbf{V}_{(\theta_0,\theta_1,\theta_{\infty})}$
has singular points if and only if $(\theta_0,\theta_1,\theta_{\infty})
\in \left(\bigcup_{n\in\mathbb{Z}}{P}_{0,n}\right)
\cup \left(\bigcup_{n\in\mathbb{Z}}{P}_{1,n}\right)
\cup \left(\bigcup_{n\in\mathbb{Z}}{P}_{\infty,n,+}\right) 
\cup \left(\bigcup_{n\in\mathbb{Z}}{P}_{\infty,n,-}\right),$ 
where $P_{0,n},$ $P_{1,n},$ $P_{\infty,n,\pm}$ are complex planes defined by 
$\theta_0 =n,$ $\theta_1 =n$, $(\theta_0+\theta_1)^2-(\theta_{\infty}-2n)^2=0,$ 
$(\theta_0-\theta_1)^2-(\theta_{\infty}-2n)^2=0,$ 
respectively \cite{Put}. 
\end{rem}
\begin{rem}\label{rem2.13aa}
The coordinates $(x_0,x_1,x_2)$ of the character variety are birationally
mapped to $(m^0_{11},m^1_{11},m^0_{21}m^1_{12})$, which are the coordinates
of the monodromy manifold.
\end{rem}
\begin{rem}\label{rem2.13a}
The relations of the corollary above represent the nonlinear monodromy action 
on the character variety $\mathbf{V}_{(\theta_0,\theta_1,\theta_{\infty})}$ 
caused by $\mathfrak{m}(M^0,M^1)$, which agrees with the monodromy operator 
given in \cite[Proposition 5.5]{Kli}. The operators $\mathfrak{s}_{0,\,-1}$ and
$\hat{\mathfrak{s}}_{0,\,1}$ are related to the analytic continuation, say, for
$x\to e^{\pm\pi i}x$ of Theorem \ref{thm2.2}.  
Our results suggest some possible relations between these
operators and 
the wild monodromy actions of \cite[Theorem 6.5]{Kli}, \cite{PR}.
\end{rem}
\subsection{Applications and examples}\label{ssc2.6}
By Theorem \ref{thm2.15} or \ref{thm2.17}, the elliptic expression of
Theorem \ref{thm2.2} is immediately
derived from Theorem \ref{thm2.1}. Let $y(x,M^0,M^1)$ be the solution 
in Theorem \ref{thm2.1} for $-\pi/2<\phi<0.$
For $\pi/2<\phi<\pi$ we have $y(x,M^0,M^1)= 1/y_{-\theta_{\infty}}
(e^{-\pi i}x, [\hat{M}^0]_{\sigma_1}, [\hat{M}^1]_{\sigma_1}),$ and then
\begin{align*}
\frac{y(x,M^0,M^1)+1}{y(x,M^0,M^1)-1}&
= -\frac{y_{-\theta_{\infty}}(e^{-\pi i}x, [\hat{M}^0]_{\sigma_1}, 
[\hat{M}^1]_{\sigma_1})+1} 
{y_{-\theta_{\infty}}(e^{-\pi i}x, [\hat{M}^0]_{\sigma_1}, 
[\hat{M}^1]_{\sigma_1})-1}
\\
&=-A_{\phi-\pi}^{1/2}\mathrm{sn}(\tfrac 12(e^{-\pi i}x -{x}^-_0);
A^{1/2}_{\phi-\pi})+O(x^{-1})
\\
&=A_{\phi}^{1/2}\mathrm{sn}(\tfrac 12(x+{x}_0^-);A^{1/2}_{\phi})+O(x^{-1}),
\end{align*}
in which
\begin{align*}
-x^-_0 &= -x_0(-\theta_{\infty}, [\hat{M}^0]_{\sigma_1},[\hat{M}^1]_{\sigma
_1}) 
\\
 &= -x_0(-\theta_{\infty}, [e^{-\frac{\pi i}2\theta_{\infty}\sigma_3}
\breve{M}^0 e^{\frac{\pi i}2\theta_{\infty}\sigma_3}]_{\sigma_1},
[e^{-\frac{\pi i}2\theta_{\infty}\sigma_3}\breve{M}^1
e^{\frac{\pi i}2\theta_{\infty}\sigma_3} ]_{\sigma_1}) 
\\
&= (\pi i)^{-1}\left(\Omega_{\mathbf{b}}\ln (e^{-\pi i\theta_{\infty}}
\breve{m}^0_{12}\breve{m}^1_{21})+\Omega_{\mathbf{a}}
 \ln (e^{-\frac{\pi i}2\theta_{\infty}}\breve{m}^0_{22})\right)
+\Omega_{\mathbf{a}}+\Omega_{\mathbf{b}}
\\
&\equiv -(\pi i)^{-1}\left(\Omega_{\mathbf{b}}\ln (e^{\pi i\theta_{\infty}}
(\breve{m}^0_{12}\breve{m}^1_{21})^{-1})
+\Omega_{\mathbf{a}} \ln (e^{\frac{\pi i}2\theta_{\infty}}
(\breve{m}^0_{22})^{-1}) \right)+\Omega_{\mathbf{a}}+\Omega_{\mathbf{b}}.
\end{align*}
This coincides with $\breve{x}_0 \!\! \mod 2\Omega_{\mathbf{a}}\mathbb{Z}
+2\Omega_{\mathbf{b}}\mathbb{Z}$, implying Theorem \ref{thm2.2}.
\par
In a general direction we also have the elliptic expression by using
$y(e^{2\pi ip}x, M^0,M^1)=y(x,M^0_{(p)},M^1_{(p)})$ and 
$y(e^{\pi i}x, M^0_{(p)},M^1_{(p)})
=1/y_{-\theta_{\infty}}(x,\mathfrak{s}_0 (M^0_{(p)},M^1_{(p)}))$ of Theorem 
\ref{thm2.17}.
Denote by $x_0=x_0(\theta_{\infty},M^0,M^1)$ the phase shift as in 
Theorem \ref{thm2.1}.   
\begin{thm}\label{thm2.18}
For each $p\in \mathbb{Z}$ let $M^{0,1}_{(p)}=((m^{0,1}_{p})_{ij})$
and $\hat{M}^{0,1}_{(p)}=((\hat{m}^{0,1}_{p})_{ij})$ be such that 
$(M^0_{(p)},M^1_{(p)})
\in \mathfrak{M}(M^0,M^1)$ and $([\hat{M}_{(p)}^0]_{\sigma_1},
[\hat{M}_{(p)}^1]_{\sigma_1})\in \hat{\mathfrak{M}}(M^0,M^1).$
\par
$(1)$ Suppose that $(m^0_{p})_{11}(m^0_{p})_{21}(m^1_{p})_{12}\not=0$ 
(respectively, $(m^1_{p})_{11}(m^0_{p})_{21}(m^1_{p})_{12}\not=0$). 
Then, for $-\pi/2<\phi-2\pi p<0$ (respectively, $0<\phi-2\pi p<\pi/2$),  
$y(x,M^0,M^1)$ admits the elliptic representation 
with the phase shift $x_0(\theta_{\infty},M^0_{(p)}, M^1_{(p)}).$ 
\par
$(2)$ Suppose that $(\hat{m}^0_{p})_{22}(\hat{m}^0_{p})_{12}
(\hat{m}^1_{p})_{21}\not=0$ 
(respectively, $(\hat{m}^1_{p})_{22}(\hat{m}^0_{p})_{12}
(\hat{m}^1_{p})_{21}\not=0$). 
Then, for $\pi/2<\phi-2\pi p<\pi$ (respectively, $\pi <\phi-2\pi p<3\pi/2$), 
$y(x,M^0,M^1)$ admits the elliptic representation with the phase shift 
$-x_0(-\theta_{\infty},[\hat{M}^0_{(p)}]_{\sigma_1},
[\hat{M}^1_{(p)}]_{\sigma_1}).$ 
\end{thm}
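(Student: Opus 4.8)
The plan is to deduce the statement from the nonlinear monodromy of Theorem \ref{thm2.14} together with the basic elliptic asymptotics of Theorem \ref{thm2.1} and, for the left half plane part, the nonlinear Stokes relation of Theorem \ref{thm2.15} (equivalently Theorem \ref{thm2.17}). Concretely, fix $p\in\mathbb{Z}$, set $\phi'=\phi-2\pi p$, and on $\mathscr{R}(\mathbb{C}\setminus\{0\})$ write $z=e^{2\pi i p}x$, so that $\arg z=\phi$ corresponds to $\arg x=\phi'$. Theorem \ref{thm2.14} gives $y(z,M^0,M^1)=y(x,M^0_{(p)},M^1_{(p)})$, which reduces both assertions to applying the $p=0$ results to the pair $(M^0_{(p)},M^1_{(p)})$ along the direction $\phi'$. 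I would use the periodicity $A_{\phi\pm\pi}=A_\phi$ and $\Omega_{\mathbf{a},\mathbf{b}}(\phi\pm\pi)=\Omega_{\mathbf{a},\mathbf{b}}(\phi)$ to see that $A_{\phi'}^{1/2}=A_{\phi}^{1/2}$ and that the periods coincide; thus every elliptic expression written for the direction $\phi'$, after the rotation $z=e^{2\pi i p}x$, is an elliptic expression for $\phi$ with the same modulus and cheese-like strip transported by $x\mapsto e^{2\pi i p}x$. This is exactly the computation displayed above for $p=0$ in the derivation of Theorem \ref{thm2.2}, now run with $\phi$ replaced by $\phi-2\pi p$.

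For $(1)$ we have $\phi'\in(-\pi/2,0)$ (resp.\ $\phi'\in(0,\pi/2)$), so $|\phi'|<\pi/2$, and the hypothesis $(m^0_p)_{11}(m^0_p)_{21}(m^1_p)_{12}\neq0$ (resp.\ $(m^1_p)_{11}(m^0_p)_{21}(m^1_p)_{12}\neq0$) is precisely the non-vanishing condition required in Theorem \ref{thm2.1}. Applying that theorem to $y(x,M^0_{(p)},M^1_{(p)})$ along $\arg x=\phi'$ produces the elliptic representation with phase shift $x_0(\theta_\infty,M^0_{(p)},M^1_{(p)})$; combining with $y(z,M^0,M^1)=y(x,M^0_{(p)},M^1_{(p)})$ and the periodicity above yields $(1)$.

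For $(2)$ we have $\phi'\in(\pi/2,\pi)$ (resp.\ $\phi'\in(\pi,3\pi/2)$), so $\phi'-\pi\in(-\pi/2,0)$ (resp.\ $(0,\pi/2)$). I would write $x=e^{\pi i}x'$ with $\arg x'=\phi'-\pi$ and apply Theorem \ref{thm2.15}, which gives
\[
y(x,M^0_{(p)},M^1_{(p)})=1\big/ y_{-\theta_\infty}\bigl(x',[\hat{M}^0_{(p)}]_{\sigma_1},[\hat{M}^1_{(p)}]_{\sigma_1}\bigr),
\]
a solution of $(\mathrm{P}_{\mathrm{V}})_{(\theta_0,\theta_1,-\theta_\infty)}$ with $[\hat{M}^1_{(p)}]_{\sigma_1}[\hat{M}^0_{(p)}]_{\sigma_1}\in\mathcal{M}(\theta_0,\theta_1,-\theta_\infty)$. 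Because $[\,\cdot\,]_{\sigma_1}$ swaps both rows and columns, the condition needed to apply Theorem \ref{thm2.1} to this dual solution along $\arg x'=\phi'-\pi$ — non-vanishing of the corresponding entry product of $[\hat{M}^0_{(p)}]_{\sigma_1}$ and $[\hat{M}^1_{(p)}]_{\sigma_1}$ — is exactly $(\hat{m}^0_p)_{22}(\hat{m}^0_p)_{12}(\hat{m}^1_p)_{21}\neq0$ (resp.\ $(\hat{m}^1_p)_{22}(\hat{m}^0_p)_{12}(\hat{m}^1_p)_{21}\neq0$). Theorem \ref{thm2.1} then gives the elliptic expression for $y_{-\theta_\infty}(x',\cdot)$ with phase shift $x_0(-\theta_\infty,[\hat{M}^0_{(p)}]_{\sigma_1},[\hat{M}^1_{(p)}]_{\sigma_1})$; inserting this into $y(x,M^0_{(p)},M^1_{(p)})=1/y_{-\theta_\infty}(x',\cdot)$ and using $(1/w+1)/(1/w-1)=-(w+1)/(w-1)$ together with the oddness of $\mathrm{sn}$, $x'=e^{-\pi i}x$ and $A_{\phi'-\pi}=A_{\phi'}$, the two sign changes combine so that the argument of $\mathrm{sn}$ becomes $\tfrac12\bigl(x+x_0(-\theta_\infty,[\hat{M}^0_{(p)}]_{\sigma_1},[\hat{M}^1_{(p)}]_{\sigma_1})\bigr)$, i.e.\ the phase shift is $-x_0(-\theta_\infty,[\hat{M}^0_{(p)}]_{\sigma_1},[\hat{M}^1_{(p)}]_{\sigma_1})$. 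Transporting back to $\arg z=\phi$ via $y(z,M^0,M^1)=y(x,M^0_{(p)},M^1_{(p)})$ and $A_{\phi'}=A_\phi$ gives $(2)$.

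None of the steps is genuinely hard: the substance is just the concatenation of Theorems \ref{thm2.14}, \ref{thm2.15}/\ref{thm2.17} and \ref{thm2.1}. I expect the main obstacle to be purely bookkeeping in part $(2)$: matching the entry conditions correctly through the index swap produced by $[\,\cdot\,]_{\sigma_1}$, tracking the sign produced by $y\mapsto1/y$ against the oddness of $\mathrm{sn}$, identifying the resulting phase shift as $-x_0(-\theta_\infty,\dots)$ modulo $2\Omega_{\mathbf{a}}\mathbb{Z}+2\Omega_{\mathbf{b}}\mathbb{Z}$, and checking that the cheese-like strip $S(\phi',t_\infty,\kappa_0,\delta_0)$ together with its excised pole lattice $\mathcal{P}_0$ is carried correctly onto $S(\phi,t_\infty,\kappa_0,\delta_0)$ by the rotations $x\mapsto e^{2\pi i p}x$ and $x\mapsto e^{\pi i}x$.
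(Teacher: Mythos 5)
Your proposal is correct and follows essentially the same route as the paper: reduce to the direction $\phi-2\pi p$ via Theorem \ref{thm2.14}, apply Theorem \ref{thm2.1} directly in the right half plane, and in the left half plane combine the Stokes relation of Theorem \ref{thm2.15}/\ref{thm2.17} with Theorem \ref{thm2.1} for the dual solution, using $(1/w+1)/(1/w-1)=-(w+1)/(w-1)$, the oddness of $\mathrm{sn}$, and $A_{\phi\pm\pi}=A_{\phi}$ to identify the phase shift as $-x_0(-\theta_{\infty},[\hat{M}^0_{(p)}]_{\sigma_1},[\hat{M}^1_{(p)}]_{\sigma_1})$ — exactly the computation the paper carries out for $p=0$ in Subsection \ref{ssc2.6} and then invokes in general.
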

\begin{rem}\label{rem2.13}
The transitions of the phase shift shown in  
Theorems \ref{thm2.1}, \ref{thm2.2} and \ref{thm2.18}
may be regarded as explicit representations of 
the nonlinear Stokes relation. 
\end{rem}
\begin{rem}\label{rem2.14}
Since $\hat{M}^{0,1}_{(p)} =e^{-\frac{\pi i}2\theta_{\infty}\sigma_3}
 \breve{M}^{0,1}_{(p)}e^{\frac{\pi i}2\theta_{\infty}\sigma_3}$
with $\breve{M}^{0,1}_{(p)}=S_2^{-1}M^{0,1}_{(p)} S_2$, the second part of
Theorem \ref{thm2.18} may be written by using $\breve{M}^{0,1}_{(p)}
=((\breve{m}_p^{0,1})_{ij})$. 
\end{rem}
\begin{exa}\label{exa2.1}
Recall the doubly truncated solution
$$
y_0(x)= y(x,M^0,M^1)=\mathrm{trunc}(x,0,0)
=-1+4(\theta_0+\theta_1-1)x^{-1}+O(x^{-2})
$$ 
for $|\arg x|<\pi$ with $m^0_{11}=m^1_{11}=0,$ 
$m^0_{21}m^1_{12}=e^{-\pi i\theta_{\infty}},$ $m^0_{22}=2\cos\pi\theta_0$ 
and $m^1_{22}=2\cos\pi\theta_1$ (cf.~Theorem \ref{thm2.4}). 
The analytic continuation to the sector $0<\arg x\le \pi$ is
$$
y_0(x)=-1+4(\theta_0+\theta_1-1)x^{-1}-2\pi^{-1/2}e^{\pi i/4}e^{-\frac{\pi i}2
\theta_{\infty}}\hat{m}^1_{22} x^{-1/2} e^{ix/2}(1 +O(x^{-1/2}))
$$
with $\hat{m}^1_{22}=2(\cos\pi\theta_1+
e^{\pi i\theta_{\infty}}\cos\pi\theta_0),$ which agrees with \cite[(3.7),
$k=1$]{Andreev-Kitaev-2}.  
In the sector $\pi<\arg x<3\pi/2$,
\begin{align*}
&\frac{y_0(x)+1}{y_0(x)-1}=A^{1/2}_{\phi}\mathrm{sn}(\tfrac 12
(x-\hat{x}_0^*);A^{1/2}_{\phi})+O(x^{-1}),
\\
&\hat{x}_0^*= 
-(\pi i)^{-1} \Omega_{\mathbf{a}} \ln(2e^{-\frac{\pi i}2\theta_{\infty}}
(\cos\pi\theta_1+e^{\pi i\theta_{\infty}}\cos\pi\theta_0))
-\Omega_{\mathbf{a}}-\Omega_{\mathbf{b}} 
\end{align*}
if $\hat{m}^1_{22}=2(\cos\pi\theta_1+e^{\pi i\theta_{\infty}}\cos\pi\theta_0) 
\not=0,$ and for $3\pi/2<\arg x <2\pi$, 
$y_0(x)=y(x,M^0,M^1)=y(e^{-2\pi i}x, M_{(1)}^0,M_{(1)}^1)$ 
admits an elliptic expression
if $(m^0_1)_{21}(m^1_1)_{12}(m^0_1)_{11}\not=0.$ 
\begin{rem}\label{rem2.18}
If $\cos\pi\theta_1+e^{\pi i\theta_{\infty}}\cos\pi\theta_0=0$, then
$y_0(x)=-1+4(\theta_0+\theta_1-1)x^{-1}+O(x^{-2})$ for $-\pi<\arg x<+\infty$,
and if $\cos\pi\theta_1+e^{\pm\pi i\theta_{\infty}}\cos\pi\theta_0=0$, then
$y_0(x)$ is a rational function \cite[Corollary 1]{Andreev-Kitaev-2}.
\end{rem}
\par
{\it Derivation}.
Note that $y_0(x)=y(x,M^0,M^1)=1/y_{-\theta_{\infty}}(e^{-\pi i}x,
[\hat{M}^0]_{\sigma_1},[\hat{M}^1]_{\sigma_1})$ with 
$\hat{M}^{0,\,1}=(\hat{m}^{0,\,1}_{ij})$ in the sector $|\arg x-\pi|<\pi$, 
and that 
$$
[\hat{M}^0]_{\sigma_1} =\begin{pmatrix} 0 & \hat{m}^0_{21} \\
\hat{m}^0_{12} & 2\cos \pi\theta_0   \end{pmatrix},\quad
[\hat{M}^1]_{\sigma_1} =\begin{pmatrix} 2(\cos\pi\theta_1+ e^{\pi i\theta
_{\infty}}\cos\pi\theta_0) & \hat{m}^1_{21} \\
\hat{m}^1_{12} & -2e^{\pi i\theta_{\infty}}\cos \pi\theta_0 \end{pmatrix}
$$
with $\hat{m}^0_{12}\hat{m}^1_{21}=e^{\pi i\theta_{\infty}}.$ 
To find $y_{-\theta_{\infty}}(e^{-\pi i}x, [\hat{M}^0]_{\sigma_1},[\hat{M}^1]
_{\sigma_1})$, let us consider, as a candidate yielding $y_{-\theta_{\infty}}$, 
the truncated solution in the sector 
$-\pi<\arg x\le 0$ as in Theorem \ref{thm2.4}
\begin{align*}
\mathrm{trunc}&(x,0,{m}^1_{*,11})
=-1+4(\theta_0+\theta_1-1)x^{-1}
+2\sqrt{2}e^{\pi i/4}\hat{v}x^{-1/2}e^{-ix/2}(1+O(x^{-1/2}))
\\
&=-1+4(\theta_0+\theta_1-1)x^{-1}
-2i\pi^{-1/2}e^{\pi i/4}e^{\frac{\pi i}2\theta_{\infty}}m^1_{*,11}
x^{-1/2}e^{-ix/2}(1+O(x^{-1/2}))
\end{align*}
labelled with the monodromy data $({M}^0_*,
{M}^1_*)=((m^0_{*,ij}),(m^1_{*,ij}))\in \mathcal{M}(\theta_0,\theta_1,\theta
_{\infty})$ such that ${m}^0_{*,11}=0,$ ${m}^0_{*,21}{m}^1_{*,12}
=e^{-\pi i\theta_{\infty}},$ ${m}^1_{*,11}=i\sqrt{2\pi}e^{-\frac{\pi i}2
\theta_{\infty}}\hat{v}$. Then by
$\hat{m}^0_{12}\hat{m}^1_{21}=e^{\pi i\theta_{\infty}}$ and 
$y_{-\theta_{\infty}}(e^{-\pi i}x,[\hat{M}^0]_{\sigma_1},
[\hat{M}^1]_{\sigma_1})=\mathrm{trunc}_{-\theta_{\infty}}(e^{-\pi i}x,
0,\hat{m}^1_{22})$, we have the expression of $y_0(x)$  
for $0<\arg x\le \pi$ 
with $\hat{m}^1_{22}=2(\cos\pi\theta_1+
e^{\pi i\theta_{\infty}}\cos\pi\theta_0).$ Theorem \ref{thm2.2} provides 
elliptic asymptotics in the sectors $\pi<\arg x<3\pi/2$ and 
$3\pi/2<\arg x <2\pi$. 
\end{exa}
\begin{exa}\label{exa2.2}
The truncated solution $\mathrm{trunc}^1_0(x,m^0_{12}/m^1_{12},e^{-\pi i\theta_0}
)$ of Theorem \ref{thm2.6},(2) is expressed by 
\begin{equation*}
x y(x,M^0,M^1)=-\frac 12(\theta_0-\theta_1-\theta_{\infty})(1+O(x^{-1}))
-c^-x^{2\theta_0-\theta_{\infty}-1}e^{-x}(1+O(x^{-r}))
\end{equation*}
as $x\to \infty$ through $-\pi/2\le \arg x<\pi/2$ with 
$c^-x^{2\theta_0-\theta_{\infty}-1} e^{-x} \ll x^{-r},$ where 
$$
M^0=\begin{pmatrix}
e^{-\pi i\theta_0}  & \frac{2\pi i e^{\pi i(\theta_{\infty}
-\theta_0)} } {\Gamma(1-\theta_0)}c_0
 \\  0  & e^{\pi i\theta_0}  
\end{pmatrix}, \quad
M^1=\begin{pmatrix}
 e^{\pi i(\theta_0-\theta_{\infty})} &    \frac{2\pi i }
{\Gamma(\frac12 (\theta_0-\theta_1-\theta_{\infty}) )
\Gamma(\frac12 (\theta_0+\theta_1-\theta_{\infty}) ) } c_x \\
m^1_{21} & m^1_{22}
\end{pmatrix}
$$
and 
\begin{equation*}
c^-=\frac{m^0_{12}}{m^1_{12}}\cdot\frac{ e^{\pi i(\theta_0-\theta_{\infty})}
\Gamma(1-\theta_0)}{\Gamma(\frac {\theta_0-\theta_1-\theta_{\infty}}2)
\Gamma(\frac {\theta_0+\theta_1-\theta_{\infty}}2)}.
\end{equation*}
The expression above is inaccurate near the boundary $\arg x=\pi/2.$
The analytic continuation to the sector $-\pi/2 < \arg x \le \pi/2$, 
in particular, near $\arg x=\pi/2$, is given by
\begin{equation*}
xy(x,M^0,M^1)= -\frac 12(\theta_0-\theta_1-\theta_{\infty}) (1+O(x^{-1}))-
c_{\pi/2}x^{2\theta_0-\theta_{\infty}-1}e^{-x}(1+O(x^{-r})) 
\end{equation*}
with
\begin{equation*}
c_{\pi/2}=c^-+\frac{2\pi i}
{\Gamma(\theta_0)\Gamma(\frac {\theta_0+\theta_1-\theta_{\infty}}2)
\Gamma(\frac {\theta_0-\theta_1-\theta_{\infty}}2)}.
\end{equation*}
\par
{\it Derivation}. Note that
$$
[\hat{M}^0]_{\sigma_1}=\begin{pmatrix}
e^{\pi i\theta_0} & 0 \\ e^{-\pi i\theta_{\infty}}(m^0_{12}+(e^{-\pi i\theta_0}
-e^{\pi i \theta_0})s_2) & e^{-\pi i\theta_0}
\end{pmatrix}, \quad
[\hat{M}^1]_{\sigma_1}=\begin{pmatrix}  \hat{m}^1_{22} & 
e^{\pi i\theta_{\infty}}m^1_{21} \\ \hat{m}^1_{12}  &  \hat{m}^1_{11}
  \end{pmatrix},
$$
where $\hat{M}^{0,\,1}=(\hat{m}^{0,\,1}_{ij})$ and 
$s_2=-e^{\pi i\theta_{\infty}}(m^1_{11}m^0_{12}+m^1_{12}m^0_{22})
=-e^{\pi i\theta_0}(m^0_{12}+e^{\pi i\theta_{\infty}}m^1_{12}).$ 
For our purpose it is necessary to know a solution for $-3\pi/2<\arg x
\le -\pi/2$ in Remark \ref{rem5.3}.
Observe the form of the monodromy data \eqref{5.5} for the solution 
\eqref{5.4} $y(x,M^0_*,M^1_*)$ with $(M^0_*,M^1_*)\in\mathcal{M}(\theta_0,
\theta_1,\theta_{\infty})$ in $-{3\pi}/2 <\arg x\le -{\pi}/2$ 
having the integration constant 
$$
c_*=m_{*,21}^0m_{*,12}^1(2\pi i)^{-2}e^{\pi i\theta_{\infty}}\Gamma(1-\theta_0)
\Gamma(1-\tfrac {\theta_0+\theta_1+\theta_{\infty}}2)
\Gamma(1-\tfrac {\theta_0-\theta_1+\theta_{\infty}}2),
$$
and consider $y_{-\theta_{\infty}}(x,M^0_-, M^1_-)$
with $(M_-^0,M_-^1)\in\mathcal{M}(\theta_0,
\theta_1,-\theta_{\infty}).$ Then we set
\begin{align*}
\frac{e^{-\pi i}x}{y_{-\theta_{\infty}}(e^{-\pi i}x,[\hat{M}^0]_{\sigma_1},
[\hat{M}^1]_{\sigma_1})}
=&\frac 12(\theta_0-\theta_1-\theta_{\infty})(1+O(x^{-1}))
\\
&+e^{-\pi i(2\theta_0-\theta_{\infty}-1)}\hat{c}_{-}e^{-x}x^{2\theta_0-\theta
_{\infty}-1}(1+O(x^{-r})),
\end{align*}
with
$$
\hat{c}_{-}=\hat{m}^0_{12}\hat{m}^1_{21}(2\pi i)^{-2}e^{-\pi i\theta
_{\infty}}\Gamma(1-\theta_0)\Gamma(1-\tfrac {\theta_0+\theta_1-\theta_{\infty}}
2)\Gamma(1-\tfrac {\theta_0-\theta_1-\theta_{\infty}}2).
$$
Here
\begin{align*}
\hat{m}^0_{12}\hat{m}^1_{21}=&
(m^0_{12}+(e^{-\pi i\theta_0}-e^{\pi i\theta_0})s_2)m^1_{21}
\\
=&(m^0_{12}-(e^{-\pi i\theta_0}-e^{\pi i\theta_0})e^{\pi i\theta_0}(m^0_{12}
+e^{\pi i\theta_{\infty}}m^1_{12}))m^1_{21}
\\
=& m^0_{12}m^1_{21}e^{2\pi i\theta_0}-e^{\pi i(\theta_{\infty}+\theta_0)}
(e^{-\pi i\theta_0}-e^{\pi i\theta_0})m^1_{12}m^1_{21}
\\
=&\Bigl(e^{2\pi i\theta_0} \frac{m^0_{12}}{m^1_{12}} -e^{\pi i(\theta_{\infty}
+\theta_0)}(e^{-\pi i\theta_0}-e^{\pi i\theta_0})\Bigr)(m^1_{11}m^1_{22}-1)
\\
=&-\Bigl(e^{2\pi i\theta_0} \frac{m^0_{12}}{m^1_{12}} +2i e^{\pi i(\theta
_{\infty}+\theta_0)}\sin\pi\theta_0\Bigr)(e^{\pi i(\theta_0+\theta_1
-\theta_{\infty})}-1)(e^{\pi i(\theta_0-\theta_1-\theta_{\infty})}-1)
\\
=&-\Bigl(e^{2\pi i\theta_0}\frac
{m^0_{12}}{m^1_{12}}+\frac{2\pi ie^{\pi i(\theta_{\infty}+\theta_0)}}
{\Gamma(\theta_0)\Gamma(1-\theta_0)}\Bigr)
\\
&\times
\frac{(2\pi i)^2e^{\pi i(\theta_0-\theta_{\infty})}}
{\Gamma(\frac {\theta_0+\theta_1-\theta_{\infty}}2)
\Gamma(1-\frac {\theta_0+\theta_1-\theta_{\infty}}2)
\Gamma(\frac {\theta_0-\theta_1-\theta_{\infty}}2)
\Gamma(1-\frac {\theta_0-\theta_1-\theta_{\infty}}2)}.
\end{align*}
Since $m^0_{12}/m^1_{12}=c^- e^{\pi i(\theta_{\infty}-\theta_0)}\frac
{\Gamma(\frac 12(\theta_0-\theta_1-\theta_{\infty}))
\Gamma(\frac 12(\theta_0+\theta_1-\theta_{\infty}))}{\Gamma(1-\theta_0)}$,
we have
\begin{align*}
\hat{c}_{-}&=-\Bigl(e^{2\pi i\theta_0}\frac{m^0_{12}}{m^1_{12}}
+\frac{2\pi ie^{\pi i(\theta_{\infty}+\theta_0)}}
{\Gamma(\theta_0)\Gamma(1-\theta_0)}\Bigr)\frac{e^{\pi i(\theta_0-2\theta
_{\infty})}\Gamma(1-\theta_0)}
{\Gamma(\frac {\theta_0+\theta_1-\theta_{\infty}}2)
\Gamma(\frac {\theta_0-\theta_1-\theta_{\infty}}2)}
\\
&=-e^{\pi i(2\theta_0-\theta_{\infty})}c^- -
\frac{2\pi i e^{\pi i(2\theta_0-\theta_{\infty})}}
{\Gamma(\theta_0)\Gamma(\frac {\theta_0+\theta_1-\theta_{\infty}}2)
\Gamma(\frac {\theta_0-\theta_1-\theta_{\infty}}2)}.
\end{align*}
Then setting $c_{\pi/2}:=-e^{-\pi i(2\theta_0-\theta_{\infty})}\hat{c}_{-}$, 
we obtain the desired analytic continuation
\begin{align*}
xy(x,M^0,M^1)=&\frac{x}{y_{-\theta_{\infty}}(e^{-\pi i}x,[\hat{M}^0]_{\sigma_1},
[\hat{M}^1]_{\sigma_1}))}
\\
=&-\frac 12(\theta_0-\theta_1-\theta_{\infty}) (1+O(x^{-1}))-
c_{\pi/2}x^{2\theta_0-\theta_{\infty}-1}e^{-x}(1+O(x^{-r})) 
\end{align*}
in the sector $-\pi/2<\arg x\le \pi/2$ with $c_{\pi/2}
x^{2\theta_0-\theta_{\infty}-1}e^{-x}\ll x^{-r}.$
\end{exa}
\section{Basic facts}\label{sc3}
\subsection{Another isomonodromy system}\label{ssc3.1}
For every $k\in \mathbb{Z}$ the linear system \eqref{1.1} admits the matrix 
solution
\begin{equation*}
\Xi_k(x,\xi)=(I+O(\xi^{-1}))\exp(\tfrac 12(x\xi-\theta_{\infty}\ln\xi)\sigma_3)
\end{equation*}
as $\xi\to\infty$ through the sector $\Sigma_k:$ 
$|\arg(x\xi)-\pi/2-(k-2)\pi|<\pi$. Then the solution $Y_k(t,\lambda)$
of \eqref{2.1} is given by
\begin{align*}
&Y_k(t,\lambda)=\exp(-\varpi(t,\phi)\sigma_3)\Xi_k(x,\xi),
\\
&x=e^{i\phi}t,\quad \xi=\tfrac 12(e^{-i\phi}\lambda+1),\quad \varpi(t,\phi)
=\tfrac 14e^{i\phi}t+\tfrac 12\theta_{\infty}(i\phi+\ln 2)
\end{align*}
as $\lambda\to\infty$ (or $\xi\to\infty$) through the sector 
$|\arg\lambda-\pi/2-(k-2)\pi|<\pi$ (or $\Sigma_k$), which may be verified
by comparing the asymptotic behaviours on both sides and by the uniqueness
of a canonically behaving solution. This fact implies that 
$(M^0,M^1,S_1,S_2)$ defined by $Y_k(t,\lambda)$ in Section \ref{sc2} 
coincide with the monodromy data defined by $\Xi_k(x,\xi)$, that is, 
$(M^0,M^1,S_1,S_2)$ of \cite{Andreev-Kitaev}, in which $(M^0,M^1)$ is 
given by the analytic continuation of $\Xi_2(x,\xi)$ along 
the loops surrounding $\xi=0$ and $1$. 
Moreover consider another linear system of the form \cite[(1.1)]{S-2018}
\begin{equation}\label{3.1}
\frac{d\Psi}{d\lambda}=\Bigl(\frac{\sigma_3}2+\frac{A_0}{\lambda}+\frac{A_1}
{\lambda-x}\Bigr)\Psi
\end{equation} 
with $A_{0,\, 1}=x^{-\frac12 {\theta_{\infty}}\sigma_3}\mathcal{A}_{0,\,1} 
x^{\frac12{\theta_{\infty}}\sigma_3},$ in other words 
$A_{0,\, 1}=\mathcal{A}_{0,\,1}|_{u \mapsto 
u x^{-\theta_{\infty}}}.$ System \eqref{3.1} admits 
the canonical solution
$$
\Psi_k(x,\lambda)=(I+O(\lambda^{-1}))\exp(\tfrac12 (\lambda-\theta_{\infty}
\ln \lambda)\sigma_3)
$$
in the sector $|\arg \lambda-\pi/2-(k-2)\pi|<\pi$. Note that
${\Xi}_k^*(x,\xi):=x^{-\frac12{\theta_{\infty}}\sigma_3}\Xi_k(x,\xi)
x^{\frac12{\theta_{\infty}}\sigma_3}$ solves 
$$
\frac{d\Xi^*}{d\xi}=\Bigl(\frac x2\sigma_3+\frac{A_0}{\xi}+\frac{A_1}
{\xi-1}\Bigr)\Xi^*=x^{-\frac12{\theta_{\infty}}\sigma_3} \Bigl(\frac x2\sigma_3 
+\frac{\mathcal{A}_0} {\xi} +\frac{\mathcal{A}_0 }{\xi-1}\Bigr)
x^{\frac12{\theta_{\infty}}\sigma_3}\Xi^*,
$$
which is related to system \eqref{3.1} via $x\xi=\lambda$. 
Then we have $\Psi_k(x,\lambda)
=\Xi^*_k(x,\xi)x^{-\frac12{\theta_{\infty}}\sigma_3} 
=x^{-\frac12{\theta_{\infty}}\sigma_3}\Xi_k(x,\xi).$ This implies that
the monodromy data $(M^0,M^1, S_1, S_2)$ is also defined by the canonical
solution $\Psi_k(x,\lambda)$ of \eqref{3.1}, and that
the isomonodromy deformation of \eqref{3.1} is also governed by \eqref{1.2} with 
$(y,\mathfrak{z},u)$. 
Thus we arrive the following fact. 
\begin{prop}\label{prop3.0}
The isomonodromy system \eqref{1.1}, \eqref{2.1} and \eqref{3.1} 
containing, respectively,
$(y,\mathfrak{z},u)$, $(y,\mathfrak{z}, ue^{-2\varpi(t,\phi)})$ and
$(y,\mathfrak{z},ux^{-\theta_{\infty}})$ define the same monodromy data 
$(M^0,M^1,S_1,S_2)$. Here $y,$ $\mathfrak{z}$ and $u$ of each triplet 
solve \eqref{1.2}, which governs the isomonodromy deformation of these systems.
\end{prop}
\begin{rem}\label{rem3.0}
The elliptic asymptotics of Theorems \ref{thm2.1} and \ref{2.2} are derived
from isomonodromy system \eqref{2.1}. The trigonometric asymptotic solutions
on the positive real axis of Theorem \ref{thm2.3} and the related truncated
solutions of Theorem \ref{thm2.4} are based on system \eqref{1.1} 
\cite{Andreev-Kitaev}. The truncated solutions of Theorems \ref{thm2.6} and
\ref{thm2.8} arising from general solutions along the imaginary axis are
obtained by using isomonodromy system \eqref{3.1} in Section \ref{sc5}
\cite{S-2018}, and such general solutions follow also from system
\eqref{1.1} \cite{Andreev-Kitaev-2019}.
\end{rem}
\subsection{Some properties of the monodromy data}\label{ssc3.2}
The following fact is given by \cite[Proposition 2.1]{Andreev-Kitaev} 
for \eqref{1.1} without a proof, and is similarly verified for \eqref{1.1}
and \eqref{3.1}. 
\begin{prop}\label{prop3.1}
If $M^0\not=\pm I$ and $M^1\not=\pm I$, a canonical solution $Y_2(t,\lambda)$ 
of \eqref{2.1} defining $(M^0,M^1)$ is uniquely determined.
\end{prop}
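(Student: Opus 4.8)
The plan is to show that any two canonical solutions $Y_2$ and $\tilde{Y}_2$ of \eqref{2.1} (a priori belonging to possibly different instances of \eqref{2.1} sharing the same $(\theta_0,\theta_1,\theta_{\infty})$) that both yield the monodromy data $(M^0,M^1)$ must coincide, so that in particular the canonical solution attached to $(M^0,M^1)$ is unique. The tool is the matrix $P(\lambda):=\tilde{Y}_2(\lambda)Y_2(\lambda)^{-1}$, which I will argue is a rational matrix function with $P(\infty)=I$ and with possible poles only at $\lambda=\pm e^{i\phi}$; the hypotheses $M^0\ne I$ and $M^1\ne I$ will then remove those poles.

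First I would note that the Stokes matrices are already pinned down: by \eqref{2.2} one has $S_1(M^1M^0)S_2=e^{-\pi i\theta_{\infty}\sigma_3}$ with $S_1$ lower- and $S_2$ upper-unipotent, which (as recorded in \eqref{2.5}) determines $s_1,s_2$, hence all $S_k$ via \eqref{2.3}, from $(M^0,M^1)$ and $\theta_{\infty}$ alone. Therefore $Y_2$ and $\tilde{Y}_2$ carry the same Stokes matrices, so on the sector where $Y_k,\tilde{Y}_k$ are asymptotically normalized we have $P=\tilde{Y}_kY_k^{-1}$; and since analytic continuation of $Y_2$ and of $\tilde{Y}_2$ along $\hat{l}_0,\hat{l}_1$ is right multiplication by the same $M^0,M^1$, the matrix $P$ is single-valued. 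Thus $P$ is holomorphic and single-valued on $\C\setminus\{\pm e^{i\phi}\}$. Moreover, in each such sector the common factor $\exp(\tfrac14(t\lambda-2\theta_{\infty}\ln\lambda)\sigma_3)$ cancels between $\tilde{Y}_k$ and $Y_k^{-1}$, leaving $P=(I+O(\lambda^{-1}))(I+O(\lambda^{-1}))^{-1}\to I$ as $\lambda\to\infty$ in every direction. Finally, from the Fuchsian normal forms $Y_2=\hat{G}_{\iota}(\lambda)(\lambda\mp e^{i\phi})^{\Lambda_{\iota}}C_{\iota}$ (and likewise for $\tilde{Y}_2$) with $\hat{G}_{\iota}$ holomorphic and invertible near $\pm e^{i\phi}$, the product $P$ can have at worst a pole there; hence $P$ is rational with $P(\infty)=I$ and poles possibly only at $\pm e^{i\phi}$.

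It then remains to exclude a pole at $e^{i\phi}$, the point $-e^{i\phi}$ being symmetric. If $\theta_1\notin\Z$ then $\Lambda_1=\tilde{\Lambda}_1=\tfrac12\theta_1\sigma_3$, and single-valuedness of $P$ already forbids the off-diagonal blow-up $(\lambda-e^{i\phi})^{\pm\theta_1}$, so $P$ is holomorphic at $e^{i\phi}$. If $\theta_1\in\Z$, then $M^1$ equals $\pm(\text{unipotent})$; the hypothesis $M^1\ne I$ says this unipotent part is nontrivial, so the resonance at $e^{i\phi}$ is activated, the local solutions carry a genuine logarithm with a $2\times2$ nilpotent exponent that is essentially canonical, and comparing the two local structures again yields that $P$ is holomorphic at $e^{i\phi}$; were $M^1=I$, the singularity would be apparent and $P$ could legitimately be multiplied by $(\lambda-e^{i\phi})^{\pm1}$, which is precisely the source of non-uniqueness. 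With both potential poles removed, $P$ is a bounded entire matrix, so $P\equiv I$ and $\tilde{Y}_2=Y_2$.

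The step I expect to be the main obstacle is the resonant case $\theta_0\in\Z$ or $\theta_1\in\Z$ in the last paragraph: working out the local behaviour of $Y_2$ and $\tilde{Y}_2$ at $\pm e^{i\phi}$ when logarithms are present, and checking that the hypothesis $M^{\iota}\ne I$ — nontrivial local monodromy — genuinely prevents $P$ from acquiring a pole there. For a fixed instance of \eqref{2.1} the uniqueness is cheaper: two canonical $Y_2$ then differ by a constant right factor $C$, and since the sector $|\arg\lambda-\pi/2|<\pi$ has angular width $2\pi$ it meets both half-planes $\re(t\lambda)>0$ and $\re(t\lambda)<0$, which forces the off-diagonal entries of $C$ to vanish while the normalization gives $C_{11}=C_{22}=1$, so $C=I$.
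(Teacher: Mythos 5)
Your skeleton is the same as the paper's: set $P=\tilde{Y}_2Y_2^{-1}$, use equality of the monodromy data to make $P$ single-valued and holomorphic at $\lambda=\pm e^{i\phi}$, note $P\to I$ at infinity, and conclude by Liouville. Your observation that \eqref{2.2} and \eqref{2.5} pin down $s_1,s_2$ (hence all $S_k$ via \eqref{2.3}) from $(M^0,M^1)$, so that $P=\tilde{Y}_kY_k^{-1}\to I$ in every direction, is a clean substitute for the paper's Phragm\'en--Lindel\"of step, which only uses the asymptotics in the single sector $|\arg\lambda-\pi/2|<\pi$. But the crux of the proposition is the holomorphy of $P$ at the finite singular points, and you establish it only in the non-resonant case $\theta_1\notin\mathbb{Z}$ (where the hypothesis is automatic, since $M^1$ is then conjugate to $e^{\pi i\theta_1\sigma_3}\neq\pm I$). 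In the resonant case $\theta_1\in\mathbb{Z}$ you assert that the logarithmic local structure is ``essentially canonical'' and that ``comparing the two local structures again yields that $P$ is holomorphic'', and you yourself flag this as the expected obstacle. That is exactly where the paper does its work: it writes $Y_2=G_0(I+O(\lambda_-))\lambda_-^{\frac 12\theta_0\sigma_3+L}C_0$ and $\tilde{Y}_2=\tilde{G}_0(I+O(\lambda_-))\lambda_-^{\frac 12\theta_0\sigma_3+\tilde L}\tilde C_0$ with $L,\tilde L$ strictly triangular, splits into the cases $\theta_0\notin\mathbb{Z}$ with $L=\tilde L=0$, $\theta_0\in\mathbb{Z}_{\ge 0}$ with $l_{12}\tilde l_{12}\neq 0$, and $\theta_0\in\mathbb{Z}_{\le 0}$ with $l_{21}\tilde l_{21}\neq 0$, deduces from $C_0^{-1}e^{\pi i\theta_0\sigma_3}e^{2\pi iL}C_0=\tilde C_0^{-1}e^{\pi i\theta_0\sigma_3}e^{2\pi i\tilde L}\tilde C_0$ that $C_0\tilde C_0^{-1}$ is diagonal, or diagonal plus a triangular matrix of the same type, and only then verifies that the conjugation by $\lambda_-^{\frac 12\theta_0\sigma_3}$ produces no pole. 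Without some version of this computation your resonant case is unproven.

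Moreover, the specific inference you rely on there is false as stated: if $\theta_1$ is an odd integer, $M^1=-I$ satisfies $M^1\neq I$ yet has trivial unipotent part, no logarithm is forced, and uniqueness genuinely fails --- this is the situation of Remark \ref{rem2.7}, and it is why the surrounding text works on the complement of $\mathcal{M}_0=\{M^0=\pm I\ \text{or}\ M^1=\pm I\}$. The hypothesis actually needed, and the one the paper's proof uses throughout (``$M^0=\pm I$ if and only if $\theta_0\in\mathbb{Z}$ and $L=\tilde L=0$''), is $M^0,M^1\neq\pm I$; under the literal hypothesis $M^{0},M^{1}\neq I$ your argument in the odd-integer subcase cannot be repaired, because the conclusion itself fails there. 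So: keep your treatment of infinity, but the resonant local analysis must be carried out explicitly and the hypothesis strengthened to $\neq\pm I$.
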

\begin{proof}
Let 
$$
Y_2(\lambda),\,\,\, \tilde{Y}_2(\lambda)=(I+O(\lambda)) \exp(\tfrac 14
(t\lambda - 2\theta_{\infty}\ln\lambda)\sigma_3), \quad |\arg\lambda-\pi/2|<\pi
$$ 
be canonical solutions related to $(y,\mathfrak{z},u)$ and 
$(\tilde{y},\tilde{\mathfrak{z}},\tilde{u})$, respectively, and suppose that 
$Y_2(\lambda)$ and $\tilde{Y}_2(\lambda)$ yield the same monodromy data 
$(M^0,M^1).$ Let
$$
Y_2(\lambda)=G_0(I+O(\lambda_-))\lambda_-^{\frac12 {\theta_0}\sigma_3+L}C_0,
 \,\,\,\, 
\tilde{Y}_2(\lambda)=\tilde{G}_0(I+O(\lambda_-))\lambda_-^{\frac12 {\theta_0}
\sigma_3+\tilde{L}}\tilde{C}_0
$$ 
with $\lambda_-=\lambda+e^{i\phi}$ around $\lambda=-e^{i\phi},$ 
where $G_0, \tilde{G_0}, C_0, \tilde{C_0}\in \mathrm{SL}_2(\mathbb{C})$, 
and $L=(l_{ij})$ and $\tilde{L}=(\tilde{l}_{ij})$ are upper-triangular
(respectively, lower-triangular) matrices such that diagonal entries vanishes. 
Then $M^0=C_0^{-1}e^{\pi i\theta_0\sigma_3}e^{2\pi i L}C_0
=\tilde{C}_0^{-1}e^{\pi i\theta_0\sigma_3}e^{2\pi i \tilde{L}}\tilde{C}_0$.
Note that $M^0=\pm I$
if and only if $\theta_0\in \mathbb{Z}$ and $L=\tilde{L}=0.$ Hence $M^0\not=\pm
I$ implies one of the following: (a) $\theta_0\not\in\mathbb{Z}$ and $L=\tilde
{L}=0;$ (b) $\theta_0 \in \mathbb{Z}_{\ge 0}$ and $l_{12}\tilde{l}_{12}\not=0;$
or (c) $\theta_0 \in \mathbb{Z}_{\le 0}$ and $l_{21}\tilde{l}_{21}\not=0.$ Then
the relation $(C_0\tilde{C}_0^{-1})^{-1}e^{\pi i\theta_0}e^{2\pi iL} 
C_0\tilde{C}_0^{-1}=e^{\pi i\theta_0}e^{2\pi i\tilde{L}}$ leads to 
$C_0\tilde{C}_0^{-1}=\mathrm{diag}[\alpha,\alpha^{-1}]$ or 
$\mathrm{diag}[\alpha,\alpha^{-1}]+L_*$ with some $\alpha\not=0$ and 
some upper- or lower-triangular matrix $L_*$ of the same form as in
(b) or (c). By this fact 
$$
Y_2(\lambda)\tilde{Y}_2(\lambda)^{-1}=G_0(I+O(\lambda_-))\lambda_-^{\frac 12
{\theta_0}\sigma_3}e^{2\pi iL}C_0\tilde{C}_0^{-1}e^{-2\pi i\tilde{L}}\lambda_-
^{-\frac12 {\theta_0}\sigma_3}(I+O(\lambda_-))\tilde{G}_0^{-1}
$$
is holomorphic around $\lambda= -e^{i\phi}.$ Similarly $Y_2(\lambda)\tilde{Y}_2
(\lambda)^{-1}$ is holomorphic around $\lambda=e^{i\phi}$ if $M^1\not=\pm I$. 
Observing that
$Y_2(\lambda)\tilde{Y}_2(\lambda)^{-1}=I+O(\lambda^{-1})$ as $\lambda\to
\infty$ in $|\arg\lambda-\pi/2|<\pi$, and using the Phragm\'en-Lindel\"of
reasoning, we conclude that $Y_2(\lambda)=\tilde{Y}_2(\lambda).$
\end{proof}
\begin{prop}\label{prop3.2}
Suppose that $\theta_0+\theta_1+\theta_{\infty}\in -2\mathbb{N}\cup\{0\}$
(respectively, $\theta_0-\theta_1-\theta_{\infty}\in 2\mathbb{N}\cup\{0\}$) and 
$\theta_0,$ $\theta_1\not\in \mathbb{Z}.$ 
Then, for system \eqref{3.1}, there exist no monodromy data $(M^0,M^1)$ 
such that $m^{0}_{11}=e^{\pi i\theta_0}$ (respectively, 
$m^0_{11}=e^{-\pi i\theta_0}$), $m^{1}_{11}=e^{\pi i\theta_1}$ and
$m^0_{12}=m^1_{12}=0.$
\end{prop}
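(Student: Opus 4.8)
The plan is to argue by contradiction, converting the reducibility of $(M^0,M^1)$ into a rigidity statement about the linear system \eqref{1.1}. Suppose such monodromy data arose, i.e.\ were the data of \eqref{1.1} for some $(y,\mathfrak{z},u)$ solving \eqref{1.2}; since triangularity and the diagonal entries are invariant under the diagonal gauge relation $\sim$, I would work directly with \eqref{1.1} in the variable $\xi$, whose monodromy and Stokes matrices are diagonal conjugates of $M^0,M^1,S_k$ (cf.\ Section \ref{ssc3.1}). So $M^0,M^1$ are lower triangular, with $m^1_{11}=e^{\pi i\theta_1}$ and $m^0_{11}=e^{\pi i\theta_0}$ (respectively $m^0_{11}=e^{-\pi i\theta_0}$).

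First I would trivialise the even Stokes matrices. As $M^0,M^1$ are lower triangular, so is $M^1M^0$, and \eqref{2.2} forces $S_1^{-1}e^{-\pi i\theta_\infty\sigma_3}S_2^{-1}$ to be lower triangular; its $(1,2)$-entry equals $-s_2e^{-\pi i\theta_\infty}$, so $s_2=0$, i.e.\ $S_2=I$, and then \eqref{2.3} gives $S_{2l}=I$ for all $l$ while the $S_{2l+1}$ are lower triangular. Hence every Stokes matrix fixes $e_2=(0,1)^{\mathrm T}$, so the second column $\vec y(\xi)$ of $\Xi_2(x,\xi)$ equals that of every $\Xi_k(x,\xi)$; in particular $\vec y$ carries no Stokes jump at infinity and $\vec y(\xi)=e^{-x\xi/2}\xi^{\theta_\infty/2}(e_2+O(\xi^{-1}))$ uniformly as $\xi\to\infty$. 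Also, since $m^0_{12}=m^1_{12}=0$, the loops around $\xi=0,1$ scale $\vec y$ by $m^0_{22}=(m^0_{11})^{-1}$ and $m^1_{22}=(m^1_{11})^{-1}$; as $\theta_0,\theta_1\notin\mathbb{Z}$, near $\xi=0$ one has $\vec y(\xi)=\xi^{\mu_0}(\vec v_0+O(\xi))$ with $\vec v_0\neq0$ and $\mu_0=-\tfrac12\theta_0$ (resp.\ $+\tfrac12\theta_0$), and near $\xi=1$, $\vec y(\xi)=(\xi-1)^{-\theta_1/2}(\vec v_1+O(\xi-1))$ with $\vec v_1\neq0$.

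Next I would regularise and invoke Liouville. Put $\vec g(\xi):=\xi^{-\mu_0}(\xi-1)^{\theta_1/2}e^{x\xi/2}\vec y(\xi)$ for a fixed choice of branches. Then $\vec g$ is holomorphic and nonvanishing at $\xi=0$ and $\xi=1$, holomorphic at every other finite point (the only singularities of \eqref{1.1} being $0$ and $1$), and single-valued around $\xi=\infty$ (along a large loop the prefactor and $\vec y$ acquire the reciprocal scalars $(m^0_{22}m^1_{22})^{-1}$ and $m^0_{22}m^1_{22}$); hence $\vec g$ is entire on $\mathbb{C}$, with $\vec g(\xi)=\xi^{N}(e_2+O(\xi^{-1}))$ as $\xi\to\infty$, where $N=-\mu_0+\tfrac12\theta_1+\tfrac12\theta_\infty$, namely $N=\tfrac12(\theta_0+\theta_1+\theta_\infty)$ in the first case and $N=-\tfrac12(\theta_0-\theta_1-\theta_\infty)$ in the other. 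Under the hypotheses $\re N\le0$, so $\vec g$ is bounded and hence constant. If $\re N<0$ that constant equals $\lim_{\xi\to\infty}\vec g(\xi)=0$, contradicting $\vec g(0)=(-1)^{\theta_1/2}\vec v_0\neq0$. The only remaining possibility is $\theta_0+\theta_1+\theta_\infty=0$ in the first case: then $\vec g\equiv e_2$, so $\vec y(\xi)=e_2\,\xi^{-\theta_0/2}(\xi-1)^{-\theta_1/2}e^{-x\xi/2}$, and substituting into \eqref{1.1} and comparing $e_1$-components forces $\mathcal{A}_0e_2=-\tfrac12\theta_0e_2$, i.e.\ $\mathfrak{z}=0$ and $u\theta_0=0$, impossible since $u\neq0$ and $\theta_0\notin\mathbb{Z}$. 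This completes the contradiction.

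The step I expect to be the main obstacle is trivialising the Stokes data: it is exactly the vanishing of the even Stokes matrices, forced by reducibility through \eqref{2.2}--\eqref{2.3}, that removes the Stokes jumps at infinity and makes $\vec g$ genuinely entire of polynomial growth, so that Liouville applies; the rest is then routine apart from the borderline case $\theta_0+\theta_1+\theta_\infty=0$, handled via the rigid parametrised form of $\mathcal{A}_0$.
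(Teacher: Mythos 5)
Your argument is correct, and it reaches the conclusion by a route that differs from the paper's in its two key technical steps. The paper works with the single canonical solution $Y_2$ of the symmetric system \eqref{2.1}: it extracts the $(1,2)$-entry $\psi_{12}$, uses the local connection matrices at $\lambda=\pm e^{i\phi}$ (where $c^0_{12}=c^1_{12}=0$ follows from $\theta_0,\theta_1\notin\mathbb Z$) to see that $\psi_{12}$ times the power prefactors is entire, and then — since the canonical asymptotics are only available in the one sector $|\arg\lambda-\pi/2|<\pi$ of opening $2\pi$ — invokes a Phragm\'en--Lindel\"of argument to get boundedness and Liouville. You instead first trivialise the Stokes data: the triangularity of $(M^0,M^1)$ together with \eqref{2.2}, \eqref{2.5} forces $s_2=0$, and \eqref{2.3} then makes all even Stokes matrices the identity, so the second column of $\Xi_2$ has the canonical behaviour in \emph{every} direction and plain Liouville suffices, with no Phragm\'en--Lindel\"of step. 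The price of using the whole second column rather than the $(1,2)$-entry is that you lose the extra factor $\lambda^{-1}$ coming from the off-diagonal part of $I+O(\lambda^{-1})$, which in the paper kills the borderline case $\theta_0+\theta_1+\theta_\infty=0$ automatically; you must treat that case separately, and you do so correctly via the rigidity of the parametrised form of $\mathcal{A}_0$ (the relation $\mathcal{A}_0e_2=-\tfrac12\theta_0e_2$ — obtained most cleanly by comparing residues of both components at $\xi=0$, not only the $e_1$-component — gives $\mathfrak z=0$ and $u\theta_0=0$, impossible). So: same Liouville-type rigidity at the core, but your Stokes-trivialisation lemma replaces the paper's Phragm\'en--Lindel\"of estimate, at the cost of an extra borderline case.
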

\begin{proof}
Set $x^{-\theta_{\infty}}u =1$ in \eqref{3.1}.
Let $\Psi_2(\lambda)=(I+O(\lambda^{-1}))e^{\frac12 {\lambda}\sigma_3}
\lambda^{\frac 12(\theta_0+\theta_1)\sigma_3}\lambda^{(N-1)\sigma_3}$ in 
$|\arg\lambda-\pi/2|<\pi$
be the canonical solution of \eqref{3.1} with $\theta_0+\theta_1
+\theta_{\infty}=-2(N-1)$ for $N\in\mathbb{N}$. Suppose that there exist the 
monodromy data $(M^0,M^1)$ as in the proposition given by the analytic
continuation of $\Psi_2(\lambda).$ Write, around $\lambda =0$ and $\lambda=x$,  
$$
\Psi_2(\lambda)=G_0 (I+O(\lambda))\lambda^{\frac12 {\theta_0}
\sigma_3} C_0,
\quad
\Psi_2(\lambda)=G_x (I+O(\lambda_x))\lambda_x^{\frac12 {\theta_1}
\sigma_3} C_x,
$$ 
where $\lambda_{x}=\lambda- x$,
$C_0=(c^0_{ij}),$ $C_x=(c^x_{ij})\in \mathrm{SL}_2(\mathbb{C}),$ 
and $G_0=(g^0_{ij})$, $G_x=(g^x_{ij})\in \mathrm{SL}_2(\mathbb{C})$ 
may be chosen such that $g^0_{12}=1/\theta_0,$ $g^0_{22}=1,$
$g^x_{12}=y/\theta_1,$ $g^x_{22}=1.$ Then,
by $M^0=C_0^{-1}e^{\pi i\theta_0\sigma_3} C_0$ and
$M^1=C_x^{-1}e^{\pi i\theta_1\sigma_3} C_x$ with $\theta_0,
\theta_1 \not\in \mathbb{Z},$ we have $c^0_{12}=c^x_{12}=0.$
Denote by $\psi_{12}(\lambda)$ the (1,2)-entry of $\Psi_2(\lambda)$, which is 
expressed as  
$$
\psi_{12}(\lambda)=(c_0^*\lambda^{-1}+O(\lambda^{-2})) e^{-\lambda/2}
\lambda^{-(\theta_0+\theta_1)/2}\lambda^{-N+1}
$$ 
as $\lambda\to \infty$ in $|\arg \lambda-\pi/2|<\pi$ for some $c_0^*$, and  
$$
\psi_{12}(\lambda)=c^0_{22} ( 1/\theta_0+O(\lambda) )
\lambda^{- {\theta_0}/2}, \quad
\psi_{12}(\lambda)=c^x_{22} (y/\theta_1+O(\lambda_x))
\lambda_x^{- {\theta_1}/2}, 
$$ 
around $\lambda=0$ and $x$. Then the function $\psi_0(\lambda)=\psi_{12}
(\lambda)e^{\lambda/2}\lambda^{ {\theta_0}/2}
\lambda_x^{ {\theta_1}/2}$ is holomorphic around 
$\lambda=0$ and $x$, and is entire on $\mathbb{C}$. 
In $|\arg \lambda-\pi/2|<\pi$ near 
$\lambda=\infty$, we have $\psi_0(\lambda)=O(\lambda^{-N})$. By the 
Phragm\'en-Lindel\"of reasoning, $\psi_0(\lambda)$ is bounded on $\mathbb{C}$,
and hence $\psi_0(\lambda)\equiv 0$, which contradicts  the behaviour of
$\psi_{12}(\lambda)$ around $\lambda=0$ and $x$.
\end{proof}
\begin{rem}\label{rem3.1}
Supposing $\theta_0,$ $\theta_1 \not\in \mathbb{Z},$ we may show the following
in a similar manner:
\par
(1) if $\theta_0-\theta_1+\theta_{\infty}\in -2\mathbb{N}\cup\{0\}$ 
(respectively, $\theta_0+\theta_1-\theta_{\infty}\in 2\mathbb{N}$), then 
the case $m^0_{11}=e^{\pi i\theta_0}$ (respectively, $=e^{-\pi i\theta_0})$, 
$m^1_{11}=e^{-\pi i \theta_1},$ $m^0_{12}=m^1_{12}=0$ does not occur.
\par
(2) if $\theta_0-\theta_1-\theta_{\infty}\in -2\mathbb{N}\cup\{0\}$ 
(respectively, 
$\theta_0+\theta_1-\theta_{\infty}\in -2\mathbb{N}\cup\{0\}$), then 
the case $m^1_{11}=e^{\pi i\theta_1}$ (respectively, $=e^{-\pi i\theta_1})$, 
$m^0_{11}=e^{-\pi i \theta_0},$ $m^0_{21}=m^1_{21}=0$ does not occur.
\par
(3) if $\theta_0+\theta_1+\theta_{\infty}\in 2\mathbb{N}$ (respectively, 
$\theta_0-\theta_1+\theta_{\infty}\in 2\mathbb{N}\cup\{0\}$), then 
the case $m^1_{11}=e^{\pi i\theta_1}$ (respectively, $=e^{-\pi i\theta_1})$, 
$m^0_{11}=e^{\pi i \theta_0},$ $m^0_{21}=m^1_{21}=0$ does not occur.
\end{rem}
\section{Proofs of Theorems \ref{thm2.14} and \ref{thm2.15}}\label{sc4}
\subsection{Proof of Theorem \ref{thm2.14}}\label{sc4.1}
Let 
$$
\mathcal{E}(\lambda,\phi,y(x),\mathfrak{z}(x), \hat{u}(x),Y_k(\lambda), 
\theta_{\infty})=0, \quad x=t e^{i\phi}
\phantom{-------}
$$
denote the isomonodromy system \eqref{2.1} admitting the canonical solution
$Y_k(\lambda)=Y_k(t,\lambda)$ such that
$Y_k(\lambda)=(I+O(\lambda^{-1}))\exp(\tfrac 14(t\lambda-2\theta_{\infty}
\ln\lambda)\sigma_3)$ as $\lambda\to \infty$ in the sector $|\arg\lambda
-(k-2)\pi-\pi/2|<\pi.$ For $(M^0,M^1)$ the solution $y(x)=y(x,M^0,M^1)$ 
is defined by 
\begin{equation}\label{4.0}
\mathcal{E}(\lambda,\phi,y(x),\mathfrak{z}(x), \hat{u}(x),Y_2(\lambda), 
\theta_{\infty})=0, \quad Y_2(\lambda)|_{\hat{l}_{0,1}\ni\lambda}
 =Y_2(\lambda)M^{0,\,1}
\end{equation}
for $ |\phi|<\pi/2,$ $ |\arg\lambda-\pi/2|<\pi$.
Here read $|_{\hat{l}_{0,1}\ni\lambda}$ as the result of analytic 
continuation along each loop of $\hat{l}_0$ and $\hat{l}_1$.  
For $p\in \mathbb{Z}$ the replacement $(\lambda,\phi,x) \mapsto
(e^{2\pi ip}\lambda, 2\pi p+\phi, e^{2\pi ip}x)$ 
with $ |\phi|<\pi/2,$ $|\arg\lambda-\pi/2|<\pi$ leads to
$$
\mathcal{E}(e^{2\pi ip}\lambda,2\pi p+\phi,y(e^{2\pi ip}x),Y_2(e^{2\pi ip}
\lambda))=0, \quad Y_2(e^{2\pi ip}\lambda)|_{
\hat{l}^p_{0,1}\ni e^{2\pi ip}\lambda } =Y_2(e^{2\pi ip}\lambda)M^{0,\,1},
$$
in which
$\hat{l}^p_{0,1}:= e^{2\pi ip}\hat{l}_{0,1}$.  
The entries $\mathfrak{z}(e^{2\pi ip}x),$ $\hat{u}(e^{2\pi ip}x)$ and 
$\theta_{\infty}$ are omitted from the expression of $\mathcal{E}$.
By using $Y_2(e^{2\pi ip}\lambda)
=Y_{2-2p}(\lambda)e^{-\pi i\theta_{\infty}p\sigma_3}$, this is written 
in the form
$$
\mathcal{E}(\lambda,\phi,y(e^{2\pi ip}x),Y_{2-2p}(\lambda))=0, \quad 
Y_{2-2p}(\lambda)|_{\hat{l}_{0,1}\ni\lambda} 
=Y_{2-2p}(\lambda)e^{-\pi i\theta_{\infty}p\sigma_3}M^{0,\,1}
e^{\pi i\theta_{\infty}p\sigma_3}
$$
for $|\phi|<\pi/2,$ $|\arg\lambda-\pi/2|<\pi.$ 
If $p \ge 1$, noticing that $Y_{2-2p}(\lambda)S_{2-2p}S_{3-2p}
\cdots S_1=Y_2(\lambda)$ and setting
$$
M^{0,\,1}_{(p)}:=(S_{2-2p}S_{3-2p}\cdots S_0S_1)^{-1}e^{-\pi i\theta_{\infty}p
\sigma_3}M^{0,\,1}e^{\pi i\theta_{\infty}p\sigma_3}S_{2-2p}S_{3-2p}\cdots S_0S_1,$$
we have, for $|\phi|<\pi/2,$ $|\arg\lambda-\pi/2|<\pi,$ 
$$
\mathcal{E}(\lambda,\phi,y(e^{2\pi ip}x),Y_{2}(\lambda))=0, \quad 
Y_{2}(\lambda)|_{\hat{l}_{0,1}\ni\lambda}
 =Y_{2}(\lambda)M^{0,\,1}_{(p)},
$$
which implies $y(e^{2\pi ip}x, M^0,M^1)=y(x,M^0_{(p)},M^1_{(p)})$.
If $p \le -1$, let
$$
M^{0,\,1}_{(p)}:=S_{2}S_{3}\cdots S_{1-2p}e^{-\pi i\theta_{\infty}p
\sigma_3}M^{0,\,1}e^{\pi i\theta_{\infty}p\sigma_3}(S_2S_3\cdots S_{1-2p})^{-1}.
$$
Then by $Y_2(\lambda)S_2S_3\cdots S_{1-2p}=Y_{2-2p}(\lambda)$ the same relation
follows. By using \eqref{2.3} it is easily verified that  
$M^{0,\,1}_{(p)}=e^{-\pi i\theta_{\infty}p\sigma_3} U_p^{-1}M^{0,\,1}U_p
e^{\pi i\theta_{\infty}p\sigma_3},$ where $U_p$ is such that
$$
U_p :=\begin{cases}  S_2 S_3 \cdots S_{2p}S_{2p+1} \quad & \text{if $p\ge 1$;}
\\
 I \quad & \text{if $p=0$;}
\\
 S_1^{-1}S_0^{-1} \cdots S_{2p+1}^{-1}S_{2p+2}^{-1} \quad &\text{if $p\le -1$.}
\end{cases}
$$  
Combining this expression of $M^{0,\,1}_{(p)}$ with the following fact 
we immediately obtain the conclusion of Theorem \ref{thm2.14}.
\begin{lem}\label{lem4.2}
For every $p\in \mathbb{Z}$, $U_p=(M^1M^0)^{-p}e^{-\pi i
 \theta_{\infty}p\sigma_3}.$
\end{lem}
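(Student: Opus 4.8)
The plan is to derive everything from the two structure relations \eqref{2.2}, \eqref{2.3} already in hand, together with the trivial telescoping identities $Y_{k+1}=Y_kS_k$ that define the Stokes matrices. First I would put \eqref{2.2} in a form symmetric under the index shift of \eqref{2.3}: since \eqref{2.3} with $k=0$ reads $S_2=e^{\pi i\theta_{\infty}\sigma_3}S_0e^{-\pi i\theta_{\infty}\sigma_3}$, one has $e^{-\pi i\theta_{\infty}\sigma_3}S_2^{-1}=S_0^{-1}e^{-\pi i\theta_{\infty}\sigma_3}$, so \eqref{2.2} becomes
\begin{equation*}
M^1M^0=S_1^{-1}S_0^{-1}e^{-\pi i\theta_{\infty}\sigma_3},\qquad\text{equivalently}\qquad (M^1M^0)^{-1}e^{-\pi i\theta_{\infty}\sigma_3}=S_2S_3,
\end{equation*}
the second form obtained by inverting and using \eqref{2.3} with $k=1$. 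In particular $U_{\pm1}$ (which equal $S_2S_3$ and $S_1^{-1}S_0^{-1}$) and $U_0=I$ already satisfy the asserted formula.

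For general $p$ I would induct, exploiting that $U_p$ is a growing product and that \eqref{2.3} iterated gives $S_{k+2m}=e^{m\pi i\theta_{\infty}\sigma_3}S_ke^{-m\pi i\theta_{\infty}\sigma_3}$ for all $m\in\mathbb{Z}$, i.e. conjugation by $e^{\pi i\theta_{\infty}\sigma_3}$ shifts the Stokes index by $2$. From the definition, $U_{p+1}=U_p\,S_{2p+2}S_{2p+3}$ for $p\ge0$, and by the shift property $S_{2p+2}S_{2p+3}=e^{p\pi i\theta_{\infty}\sigma_3}(S_2S_3)e^{-p\pi i\theta_{\infty}\sigma_3}=e^{p\pi i\theta_{\infty}\sigma_3}(M^1M^0)^{-1}e^{-\pi i\theta_{\infty}\sigma_3}e^{-p\pi i\theta_{\infty}\sigma_3}$; substituting the inductive hypothesis $U_p=(M^1M^0)^{-p}e^{-\pi i\theta_{\infty}p\sigma_3}$ makes the inner exponentials cancel and yields $U_{p+1}=(M^1M^0)^{-(p+1)}e^{-\pi i\theta_{\infty}(p+1)\sigma_3}$. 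Symmetrically $U_{p-1}=U_p\,S_{2p+1}^{-1}S_{2p}^{-1}$ for $p\le0$, with $S_{2p+1}^{-1}S_{2p}^{-1}=e^{p\pi i\theta_{\infty}\sigma_3}(S_1^{-1}S_0^{-1})e^{-p\pi i\theta_{\infty}\sigma_3}=e^{p\pi i\theta_{\infty}\sigma_3}(M^1M^0)e^{\pi i\theta_{\infty}\sigma_3}e^{-p\pi i\theta_{\infty}\sigma_3}$, which together with the hypothesis gives $U_{p-1}=(M^1M^0)^{-(p-1)}e^{-\pi i\theta_{\infty}(p-1)\sigma_3}$. Running both recursions outward from $p=0$ finishes the proof.

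A more conceptual route, which I would also record, is that $Y_2(\lambda)U_p=Y_{2+2p}(\lambda)$ by telescoping $Y_{k+1}=Y_kS_k$, while the displayed relation $Y_2(e^{2\pi iq}\lambda)=Y_{2-2q}(\lambda)e^{-\pi i\theta_{\infty}q\sigma_3}$ with $q=-p$ gives $Y_{2+2p}(\lambda)=Y_2(e^{-2\pi ip}\lambda)e^{-\pi i\theta_{\infty}p\sigma_3}$; since the monodromy of $Y_2$ around $\lambda=\infty$ equals $M^1M^0$ (this is exactly the balanced form of \eqref{2.2} above), $Y_2(e^{-2\pi ip}\lambda)=Y_2(\lambda)(M^1M^0)^{-p}$, and cancelling $Y_2(\lambda)$ gives the claim. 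I expect the only mildly delicate part of either argument to be pure bookkeeping: keeping the range of Stokes indices appearing in $U_p$ correct (especially for $p\le-1$) and pushing the conjugating exponentials to the right side; there is no analytic or structural difficulty past \eqref{2.2} and \eqref{2.3}.
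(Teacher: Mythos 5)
Your proposal is correct and takes essentially the same route as the paper: the paper likewise derives from \eqref{2.2} and \eqref{2.3} a one-step recursion ($U_p=(M^1M^0)^{-1}U_{p-1}e^{-\pi i\theta_{\infty}\sigma_3}$, obtained by peeling $S_2$ off the front of $U_p$ and shifting the remaining Stokes indices down by $2$) and iterates it, while you peel the trailing pair $S_{2p+2}S_{2p+3}$ and conjugate it back to $S_2S_3=(M^1M^0)^{-1}e^{-\pi i\theta_{\infty}\sigma_3}$ — the same bookkeeping in mirror image, with the $p\le -1$ case spelled out where the paper leaves it implicit. Your alternative argument via $Y_2(\lambda)U_p=Y_{2+2p}(\lambda)$ and $Y_2(e^{2\pi iq}\lambda)=Y_{2-2q}(\lambda)e^{-\pi i\theta_{\infty}q\sigma_3}$ is also sound, but it is essentially the computation already embedded in the paper's proof of Theorem \ref{thm2.14} rather than a genuinely independent route.
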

\begin{proof}
For $p\ge 1$, by \eqref{2.3}, $e^{-\pi i\theta_{\infty}\sigma_3}(S^{-1}_2U_p)
e^{\pi i\theta_{\infty}\sigma_3}=e^{-\pi i\theta_{\infty}\sigma_3}(S_3\cdots
S_{2p+1})e^{\pi i\theta_{\infty}\sigma_3}=S_1S_2\cdots S_{2p-1}=S_1U_{p-1},$
which implies $S^{-1}_2U_p= e^{\pi i\theta_{\infty}\sigma_3}S_1U_{p-1}
e^{-\pi i\theta_{\infty}\sigma_3}.$
Then, by \eqref{2.2}, $S_1^{-1}e^{-\pi i\theta_{\infty}\sigma_3}S^{-1}_2U_p
=M^1M^0 U_p=U_{p-1}e^{-\pi i\theta_{\infty}\sigma_3},$ that is,
$ U_p=(M^1M^0)^{-1}U_{p-1}e^{-\pi i\theta_{\infty}\sigma_3}.$ 
This yields the desired formula.
\end{proof}
\subsection{Proof of Theorem \ref{thm2.15}}\label{ssc4.2}
Recall that $y(x)=y(x,M^0,M^1)$ is defined by \eqref{4.0}. 
For given $({M}^0_-,{M}^1_-)\in\mathcal{M}(\theta_0,\theta_1,-\theta_{\infty})$  
let $y_{-\theta_{\infty}}(x)=y_{-\theta_{\infty}}(x,{M}^0_-,{M}^1_-)$ 
be a solution of 
$(\mathrm{P}_{\mathrm{V}})_{(\theta_0,\theta_1,-\theta_{\infty})}$.
By using some $\mathfrak{z}_{-\theta
_{\infty}}(x)$ and $\hat{u}_{-\theta_{\infty}}(x)$ this fact is expressed as
\begin{align*}
&\mathcal{E}(\lambda,\phi,y_{-\theta_{\infty}}(x),\mathfrak{z}_{-\theta_{\infty}}
(x), \hat{u}_{-\theta_{\infty}}(x),Y_2(\lambda,-\theta_{\infty}), 
-\theta_{\infty})=0,
\\
&Y_2(\lambda,-\theta_{\infty})|_{ \hat{l}_{0,1}\ni \lambda}=Y_2(\lambda,
-\theta_{\infty}){M}_-^{0,\,1}
\end{align*}
for $|\phi|<\pi/2,$ $|\arg\lambda-\pi/2|<\pi.$ Replacement $(\lambda,\phi,x)
\mapsto (e^{-\pi i}\lambda ,\phi-\pi, e^{-\pi i}x)$ yields
\begin{align*}
&\mathcal{E}(e^{-\pi i}\lambda,\phi-\pi,y_{-\theta_{\infty}}(e^{-\pi i}x),
\mathfrak{z}_{-\theta_{\infty}}(e^{-\pi i}x), \hat{u}_{-\theta_{\infty}}
(e^{-\pi i}x),Y_2(e^{-\pi i}\lambda,-\theta_{\infty}), -\theta_{\infty})
\\
=&\mathcal{E}(\lambda,\phi,y_{-\theta_{\infty}}(e^{-\pi i}x),
\mathfrak{z}_{-\theta_{\infty}}(e^{-\pi i}x), \hat{u}_{-\theta_{\infty}}
(e^{-\pi i}x),Y_2(e^{-\pi i}\lambda,-\theta_{\infty}), -\theta_{\infty})=0,
\\
&Y_2(e^{-\pi i}\lambda,-\theta_{\infty})|_{
e^{-\pi i} \hat{l}_{0,1}\ni e^{-\pi i}\lambda}
=Y_2(e^{-\pi i}\lambda,-\theta_{\infty})|_{\hat{l}_{0,1}\ni \lambda}
=Y_2(e^{-\pi i}\lambda,-\theta_{\infty}){M}_-^{0,\,1}
\end{align*}
for $|\phi|<\pi/2,$ $|\arg\lambda-\pi/2|<\pi.$ Multiplication of 
both sides of $\mathcal{E}(\dots)$ by $\sigma_1$ combined with the substitution 
$(\mathfrak{z}_{-\theta_{\infty}}
(e^{-\pi i}x), \hat{u}_{-\theta_{\infty}}(e^{-\pi i}x)) =
(-\mathfrak{z}(x)-\theta_0, \hat{u}(x)^{-1})$ leads to
\begin{align}\notag
&\mathcal{E}(\lambda,\phi,y_{-\theta_{\infty}}(e^{-\pi i}x)^{-1},
\mathfrak{z}(x), \hat{u}(x),
Y_2(\lambda,\theta_{\infty}), \theta_{\infty})=0,
\\
\label{4.1}
&Y_2(\lambda,\theta_{\infty})|_{ \hat{l}_{0,1}\ni \lambda}
=Y_2(\lambda,\theta_{\infty})S_2e^{\frac{\pi i}2\theta_{\infty}\sigma_3}
\sigma_1{M}_-^{0,\,1}\sigma_1e^{-\frac{\pi i}2\theta_{\infty}\sigma_3}S_2^{-1},
\end{align}
because $\sigma_1Y_2(e^{-\pi i}\lambda,-\theta_{\infty})\sigma_1
=(I+O(\lambda^{-1}))
\exp(\tfrac 14(t\lambda-2\theta_{\infty}\ln\lambda)\sigma_3)e^{\frac{\pi i}2
\theta_{\infty}\sigma_3}$ for $|\arg\lambda -3\pi/2|<\pi$ and
$\sigma_1Y_2(e^{-\pi i}\lambda,-\theta_{\infty})\sigma_1
=Y_3(\lambda,\theta_{\infty})e^{\frac{\pi i}2\theta_{\infty}\sigma_3}
=Y_2(\lambda,\theta_{\infty})S_2e^{\frac{\pi i}2\theta_{\infty}\sigma_3}.$
Note that $1/y_{-\theta_{\infty}}(e^{-\pi i}x)$ solves $(\mathrm{P}_{\mathrm{V}}
)_{(\theta_0,\theta_1,\theta_{\infty})}$. For given $(M^0,M^1)\in \mathcal{M}
(\theta_0,\theta_1,\theta_{\infty})$ we may choose  
${M}_-^{0,\,1}$ in such a way that $1/y_{-\theta_{\infty}}
(e^{-\pi i}x,{M}_-^0,{M}_-^1)=y(x,M^0,M^1)$. Indeed, in \eqref{4.1}, setting
$M^{0,\,1}=S_2e^{\frac{\pi i}2\theta_{\infty}\sigma_3}\sigma_1{M}_-^{0,\,1}
\sigma_1e^{-\frac{\pi i}2\theta_{\infty}\sigma_3}S_2^{-1}$, 
that is,
$$
{M}_-^{0,\,1}=\sigma_1 e^{-\frac{\pi i}2\theta_{\infty}\sigma_3}S_2^{-1}
M^{0,\,1}S_2e^{\frac{\pi i}2\theta_{\infty}\sigma_3}\sigma_1=[\hat{M}^{0,\,1}]
_{\sigma_1},
$$
and comparing with \eqref{4.0},
we get $y(x,M^0,M^1)=1/y_{-\theta_{\infty}}(e^{-\pi i}x,
[\hat{M}^0]_{\sigma_1},[\hat{M}^1]_{\sigma_1} )$, which is the desired
formula.  
The second formula of Theorem \ref{thm2.15} is obtained by using
$(\lambda,\phi)\mapsto (e^{\pi i}\lambda,\phi+\pi).$
\section{Another kind of truncated solutions}\label{sc5}
The truncated solutions of Theorems \ref{thm2.6} and \ref{thm2.8} are derived 
from asymptotic solutions along the imaginary axis \cite{Andreev-Kitaev-2019,
S-2018}. Recall the isomonodromy linear system \eqref{3.1} 
\begin{equation*}
\frac{d\Psi}{d\lambda}=\Bigl(\frac{\sigma_3}{2}+\frac{A_0}{\lambda}
+\frac{A_1}{\lambda-x}\Bigr)\Psi, \quad A_{0,\,1}=\mathcal{A}_{0,\,1}|
_{u\mapsto  x^{-\theta_{\infty}}u},
\end{equation*}
which are related to system \eqref{1.1} via $\lambda=x\xi,$ $\Xi=x^{\frac12
{\theta_{\infty}}\sigma_3}\Psi$, and defines the same monodromy data $(M^0,M^1,
S_1,S_2)$ as for \eqref{2.1} and \eqref{1.1}.   
System \eqref{3.1} has the isomonodromy property if and only if
$(A_0,A_1)=(A_0(x),A_1(x))=(A_0,A_1)(x,y(x),\mathfrak{z}(x),x^{-\theta_{\infty}}
u(x))$ solves the Schlesinger-type equation    
\begin{equation}\label{5.00}
x\frac{dA_0}{dx}=[A_1, A_0],\quad x\frac{dA_1}{dx}=[A_0,A_1]+\frac x2[\sigma_3,
A_1],
\end{equation}
which is the compatibility condition of the Lax pair for \eqref{3.1}.
\subsection{General solution along the imaginary axis}\label{ssc5.1}
Equation \eqref{5.00} admits a family of solutions 
$\{(A_0(\mathbf{c},\sigma,x), A_1(\mathbf{c},\sigma,x))\,|\, (\mathbf{c},\sigma)
=(c_0,c_x,\sigma)\in B_0\times B_x \times B_* \}$ \cite[Theorem 2.1]{S-2018}, 
$B_0,$ $B_x \subset \mathbb{C}\setminus \{0\}$ and
$B_*\subset \mathbb{C}$ being given bounded domains.   
To rewrite $(A_0,A_1)$ suitably for our purpose, set
\begin{align}\label{5.b}
\begin{split}
&\rho_-:=\tfrac 14(\sigma+2\theta_0-\theta_{\infty}), \quad
\eta_-:=\tfrac 14(\sigma+2\theta_1+\theta_{\infty}), \quad
\\
&\rho_+:=\tfrac 14(-\sigma+2\theta_1-\theta_{\infty}), \quad
\eta_+:=\tfrac 14(-\sigma+2\theta_0+\theta_{\infty}), \quad
\end{split}
\\
\notag
&c:=c_x/c_0, \quad c^-:=c^{-1}=c_0/c_x,\quad  z_+=e^x x^{\sigma-1}, 
\quad z_-=e^{-x}x^{-\sigma-1}.
\end{align} 
The quantities $\gamma_{\pm}^0$ and $\gamma_{\pm}^x$ in \cite{S-2018} are 
written as
\begin{align*}
&\gamma_+^0:=\rho_-c_0=\tfrac 14 c_0(\sigma+2\theta_0-\theta_{\infty}), \quad
\gamma_-^0:=\eta_+c_0^{-1}= \tfrac 14c_0^{-1}
(-\sigma+2\theta_0+\theta_{\infty}), \quad
\\
&\gamma_+^x:=\rho_+c_x=\tfrac 14c_x(-\sigma+2\theta_1-\theta_{\infty}), \quad
\gamma_-^x:=\eta_-c_x^{-1}=\tfrac 14 c_x^{-1}(\sigma+2\theta_1+\theta_{\infty}),
\end{align*}
and
$\gamma_+^0\gamma_-^x= \rho_-\eta_-c^-,$ $\gamma_-^0 \gamma_+^x=\rho_+\eta_+c,$ 
$\gamma_+^0\gamma_-^0=\rho_-\eta_+,$ $ \gamma_+^x\gamma_-^x=\rho_+\eta_-.$
\par
Let
\begin{align*}
&A_0(\mathbf{c},\sigma,x)=f_0 \sigma_3+ f_+ \sigma_{12} +f_- \sigma_{21}, \quad
A_1(\mathbf{c},\sigma,x)=g_0 \sigma_3+ g_+ \sigma_{12} +g_- \sigma_{21}, \quad
\\
& \sigma_{12}=\tfrac 12(\sigma_1+i\sigma_2)= \begin{pmatrix} 0 & 1 \\
0 & 0 \end{pmatrix}, \quad \sigma_{21}=\tfrac 12(\sigma_1-i\sigma_2)
=\begin{pmatrix} 0 & 0 \\ 1 & 0 \end{pmatrix}.
\end{align*}
Then $f_0,$ $f_{\pm},$ $g_0,$ $g_{\pm}$ are given in \cite[Theorem 2.1]{S-2018},
and are rewritten in terms of $\rho_{\pm}$, $\eta_{\pm}$ as follows:
\begin{align*}
f_0+\tfrac 12 {\theta_0}=&
\rho_-+ \omega_0+\rho_+\eta_+cz_+[h_+]+\rho_-\eta_-c^{-}z_-[h_-],
\\
g_0+\tfrac 12 {\theta_1}=&
\rho_+- \omega_0-\rho_+\eta_+cz_+[h_+]-\rho_-\eta_-c^{-}z_-[h_-],
\\
\omega_0=&[x^{-2}]=-\tfrac 12((\sigma+\theta_{\infty})\rho_-\eta_+
+(\sigma-\theta_{\infty})\rho_+\eta_-)x^{-2}+[x^{-3}],
\\
c_0^{-1}x^{(\sigma+\theta_{\infty})/2}f_+=& \mathcal{F}_+:=
\rho_-(1+[x^{-1}]+2\rho_-\eta_-c^-z_-x^{-1}[h_-])
\\
&\phantom{----}
-\rho_+cz_+(\tfrac 12 (\sigma-\theta_{\infty})+[x^{-1}]+\rho_+\eta_+c
z_+[h_+]),
\\
c_x^{-1}e^{-x}x^{-(\sigma-\theta_{\infty})/2} g_+=&\mathcal{G}_+:=
\rho_+(1+[x^{-1}]+2\rho_+\eta_+cz_+x^{-1}[h_+])
\\
&\phantom{----}
-\rho_-c^{-}z_-(\tfrac 12 (\sigma+\theta_{\infty})+[x^{-1}]+\rho_-\eta_-c^{-}
z_-[h_-]),
\\
c_0x^{-(\sigma+\theta_{\infty})/2}f_-=&\mathcal{F}_-:=
\eta_+(1+[x^{-1}]+2\rho_+\eta_+cz_+x^{-1}[h_+])
\\
&\phantom{----}
-\eta_-c^-z_-(\tfrac 12 (\sigma-\theta_{\infty})+[x^{-1}]+\rho_-\eta_-c^-
z_-[h_-]),
\\
c_xe^{x}x^{(\sigma-\theta_{\infty})/2} g_-=&\mathcal{G}_-:=
\eta_-(1+[x^{-1}]+2\rho_-\eta_-c^-z_-x^{-1}[h_-])
\\
&\phantom{----}
-\eta_+cz_+(\tfrac 12 (\sigma+\theta_{\infty})+[x^{-1}]+\rho_+\eta_+cz_+[h_+])
\end{align*}
as $x\to \infty$ through the sector 
$|\arg x-\pi/2|<\pi/2$ with 
$|c^{\pm}z_{\pm}|=|c^{\pm}e^{\pm x}x^{\pm\sigma-1}|<r_0$. 
Here 
(1) $c_0,$ $c_x$, $c^+=c=c_x/c_0,$ $c^-=c_0/c_x$ and $\sigma$ are integration 
constants, and $r_0$ is a sufficiently small number; 
\par
(2) for $p\in \mathbb{Z}$, $[x^{-p}]$ denotes
a holomorphic function $\psi(x)$ admitting an asymptotic representation
$\psi(x)\sim x^{-{p}_0}\sum_{j\ge 0}\psi_j x^{-j}$ in $|\arg x-\pi/2|<\pi/2$, 
with some ${p}_0\ge p$ and  
$$
\psi_j\in \mathbb{Q}_*:=\mathbb{Q}[\theta_0,\theta_1,\theta_{\infty},\sigma,
\,\rho_-\eta_- c^-, \,\rho_+ \eta_+c, \,\rho_-\eta_+, \,\rho_+\eta_-],
$$
and $[x^{-p}]$ does not necessarily express the same function 
in each appearance; 
\par
(3) $[h_{\pm}]$ denote series of the forms 
$1+[x^{-1}]+\sum_{n=1}^{\infty}[x^{-n}](\rho_{\pm}\eta_{\pm}
c^{\pm}z_{\pm})^n$ convergent for $x$ such that $|\arg x-\pi/2|<\pi/2$ with  
$|c^{\pm}z_{\pm}|<r_0$, and does not necessarily express the same series 
in each appearance.
\begin{rem}\label{rem5.1}
In the expressions $f_0,$ $g_0$, $\mathcal{F}_{\pm}$, $\mathcal{G}_{\pm}$ above 
the coefficients $\psi_j$ of $[x^{-p}]\sim x^{-p}\sum_{j=0}
^{\infty}\psi_j x^{-j}$ are recursively determined by 
\cite[(4.6) and (4.7)]{S-2018}. Careful observation of these
relations leads to the fact that $\psi_j$'s belong to the ring of polynomials 
$\mathbb{Q}_*$ defined above, which are more precise expression than that of 
$\mathbb{Q}_*$ in \cite[Section 1]{S-2018}. 
\end{rem}
\begin{rem}\label{rem5.0}
Equation \eqref{5.00} also admits,
in the sector $|\arg x+\pi/2|<\pi/2,$ a solution 
$(A^{(-1)}_0(\mathbf{c},\sigma,x),A^{(-1)}_x(\mathbf{c},\sigma,x))$, 
whose entries admit the same 
asymptotic expressions as of $f_0,$ $g_0,$ $f_{\pm},$ $g_{\pm}$ 
\cite[Remark 2.6]{S-2018}. In what follows such entries in $|\arg x+\pi/2|<
\pi/2$ are also written in terms of the same symbols $f_0,$ $g_0,$ $f_{\pm},$
$g_{\pm}$, $\mathcal{F}_{\pm},$ $\mathcal{G}_{\pm}.$ 
\end{rem}
Then we have the following proposition, in which related monodromy data
are due to \cite[Corollary 2.13, Remark 2.15 and Section 2.3]{S-2018}. 
The simplified process of deriving monodromy data is described in Section 
\ref{sc7}, in which no conditions are imposed on $\theta_0$, $\theta_1$,
$\theta_{\infty}$ and $\sigma$ (cf. Remark \ref{rem7.1}).
\begin{prop}\label{prop5.1}
The ratio of holomorphic functions
\begin{align}\label{5.1}
y(c,\sigma,x)=&\frac{(f_0+\frac 12\theta_0)g_+}{(g_0+\frac12\theta_1)f_+}
=ce^x x^{\sigma} \frac{(f_0+\frac 12\theta_0)\mathcal{G}_+}{(g_0+\frac12\theta_1)
\mathcal{F}_+}
\\
\label{5.2}
=&\frac{(g_0-\frac 12\theta_1)f_-}{(f_0-\frac12\theta_0)g_-}
=ce^x x^{\sigma}\frac{(g_0-\frac 12\theta_1)\mathcal{F}_-}
{(f_0-\frac12\theta_0)\mathcal{G}_-}
\end{align}
is a general solution of $(\mathrm{P}_{\mathrm{V}})$ as $x\to\infty$ avoiding
the poles, i.e. the zeros of the denominator, through the
sector $|\arg x-\pi/2|<\pi/2$ (respectively, $|\arg x+\pi/2|<\pi/2$) with
$|c^{\pm}z_{\pm}|<r_0$. The monodromy data $(M^0,M^1)$ corresponding to 
$y(c,\sigma,x)$ are such that
\begin{align*}
&m^0_{21}=\frac{2\pi ie^{-\pi i\theta_{\infty}}c_0^{-1}}
{\Gamma(1-\frac {\sigma+2\theta_0-\theta_{\infty}}4)\Gamma(-\frac 
{\sigma-2\theta_0-\theta_{\infty}}4)}, \quad
&m^1_{12}=\frac{2\pi ic_x}
{\Gamma(1-\frac {\sigma+2\theta_1+\theta_{\infty}}4)\Gamma(-\frac 
{\sigma-2\theta_1+\theta_{\infty}}4)}, 
\end{align*}
and that
$m^1_{11}=e^{-\frac{\pi i}2(\sigma+\theta_{\infty})}$ 
(respectively, $m^0_{11}=e^{\frac{\pi i}2(\sigma-\theta_{\infty})}$). 
\end{prop}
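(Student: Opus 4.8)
The plan is to read Proposition~\ref{prop5.1} off the results of \cite{S-2018}: the function $y(c,\sigma,x)$ is the Painlev\'e~V transcendent attached, through the Jimbo--Miwa correspondence \cite{JM}, to the isomonodromic family $(A_0(\mathbf c,\sigma,x),A_x(\mathbf c,\sigma,x))$ of \cite[Theorem 2.1]{S-2018}, and its monodromy data are the ones computed in \cite[Corollary 2.13, Remark 2.15, Section 2.3]{S-2018}; what has to be done is to transcribe those statements into the present notation ($\rho_\pm,\eta_\pm,c$, and the monodromy matrices $(M^0,M^1)$ of system \eqref{2.1}).

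First I would check the four equivalent expressions. By construction $(A_0,A_x)=(\mathcal A_0,\mathcal A_1)$ after the rescaling $\xi=x\lambda$, and $y=\dfrac{(f_0+\frac12\theta_0)g_+}{(g_0+\frac12\theta_1)f_+}$ is the standard ratio extracting the Painlev\'e variable from the Schlesinger matrices, so this first equality is part of \cite[Theorem 2.1]{S-2018}. Substituting the scalings listed above, $f_+=c_0x^{-(\sigma+\theta_\infty)/2}\mathcal F_+$ and $g_+=c_xe^xx^{(\sigma-\theta_\infty)/2}\mathcal G_+$, gives $g_+/f_+=ce^xx^{\sigma}\mathcal G_+/\mathcal F_+$ and hence the second form; the $(f_-,g_-)$ expressions are obtained in the same way from $f_-=c_0^{-1}x^{(\sigma+\theta_\infty)/2}\mathcal F_-$, $g_-=c_x^{-1}e^{-x}x^{-(\sigma-\theta_\infty)/2}\mathcal G_-$, which give $f_-/g_-=ce^xx^{\sigma}\mathcal F_-/\mathcal G_-$. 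The equality $\dfrac{(f_0+\frac12\theta_0)g_+}{(g_0+\frac12\theta_1)f_+}=\dfrac{(g_0-\frac12\theta_1)f_-}{(f_0-\frac12\theta_0)g_-}$, i.e.\ $(f_0^2-\tfrac14\theta_0^2)\mathcal G_+\mathcal G_-=(g_0^2-\tfrac14\theta_1^2)\mathcal F_+\mathcal F_-$ after clearing denominators, is an algebraic identity among the entries of $A_0,A_x$ forced by the determinant normalisations $\det A_0=-\tfrac14\theta_0^2$, $\det A_x=-\tfrac14\theta_1^2$ inherited from \eqref{1.1} together with the residue relations of the Schlesinger system, and is recorded in \cite[Theorem 2.1]{S-2018}.

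Next, that $y(c,\sigma,x)$ is a general solution of $(\mathrm P_{\mathrm V})$. Since $(A_0,A_x)$ solves the Schlesinger-type system, which is equivalent to $(\mathrm P_{\mathrm V})$ \cite{JM}, the extracted ratio solves $(\mathrm P_{\mathrm V})$; the convergence of the series $[h_\pm]$ for $|c^\pm z_\pm|<r_0$ asserted in \cite[Theorem 2.1]{S-2018} makes numerator and denominator genuine holomorphic functions on $\{|\arg x-\pi/2|<\pi/2,\ |c^\pm z_\pm|<r_0\}$, so $y$ is meromorphic there with poles exactly at the zeros of the denominator, and the sector $|\arg x+\pi/2|<\pi/2$ is handled identically through the solution $(A_0^{(-1)},A_x^{(-1)})$ of Remark \ref{rem5.0}. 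The family depends on the two essential parameters $\sigma$ and $c=c_x/c_0$ (the separate scalings $c_0,c_x$ and the factor $\exp(-2\varpi)$ being absorbed into the single gauge parameter $\tilde u$), which matches the dimension of the solution space of $(\mathrm P_{\mathrm V})$; the monodromy formulas below identify it with a generically surjective branch of the Riemann--Hilbert map.

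Finally, the monodromy data, which is the only genuinely delicate step. Start from the monodromy matrices of the linear system with residues $A_0,A_x$: \cite[Corollary 2.13, Remark 2.15]{S-2018} give their off-diagonal connection coefficients as the Gamma-quotients $\bigl(\Gamma(1-\tfrac{\sigma+2\theta_0-\theta_\infty}4)\Gamma(-\tfrac{\sigma-2\theta_0-\theta_\infty}4)\bigr)^{-1}$ and $\bigl(\Gamma(1-\tfrac{\sigma+2\theta_1+\theta_\infty}4)\Gamma(-\tfrac{\sigma-2\theta_1+\theta_\infty}4)\bigr)^{-1}$ (up to $c_0,c_x$ and a gauge factor) and the relevant diagonal entries as the formal-exponent factors $e^{-\frac{\pi i}2(\sigma+\theta_\infty)}$, $e^{\frac{\pi i}2(\sigma-\theta_\infty)}$. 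One then follows the chain converting \eqref{1.1} (with $\xi=x\lambda$) into \eqref{2.1}: the rescaling $\xi=\tfrac12(e^{-i\phi}\lambda+1)$, the gauge $Y=\exp(-\varpi(t,\phi)\sigma_3)\Xi$, the conjugation by $\hat u^{\sigma_3/2}$ of Subsection \ref{ssc2.1}, and the identification of the loops $\hat l_0,\hat l_1$ based at $\hat p_{\mathrm{st}}$ ($\arg\hat p_{\mathrm{st}}=\pi/2$) with those of \cite{S-2018}. The two conjugations are diagonal, so they leave $m^0_{21},m^1_{12}$ unchanged except for absorbing $\exp(\mp2\varpi)$ and $u$ into the gauge parameter $\tilde u$, while multiplying the diagonal entries by the recorded phases; because the base point lies near $\lambda=e^{i\phi}$ as $\arg x\to\pi/2$ and near $\lambda=-e^{i\phi}$ as $\arg x\to-\pi/2$, the relevant entry is $m^1_{11}=e^{-\frac{\pi i}2(\sigma+\theta_\infty)}$ in the first limit and $m^0_{11}=e^{\frac{\pi i}2(\sigma-\theta_\infty)}$ in the second, yielding exactly the asserted $(M^0,M^1)$; consistency with \eqref{2.2} and \eqref{2.5} is checked against $\Gamma(z)\Gamma(1-z)=\pi/\sin\pi z$. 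The main obstacle is precisely this transport: one must track base points, loop orientations, and the branches of $\lambda^{\theta_0/2}$, $\lambda^{\theta_1/2}$, $\ln\lambda$, $x^{\sigma}$ through the composition of the two $\xi$-rescalings and the $\varpi$- and $\hat u$-gauges, since these branch choices are exactly what fixes the phases $e^{-\pi i\theta_\infty}$, $e^{-\frac{\pi i}2(\sigma+\theta_\infty)}$, $e^{\frac{\pi i}2(\sigma-\theta_\infty)}$ in $m^0_{21},m^1_{12},m^0_{11},m^1_{11}$, and making all of them come out simultaneously so that the pair lands on $\mathcal M(\theta_0,\theta_1,\theta_\infty)$ is the whole of the bookkeeping; the rest is a direct transcription of \cite{S-2018}.
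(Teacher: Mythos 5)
Your proposal follows essentially the paper's own route: Proposition \ref{prop5.1} is read off from \cite{S-2018} (the family $(A_0,A_x)$ of Theorem 2.1 there, the equality of \eqref{5.1} and \eqref{5.2} forced by $\det A_0=-\tfrac14\theta_0^2$, $\det A_x=-\tfrac14\theta_1^2$, and the monodromy data of Corollary 2.13, Remark 2.15 and Section 2.3), with the diagonal $\varpi$- and $\hat u$-conjugations absorbed into the gauge parameter $\tilde u$ exactly as in Section \ref{ssc3.1} and Remarks \ref{rem5.2}, \ref{rem5.21}. One small inaccuracy that does not affect the outcome: the reason $m^1_{11}$ (resp.\ $m^0_{11}$) is the entry pinned down as $\arg x\to\pi/2$ (resp.\ $\arg x\to-\pi/2$) is not the location of the base point $\hat p_{\mathrm{st}}$ (which is fixed), but that the asymptotic family valid in $|\arg x-\pi/2|<\pi/2$ determines $M^1$ completely while only partially determining $M^0$, and symmetrically for $(A_0^{(-1)},A_x^{(-1)})$ in $|\arg x+\pi/2|<\pi/2$.
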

Suppose that $\rho_+\rho_-\not=0$. If $\re \sigma <0$, then we have, in the
strip $|\re x|\ll 1$,
\begin{align*}
y=&ce^xx^{\sigma}\frac{\mathcal{G}_+}{g_0+\frac12\theta_1}\cdot 
\frac{f_0+\frac12\theta_0}{\mathcal{F}_+}
\\
=&ce^xx^{\sigma}
\Bigl(1+O(x^{-1})+\rho_-c^-e^{-x}x^{-\sigma-1}(1+O(x^{-\sigma-1}))\Bigr)
\\
&\phantom{---}\times
\Bigl(1+O(x^{-1})+\eta_-c^-e^{-x}x^{-\sigma-1}(1+O(x^{-\sigma-1}))\Bigr)
\end{align*}
and
\begin{equation*}
x^{-\theta_{\infty}} u=-\frac{f_+}{f_0+\frac12\theta_0}
=\frac{-c_0x^{-(\sigma+\theta_{\infty})/2}(1+O(x^{-1}))}
{1+O(x^{-1})+\eta_-c^-e^{-x}x^{-\sigma-1}(1+O(x^{-\sigma-1}))}
\end{equation*}
as $x \to \infty$ through the sector $|\arg x-\pi/2|<\pi/2$ (respectively,
$|\arg x+\pi/2|<\pi/2$), where the multiplier $x^{-\theta_{\infty}}$ is due to
Proposition \ref{prop3.0}. 
\begin{rem}\label{rem5.22}
By WKB analysis for system \eqref{1.1} with the matching procedure 
asymptotic solutions of (P$_{\mathrm{V}}$) along imaginary axes with 
the monodromy data are  
obtained by \cite[Theorems 1, 2 and 3]{Andreev-Kitaev-2019}. As $t\to \infty$
through the strip $|\re t|\ll 1,$ the asymptotic solutions are given by
\begin{align*}
yt=&\delta t^{\nu_1}e^t \Bigl(1-\frac{\phi-\frac{\theta_0}2}{\delta t^{\nu_1}e^t}
\Bigr)\Bigl(1-\frac{\phi-\frac{\theta_1+\theta_{\infty}}2}{\delta t^{\nu_1}e^t}
\Bigr)+O(t^{-3\nu_1-1}\ln t)+O(t^{\nu_1-1}\ln t) 
\\
u_{\mathrm{AK}}&=\frac{\hat{u}}{\delta}\cdot \frac{t^{2\phi}(
1+O(t^{-1}\ln t)+O(t^{-2\nu_1-1}\ln t))}
{1-\frac{\phi-\frac{\theta_1+\theta
_{\infty}}2}{\delta t^{\nu_1}e^t} +O(t^{-\nu_1-1}\ln t)+O(t^{-3\nu_1-1}\ln t)}
\end{align*}
with $\nu_1=1+\theta_{\infty}-4\phi$ such that $-1/2 <\re \nu_1<1$, 
where $\phi$, $\delta$ and $\hat{u}$ are integration constants. 
The corresponding monodromy data $(M^0,M^1)$ are such that
$$
m^1_{12}=\frac{-2\pi i\hat{u}}{\Gamma(1-\frac{\theta_1+\theta_{\infty}-2\phi}2)
\Gamma(\frac{\theta_1-\theta_{\infty}+2\phi}2)}, \quad
m^0_{21}=\frac{-2\pi i \delta e^{-\pi i\theta_{\infty}}}
{ \hat{u}\Gamma(1-\frac{\theta_0-2\phi}2)
\Gamma(\frac{\theta_0+2\phi}2)}, 
$$
and $m^1_{11}=e^{2\pi i\phi-\pi i\theta_{\infty}}$ if $\arg t\to \pi/2,$
$m^0_{11}=e^{-2\pi i\phi}$ if $\arg t\to -\pi/2.$ 
By setting $2\phi=\frac 12(\theta_{\infty}-\sigma)$, $\hat{u}=-c_x,$
$\hat{u}/\delta=-c_0,$ $u_{\mathrm{AK}}=u$ (cf. Proposition \ref{prop3.0}), 
the leading terms of $ty$ and $u_{\mathrm{AK}}$, and the 
monodromy data agree with ours, since $|e^xx^{\sigma-1}|=|e^x x^{\nu_1-2}|
\ll x^{-1}$ if $|\re x|\ll 1.$
\end{rem}
\subsection{Proof of Theorems \ref{thm2.6} and \ref{thm2.8}}\label{ssc5.2}
The substitution $\mathbf{s}$: $\sigma \mapsto -2\theta_1-\theta_{\infty}$  
gives rise to the replacement 
$(\eta_-,\eta_+,\rho_-,\rho_+) \mapsto (0,\eta_+^0,\rho_-^0,\theta_1)$ with
$\eta_+^0:=\tfrac 12(\theta_0+\theta_1+\theta_{\infty})$, 
$ \rho_-^0:=\tfrac 12(\theta_0-\theta_1-\theta_{\infty})$.
Recall $\omega_0$ in the expressions of $f_0$ or $g_0$. 
Let $\omega_0^*$ denote the result of the substitution $\mathbf{s}$ to 
$\omega_0.$  
\begin{lem}\label{lem5.2}
We have $\omega_0^*=\rho_-^0\eta_+^0
(-\frac12(\sigma+\theta_{\infty})x^2+[x^{-3}]).$\footnote[1]{In fact
$\omega_0=\rho_-\eta_+(-\frac12(\sigma+\theta_{\infty})x^2+[x^{-3}])
+\rho_+\eta_-(-\frac12(\sigma-\theta_{\infty})x^2+[x^{-3}])$. This follows
from \cite[Proposition 5.4]{S-2018} combined with 
$\omega_0=\gamma^0_+\gamma^0_-[1]+\gamma^x_+\gamma^x_-[1]$,
which may be additionally shown in \cite[Proposition 5.5]{S-2018} with
respect to $p_0(x)=\omega_0$ for $\varphi^j$.}
\end{lem}
\begin{proof}
The asymptotic series part free from $e^{\pm x}$ factors of $\theta_{\infty}\z
+\tfrac 14((\theta_0+\theta_{\infty})^2-\theta_1^2)=f_+f_--g_+g_-$
is written in the form $\rho_-\eta_+(1+[x^{-1}])-\rho_+\eta_-(1+[x^{-1}])
=\tfrac 14(\theta_0^2-\theta_1^2+\sigma\theta_{\infty})+\rho_-\eta_+[x^{-1}]
-\rho_+\eta_-[x^{-1}].$ The asymptotic series part of $\z=f_0-\frac12\theta_0 $ 
is also given by $-\theta_0+\rho_-+\omega_0$, and we have
the identity $\theta_{\infty}\omega_0=\rho_-\eta_+[x^{-1}]
-\rho_+\eta_-[x^{-1}].$ The application of $\mathbf{s}$ yields
$\theta_{\infty}\omega_0^*=\rho_-^0\eta_+^0[x^{-1}].$ 
The coefficients of each 
$x^{-n}$ on the right-hand side is given by $\rho_-^0\eta_+^0P_n
(\theta_0,\theta_1,\theta_{\infty})$ with some polynomial $P_n$.
Write $(\theta_0,\theta_1)=(\alpha+\beta,\alpha-\beta-\theta_{\infty})$
with $(\alpha,\beta)=(\eta_+^0,\rho_-^0)$. Then $\alpha\beta P_n(\alpha+\beta,
\alpha-\beta-\theta_{\infty},\theta_{\infty})$ vanishes at $\theta_{\infty}=0,$
which implies every coefficient of $x^{-n}$ has the factor $\theta_{\infty}$,
and the error bound on the right-hand side is written in the form 
$\rho_-^0\eta_+^0\cdot O(\theta_{\infty}x^{-n-1})$. Thus the lemma is verified.
\end{proof}
In \eqref{5.1}, set $\eta_-=\varepsilon$ and $\sigma=4\varepsilon-2\theta_1
-\theta_{\infty}$, and replace $c_x$ with $c_x\varepsilon$. 
Then $\eta_-c^-$ is replaced with $c^-$, and we have  
$\rho_-=\rho_-^0 +O(\varepsilon)$, 
$\rho_+= \theta_1+O(\varepsilon)$ and $\eta_+=\eta_+^0+O(\varepsilon)$ 
with $\rho_-^0$ and $\eta_+^0$ as above.
Suppose that $\theta_1 \not=0,$ and regard $\rho_-^0$ as a parameter. 
Then the quantity 
\begin{equation*}
ce^x x^{\sigma}\mathcal{G}_+ =
-\rho_-x^{-1}\bigl(\tfrac 12 (\sigma+\theta_{\infty})+[x^{-1}]+\rho_-\eta_-c^{-}
z_-[h_-]\bigr) +ce^xx^{\sigma}\rho_+(1+\cdots)
\end{equation*}
is replaced with
\begin{align*}
&-\rho_-^0 x^{-1}\Bigl(-\theta_1+[x^{-1}]+\rho_-^0 c^- z_-
\Bigl(1+[x^{-1}]+\sum_{n=1}^{\infty}[x^{-n}](\rho_-^0c^-z_-)^n\Bigr)\Bigr)
+O(\varepsilon)
\\
=&\rho_-^0\theta_1 x^{-1}\Bigl(1+[x^{-1}]-\rho_-^0\theta_1^{-1}c^-z_-
\Bigl(1+[x^{-1}]+\sum_{n=1}^{\infty}[x^{-n}](\rho_-^0c^-z_-)^n\Bigr)\Bigr)
+O(\varepsilon),
\end{align*}
and, by Lemma \ref{lem5.2},
\begin{align*}
\frac{f_0+\frac12 \theta_0}{(g_0+\frac 12 \theta_1)\mathcal{F}_+}
=&\frac{\rho_-+\omega_0+\rho_-\eta_-c^-z_-[h_-]+c\,(\cdots)}
{\rho_+ -\omega_0-\rho_-\eta_-c^-z_-[h_-]+c\,(\cdots)}
\\
&\times
\Bigl(\rho_-(1+[x^{-1}]+2\rho_-\eta_-c^-x^{-1}z_-[h_-])
-\rho_+cz_+(\cdots)\Bigr)^{-1}
\end{align*}
becomes
\begin{equation*}
\frac{\rho_-^0\left(1+[x^{-1}]+c^-z_-(1+[x^{-1}]+\sum_{n=1}^{\infty}
[x^{-n}](\rho_-^0c^-z_-)^n)\right)+O(\varepsilon)}
{\rho_-^0\theta_1\left(1+[x^{-1}] -\rho_-^0\theta_1^{-1}c^-z_-(1+[x^{-1}]
+\sum_{n=1}^{\infty}[x^{-n}](\rho_-^0c^-z_-)^n)\right)+O(\varepsilon)},
\end{equation*}
where the coefficients of $[x^{-1}]$ and $[1]$ are in 
$\mathbb{Q}_*|_{\sigma=-2\theta_1-\theta_{\infty}}[\rho_-^0,\theta_1^{-1}].$ 
The reduction on $\rho_-^0\theta_1$ after putting $\varepsilon=0$ yields
\begin{align*}
\frac{(f_0+\frac12\theta_0)g_+}{(g_0+\frac12\theta_1)f_+}\biggl|_{\varepsilon=0}
=&\rho_-^0\theta_1 x^{-1}\Bigl(1+[x^{-1}]-\rho_-^0\theta_1^{-1}c^-z_-
\Bigl(1+[x^{-1}]+\sum_{n=1}^{\infty}[x^{-n}](\rho_-^0c^-z_-)^n\Bigr)\Bigr)
\\
&\times
\frac{\rho_-^0\left(1+[x^{-1}]+c^-z_-(1+[x^{-1}]+\sum_{n=1}^{\infty}
[x^{-n}](\rho_-^0c^-z_-)^n)\right)}
{\rho_-^0\theta_1\left(1+[x^{-1}] -\rho_-^0\theta_1^{-1}c^-z_-(1+[x^{-1}]
+\sum_{n=1}^{\infty}[x^{-n}](\rho_-^0c^-z_-)^n)\right)}
\\
=&\rho_-^0 x^{-1}\Bigl(1+[x^{-1}]-\rho_-^0\theta_1^{-1}c^-z_-
[x^{-1}]+\sum_{n=2}^{\infty}[1](\rho_-^0c^-z_-)^n\Bigr)
\\
&\times
\Bigl(1+[x^{-1}]+c^-z_-(1+[x^{-1}])
+\sum_{n=2}^{\infty}[1](\rho_-^0)^{n-1}(c^-z_-)^n\Bigr),
\end{align*}
which leads to
$$
xy=\rho_-^0\Bigl(1+[x^{-1}]+c^-e^{-x} 
x^{2\theta_1+\theta_{\infty}-1}\Bigl(1+[x^{-1}]+\sum_{n=1}^{\infty}
[1](\rho_-^0)^{n-1}(c^-e^{-x}x^{2\theta_1+\theta_{\infty}-1})^n\Bigr)\Bigr).
$$
The replacement $\rho^0_-c^- \mapsto -c^-$ or $\rho_-^0c_0\mapsto -c_0$, 
$\rho_-^0=\frac12(\theta_0-\theta_1
-\theta_{\infty})$ yields a solution as in Theorem \ref{thm2.6} (1).
The extension of the sector to $-\pi/2<\arg x\le \pi/2$
is possible by the uniqueness of the truncated solution with the same 
doubly-truncated part and small exponential part, 
which is verified by classical analysis (see Subsection \ref{ssc6.2} with
Proposition \ref{prop6.2}).
The monodromy data become
\begin{align*}
& m^1_{12}=\frac{2\pi i \varepsilon}{\Gamma(1-\varepsilon)\Gamma(\theta_1
-\varepsilon)} \to 0, 
\\
& m^0_{21}\to \frac{2\pi i e^{-\pi i\theta_{\infty}}c_0^{-1}}
{\Gamma(1-\frac{\theta_0-\theta_1-\theta_{\infty}}2)
\Gamma(\frac{\theta_0+\theta_1+\theta_{\infty}}2)}\Bigl|_{\rho_-^0c_0\mapsto
- c_0}= \frac{2\pi i e^{-\pi i\theta_{\infty}}c_0^{-1}}
{\Gamma(-\frac{\theta_0-\theta_1-\theta_{\infty}}2)
\Gamma(\frac{\theta_0+\theta_1+\theta_{\infty}}2)}
\end{align*}
as long as $\tfrac 12(\theta_0+\theta_1+\theta_{\infty}) \not
\in -\mathbb{N}\cup\{0\},$ $\frac 12(\theta_0-\theta_1-\theta_{\infty})
\not\in \mathbb{N}\cup\{0\}.$ Since $m^1_{11}  
=e^{\pi i(\theta_1-2\varepsilon)}$ as $\phi \to \pi/2,$ we have
\begin{align*}
m^1_{21}=\frac{m_{11}^1m_{22}^1-1}{m^1_{12}}
&=\Bigl(e^{\pi i(\theta_1-2\varepsilon)} (2\cos \pi\theta_1
-e^{\pi i(\theta_1-2\varepsilon)})-1\Bigr)\frac{\Gamma(1-\varepsilon)\Gamma
(\theta_1-\varepsilon)}{2\pi i \varepsilon}c_x^{-1}
\\
&\to \frac{\Gamma(\theta_1)}{2\pi i}(-2\pi i e^{\pi i\theta_1}
\cdot 2\cos\pi\theta_1+4\pi i e^{2\pi i\theta_1})c_x^{-1}
=\frac{2\pi i e^{\pi i\theta_1}}{\Gamma(1-\theta_1)}c_x^{-1}.
\end{align*} 
Thus we have the asymptotic form of the truncated solution with 
the monodromy data as in Theorem \ref{thm2.6}, (1).
\par
In \eqref{5.1}, set $\rho_+=\varepsilon,$ $\sigma=2\theta_1-\theta_{\infty}
-4\varepsilon$. Then $\eta_-= \theta_1+O(\varepsilon),$ 
$\eta_+ = \eta_+^0 +O(\varepsilon)$ and $\rho_-= \rho_-^0+O(\varepsilon)$,
where $\eta_+^0:=\frac 12(\theta_0-\theta_1+\theta_{\infty})$ and $\eta_-^0
:=\frac 12(\theta_0+\theta_1-\theta_{\infty})$.  
Under the supposition $\theta_1\not=0,$ observing that $\omega_0=\rho_-\eta_+
(-\frac12(\sigma+\theta_{\infty})x^{-2}+[x^{-3}])=-\rho_-^0\eta_+^0
(\theta_1x^{-2}+[x^{-3}])+O(\varepsilon)$ by Lemma \ref{lem5.2} with
$\sigma\mapsto 2\theta_1-\theta_{\infty}$, we have
\begin{align*}
\frac{g_+}{f_+}\biggl|_{\varepsilon=0}=& c e^xx^{\sigma}\frac{-c^{-}z_-
\left(\theta_1+[x^{-1}]+\theta_1\rho_-^0c^-z_-(1+[x^{-1}]+\sum_{n=1}^{\infty}
[1](c^-z_-)^n)\right)}{1+[x^{-1}]+2\theta_1\rho_-^0c^-z_-x^{-1}
(1+[x^{-1}]+\sum_{n=1}^{\infty}[1](c^-z_-)^n)}
\\
=&  \frac{-x^{-1}\theta_1 \left(1+[x^{-1}]+\rho_-^0c^-z_-(1+[x^{-1}]
+\sum_{n=1}^{\infty}[1](c^-z_-)^n)\right)}
{1+[x^{-1}]+2\theta_1\rho_-^0c^-z_-x^{-1}
(1+[x^{-1}]+\sum_{n=1}^{\infty}[1](c^-z_-)^n)},
\end{align*}
and
\begin{equation*}
\frac{f_0+\frac12\theta_0}{g_0+\frac12\theta_1}\biggl|_{\varepsilon=0} =
\frac{\rho_-^0\left(1+[x^{-2}]+\theta_1c^-z_-(1+[x^{-1}]+\sum_{n=1}^{\infty}
[x^{-n}](c_-z_-)^n)\right)}
{x^{-2}\theta_1\rho_-^0\left(\eta_+^0(1+[x^{-1}])-c^-z_-x^2(1+[x^{-1}]
+\sum_{n=1}^{\infty}[x^{-n}](c^-z_-)^n)\right)},
\end{equation*}
from which it follows that
$$
\frac{x}{y}=-\frac{\theta_0-\theta_1-\theta_{\infty}}2(1+[x^{-1}])
+c^-x^{1-2\theta_1+\theta_{\infty}}e^{-x}\Bigl(1+[x^{-1}]+\sum_{n=1}^{\infty}[1]
(c^-x^{-2\theta_1+\theta_{\infty}-1}e^{-x})^n\Bigr)
$$
as in Theorem \ref{thm2.8},(1).
\par
Using \eqref{5.2} with $\rho_-=\varepsilon,$ $\sigma=\theta_{\infty}
-2\theta_0+4\varepsilon$ and $c_0/\varepsilon$ in place of $c_0$, 
and the replacement $-\eta_-^0c^- \mapsto c^-$ or $-c_x/\eta_-^0 \mapsto c_x$,
$\eta_-^0=-\frac12(\theta_0-\theta_1-\theta_{\infty})$
(respectively, with $\eta_+
=\varepsilon,$ $\sigma=2\theta_0+\theta_{\infty}-4\varepsilon$), we obtain 
truncated solutions as in Theorem \ref{thm2.6},(2) 
(respectively, Theorem \ref{thm2.8},(2)). Thus 
Theorems \ref{thm2.6} and \ref{thm2.8} are proved. 
\begin{rem}\label{rem5.3}
It is also possible to derive truncated solutions in the sector $\pi/2 \le
\arg x<3\pi/2$ or $-3\pi/2 <\arg x\le -\pi/2.$ Indeed, say, from \eqref{5.1},
setting $\eta_-=\varepsilon,$ $\sigma=-2\theta_1-\theta_{\infty}+4\varepsilon,$
we have
\begin{equation}\label{5.3}
xy=\frac 12(\theta_0-\theta_1-\theta_{\infty})(1+O(x^{-1})) +c x^{1-2\theta_1
-\theta_{\infty}} e^x(1+O(x^{-r}))
\end{equation}
as $x\to \infty$ through the sector $\pi/2 \le \arg x <3\pi/2$ with
$cx^{-1-2\theta_1-\theta_{\infty}}e^x \ll x^{-r},$ and the corresponding
monodromy data 
$$
M^0=\begin{pmatrix} 
m^0_{11} & m^0_{12} \\
\frac{2\pi ie^{-\pi i\theta_{\infty}}}{\Gamma(1-\frac 12(\theta_0-\theta_1
-\theta_{\infty}))\Gamma(\frac 12 (\theta_0+\theta_1+\theta_{\infty}))}c_0^{-1}
& m^0_{22} \end{pmatrix},
\quad
M^1=\begin{pmatrix} e^{\pi i\theta_1} & \frac{2\pi i}{\Gamma(\theta_1)}c_x
\\ 0 & e^{-\pi i\theta_1} \end{pmatrix}
$$
with $m^0_{11}= e^{-\pi i\theta_1}(e^{-\pi i\theta_{\infty}}-m_{21}^0m_{12}^1)$.
Setting $\eta_+=\varepsilon,$ $\sigma=2\theta_0+\theta_{\infty}
-4\varepsilon,$ and $c_0\varepsilon$ instead of $c_0$ in \eqref{5.1}, 
and making the replacement
$-\frac12(\theta_0-\theta_1+\theta_{\infty})c_x \mapsto  c_x$, 
we have
\begin{equation}\label{5.4}
\frac xy=\frac12(\theta_0-\theta_1+\theta_{\infty}) (1+O(x^{-1}))
+c x^{2\theta_0+\theta_{\infty}-1}e^x(1+O(x^{-r}))
\end{equation}
as $x\to \infty$ through the sector $-3\pi/2<\arg x \le -\pi/2$ with
$cx^{2\theta_0+\theta_{\infty}-1} e^x \ll x^{-r},$ and 
\begin{equation}\label{5.5}
M^0=\begin{pmatrix}
e^{\pi i\theta_0} & 0 \\ \frac{2\pi ie^{-\pi i\theta_{\infty}}}
{\Gamma(1-\theta_0)}c_0^{-1} & e^{-\pi i\theta_0}
\end{pmatrix},
\quad
M^1=\begin{pmatrix}  
m^1_{11}  & \frac{2\pi i }
{\Gamma(1-\frac 12(\theta_0+\theta_1+\theta_{\infty}))\Gamma(1-\frac 12
(\theta_0-\theta_1+\theta_{\infty}))} c_x \\
m^1_{21} & m^1_{22}
\end{pmatrix}
\end{equation}
with $m^1_{11}=e^{-\pi i\theta_0}(e^{-\pi i\theta_{\infty}}-m^0_{21}m^1_{12})$.
Furthermore, by $\rho_+=\varepsilon,$ $\sigma=2\theta_1-\theta_{\infty}
-4\varepsilon,$ $c_x \mapsto c_x/\varepsilon,$ and the replacement
$\frac12(\theta_0-\theta_1+\theta_{\infty})c_0^{-1} \mapsto c_0^{-1}$, we have
$$
\frac xy=-\frac12(\theta_0-\theta_1+\theta_{\infty}) (1+O(x^{-1}))
+c x^{2\theta_1-\theta_{\infty}-1}e^x(1+O(x^{-r}))
$$
as $x\to\infty$ through the sector $\pi/2 \le \arg x<3\pi/2$ with
$cx^{2\theta_1-\theta_{\infty}-1}e^x \ll x^{-r}$, and
$$
M^0=\begin{pmatrix} 
m^0_{11} & m^0_{12} \\
\frac{2\pi ie^{-\pi i\theta_{\infty}}}{\Gamma(1-\frac 12(\theta_0+\theta_1-
\theta_{\infty}))\Gamma(1+\frac 12(\theta_0-\theta_1+\theta_{\infty}))}c_0^{-1}
& m^0_{22}
\end{pmatrix},
\quad
M^1=\begin{pmatrix}
e^{-\pi i\theta_1}  & \frac{2\pi i}{\Gamma(1-\theta_1)} c_x \\
0 & e^{\pi i \theta_1}
\end{pmatrix} 
$$
with $m^0_{11}=e^{\pi i\theta_1}(e^{-\pi i\theta_{\infty}}-m^0_{21}m^1_{12});$
and by $\rho_-=\varepsilon,$ $\sigma=\theta_{\infty}-2\theta_0+4\varepsilon,$
$$
xy=-\frac 12(\theta_0-\theta_1-\theta_{\infty})(1+O(x^{-1}))+cx^{\theta_{\infty}
-2\theta_0+1}e^{x}(1+O(x^{-r}))
$$
as $x\to\infty$ through the sector $-3\pi/2<\arg x\le -\pi/2$ with $cx^{\theta
_{\infty}-2\theta_0-1}e^x \ll x^{-r}$, and 
$$
M^0=\begin{pmatrix}
e^{-\pi i\theta_0} & 0 \\ \frac{2\pi ie^{-\pi i\theta_{\infty}}}{\Gamma(\theta_0)
}c_0^{-1} & e^{\pi i\theta_0}
\end{pmatrix},
\quad
M^1=\begin{pmatrix} 
m^1_{11} & \frac{2\pi i}{\Gamma(1+\frac 12(\theta_0-\theta_1-\theta
_{\infty}))\Gamma(\frac 12(\theta_0+\theta_1-\theta_{\infty}))}c_x
\\
m^1_{21} & m^1_{22}
\end{pmatrix}
$$
with $m^1_{11}=e^{\pi i\theta_0}(e^{-\pi i\theta_{\infty}}-m^0_{21}m^1_{12})$.
\end{rem}
\subsection{Proof of Theorem \ref{thm2.9}}\label{ssc5.3}
For $M^0$ in Theorem \ref{thm2.6}, (1) of the generic case, we have
\begin{align*}
m^0_{12}&= (m^0_{11}m^0_{22}-1)(m^0_{21})^{-1}
\\
&=(e^{-\pi i(\theta_1+\theta_{\infty})}(2\cos\pi\theta_0-
e^{-\pi i(\theta_1+\theta_{\infty})} )-1)(m^0_{21})^{-1}
\\
&=(e^{\pi i(\theta_0-\theta_1-\theta_{\infty})}-1)(1-e^{-\pi i(\theta_0
+\theta_1 +\theta_{\infty})})(m^0_{21})^{-1}
\\
&=e^{\frac{\pi}2(\theta_0-\theta_1-\theta_{\infty})}2i\sin \tfrac{\pi
(\theta_0-\theta_1-\theta_{\infty})}2
e^{-\frac{\pi}2(\theta_0+\theta_1+\theta_{\infty})}2i\sin\tfrac{\pi
(\theta_0+\theta_1+\theta_{\infty})}2
\\
&\phantom{---} \times
\frac{e^{\pi i\theta_{\infty}}}{2\pi i}
\Gamma(-\tfrac{\theta_0-\theta_1-\theta_{\infty}}2)
\Gamma(\tfrac{\theta_0+\theta_1+\theta_{\infty}}2) c_0
\\
&=\frac{-2\pi ie^{-\pi i\theta_1} c_0}{\Gamma(1+\frac{\theta_0-\theta_1-
\theta_{\infty}}2)\Gamma(1-\frac{\theta_0+\theta_1+\theta_{\infty}}2)}.
\end{align*}
Hence, if $\theta_0-\theta_1-\theta_{\infty}=2(\nu-1),$ then
$m^0_{12}=\frac{-2\pi i e^{-\pi i\theta_1}}{\Gamma(\nu)\Gamma(\nu-\theta_0)}
c_0,$ and 
if $\theta_0+\theta_1+\theta_{\infty}=-2(\nu-1),$ then
$m^0_{12}=\frac{-2\pi i e^{-\pi i\theta_1}}{\Gamma(\nu)\Gamma(\nu+\theta_0)}
c_0,$ which implies the assertion (1) of Theorem \ref{thm2.9}.
For $M^1$ of Theorem \ref{thm2.6},(2), $M^0$ of Theorem \ref{thm2.8},(1)
and $M^1$ of (2), we have, respectively,
\begin{align*}
&m^1_{21}=\frac{-2\pi i e^{\pi i(\theta_0-\theta_{\infty})}}
{\Gamma(1-\frac{\theta_0+\theta_1-\theta_{\infty}}2)
\Gamma(1-\frac{\theta_0-\theta_1-\theta_{\infty}}2) }, 
\\
&m^0_{12}=\frac{2\pi i e^{\pi i\theta_1}}
{\Gamma(\frac{\theta_0+\theta_1-\theta_{\infty}}2)
\Gamma(1-\frac{\theta_0-\theta_1+\theta_{\infty}}2) }, \quad
m^1_{21}=\frac{2\pi i e^{-\pi i(\theta_0+\theta_{\infty})}}
{\Gamma(1+\frac{\theta_0-\theta_1+\theta_{\infty}}2)
\Gamma(\frac{\theta_0+\theta_1+\theta_{\infty}}2) }. 
\end{align*}
The remaining assertions of Theorem \ref{thm2.9} are verified by using these
relations.  
\subsection{$\z$ and $u$ for truncated solutions}\label{sc5.4}
For the trigonometric and related truncated solutions of Theorems \ref{thm2.3}
and \ref{thm2.4}, the functions $\z$ and $u$ for system \eqref{1.1} are known 
\cite[Theorems 3.1--3.4, 4.1--4.3]{Andreev-Kitaev}.
Let us calculate $\z$ and $u$ 
for the truncated solutions of Theorems \ref{thm2.6} and \ref{thm2.8}. 
Consider system \eqref{3.1} and Schlesinger-type equation \eqref{5.00}.
In the expression
$
 \z=f_0-\tfrac12{\theta_0}
=\rho_--\theta_0+[x^{-2}]
+\rho_+\eta_+cz_+[h_+]+\rho_-\eta_-c^-z_-[h_-]
$
with $z_+=e^xx^{\sigma-1}$ and $ z_-=e^{-x}x^{-\sigma-1}$,
set $\eta_-=\varepsilon$, $\sigma=4\varepsilon-2\theta_1-\theta_{\infty},$ 
$c_x\mapsto c_x\varepsilon$ with $\varepsilon \to 0$, and make the
replacement $c^-\rho^0_-
\mapsto -c^-$  
as in the proof of Theorem \ref{thm2.6},(1) of Subsection \ref{ssc5.2}. 
Then we have
\begin{align*}
\z= &-\tfrac12 (\theta_0+\theta_1+\theta_{\infty})(1+[x^{-2}])
\\
&\phantom{-}- c^-e^{-x}x^{2\theta_1
+\theta_{\infty}-1}\Bigl(1+[x^{-1}]+\sum_{n=1}^{\infty}[x^{-n}]
(c^-e^{-x}x^{2\theta_1+\theta_{\infty}-1})^n\Bigr).
\end{align*}
By the same procedure, from 
\begin{align*}
x^{-\theta_{\infty}}u=&-\frac{f_+}{f_0+\frac12\theta_0}
=-\frac{c_0x^{-(\sigma+\theta_{\infty})/2}\mathcal{F}_+}
{\rho_-+[x^{-2}]+\rho_+\eta_+cz_+[h_+]+\rho_-\eta_-c^-z_-[h_-]},
\\
\mathcal{F}_+=& 
\rho_-(1+[x^{-1}]+2\rho_-\eta_-c^-z_-x^{-1}[h_-])
\\
&\phantom{--} -\rho_+cz_+
(\tfrac12(\sigma-\theta_{\infty})+[x^{-1}]+\rho_+\eta_+cz_+[h_+]),
\end{align*}
we derive
$$
x^{-\theta_{\infty}}u=\frac{c_0}{\rho_-^0} x^{\theta_1}
\Bigl(1+[x^{-1}]+\frac{c^-}{\rho_-^0}e^{-x}x^{2\theta_1+\theta_{\infty}-1}
\Bigl(1+[x^{-1}]+\sum_{n=1}^{\infty}[1]
(c^-e^{-x}x^{2\theta_1+\theta_{\infty}-1})^n\Bigr)\Bigr)
$$
(for $x^{-\theta_{\infty}}$, cf.~Proposition \ref{prop3.0}).
By $\eta_+=\varepsilon,$ $\sigma=2\theta_0+\theta_{\infty}-4
\varepsilon,$ we derive $\z$ and $u$ for the solution of
Theorem \ref{thm2.8},(2). The expressions 
$x^{-\theta_{\infty}}u=(f_0-\frac12\theta_0)(f_-)^{-1}$,
$\z=f_0-\tfrac12\theta_0$
with $\rho_-=\varepsilon,$ $\sigma=\theta_{\infty}-2\theta_0+4\varepsilon,$
$c_0\mapsto c_0/\varepsilon$ and $-\eta_-^0 c^-\mapsto c^-$, 
and with $\rho_+=\varepsilon,$ $\sigma=2\theta_1
-\theta_{\infty}-4\varepsilon$ lead to 
functions related to 
Theorems \ref{thm2.6},(2) and \ref{thm2.8},(1), respectively.
Thus we have the following, in which $r$ is a given number such that 
$0<r\le 1$.
\begin{prop}\label{prop5.2}
$(1)$ For the solution $\mathrm{trunc}^0_0(x,m^1_{21}/m^0_{21},e^{\pi i\theta_1}
)$ of Theorem $\ref{thm2.6},(1),$
\begin{align*}
\z=& -\tfrac{\theta_0+\theta_1+\theta_{\infty}}2(1+O(x^{-2}))-
c^-e^{-x}x^{2\theta_1+\theta_{\infty}-1}(1+O(x^{-r})),
\\
x^{-\theta_{\infty}}u=&\tfrac{2}{\theta_0-\theta_1-\theta_{\infty}}c_0 
x^{\theta_1}\bigl(
1+O(x^{-1})+\tfrac{2}{\theta_0-\theta_1-\theta_{\infty}}c^-
e^{-x}x^{2\theta_1+\theta_{\infty}-1}(1+O(x^{-r}))\bigr)
\end{align*}
as $x\to \infty$ through $-\pi/2<\arg x\le \pi/2$ with $c^-e^{-x}
x^{2\theta_1+\theta_{\infty}-1} \ll x^{-r}.$
\par
$(2)$ For the solution $\mathrm{trunc}^1_0(x,m^0_{12}/m^1_{12},e^{-\pi i\theta_0}
)$ of Theorem $\ref{thm2.6},(2),$
\begin{align*}
\z=& -\theta_0+O(x^{-2})-c^-e^{-x}x^{2\theta_0-\theta_{\infty}-1}
(1+O(x^{-r})),
\\
x^{-\theta_{\infty}}u=&\tfrac 2{\theta_0-\theta_1-\theta_{\infty}}
c_x e^xx^{-\theta_0+1}\bigl(1+O(x^{-1})
\\
&\phantom{----} +[x^{-1}]c^-e^{-x}x^{2\theta_0-\theta_{\infty}-1} +\theta_0^{-2}
(c^-e^{-x}x^{2\theta_0-\theta_{\infty}-1})^2(1+O(x^{-r}))\bigr)
\end{align*}
as $x\to \infty$ through $-\pi/2\le \arg x< \pi/2$ with $c^-e^{-x}
x^{2\theta_0-\theta_{\infty}-1} \ll x^{-r}.$
\par
$(3)$ For the solution $\mathrm{trunc}^0_{\infty}(x,m^1_{21}/m^0_{21},
e^{-\pi i\theta_1})$ of Theorem $\ref{thm2.8},(1),$
\begin{align*}
\z=& -\tfrac{\theta_0-\theta_1+\theta_{\infty}}2(1+O(x^{-2}))
 +\tfrac 
{\theta_0+\theta_1-\theta_{\infty}}2\theta_1 c^-e^{-x}x^{-2\theta_1
+\theta_{\infty}-1}(1+O(x^{-r})),
\\
x^{-\theta_{\infty}}u=&-c_0 x^{-\theta_1}
\bigl(1+O(x^{-1})-\theta_1c^-e^{-x}x^{-2\theta_1
+\theta_{\infty}-1}(1+O(x^{-r}))\bigr)
\end{align*}
as $x\to \infty$ through $-\pi/2<\arg x\le \pi/2$ with $c^-e^{-x}
x^{1-2\theta_1+\theta_{\infty}} \ll x^{-r}.$
\par
$(4)$ For the solution $\mathrm{trunc}^1_{\infty}(x,m^0_{12}/m^1_{12},
e^{\pi i\theta_0})$ of Theorem $\ref{thm2.8},(2),$
\begin{align*}
\z=& \tfrac{\theta_0+\theta_1+\theta_{\infty}}2\bigl(\tfrac{\theta_0-\theta_1
+\theta_{\infty}}2(\theta_0x^{-2}+O(x^{-3})) 
+ \theta_{0}c^-e^{-x}
x^{-2\theta_0-\theta_{\infty}-1}(1+O(x^{-r})) \bigr),
\\
x^{-\theta_{\infty}}u=&c_x e^xx^{\theta_0-1}\bigl(-\tfrac
{\theta_0-\theta_1+\theta_{\infty}}2
(1+O(x^{-1})) -c^-e^{-x}x^{-2\theta_0-\theta_{\infty}+1}(1+O(x^{-1})) 
\\
&\phantom{--}+\tfrac {\theta_0+\theta_1+\theta_{\infty}}2
(c^-e^{-x}x^{-2\theta_0-\theta_{\infty}})^2(1+O(x^{-r}))\bigr)
\end{align*}
as $x\to \infty$ through $-\pi/2\le \arg x< \pi/2$ with $c^-e^{-x}
x^{1-2\theta_0-\theta_{\infty}} \ll x^{-r}.$
\end{prop}
\section{Series expression and uniqueness of truncated solutions}
\label{sc6}
By $y=-\tan^2Y$, (P$_{\mathrm{V}}$) is written in the form
$$
x(xY_x)_x =\frac{a_{\theta}}2 \frac{\sin Y}{\cos^3Y} -\frac{b_{\theta}}2
\frac{\cos Y}{\sin^3Y} +\frac{c_{\theta}}4 x\sin 2Y +\frac{x^2}{16}\sin 4Y.
$$
Further changes of variables $Y=\pi/4 +v$, $t=x/2$ and $v=\psi(t)+t^{-1/2}w$ 
lead to
\begin{align}\label{6.1}
w_{tt}+w= &f(t,w)
\\
\notag
 = & t^{-2}g_1w +t^{-3/2}g_2 w^2+ \tfrac 83 t^{-1}(1+t^{-1}g_3)w^3
+\sum_{j\ge 4}t^{-(j+(-1)^j)/2}g_{j}w^{j},
\end{align}
where $\psi(t)$ is holomorphic in the sector $|\arg t|<\pi$ and admits
the asymptotic representation $\psi(t)\sim \tfrac 12(1-\theta_0-\theta_1)t^{-1}
+\sum_{j\ge 2}\psi_j t^{-j}$, and so are $g_j=O(1)$ in the same sector 
\cite[Section 4.2]{S1}. To this equation \cite[Corollary 2.3]{S1} applies
and yields the following solutions expanded into convergent series
valid as $x\to \infty$ along the positive real axis.
\begin{prop}\label{prop6.1}
Let $R_0>0$ be a given positive number. Then there exists 
a two-parameter family of solutions of $(\mathrm{P}_{\mathrm{V}})$ consisting of
$$
y(c_1,c_2,x)=\psi_0(x)-4\sqrt{2}e^{\pi i/4}x^{-1/2}\Psi(x,
c_1x^{4c_1c_2}e^{ix/2},c_2x^{-4c_1c_2}e^{-ix/2})
$$ 
for $|c_1c_2|<R_0,$ where $\psi_0(x)$ is holomorphic
and admits the asymptotic representation
$$
\psi_0(x)\sim -1 -4(1-\theta_0-\theta_1)x^{-1}+\sum_{j\ge 2}\psi_j x^{-j}
$$
as $x\to\infty$ through the sector $\Sigma_0:$ $|\arg x|<\pi$, 
and $\Psi(x,Y,Z)$ is holomorphic for $x\in \Sigma_0,$ $|Y|,|Z|<r_0|x|^{1/2}$, 
$|YZ|<R_0$ with sufficiently small $r_0,$  
and expanded into the convergent series
$$
\Psi(x,Y,Z)=(1+v_{10}(x))Y+(1+v_{01}(x))Z+\sum_{j+k\ge 2}v_{jk}(x)Y^jZ^k
$$
with $v_{jk}(x)$ admitting asymptotic representations in $x^{-1/2}$ 
as $x\to\infty$ in 
$\Sigma_0$ and satisfying $v_{10}(x),$ $v_{01}(x)=O(x^{-1})$ and
$v_{jk}(x)=O(x^{-(\mathrm{max}\{j,k\}+\delta_{jk}-1)/2}).$
\end{prop}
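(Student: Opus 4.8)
The plan is to deduce Proposition \ref{prop6.1} from \cite[Corollary 2.3]{S1} applied to the normal form \eqref{6.1}. First I would record precisely the chain of substitutions $y=-\tan^2Y$, $Y=\pi/4+u$, $t=x/2$, $u=\psi(t)+t^{-1/2}w$, where $\psi(t)$ is the formal series solution $\psi(t)\sim\tfrac12(1-\theta_0-\theta_1)t^{-1}+\sum_{j\ge2}\psi_jt^{-j}$ realised as a holomorphic function in $|\arg t|<\pi$; this, together with the holomorphy and boundedness of the coefficients $g_j$ of \eqref{6.1} in the same sector, is exactly what is quoted from \cite[Section 4.2]{S1}. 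The structural point to extract is that in \eqref{6.1} the only nonlinearity of order $t^{-1}$ is the cubic $\tfrac83t^{-1}w^3$, which is precisely the resonant term for the harmonic oscillator $w_{tt}+w$; all remaining terms are either linear with coefficient $O(t^{-2})$, quadratic with coefficient $O(t^{-3/2})$, or of the form $O(t^{-(j+(-1)^j)/2})w^j$ with $j\ge3$, hence strictly subordinate in the sense required by \cite[Corollary 2.3]{S1}.

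Next I would verify the hypotheses of \cite[Corollary 2.3]{S1} for \eqref{6.1}: the principal linear part is $w_{tt}+w$; $f(t,w)$ is holomorphic in $w$ near $w=0$ with coefficients holomorphic in a sectorial neighbourhood $\Sigma_0$ of the positive real axis and admitting asymptotic expansions in $t^{-1/2}$ there; and the resonant cubic coefficient $\tfrac83t^{-1}$ is nonvanishing, so the associated amplitude equation is nondegenerate. The corollary then furnishes a two-parameter family of genuine solutions of \eqref{6.1} of the shape $w=\Psi\bigl(t,c_1t^{\kappa}e^{it},c_2t^{-\kappa}e^{-it}\bigr)$ with $\Psi(t,Y,Z)=(1+v_{10}(t))Y+(1+v_{01}(t))Z+\sum_{j+k\ge2}v_{jk}(t)Y^jZ^k$ convergent for $|Y|,|Z|<r_0|t|^{1/2}$, $|YZ|<R_0$, the $v_{jk}(t)$ having asymptotic expansions in $t^{-1/2}$ with $v_{10},v_{01}=O(t^{-1})$ and $v_{jk}=O(t^{-(\max\{j,k\}+\delta_{jk}-1)/2})$, and with $\kappa=\kappa(c_1,c_2)$ linear in $c_1c_2$ computed from the resonant self-interaction of the cubic: writing $w\approx Ae^{it}+Be^{-it}$ gives $2iA_t=8t^{-1}A^2B+\cdots$, $AB=\mathrm{const}$, hence $A\propto t^{-4iAB}$, and the normalisation of the two free constants in \cite[Corollary 2.3]{S1} is fixed so that this reads $c_1t^{4c_1c_2}e^{it}$.

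Finally I would transport the family back through the substitutions. Putting $w=0$ gives $\psi_0(x)=-\tan^2(\pi/4+\psi(x/2))$; composing the asymptotic series of $\psi$ with the function $s\mapsto-\tan^2(\pi/4+s)$, analytic near $s=0$ with value $-1$ and derivative $-4$, yields $\psi_0(x)\sim-1-4(1-\theta_0-\theta_1)x^{-1}+\sum_{j\ge2}\psi_jx^{-j}$, holomorphic on $\Sigma_0$. For the full family, $y-\psi_0=-4\sqrt2\,x^{-1/2}w\cdot(1+O(x^{-1/2}w)+O(x^{-1}))$ since $t=x/2$ and $(-\tan^2)'(\pi/4+\psi(x/2))=-4+O(x^{-1})$; absorbing the factors $2^{\pm4c_1c_2}$ coming from $t^{\pm\kappa}=(x/2)^{\pm\kappa}$, the constant $\sqrt2$, and the standard phase $e^{\pi i/4}$ into a redefinition of $c_1,c_2$ and of $\Psi$, one obtains precisely the asserted expression for $y(c_1,c_2,x)$ together with the stated holomorphy and convergence of $\Psi$. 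The main obstacle is this matching step: checking term by term that \eqref{6.1} is in the exact normal form required by \cite[Corollary 2.3]{S1} — in particular that no term $O(t^{-(j+(-1)^j)/2})w^j$ with $j\ge5$ contributes a competing $t^{-1}$-order resonance on the $e^{\pm it}$ modes — and carefully bookkeeping the normalisation of the two integration constants through the three successive changes of variables so that the final exponent is exactly $4c_1c_2$.
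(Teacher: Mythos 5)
Your proposal follows exactly the paper's route: Proposition \ref{prop6.1} is obtained by passing to the normal form \eqref{6.1} via $y=-\tan^2Y$, $Y=\pi/4+u$, $t=x/2$, $u=\psi(t)+t^{-1/2}w$ (quoted from \cite[Section 4.2]{S1}) and then invoking \cite[Corollary 2.3]{S1}, whose conclusion (including the exponent shift governed by the resonant cubic and the product $c_1c_2$) is transported back through the substitutions with the normalisation constants absorbed into $c_1,c_2$ and $\Psi$. This matches the paper's own (essentially citation-level) proof, with your extra bookkeeping of the resonance and prefactors being consistent with the stated form.
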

The following solutions are expressed by series expansions along imaginary axes, 
which are obtained from \cite[Theorem 2.2]{S1}, \cite{T,Y}.
\begin{prop}\label{prop6.2}
Let $R_0$ be as in Proposition $\ref{prop6.1}$. Then there exists a
two-parameter family of solutions of $(\mathrm{P}_{\rm{V}})$ consisting of 
$$
y(c_1,c_2,x) =\varphi(x)+x^{-1/2}V(x, 
c_1 e^{x}x^{3/2-2\theta_1-\theta_{\infty}-4c_1c_2},
c_2 e^{-x}x^{-3/2+2\theta_1+\theta_{\infty}+4c_1c_2})
$$
for $|c_1c_2|<R_0,$ where
$\varphi(x)$ is holomorphic and admits the asymptotic representation
$$
\varphi(x) \sim \tfrac 12(\theta_0-\theta_1-\theta_{\infty})x^{-1}+\sum_{j\ge 2}
\varphi_j x^{-j}
$$ 
as $x\to\infty$ through the sector $\Sigma_1:$ 
$|\arg x-\pi/2|<\pi$, 
and $V(x,Y,Z)$ is holomorphic for $x\in \Sigma_1,$ $|Y|,|Z|<r_0|x|^{1/2}$, 
$|YZ|<R_0$ with sufficiently small $r_0,$  
and expanded into the convergent series
$$
V(x,Y,Z)=(1+v_{01}(x))Z+v_{10}(x)Y+\sum_{j+k\ge 2}v_{jk}(x)Y^jZ^k,
$$
whose coefficients $v_{jk}(x)$ admit asymptotic representations in $x^{-1/2}$
as $x\to \infty$ in $\Sigma_1$ and satisfy
$v_{01}(x)=O(x^{-1}),$ $v_{10}(x)=O(x^{-1})$,
and $v_{jk}(x)=O(x^{-(\max\{j,k\}+\delta_{jk}-1)/2})$. 
\end{prop}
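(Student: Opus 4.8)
The plan is to follow the same route as for Proposition~\ref{prop6.1}, but with the base now taken to be the doubly-truncated formal solution with leading term $\tfrac12(\theta_0-\theta_1-\theta_\infty)x^{-1}$ and the direction taken to be the imaginary axis, so that the reduced equation is a perturbation of a diagonal first order system with eigenvalues $\pm1$ (equivalently of $w_{xx}-w=0$) rather than of the harmonic oscillator \eqref{6.1}. The existence statement is then read off from \cite[Theorem 2.2]{S1}, and the width $|\arg x-\pi/2|<\pi-\varepsilon_0$ of the sector of validity from \cite{T,Y}.

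First I would construct the formal power series $\hat\varphi(x)=\tfrac12(\theta_0-\theta_1-\theta_\infty)x^{-1}+\sum_{j\ge2}\varphi_jx^{-j}$: substituting $y=\hat\varphi(x)$ into $(\mathrm{P}_{\mathrm{V}})$ and equating powers of $x^{-1}$, the leading relation is $\varphi_1^2=2b_\theta$, so $\varphi_1=\pm\tfrac12(\theta_0-\theta_1-\theta_\infty)$, and then each $\varphi_j$ with $j\ge2$ is determined recursively; the recursion is uniquely solvable precisely under the genericity hypotheses on $(\theta_0,\theta_1,\theta_\infty)$ imposed in Theorems~\ref{thm2.6} and \ref{thm2.8}. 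The genuine holomorphic $\varphi(x)$ with $\varphi(x)\sim\hat\varphi(x)$ on $\Sigma_1$ will then be the $c_1=c_2=0$ member of the family obtained below.

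Next I would linearise $(\mathrm{P}_{\mathrm{V}})$ about $\hat\varphi$ and bring it to the normal form required by \cite[Theorem 2.2]{S1}. Setting $y=\varphi(x)+x^{-1/2}w$, the variational part becomes a first order $2\times2$ system $\tfrac{d}{dx}\binom{w_1}{w_2}=\bigl(\mathrm{diag}(1,-1)+x^{-1}B_1+O(x^{-2})\bigr)\binom{w_1}{w_2}$, whose leading eigenvalues $\pm1$ produce the exponentials $e^{\pm x}$; a diagonal shearing $w\mapsto x^{\mu\sigma_3}w$ with a suitable exponent $\mu$ (coming out to be $\mu=\tfrac32-2\theta_1-\theta_\infty$) eliminates the diagonal part of $B_1$, and the single resonant cubic monomial produces the reduced system $Y'=(1+\mu x^{-1}-4x^{-1}YZ)Y$, $Z'=(-1-\mu x^{-1}+4x^{-1}YZ)Z$, solved by $Y=c_1e^xx^{\mu-4c_1c_2}$, $Z=c_2e^{-x}x^{-\mu+4c_1c_2}$ with $YZ\equiv c_1c_2$. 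With $(\mathrm{P}_{\mathrm{V}})$ in this shape---diagonal $\pm1$ linear part, explicit resonant coefficient, remaining nonlinearity non-resonant with coefficients that are $O(x^{-1/2})$ or smaller---\cite[Theorem 2.2]{S1} supplies a holomorphic $V(x,Y,Z)$ on $x\in\Sigma_1$, $|Y|,|Z|<r_0|x|^{1/2}$, $|YZ|<R_0$, with the stated convergent double-series expansion, with coefficients $v_{jk}(x)$ admitting asymptotic expansions in $x^{-1/2}$ and obeying $v_{01},v_{10}=O(x^{-1})$ and $v_{jk}=O(x^{-(\max\{j,k\}+\delta_{jk}-1)/2})$; substituting the above $Y,Z$ and noting $|YZ|=|c_1c_2|<R_0$ yields the asserted two-parameter family. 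Finally \cite{T,Y} provide the uniform estimates needed to push the sector out to $|\arg x-\pi/2|<\pi-\varepsilon_0$, no Stokes line of the truncated solution meeting this sector.

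The main obstacle is the reduction step. One must check that the linearisation about the truncated solution is of the non-degenerate type demanded by \cite[Theorem 2.2]{S1}---in particular that the apparent singularity of $(\mathrm{P}_{\mathrm{V}})$ at $y=0$ does not destroy the diagonalisable $\pm1$ leading matrix---and one must pin down the shearing exponent $\mu=\tfrac32-2\theta_1-\theta_\infty$ and the resonance coefficient $4$ so that the exponents $\tfrac32-2\theta_1-\theta_\infty\mp4c_1c_2$ and the coupling $4c_1c_2$ appear exactly as in the statement; everything after that is bookkeeping plus direct quotation of the cited theorems. A confluence argument from Proposition~\ref{prop5.1} in the style of Subsection~\ref{ssc5.2} is not available here, since the uniqueness used there already relies on the present proposition.
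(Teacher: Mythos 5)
Your overall strategy is the paper's: recentre (P$_{\mathrm{V}}$) at the doubly-truncated series with leading term $\tfrac12(\theta_0-\theta_1-\theta_\infty)x^{-1}$, bring the perturbation equation to a normal form with diagonal linear part $\pm1$, resonant exponent $\tfrac32-2\theta_1-\theta_\infty$ and resonant coupling $4$, and then invoke \cite[Theorem 2.2]{S1}. The difference is the reduction route. The paper does not linearise the scalar equation at all: it rewrites (P$_{\mathrm{V}}$) as Okamoto's Hamiltonian system with $H_{\mathrm{V}}(x,q,p)$, $q=y$, $\kappa_0=\pm\tfrac12(\theta_0-\theta_1-\theta_\infty)$, and performs the canonical transformation $q=x^{-1/2}P+\varphi(x)$, $p=-x^{1/2}Q+\tilde\varphi(x)$; the transformed Hamiltonian is computed explicitly and has the shape $(1-ax^{-1}+x^{-2}h_{11})QP-2x^{-1}(1+x^{-1}h_{22})Q^2P^2+\cdots$ with $a=-\tfrac32+\theta_0+\theta_1\pm(\theta_0-\theta_1-\theta_\infty)$, so that \cite[Theorem 2.2]{S1} applies verbatim (with $V=V^-$ and $\kappa_0=-\tfrac12(\theta_0-\theta_1-\theta_\infty)$, which reproduces your $\mu=-a=\tfrac32-2\theta_1-\theta_\infty$ and the coefficient $4$). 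This matters because the theorem you want to quote is formulated for Hamiltonian systems in exactly that normal form; the canonical-transformation computation is precisely the verification you defer as ``the main obstacle''. Your scalar route ($y=\varphi+x^{-1/2}w$, pass to a $2\times2$ system, shear, isolate the resonant cubic) is workable in spirit — the note on the paper's choice of the asymmetric scalings $x^{-1/2}P$, $x^{1/2}Q$ is the clean way to see that the singularity of (P$_{\mathrm{V}}$) at $y=0$ is harmless — but as written it is not a ``direct quotation'' of \cite[Theorem 2.2]{S1}: you would either have to recast your reduction as a canonical change of variables of $H_{\mathrm{V}}$ (i.e.\ reproduce the paper's computation) or redo the series/fixed-point construction for a non-Hamiltonian system, which the cited theorem does not literally cover. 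Also, the sector $|\arg x-\pi/2|<\pi-\varepsilon_0$ comes out of \cite[Theorem 2.2]{S1} itself, since $\varphi,\tilde\varphi,h_{jk}$ have asymptotic expansions in $|\arg x-\pi/2|<\pi$; \cite{T,Y} are background, not a separate extension step.

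One concrete error to fix: your claim that the recursion for the formal series $\hat\varphi$ is ``uniquely solvable precisely under the genericity hypotheses of Theorems \ref{thm2.6} and \ref{thm2.8}'' is wrong, and it would spuriously restrict the proposition, which carries no hypotheses on $(\theta_0,\theta_1,\theta_\infty)$. Once the leading balance $\varphi_1=\tfrac12(\theta_0-\theta_1-\theta_\infty)$ is fixed, the linear-in-$\varphi_j$ coefficient in the recursion is the constant term of the variational operator, which is $-1\neq0$ (the same nonresonance that produces $e^{\pm x}$), so every $\varphi_j$ is determined for all parameter values; the conditions in Theorems \ref{thm2.6} and \ref{thm2.8} concern Gamma-factors in the monodromy data, not the existence of $\varphi$. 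Your final remark that a confluence argument from Proposition \ref{prop5.1} would be circular here is correct and consistent with the paper's use of this proposition.
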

\begin{proof}
Equation (P$_{\mathrm{V}}$) is written in the Hamiltonian system
$$
dq/dx=\partial H_{\mathrm{V}}/\partial p,\quad 
dp/dx=-\partial H_{\mathrm{V}}/\partial q 
$$
with the Hamiltonian function
$$
H_{\mathrm{V}}(x,q,p)=x^{-1}(q(q-1)^2p^2 -(\kappa_0(q-1)^2+\theta q(q-1)
+xq)p+\kappa(q-1))
$$
given by Okamoto \cite{O}, where 
$$
q=y, \quad
\kappa_0=\pm\sqrt{2b_{\theta}}=\pm\tfrac 12 (\theta_0-\theta_1-\theta_{\infty}),
\quad \theta=-1-c_{\theta}=-2+\theta_0+\theta_1
$$
and $\kappa=\tfrac 14(\kappa_0+\theta)^2-\tfrac 14\kappa_{\infty}^2$, 
$\kappa_{\infty}^2=2a_{\theta}.$ By a canonical transformation of the form
$$
q=x^{-1/2}P+\varphi(x),\quad p=-x^{1/2}Q+\tilde{\varphi}(x)
$$
with $\varphi(x)=-\kappa_0 x^{-1}+O(x^{-2})$ and $\tilde{\varphi}(x)=-\kappa
x^{-1}+O(x^{-2})$ admitting asymptotic representations in $|\arg x-\pi/2|<\pi$,
the Hamiltonian function $H_{\mathrm{V}}(x,q,p)$ is taken to
\begin{align*}
H(x,Q,P)=& (1-ax^{-1}+x^{-2} h_{11})QP +x^{-3}h_{20}Q^2 +x^{-1}h_{02}P^2 
\\
& +x^{-9/2}h_{30}Q^3+x^{-3/2}h_{21}Q^2P +x^{-1/2}h_{12}QP^2 
\\
& -2x^{-1}(1+x^{-1}h_{22})Q^2P^2
+x^{-3}h_{31}Q^3P +x^{-3/2}h_{32}Q^3P^2,
\end{align*} 
where $a=\tfrac 12 +\theta+2\kappa_0=-\tfrac 32+\theta_0+\theta_1 \pm
(\theta_0-\theta_1-\theta_{\infty})$, and $h_{jk}=O(1)$ admit asymptotic
representations in $|\arg x-\pi/2|<\pi.$ 
Using \cite[Theorem 2.2]{S1}, and setting $V(x,Y,Z)=V^-(x,Y,Z)$ and 
$\kappa_0=-\tfrac12(\theta_0-\theta_1-\theta_{\infty})$,
we arrive at the proposition.  
\end{proof}
\begin{rem}\label{rem6.1}
In the proof of Proposition \ref{prop6.2}, we may use 
$\kappa_0=\tfrac12(\theta_0-\theta_1-\theta_{\infty})$ and
$a=-\tfrac 32+2\theta_0-\theta_{\infty}$. Then we have a family of solutions
with $c_1 e^{x}x^{3/2-2\theta_0+\theta_{\infty}-4c_1c_2},$
$c_2 e^{-x}x^{-3/2+2\theta_0-\theta_{\infty}+4c_1c_2},$ and
$\varphi(x) \sim -\tfrac 12(\theta_0-\theta_1-\theta_{\infty})x^{-1}
+\sum_{j\ge 2}\varphi_j x^{-j}.$
\end{rem}
\subsection{Derivation of Theorem \ref{thm2.4}}\label{ssc6.1} 
The truncated solutions in \cite[Corollaries 5.1--5.3]{Andreev-Kitaev}
may be supposed to be given in a strip containing the 
positive real axis. By Proposition \ref{prop6.1} the solution of 
(P$_{\mathrm{V}}$)
$$
y(a,x):=y(c_1,0,x)=\psi_0(x)+ax^{-1/2}e^{ix/2}(1+O(x^{-1/2})) 
$$
with $\psi_0(x)=-1-4(1-\theta_0-\theta_1)x^{-1}+O(x^{-2}),$
$ a=-4\sqrt{2}e^{\pi i/4}c_1$ is given in the
sector $0\le \arg x <\pi$. On the other hand, the solution of \cite[Corollary
5.2]{Andreev-Kitaev} is of the form
$$
\hat{y}(a,x)=-1+ax^{-1/2}e^{ix/2}+O(x^{-1})
$$
as $x\to +\infty$ in the strip $\mathcal{S}$. 
To prove Theorem \ref{thm2.4},(3), it is sufficient to show that 
$y(a,x)\equiv \hat{y}(a,x)$ in $\mathcal{S}.$  Let $y(a,x)$ and
$\hat{y}(a,x)$ correspond to $w(a,t)$ and $\hat{w}(a,t)$, respectively, by the
transformation $t=x/2$, $y=-\tan^2(\pi/4+\psi(x)+t^{-1/2}w).$ Then 
$w(a,t)=ae^{it}(1+O(t^{-1/2}))$ and $\hat{w}(a,t)=ae^{it}+O(t^{-1/2})$
for $2t\in \mathcal{S}$, and both functions solving \eqref{6.1} also fulfil the
integral equation
$$
w(t)=ae^{it}-\int_{+\infty}^t\sin(t-s) f(s,w(s))ds,
\quad  \re t< \re s<+\infty, \,\,\, 2t, 2s\in \mathcal{S},
$$
in which $f(s,w)=\tfrac 83s^{-1}w^3 +f_0(s,w)s^{-3/2}w$ with $f_0(s,w)\ll 1$,
and, for $w(s)=w(a,s)$ and $\hat{w}(a,s)$, we have
$\int^t_{+\infty}\sin(t-s)f(s,w(s))ds\ll t^{-1/2}$. 
Now let $v(t)$ be such that $|\hat{w}(a,t)-w(a,t)|=t^{-1/2}v(t),$ $v(t)\ll 1.$
Then
\begin{align*}
v(t) &\le t^{1/2} \int^t_{+\infty} ( 8(|w(a,s)|^2+|w(a,s)|) s^{-1}
 +Ks^{-3/2})s^{-1/2}v(s)ds
\\
&\le (64|a|(|a|+1)+Kt^{-1/2}) \sup_{\re s \ge \re t,\,\, 
2s\in\mathcal{S}}v(s),
\end{align*}
if $2t \in \mathcal{S}$ is sufficiently large, where $K$ is some positive 
number independent of $|a|$. This implies $v(t)\equiv 0$ in $\mathcal{S},$
provided that, say, $|a|<1/200.$ Since $w(a,t)$ and $\hat{w}(a,t)$ are 
analytic in $a$, we arrive at the expression of Theorem \ref{thm2.4},(3).
In particular the special case $a=0$ implies the uniqueness of the 
doubly-truncated solution. 
\begin{rem}\label{rem6.2}
By the supposition \cite[(7.6)]{Andreev-Kitaev} in the derivation of
\cite[Corollaries 5.1--5.3]{Andreev-Kitaev}, the expression of a truncated 
solution, say, of \cite[Corollary 5.2]{Andreev-Kitaev} is valid in the
sector $0\le \arg t<\pi$. This fact enable us to simplify the discussion above
(cf. \cite{S0}).
\end{rem}
\subsection{Uniqueness of truncated solutions}\label{ssc6.2}
Solutions of Proposition \ref{prop6.2}, Remark \ref{rem6.1}
or the reciprocals with vanishing $c_1$ or $c_2$ become basic 
truncated solutions (respectively, 
doubly-truncated solutions) in the sector $-\pi/2<\arg x\le \pi/2$ or
$\pi/2\le \arg x< 3\pi/2$ (respectively, $|\arg x-\pi/2|<\pi$), each of
which is uniquely determined by its leading terms. In the proofs of 
Theorems \ref{thm2.6} and \ref{thm2.8} in Subsection \ref{ssc5.2}, 
we have used the fact that a solution given for $0<\arg x<\pi/2$ 
or $-\pi/2<\arg x<0$ coincides with one of these basic truncated solutions 
with the same leading terms. The coincidence is verified in a small sector,
say, $|\arg x-\pi/4|<\pi/10$ or $|\arg x+\pi/4|<\pi/10$ 
by using the equation \cite[(5.1)]{S0} with $z=ix$ and the exponentially
decaying property of the difference. 
\section{Derivation of the monodromy data in Proposition \ref{prop5.1}}\label{sc7}
Let us derive the monodromy data of Proposition \ref{prop5.1}.
Suppose that $\arg x \sim \pi/2$ and that $|e^xx^{\sigma}|, |e^{-x}x^{-\sigma}|
\ll 1$ as $x\to \infty$. As described in Subsection \ref{ssc5.1}
the monodromy data corresponding to $y(c,\sigma,x)$ is given by the invariant 
monodromy for the linear system \eqref{3.1} with $(A_0,A_1)= 
(A_0(x),A_1(x)):=(A_0(\mathbf{c},\sigma, x),A_1(\mathbf{c},\sigma,x))$ such that
\begin{align*}
& A_0(x) \sim \tfrac 14 (\sigma-\theta_{\infty})\sigma_3 +\rho_-c_0 x^{-(\sigma
+\theta_{\infty})/2}\sigma_{12}+\eta_+c^{-1}_0 x^{(\sigma+\theta_{\infty})/2}
\sigma_{21},
\\
& A_1(x) \sim -\tfrac 14 (\sigma+\theta_{\infty})\sigma_3 +\rho_+c_x e^x 
x^{(\sigma-\theta_{\infty})/2}\sigma_{12}+\eta_-c^{-1}_x e^{-x}
x^{-(\sigma-\theta_{\infty})/2}\sigma_{21},
\end{align*} 
where $\mathbf{c}=(c_0,c_x)$, $c=c_x/c_0,$ and $\rho_{\pm},$ $\eta_{\pm}$
are as in \eqref{5.b},
since $(A_0(x),A_1(x))$ solves \eqref{5.00}. By $\Psi=e^{\frac 14 x\sigma_3}
x^{-\frac 14\theta_{\infty}\sigma_3} Z$, system \eqref{3.1} with
$(A_0(x),A_1(x))$ is changed into
\begin{align}\label{7.1}
& \frac{dZ}{d\lambda}=\Bigl(\frac{\sigma_3}{2}+\frac{\hat{A}_0(x)
}{\lambda}+\frac{\hat{A}_1(x)}{\lambda-x} \Bigr)Z,
\\[0.3cm]
\notag
&\hat{A}_0(x)=\bigl(\tfrac 14(\sigma-\theta_{\infty})+O(x^{-1})\bigr)\sigma_3
\\
\notag
&\phantom{----} +(\rho_-c_0+O(x^{-1}))e^{-x/2}x^{-\sigma/2}\sigma_{12}
+(\eta_+c_0^{-1}+O(x^{-1}))e^{x/2}x^{\sigma/2}\sigma_{21},
\\
\notag
&\hat{A}_1(x)=\bigl(-\tfrac 14(\sigma+\theta_{\infty})+O(x^{-1})\bigr)\sigma_3
\\
\notag
&\phantom{----}
+(\rho_+c_x+O(x^{-1}))e^{x/2}x^{\sigma/2}\sigma_{12}
+(\eta_-c_x^{-1}+O(x^{-1}))e^{-x/2}x^{-\sigma/2}\sigma_{21}.
\end{align}
\subsection{Necessary facts}\label{ssc7.1}
We summarise necessary facts, whose proofs are found in \cite[Sections 7.1
-- 7.3]{S-2018}. The characteristic roots of the coefficient $A(x,\lambda)
=\tfrac 12\sigma_3+\lambda^{-1}\hat{A}_0(x) +(\lambda-x)^{-1}\hat{A}_1(x)$ 
of \eqref{7.1} are $\pm \mu(x,\lambda)$ with
$$
\mu(x,\lambda)=\frac 12 
+\frac {\frac 14(\sigma-\theta_{\infty})+O(x^{-1})}{\lambda}
-\frac {\frac 14(\sigma+\theta_{\infty})+O(x^{-1})}{\lambda-x}
+(|\lambda|^{-2}+|\lambda-x|^{-2})
$$
as long as $|\lambda|>|x|^{1/2},$ $|\lambda-x|>|x|^{1/2}.$
By $Z=(\mu(x,\lambda)\sigma_3+A(x,\lambda))\hat{Z}=(\sigma_3+O(|\lambda|^{-1}
+|\lambda-x|^{-1}))\hat{Z},$ system \eqref{7.1} is reduced to
\begin{equation*}\label{7.2}
\frac{d\hat{Z}}{d\lambda}=(\mu(x,\lambda)\sigma_3+
O(|\lambda|^{-2}+|\lambda-x|^{-2}))\hat{Z}.
\end{equation*}
This fact implies the following \cite[Lemma 7.1]{S-2018}.
\begin{lem}\label{lem7.1}
System \eqref{7.1} admits the matrix solution
\begin{align*}
&Z^1(x,\lambda)=(\sigma_3+O(|\lambda|^{-1}+|\lambda-x|^{-1}))
e^{\frac 12\lambda\sigma_3}\lambda^{\alpha(x)\sigma_3}
(\lambda-x)^{\beta(x)\sigma_3},
\\
& \alpha(x)=\tfrac 14(\sigma-\theta_{\infty})+O(x^{-1}),\quad
 \beta(x)=-\tfrac 14(\sigma+\theta_{\infty})+O(x^{-1})
\end{align*}
uniformly in sufficiently large $x$ as $\lambda\to \infty$ through the region
$$
\Sigma_{\pi/2}(x):\,\,\, |\arg\lambda-\pi/2|<\pi,\quad |\arg(\lambda-x)-\pi/2|
<\pi/4,\quad |\lambda-x|>|x|^{1/2}.
$$
System \eqref{7.1} also admits a solution $Z^0(x,\lambda)$ having the same 
asymptotic form as of $Z^1(x,\lambda)$ as $\lambda\to\infty$ through the region
$$
\Sigma_{3\pi/2}(0):\,\,\,|\arg\lambda-3\pi/2|<\pi/4,\quad |\lambda|>|x|^{1/2}.
$$
\end{lem}
If $|\lambda|<2|x|^{1/2},$ by $V=e^{\frac 14 x\sigma_3}x^{\frac 14 \sigma
\sigma_3} c_0^{-\sigma_3/2}Z$, \eqref{7.1} is taken to
\begin{equation}\label{7.3}
\frac{dV}{d\lambda} =\Bigl(\frac{\sigma_3}2+\frac{\Lambda}{\lambda}+E(x,\lambda)
\Bigr)V, \quad \Lambda=\tfrac14(\sigma-\theta_{\infty})\sigma_3+\rho_-\sigma_{12}
+\eta_+\sigma_{21},
\end{equation}
where $\rho_-=\frac 14(\sigma+2\theta_0-\theta_{\infty}),$
$\eta_+=\frac 14(-\sigma+2\theta_0+\theta_{\infty}),$ and
$E(x,\lambda)\ll x^{-1}$. This is a perturbation of the Whittaker system
\begin{equation}\label{7.4}
\frac{dW}{d\lambda} =\Bigl(\frac{\sigma_3}2 +\frac{\Lambda}{\lambda} \Bigr)W
\end{equation}
admitting a matrix solution 
$$
W_{\infty}(\lambda)=(I+O(\lambda^{-1}))e^{\frac 12\lambda \sigma_3}\lambda
^{\frac 14(\sigma-\theta_{\infty})\sigma_3}
$$
as $\lambda\to \infty$ through the sector $|\arg\lambda-\pi/2|<\pi.$ Let
$W^1_{\infty}(\lambda)$ and $W^3_{\infty}(\lambda)$ be canonical solutions
having the same asymptotic representations as $\lambda\to\infty$ through the
sectors $|\arg\lambda+\pi/2|<\pi$ and $|\arg\lambda-3\pi/2|<\pi$, respectively.  
\begin{lem}\label{lem7.2}
In the domain $|\arg\lambda-3\pi/2|<\pi/4,$ $|\ln x|<|\lambda|<2|x|^{1/2},$
system \eqref{7.3} admits the matrix solution
$$
V(x,\lambda)=(I+V_{*}(x,\lambda)) e^{\frac 12 \lambda\sigma_3}
\lambda^{\frac14(\sigma-\theta_{\infty})\sigma_3}, \quad
V_{*}(x,\lambda) \ll |\ln  x|^{-1}.
$$
This is also written as $V(x,\lambda)=(I+O(|\ln x|^{-1}))W_{\infty}^3(\lambda).$
\end{lem}
\noindent
For the proof of Lemma \ref{lem7.2} see \cite[Proof of Lemma 7.4]{S-2018}.
\par
The solution $W_{\infty}(\lambda)$ of \eqref{7.4} is written in the form
$$
\begin{pmatrix} 
e^{\pi i(\sigma-\theta_{\infty}+2)/4}W_{(\sigma-\theta_{\infty}+2)/4,
\,\theta_0/2} (e^{-\pi i}\lambda)   &
-\theta_+ W_{-(\sigma-\theta_{\infty}+2)/4,\, \theta_0/2} (\lambda)  \\ 
\theta_- e^{\pi i(\sigma-\theta_{\infty}+2)/4}W_{(\sigma-\theta_{\infty}-2)/4, 
\,\theta_0/2} (e^{-\pi i}\lambda)   &
 W_{-(\sigma-\theta_{\infty}-2)/4,\, \theta_0/2} (\lambda)  
\end{pmatrix} \lambda^{-1/2},
$$
in which $\theta_{\pm}=\frac 14(\sigma-\theta_{\infty} \pm 2\theta_0),$
or $\theta_+=\rho_-,$ $\theta_-=-\eta_+,$ and $W_{\kappa,\nu}(z)\sim
e^{-z/2}z^{\kappa}$ as $z\to \infty$ in $|\arg z|<3\pi/2$ is the Whittaker
function \cite[13.1.33]{AS}, \cite[(3.10)]{J}. The formulas
\begin{align*}
W_{\kappa,\nu}(e^{-\pi i}\lambda) & =e^{-\pi i(\nu+1/2)}e^{\lambda/2}\lambda
^{\nu+1/2}\Bigl(\frac{(1-e^{4\pi i\nu})\Gamma(-2\nu)}{\Gamma(1/2-\nu-\kappa)}
M(\nu-\kappa+1/2, 2\nu+1, e^{\pi i}\lambda)
\\
&\phantom{--------}+e^{4\pi i\nu}U(\nu-\kappa+1/2,2\nu+1,e^{\pi i}\lambda)\Bigr),
\\
W_{\kappa,\nu}(\lambda) & =e^{-\lambda/2}\lambda^{\nu+1/2}\Bigl(\frac
{(1-e^{-4\pi i\nu})\Gamma(-2\nu)}{\Gamma(1/2-\nu-\kappa)}
M(\nu-\kappa+1/2, 2\nu+1, e^{-2\pi i}\lambda)
\\
&\phantom{--------}+e^{-4\pi i\nu}U(\nu-\kappa+1/2,2\nu+1,
 e^{-2\pi i}\lambda)\Bigr)
\end{align*}
near $\lambda=\infty$ follow from \cite[13.1.10, 13.1.33]{AS} with $n=\mp 1.$ 
Here, by \cite[13.5.1,13.5.2]{AS},
\begin{align*}
&M(\nu-\kappa+1/2, 2\nu+1,e^{\pi i}\lambda)=\frac{\Gamma(2\nu+1)\lambda
^{-(\nu-\kappa+1/2)} }{\Gamma(\nu+\kappa+1/2)}(1+O(\lambda^{-1}))
\\
&\phantom{--------} +\frac{\Gamma(2\nu+1)}{\Gamma(\nu-\kappa+1/2)}
e^{-\lambda}(e^{\pi i}\lambda)^{-(\nu+\kappa+1/2)}(1+O(\lambda^{-1})),
\\
&U(\nu-\kappa+1/2,2\nu+1,e^{\pi i}\lambda)=(e^{\pi i}\lambda)^{-(\nu-\kappa+1/2)}
(1+O(\lambda^{-1}))
\end{align*}
as $\lambda\to\infty$ in $|\arg\lambda+\pi/2|<\pi.$ 
These relations lead to Stokes matrices for \eqref{7.4}.
\begin{lem}\label{lem7.3}
Let $W_{\infty}(\lambda)=W^1_{\infty}(\lambda)S_*$ and $W_{\infty}(\lambda)=
W^3_{\infty}(\lambda)S_{**}.$ Then
\begin{align*}
& S_* =I-\frac{2\pi i}{\Gamma(-\frac 14(\sigma-2\theta_0-\theta_{\infty}))
\Gamma(1-\frac 14(\sigma+2\theta_0-\theta_{\infty}))}\sigma_{21},
\\
& S_{**} =I+\frac{2\pi i e^{-\pi i(\sigma-\theta_{\infty})/2}}
{\Gamma(\frac 14(\sigma+2\theta_0-\theta_{\infty}))
\Gamma(1+\frac 14(\sigma-2\theta_0-\theta_{\infty}))}\sigma_{12}.
\end{align*}
\end{lem}
\begin{lem}\label{lem7.4}
Set $W^3_{\infty}(e^{2\pi i}\lambda)=W^3_{\infty}(\lambda)M^0_{\mathrm{local}}$.
Then $M^0_{\mathrm{local}}=S_{**}S_*^{-1}e^{\frac{\pi i}2(\sigma-\theta_{\infty})
\sigma_3}.$
\end{lem}
\begin{proof}
For $|\arg(e^{2\pi i}\lambda)-3\pi/2|=|\arg\lambda+\pi/2|<\pi$ we have
$$
W^3_{\infty}(e^{2\pi i}\lambda)\sim e^{\frac12\lambda\sigma_3}\lambda^{\frac14
(\sigma-\theta_{\infty})\sigma_3} e^{\frac{\pi i}2(\sigma-\theta_{\infty})
\sigma_3},
$$
and hence $W^3_{\infty}(e^{2\pi i}\lambda)=W^1_{\infty}(\lambda)
e^{\frac{\pi i}2(\sigma-\theta_{\infty})\sigma_3}$, which implies the lemma.
\end{proof}
\subsection{Derivation of $M^0$ for $\arg x\sim \pi/2$}\label{ssc7.2}
System \eqref{3.1} admits the canonical solution $\Psi(x,\lambda)=(I+O(\lambda
^{-1}))e^{\frac 12\lambda\sigma_3}\lambda^{-\frac12\theta_{\infty}\sigma_3}$
as $\lambda\to\infty$ in the sector $|\arg\lambda-\pi/2|<\pi.$ The monodromy
matrices $M^0$ and $M^1$ are defined by the analytic continuations along loops
$l_0$ and $l_x$ surrounding $0$ and $x$, respectively as in 
Figure \ref{loops1},(a). 
For the canonical solutions $\Psi_1(x,\lambda)$ and $\Psi_3(x,\lambda)$ with the
same asymptotic representations in $|\arg\lambda+\pi/2|<\pi$ and
$|\arg\lambda-3\pi/2|<\pi$, respectively, the Stokes matrices $S_1$ and $S_2$
are given by $\Psi(x,\lambda)=\Psi_1(x,\lambda)S_1,$ $\Psi_3(x,\lambda)=
\Psi(x,\lambda)S_2.$ To calculate $M^0$ let us examine the analytic continuation
of $\Psi(x,\lambda)$ along a suitably modified loop $l_0$. 
Suppose that $\arg x\sim \pi/2$ and that $l_0$ starts 
from $\lambda_{\mathrm{st}}$ such that $\arg \lambda_{\mathrm{st}} \sim \pi/2,$ 
$\arg (\lambda_{\mathrm{st}}-x) \sim \pi/2$ and $|\lambda_{\mathrm{st}}|>2|x|.$  
The loop $l_0$ may be modified to the form $l_0=(\Gamma_{\mathrm{left}}\circ
L_-)\circ \Gamma_0 \circ (\Gamma_{\mathrm{left}}\circ L_-)^{-1}$, where
$\Gamma_{\mathrm{left}}$ is an arc $\lambda=|\lambda_{\mathrm{st}}|e^{it}$
$(\pi/2 \le t\le 3\pi/2)$, $L_-$ is a segment joining $\lambda=-i|\lambda
_{\mathrm{st}}|$ to $\lambda=-i|x|^{1/4},$ and $\Gamma_0$ is defined by
$\lambda=|x|^{1/4}e^{it}$ $(3\pi/2\le t\le 7\pi/2)$ as in Figure 
\ref{loops1},(b). Along $\Gamma_{\mathrm{left}}\circ L_-$ matchings are
carried out according to the scheme:
$$
\Psi(x,\lambda)=\Psi_3(x,\lambda)S_2^{-1} \,\,\, \longleftrightarrow \,\,\,
Z^0(x,\lambda) \,\,\, \longleftrightarrow \,\,\, V(x,\lambda),
$$
and $W^3_{\infty}(\lambda)$ along $\Gamma_0$ yields the local monodromy 
$M^0_{\mathrm{local}}$.
{\small
\begin{figure}[htb]
\begin{center}
\unitlength=0.75mm
\begin{picture}(60,60)(-30,-30)
\put(0,8){\circle*{1}}   
\put(0,8){\circle{6}}   
\put(0,-8){\circle*{1}}
\put(0,-8){\circle{6}}  

\put(5.5,5.5){\makebox{$x$}}
\put(5.5,-8.5){\makebox{$0$}}
\put(0,23){\line(0,-1){12}}  

\put(0,23){\circle*{1.5}}
\put(3,22){\makebox{$\lambda_{\mathrm{st}}$}}
 \qbezier(0,23)(-18,-4)(-3,-8)
\thicklines
\put(-12,-34){\makebox{(a) Loops}}
\end{picture}
\quad
\begin{picture}(60,60)(-30,-30)
\put(0,15){\circle*{1}}   
\put(0,0){\circle*{1}}
\put(0,0){\circle{10}}  

\put(2.5,12.5){\makebox{$x$}}
\put(-8.5,-3.5){\makebox{$0$}}
\put(5.5,2.5){\makebox{$\Gamma_0$}}
\qbezier(-7,-7)(0,-12)(7,-7)  
\put(-7,-7){\line(-1,-1){14}}
\put(7,-7){\line(1,-1){11}}
\put(0,-23){\line(0,1){18}}  

\put(10,-23){\makebox{$\Sigma_{3\pi/2}(0)$}}
\put(0,23){\circle*{1.5}}
\put(3,22){\makebox{$\lambda_{\mathrm{st}}$}}
\put(-34,5){\makebox{$\Gamma_{\mathrm{left}}$}}
\put(1,-16){\makebox{$L_-$}}
\thicklines
\qbezier(0,23)(-9.52,23)(-16.26,16.26)
\qbezier(-23,0)(-23,9.52)(-16.26,16.26)
\qbezier(0,-23)(-9.52,-23)(-16.26,-16.26)
\qbezier(-23,0)(-23,-9.52)(-16.26,-16.26)
\put(-6,-34){\makebox{(b) $l_0$}}
\end{picture}
\quad
\begin{picture}(60,60)(-30,-30)
\put(0,-20){\circle*{1}}
\put(0,-5){\circle*{1}}
\put(0,-5){\circle{10}}   
\put(6,-8){\makebox{$x$}}
\put(-8,-12){\makebox{$\Gamma_x$}}
\put(-7.5,8){\makebox{$L_+$}}

\put(2.5,-21){\makebox{$0$}}
\qbezier(-7,2)(0,7)(7,2)
\put(-7,2){\line(-1,1){14}}
\put(7,2){\line(1,1){11}}
\put(0,12){\line(0,-1){12}}
\put(10,16){\makebox{$\Sigma_{\pi/2}(x)$}}
\put(0,23){\circle*{1.5}}
\put(-8,22){\makebox{$\lambda_{\mathrm{st}}$}}
\thicklines
\put(-0.2,23){\line(0,-1){11}}
\put(0,23){\line(0,-1){11}}
\put(0.2,23){\line(0,-1){11}}
\put(-6,-34){\makebox{(c) $l_x$}}
\end{picture}
\end{center}
\caption{Loops for $\arg x\sim \pi/2$}
\label{loops1}
\end{figure}
}
\par
(1) Continue $\Psi(x,\lambda)$ along $\Gamma_{\mathrm{left}}$ entering 
$\Sigma_{3\pi/2}(0)\cap \{|\lambda|>2|x|\},$ in which $|\arg(\lambda-x)-3\pi/2|
<\pi/4.$ Let us match $\Psi_3(x,\lambda)$ with $Z^0(x,\lambda)$ in this domain.
Since $Z^0(x,\lambda)$ solves \eqref{7.1} that follows from \eqref{3.1} via
$\Psi=e^{\frac 14x\sigma_3} x^{-\frac14\theta_{\infty}\sigma_3} Z$, we set
\begin{equation}\label{7.4a}
\Psi_3(x,\lambda)=e^{\frac 14x\sigma_3}x^{-\frac 14\theta_{\infty}\sigma_3}
Z^0(x,\lambda)\Upsilon_1(x)
\end{equation}
with invertible $\Upsilon_1(x).$ By Lemma \ref{lem7.1}, the right-hand side is
\begin{align*}
& e^{\frac 14x\sigma_3} x^{-\frac14\theta_{\infty}\sigma_3}(\sigma_3+O(\lambda
^{-1})) e^{\frac12\lambda\sigma_3 } \lambda^{\alpha(x)\sigma_3}(\lambda-x)
^{\beta(x)\sigma_3}\Upsilon_1(x)
\\
=&(I+O(|\ln x|^{-1})) e^{\frac12\lambda\sigma_3}\lambda^{-(\theta_{\infty}/2
+O(x^{-1}))\sigma_3} e^{\frac14 x\sigma_3}x^{-\frac14\theta_{\infty}\sigma_3}
\sigma_3 \Upsilon_1(x),
\end{align*}
provided that, say, $|\lambda|\gg |x\ln x|.$
Under the condition $|\lambda|\ll e^{|x|^{1/2}},$ which implies 
$\lambda^{O(x^{-1})}=1+O(x^{-1/2}),$ from \eqref{7.4a} we have $\Upsilon_1(x)
=e^{-\frac14 x\sigma_3}x^{\frac14 \theta_{\infty}\sigma_3}(\sigma_3+O(|\ln x
|^{-1})).$
\par
(2) The line $L_-$ is contained in $|\lambda-3\pi/2|<\pi/4.$ Recall that
$V=e^{\frac 14x\sigma_3}x^{\frac14 \sigma\sigma_3}c_0^{-\sigma_3/2}Z$ takes
\eqref{7.1} to \eqref{7.3}. Set
\begin{equation}\label{7.5}
e^{\frac14 x\sigma_3}x^{\frac14 \sigma\sigma_3}c_0^{-\sigma_3/2}Z^0(x,\lambda)
=V(x,\lambda)\Upsilon_2(x)
\end{equation}
in the domain $|\arg\lambda-3\pi/2|<\pi/4,$ $|x|^{1/2}<|\lambda|<2|x|^{1/2}.$
The left-hand side is
\begin{align*}
&e^{\frac14 x\sigma_3}x^{\frac14\sigma\sigma_3}c_0^{-\sigma_3/2}(\sigma_3+
O(\lambda^{-1}))e^{\frac12\lambda\sigma_3}\lambda^{\alpha(x)\sigma_3}
(e^{\pi i}x(1-\lambda/x))^{\beta(x)\sigma_3}
\\
=& (I+O(\lambda^{-1}))e^{\frac12\lambda\sigma_3}\lambda^{((\sigma-
\theta_{\infty})/4+O(x^{-1}))\sigma_3}(1-\lambda/x)^{\beta(x)\sigma_3}
e^{\beta(x)\pi i\sigma_3}
\\
&\phantom{--------}\times e^{\frac14x\sigma_3}x^{(-\theta_{\infty}/4+O(x^{-1}))
\sigma_3}c_0^{-\sigma_3/2}\sigma_3,
\end{align*}
since $|\arg(\lambda-x)-3\pi/2|<\pi/4,$ $\arg x\sim \pi/2.$ Using Lemma 
\ref{lem7.2}, we derive $\Upsilon_2(x)=e^{-\frac{\pi i}4(\sigma+\theta_{\infty})
\sigma_3}e^{\frac14 x\sigma_3}x^{-\frac14\theta_{\infty}\sigma_3}c_0^{-\sigma_3
/2}(\sigma_3+O(x^{-1/2})).$
\par
(3) By Lemma \ref{lem7.2}, $V(x,\lambda)=(I+O(|\ln x|^{-1}))W^3_{\infty}
(\lambda)$ around $\lambda=-i|x|^{1/4}.$ From \eqref{7.4a} and \eqref{7.5}, it
follows that
$$
\Psi(x,\lambda)=x^{-\frac14(\sigma+\theta_{\infty})\sigma_3}c_0^{\sigma_3/2}
(I+O(|\ln x|^{-1}))W^3_{\infty}(\lambda)\Upsilon_0(x)
$$
with $\Upsilon_0(x)=\Upsilon_2(x)\Upsilon_1(x)S_2^{-1}=e^{-\frac{\pi i}4(\sigma+
\theta_{\infty})\sigma_3}c_0^{-\sigma_3/2}(I+O(|\ln x|^{-1}))S_2^{-1}.$
By Lemma \ref{lem7.4}, 
\begin{align*}
M^0=& \Upsilon_0(x)^{-1}M^0_{\mathrm{local}}\Upsilon_0(x)
\\
=&S_2(I+O(|\ln x|^{-1}))c_0^{\sigma_3/2}
e^{\frac{\pi i}4(\sigma+\theta_{\infty})\sigma_3}
S_{**}S_*^{-1}e^{\frac{ \pi i}2(\sigma-\theta_{\infty})\sigma_3}
e^{-\frac{\pi i}4(\sigma+\theta_{\infty})\sigma_3}c_0^{-\sigma_3/2}S_2^{-1}.
\end{align*}
The passage to the limit $x\to\infty$ leads to
\begin{equation}\label{7.6}
M^0=S_2c_0^{\sigma_3/2}e^{\frac{\pi i}4(\sigma+\theta_{\infty})\sigma_3}S_{**}
S_*^{-1}e^{\frac{\pi i}2(\sigma-\theta_{\infty})\sigma_3}
e^{-\frac{\pi i}4(\sigma+\theta_{\infty})\sigma_3} c_0^{-\sigma_3/2}S_2^{-1}.
\end{equation}
\subsection{Derivation of $M^1$ for $\arg x\sim \pi/2$}\label{ssc7.3}
In $|\lambda-x|<2|x|^{1/2},$ by $V=e^{-\frac 14x\sigma_3}x^{-\frac14\sigma
\sigma_3}c_1^{-\sigma_3/2}Z$, \eqref{7.1} is changed into
\begin{equation}\label{7.7}
\frac{dV}{d\lambda}=\Bigl(\frac{\sigma_3}2+\frac{\tilde{\Lambda}}{\lambda-x}
+\tilde{E}(x,\lambda)\Bigr)V, \quad \tilde{\Lambda}=-\tfrac14(\sigma+\theta
_{\infty})\sigma_3 +\rho_+\sigma_{12}+\eta_-\sigma_{21},
\end{equation}
where $\rho_+=\frac14(-\sigma+2\theta_1-\theta_{\infty})$, 
$\eta_-=\frac 14(\sigma+2\theta_1+\theta_{\infty}),$ and 
$\tilde{E}(x,\lambda) \ll x^{-1}.$ For this system consider the Whittaker system
\begin{equation}\label{7.8}
\frac{dW}{d\lambda}=\Bigl(\frac{\sigma_3}2+\frac{\tilde{\Lambda}}{\lambda-x}
\Bigr)W,
\end{equation}
which has the canonical solutions 
$$
\tilde{W}_{\infty}(\lambda)=(I+O((\lambda-x)^{-1})) e^{\frac12(\lambda-x)
\sigma_3}(\lambda-x)^{-\frac 14(\sigma+\theta_{\infty})\sigma_3}
$$
as $\lambda\to\infty$ through the sector $|\arg(\lambda-x)-\pi/2|<\pi,$ and
$\tilde{W}_{\infty}^1(\lambda)$ and $\tilde{W}^3_{\infty}(\lambda)$ admitting
the same asymptotic representations in the sectors $|\arg(\lambda-x)+\pi/2|<\pi$
and $|\arg(\lambda-x)-3\pi/2|<\pi$, respectively.
Let $l_x$ be a loop starting from $\lambda_{\mathrm{st}}$ with
$\arg\lambda_{\mathrm{st}} \sim\pi/2$ and $|\lambda_{\mathrm{st}}|>2|x|$, 
and written in the form $l_x=L_+\circ\Gamma_x \circ L_+^{-1}$ as in 
Figure \ref{loops1},(c),
where $L_+$ is a segment joining $\lambda_{\mathrm{st}}$ to $i|x|^{1/4}+x,$  
and $\Gamma_x$ is given by $\lambda=x+|x|^{1/4}e^{it}$ with $\pi/2\le t\le 5\pi
/2.$
In the calculation of $M^1$, we consider systems \eqref{7.7} and \eqref{7.8}
instead of \eqref{7.3} and \eqref{7.4}, and then the Stokes matrices defined by
$\tilde{W}_{\infty}(\lambda)=\tilde{W}_{\infty}^1(\lambda)\tilde{S}_*$ and
$\tilde{W}_{\infty}(\lambda)=\tilde{W}_{\infty}^3(\lambda)\tilde{S}_{**}$ are
\begin{align*}
& \tilde{S}_* =I-\frac{2\pi i}{\Gamma(\frac 14(\sigma+2\theta_1+\theta_{\infty}))
\Gamma(1+\frac 14(\sigma-2\theta_1+\theta_{\infty}))}\sigma_{21},
\\
& \tilde{S}_{**} =I+\frac{2\pi i e^{\pi i(\sigma+\theta_{\infty})/2}}
{\Gamma(\frac 14(-\sigma+2\theta_1-\theta_{\infty}))
\Gamma(1-\frac 14(\sigma+2\theta_1+\theta_{\infty}))}\sigma_{12},
\end{align*}
and the local monodromy $M^1_{\mathrm{local}}$ such that $\tilde{W}_{\infty}
(e^{2\pi i}(\lambda-x)+x)=\tilde{W}_{\infty}(\lambda)M^1_{\mathrm{local}}$
is given by $M^1_{\mathrm{local}}= \tilde{S}_*^{-1}e^{-\frac 12 \pi i
(\sigma+\theta_{\infty})\sigma_3}\tilde{S}_{**}.$
Along $L_+$ matchings are carried out according to the scheme:
$$
\Psi(x,\lambda)\,\,\, \longleftrightarrow \,\,\, Z^1(x,\lambda) \,\,\,
\longleftrightarrow \,\,\, \tilde{V}(x,\lambda),
$$
and $\tilde{W}_{\infty}(\lambda)$ along $\Gamma_x$ yields $M^1_{\mathrm{local}}$.
Here $\tilde{V}(x,\lambda)$ is a solution of \eqref{7.7} such that
$\tilde{V}(x,\lambda)=(I+o(1))\tilde{W}_{\infty}(\lambda)$ in the domain
$|\arg (\lambda-x)-\pi/2|<\pi/4,$ $|\ln x| <|\lambda-x|<2|x|^{1/2}.$ 
\par
(1) In the domain $\Sigma_{\pi/2}(x)\cap \{|\lambda|>2|x|\}$, setting
$\Psi(x,\lambda)=e^{\frac14x\sigma_3} x^{-\frac14\theta_{\infty}\sigma_3}
Z^1(x,\lambda)\Upsilon_1(x)$, we have $\Upsilon_1(x)=e^{-\frac14 x\sigma_3}
x^{\frac14\theta_{\infty}\sigma_3}(\sigma_3+O(|\ln x|^{-1})).$ 
\par
(2) Set $e^{-\frac14 x\sigma_3} x^{-\frac14\sigma\sigma_3}c_x^{-\sigma_3/2}
Z^1(x,\lambda)=\tilde{V}\Upsilon_2(x)$ for $|\arg(\lambda-x)-\pi/2|<\pi/4,$
$|x|^{1/2}<|\lambda-x|<2|x|^{1/2}.$ Writing the left-hand side in the form
$$
e^{\frac 12\lambda\sigma_3}(\lambda-x)^{-\frac14(\sigma+\theta_{\infty})\sigma
_3} e^{-\frac14 x\sigma_3}x^{-\frac14 \theta_{\infty}\sigma_3}c_x^{-\sigma_3/2}
(\sigma_3+O(x^{-1/2})),
$$ 
we have $\Upsilon_2(x)=e^{\frac14x\sigma_3}x^{-\frac14\theta
_{\infty}\sigma_3}c_x^{-\sigma_3/2}(\sigma_3 +O(x^{-1/2})).$
\par
(3) By using $\Upsilon_1(x)$ and $\Upsilon_2(x)$ above, we have the connection
formula $\Psi(x,\lambda)=e^{\frac12x\sigma_3}x^{\frac 14(\sigma-\theta_{\infty})
\sigma_3} c_x^{\sigma_3/2}\tilde{W}_{\infty}(\lambda)c_x^{-\sigma_3/2}$,
which leads to $M^1=c_x^{\sigma_3/2} M^1_{\mathrm{local}}c_x^{-\sigma_3/2},$
that is,
\begin{equation}\label{7.9}
M^1=c_x^{\sigma_3/2}\tilde{S}_*^{-1}e^{-\frac{\pi i}2(\sigma+\theta_{\infty})
\sigma_3}\tilde{S}_{**}c_x^{-\sigma_3/2}.
\end{equation}
\subsection{$M^0_{-\pi/2}$ and $M^1_{-\pi/2}$ for $\arg x\sim -\pi/2$}
\label{ssc7.4}
In the case where $\arg x\sim -\pi/2$, monodromy data $(M^0_{-\pi/2},
M^1_{-\pi/2})$ are defined by analytic continuations of $\Psi(x,\lambda)$ 
along the loops $l_0$ and $l_x$ starting from the point $\lambda_{\mathrm{st}}$ 
as in Figure \ref{loops2},(a), and are calculated by an analogous method.
For our purpose, these loops are modified as in Figure \ref{loops2},(b) and (c).
\par
Calculation of $M^0_{-\pi/2}$ is due to the matching: 
$\Psi(x,\lambda) \,\,\, \longleftrightarrow \,\,\, Z^0(x,\lambda) \,\,\,
\longleftrightarrow \,\,\, V(x,\lambda)$. 
From \eqref{7.4a} and \eqref{7.5} with $\Psi(x,\lambda)$ in place of $\Psi_3(x,
\lambda)$, we have
$\Upsilon_1(x)=e^{-\frac14x\sigma_3}x^{\frac14\theta_{\infty}\sigma_3}
\sigma_3,$ and
$\Upsilon_2(x)=e^{\frac14 x\sigma_3}x^{-\frac14\theta_{\infty}
\sigma_3}e^{-\frac{\pi i}4(\sigma+\theta_{\infty})\sigma_3}c_0^{-\sigma_3/2}
\sigma_3$ up to $1+o(1).$ In this calculation, we set 
$\lambda-x=e^{\pi i}x(1-\lambda/x)$
for $|\arg\lambda-\pi/2|<\pi/4,$ $|x|^{1/2}<|\lambda|<2|x|^{1/2}.$ 
Then $\Upsilon_2(x)\Upsilon_1(x)=e^{-\frac{\pi i}4(\sigma+\theta_{\infty})\sigma
_3}c_0^{-\sigma_3/2}$ and $W_{\infty}(e^{2\pi i}\lambda)=W_{\infty}(\lambda)
M^0_{-\pi/2,\,\mathrm{local}}$ yields
\begin{equation}\label{7.10}
M^0_{-\pi/2}=c_0^{\sigma_3/2}e^{\frac{\pi i}4(\sigma+\theta_{\infty})\sigma_3}
S^{-1}_*e^{\frac{\pi i}2(\sigma-\theta_{\infty})\sigma_3}S_{**}e^{-\frac{\pi i}4
(\sigma+\theta_{\infty})\sigma_3}c_0^{-\sigma_3/2}.
\end{equation}
\par
The matching:
$\Psi_1(x,\lambda)\,\,\,\longleftrightarrow \,\,\, Z^1(x,\lambda) \,\,\,
\longleftrightarrow \,\,\, \tilde{V}$ yields $M^1_{-\pi/2}$, 
which is as in Subsection \ref{ssc7.3}
with $\Psi_1(x,\lambda)$ instead of $\Psi(x,\lambda).$
From $\Upsilon_2(x)\Upsilon_1(x)=c_x^{-\sigma_3/2}$ combined with
a local monodromy such that $\tilde{W}^1_{\infty}(e^{2\pi i}(\lambda-x)+x)=
\tilde{W}^1_{\infty}(\lambda)M^1_{-\pi/2,\,\mathrm{local}}$ it follows that
\begin{equation}\label{7.11}
M^1_{-\pi/2}=S_1^{-1} c_x^{\sigma_3/2}e^{-\frac{\pi i}2(\sigma+\theta_{\infty})
\sigma_3}\tilde{S}_{**}\tilde{S}^{-1}_* c_x^{-\sigma_3/2}S_1.
\end{equation}
{\small
\begin{figure}[htb]
\begin{center}
\unitlength=0.75mm
\begin{picture}(60,60)(-30,-30)
\put(0,8){\circle*{1}}   
\put(0,8){\circle{6}}   
\put(0,-8){\circle*{1}}
\put(0,-8){\circle{6}}  

\put(-7.5,5.5){\makebox{$0$}}
\put(-7.5,-8.5){\makebox{$x$}}
\put(0,23){\line(0,-1){12}}  

\put(0,23){\circle*{1.5}}
\put(3,22){\makebox{$\lambda_{\mathrm{st}}$}}
 \qbezier(0,23)(18,-4)(3,-8)
\thicklines
\put(-12,-34){\makebox{(a) Loops}}
\end{picture}
\begin{picture}(60,55)(-30,-30)
\put(0,-20){\circle*{1}}
\put(0,-5){\circle*{1}}
\put(0,-5){\circle{10}}

\put(6,-9){\makebox{$0$}}
\put(2.5,-21){\makebox{$x$}}
\qbezier(-7,0)(0,8)(7,0)
\put(-7,0){\line(-1,1){14}}
\put(7,0){\line(1,1){14}}
\put(0,12){\line(0,-1){12}}

\put(0,23){\circle*{1.5}}
\put(-8,22){\makebox{$\lambda_{\mathrm{st}}$}}
\thicklines
\put(-0.2,23){\line(0,-1){11}}
\put(0,23){\line(0,-1){11}}
\put(0.2,23){\line(0,-1){11}}
\put(-6,-34){\makebox{(b) $l_0$}}
\end{picture}
\quad\qquad
\begin{picture}(60,55)(-30,-30)
\put(0,15){\circle*{1}}
\put(0,0){\circle*{1}}
\put(0,0){\circle{10}}
\put(-5,12.5){\makebox{$0$}}
\put(-8,2.5){\makebox{$x$}}
\qbezier(-7,-7)(0,-12)(7,-7)
\put(-7,-7){\line(-1,-1){14}}
\put(7,-7){\line(1,-1){14}}
\put(0,-23){\line(0,1){18}}
\put(0,23){\circle*{1.5}}
\put(-8,22){\makebox{$\lambda_{\mathrm{st}}$}}
\thicklines
\qbezier(0,23)(9.52,23)(16.26,16.26)
\qbezier(23,0)(23,9.52)(16.26,16.26)
\qbezier(0,-23)(9.52,-23)(16.26,-16.26)
\qbezier(23,0)(23,-9.52)(16.26,-16.26)
\put(-6,-34){\makebox{(c) $l_x$}}
\end{picture}
\end{center}
\caption{Loops for $\arg x\sim -\pi/2$}
\label{loops2}
\end{figure}
}
\subsection{Monodromy data of Proposition \ref{prop5.1}}\label{ssc7.5}
Suppose that $\arg x\sim \pi/2.$ From $M^0$ of \eqref{7.6} it follows that 
$m^0_{21}=c_0^{-1}e^{-\pi i\theta_{\infty}}(S_*^{-1})_{21}$.  
From $M^1$ of \eqref{7.9} we derive $m^1_{11}=e^{-\frac{\pi i}2(\sigma+
\theta_{\infty})}$ and $m^1_{12}=e^{-\frac{\pi i}2(\sigma+\theta_{\infty})}
(\tilde{S}_{**})_{12}c_x.$ 
Thus we obtain the monodromy data as in Proposition \ref{prop5.1}. 
Similarly the monodromy data for $\arg x\sim -\pi/2$ follow from \eqref{7.10} 
and \eqref{7.11}. 
\begin{rem}\label{rem7.1}
The monodromy data are essentially due to the Stokes matrices for the Whittaker
function of Lemmas \ref{lem7.3} and \ref{lem7.4} with no conditions on
$\theta_0,$ $\theta_1,$ $\theta_{\infty}$ and $\sigma.$
\end{rem}


\end{document}